\documentclass[english,10pt]{smfbook}

\usepackage[english]{babel}
\usepackage[utf8]{inputenc}
\usepackage{amsmath,amssymb,amsthm,stmaryrd,appendix,imakeidx}
\usepackage{eqparbox}
\usepackage{graphicx,xcolor}
\usepackage{etoolbox}




\usepackage{enumitem}
\setlist[enumerate]{label=\mbox{(\textit{\roman*}\hspace{.08em})},font=\rm,itemsep=.25em}
\setlist[itemize]{itemsep=.25em}

\newcommand{\itemi}{\mbox{(\textit{i}\hspace{.08em})}}
\newcommand{\itemii}{\mbox{(\textit{ii}\hspace{.08em})}}
\newcommand{\itemiii}{\mbox{(\textit{iii}\hspace{.08em})}}

\newcommand{\hair}{\ifmmode\mskip1.5mu\else\kern0.08em\fi}

\usepackage{tikz}
\usepackage{mathtools}
\usetikzlibrary{backgrounds,matrix,arrows,decorations.markings,decorations.pathmorphing,shapes}
\tikzset{>=stealth}
\renewcommand{\to}{\mathrel{\tikz[baseline]\draw[ ->,line width=.4pt] (0ex,0.65ex) -- (3ex,0.65ex);}}
\renewcommand{\mapsto}{\mathrel{\tikz[baseline]\draw[|->,line width=.4pt] (0pt,0.65ex) -- (3ex,0.65ex);}}
\renewcommand{\rightarrow}{\mathrel{\tikz[baseline]\draw[->,line width=.4pt] (0ex,0.65ex) -- (3ex,0.65ex);}}

\renewcommand{\leftarrow}{\mathrel{\tikz[baseline]\draw[<-,line width=.4pt] (0ex,0.65ex) -- (3ex,0.65ex);}}
\renewcommand{\rightrightarrows}{\mathrel{\tikz[baseline]{\draw[->,line width=.4pt] (0ex,0.25ex) -- (3ex,0.25ex); \draw[->] (0ex,1.05ex) -- (3ex,1.05ex);}}}

\renewcommand{\hookrightarrow}{\mathrel{\tikz[baseline]\draw[right hook->,line width=.4pt] (0ex,0.65ex) -- (3ex,0.65ex);}}
\renewcommand{\leftrightarrow}{\mathrel{\tikz[baseline]\draw[<->,line width=.4pt] (0ex,0.65ex) -- (3.5ex,0.65ex);}}

\newcommand{\toarg}[1]{\mathrel{\tikz[baseline]\path[->,line width=.4pt] (0ex,0.65ex) edge node[above=-.4ex, overlay, font=\scriptsize] {$#1$} (3.5ex,.65ex);}}

\newcommand{\toarglong}[1]{\mathrel{\tikz[baseline]\path[->,line width=.4pt] (0ex,0.65ex) edge node[above=-.4ex, overlay, font=\scriptsize] {$#1$} (5ex,.65ex);}}

\newcommand{\tosim}{\mathrel{\tikz[baseline] \path[->,line width=.4pt] (0ex,0.65ex) edge node[above=-.9ex, overlay, font=\normalsize, pos=.45] {$\sim$} (3.5ex,.65ex);}}

\newcommand{\MyTo}{\mathbin{\tikz[baseline] \draw[<-,line width=.3pt] (0ex,0.4ex) -- (2ex,0.4ex);}}

\renewcommand{\varprojlim}{%
	\mathop{%
	\mathchoice
		{\mathrm{lim}\mkern-24mu\tikz[baseline]\draw[<-,line width=.4pt](-2.9ex,-.55ex)--(-.1ex,-.55ex);\mkern1mu}
		{\mathrm{lim}\mathllap{\rule[-1.2pt]{0pt}{1.2pt}\tikz[baseline,overlay] \draw[<-,line width=.4pt] (-2.9ex,-.55ex) -- (-.1ex,-.55ex);}\mkern1mu}
		{\mathrm{lim}\tikz[baseline=-.5ex] \draw[<-,line width=.3pt] (0ex,.3ex) -- (2ex,.3ex);}
		{\lim_{\MyTo{2.0ex}}
}}}

\usepackage{scalerel}

\newcommand{\hdot}{{\scaleobj{.95}{\bullet}}}
\newcommand{\ldot}{{\scaleobj{.95}{\bullet}}}

\newcommand{\llrr}[1]{%
\llbracket #1 \rrbracket}

\newcommand{\overbar}[1]{{\mkern1.5mu\overline{\mkern-1.5mu #1\mkern-1.5mu}\mkern1.5mu}}

\newcommand{\compl}{{\widehat{\;\;}}}

\newcommand{\hatotimes}{\mathbin{\widehat{\otimes}}}

\newcommand{\blank}{\mkern.5mu {-} \mkern.5mu}

\renewcommand{\theta}{\vartheta}
\renewcommand{\phi}{\varphi}
\renewcommand{\emptyset}{\varnothing}

\numberwithin{equation}{chapter}
\numberwithin{figure}{chapter}
\newtheorem{theorem}[equation]{Theorem}
\newtheorem*{theorem*}{Theorem}
\newtheorem{proposition}[equation]{Proposition}

\newtheorem{question}[equation]{Question}
\newtheorem{lemma}[equation]{Lemma}
\newtheorem*{lemma*}{Lemma}
\newtheorem{corollary}[equation]{Corollary}
\newtheorem*{corollary*}{Corollary}
\theoremstyle{definition}
\newtheorem{definition}[equation]{Definition}
\newtheorem{example}[equation]{Example}

\newtheorem{notation}[equation]{Notation}
\newtheorem{heuristic}[equation]{Heuristic}
\theoremstyle{remark}
\newtheorem{remark}[equation]{Remark}
\newtheorem{remarks}[equation]{Remarks}
\newtheoremstyle{none}
{.5\linespacing plus .5\linespacing}        
{.5\linespacing plus .5\linespacing}              
{}              
{}    
{}              
{}             
{0pt}         
{}  
\theoremstyle{none}
\newtheorem*{none*}{}
\patchcmd{\thmhead}{(#3)}{#3}{}{} 

\renewcommand{\Bar}{\operatorname{Bar}}
\DeclareMathOperator{\coh}{coh}

\DeclareMathOperator{\Def}{Def}

\newcommand{\e}{\mathrm{e}}

\DeclareMathOperator{\GL}{GL}
\DeclareMathOperator{\gldim}{gldim}
\DeclareMathOperator{\gr}{gr}

\DeclareMathOperator{\HH}{HH}
\DeclareMathOperator{\Hom}{Hom}
\newcommand{\Irr}{\mathrm{Irr}}
\DeclareMathOperator{\MC}{MC}

\DeclareMathOperator{\Mod}{Mod}
\newcommand{\id}{\mathrm{id}}
\DeclareMathOperator{\im}{im}

\newcommand{\Sym}{\mathrm{S}}
\newcommand{\T}{\mathrm{T}}
\DeclareMathOperator{\tip}{tip}

\newcommand{\U}{\mathrm{U}}



\hyphenation{mani-fold mani-folds sub-mani-fold sub-mani-folds Kon-tse-vich pre-pro-jec-tive Hoch-schild Nijen-huis}

\makeindex

\begin{document}

\frontmatter


\title{Deformations of path algebras of quivers with relations} 

\begin{abstract}
Let $A = \Bbbk Q / I$ be the path algebra of any finite quiver $Q$ modulo any two-sided ideal $I$ of relations and let $R$ be any reduction system satisfying the diamond condition for $I$. We introduce an intrinsic notion of deformation of reduction systems and show that there is an equivalence of deformation problems between deformations of the associative algebra $A$ and deformations of the reduction system $R$, the latter being controlled by a natural, explicit L$_\infty$ algebra. It follows in particular that any formal deformation of the associative multiplication on $A$ can, up to gauge equivalence, be given by a combinatorially defined star product, and the approach via reduction systems can be used to give a concrete and complete description of the deformation theory of $A$. 

For the polynomial algebra in a finite number of variables, this combinatorial star product can be described via bidifferential operators associated to graphs, which we compare to the graphs appearing in Kontsevich's universal quantization formula.

Using the notion of admissible orders on the set of paths of the quiver $Q$, we give criteria for the existence of algebraizations of formal deformations, which we also interpret geometrically as algebraic varieties of reduction systems. In this context the Maurer--Cartan equation of the L$_\infty$ algebra can be viewed as a generalization of the Braverman--Gaitsgory criterion for Poincaré--Birkhoff--Witt deformations of Koszul algebras.
\end{abstract}


\subjclass{16S80, 16S15, 53D55, 16Z10}

\keywords{path algebras of quivers, deformations of associative algebras, L$_\infty$ algebras, reduction systems, deformation quantization}

\author{Severin Barmeier}
\email{s.barmeier@gmail.com}
\address{Max-Planck-Institut für Mathematik, Vivatsgasse 7, 53111 Bonn, Germany}
\address{Albert-Ludwigs-Universität Freiburg, Mathematisches Institut, Ernst-Zermelo-Str.~1, 79104 Freiburg im Breisgau, Germany}
\address{Universität zu Köln, Weyertal 86-90, 50931 Köln, Germany}

\author{Zhengfang Wang}
\email{wangzg@mathematik.uni-stuttgart.de}
\address{Max-Planck-Institut für Mathematik, Vivatsgasse 7, 53111 Bonn, Germany}
\address{Universität Stuttgart, Institut für Algebra und Zahlentheorie, Pfaffenwaldring 57, 70569 Stuttgart, Germany}

\thanks{Both authors were supported by the Max Planck Institute for Mathematics Bonn and the Hausdorff Research Institute for Mathematics Bonn funded by the Deutsche Forschungsgemeinschaft (DFG, German Research Foundation) under Germany's Excellence Strategy -- EXC-2047/1 -- 390685813. The first named author was also supported by the DFG Research Training Group GK1821 ``Cohomological Methods in Geometry'' at the University of Freiburg. The second named author was also supported by a Humboldt Research Fellowship from the Alexander von Humboldt Foundation and by a National Natural Science Foundation of China (NSFC) grant with number 11871071.}

\maketitle

\tableofcontents

\listoffigures

\mainmatter

\chapter{Introduction}
\section{Background and motivation}

The deformation theory of associative algebras is the systematic study of variations of the associative multiplication along a single parameter or multiple parameters, first studied in a series of articles by Gerstenhaber \cite{gerstenhaber,gerstenhaber2,gerstenhaber3,gerstenhaber4}. 

A cornerstone of the theory is the Gerstenhaber bracket\index{Gerstenhaber bracket}, which endows the (shifted) Hochschild cochain complex\index{Hochschild!cochain complex} $\Hom_{A^\e} (\Bar_{\ldot+1}, A)$ with the structure of a DG Lie algebra\index{DG Lie algebra}\index{algebra!DG Lie} --- which is precisely the structure used in the Maurer--Cartan formalism of deformation theory via DG Lie and L$_\infty$ algebras. It is well known that first-order deformations correspond to Hochschild $2$-cocycles and two such first-order deformations are equivalent precisely when the $2$-cocycles differ by a $2$-coboundary so that first-order deformations up to equivalence are parametrized by the second Hochschild cohomology $\HH^2 (A, A)$ and obstructions to extending such deformations to higher orders lie in $\HH^3 (A, A)$.

This classical description is both powerful and elegant. However, the classical theory works with multilinear maps and with the ``higher structure'' on  the complex $\Hom_{A^\e} (\Bar_{\ldot + 1}, A)$, where $\Bar_\ldot$ is the bar resolution\index{resolution!bar} of $A$
\[
\dotsb \to A \otimes_{\Bbbk} A^{\otimes_{\Bbbk} n} \otimes_{\Bbbk} A \to \dotsb \to A \otimes_{\Bbbk} A \otimes_{\Bbbk} A \to A \otimes_{\Bbbk} A  \to 0.
\]
The size of the bar resolution\index{resolution!bar} is in fact often the main obstacle for obtaining a concrete description of the deformations of $A$. Indeed, such a concrete description is difficult to come by even in the cases of, say, finite-dimensional algebras \cite{gabriel} or commutative polynomial algebras \cite{kontsevich1}.

From a representation-theoretic point of view deformations of $A$ can be identified with deformations of the module category $\Mod (A)$ as Abelian category \cite{lowenvandenbergh1,lowenvandenbergh2} and the effect of deformations of $A$ on $\Mod (A)$ can be studied both in representation theory and (noncommutative) algebraic geometry, where $\Mod (A)$ can be viewed as the category of quasi-coherent sheaves on a noncommutative scheme. More generally, deformations of $A$ can be viewed as deformations of the derived category $\mathrm D (A)$ \cite{keller1} which can be naturally related to many different algebraic or geometric objects via derived equivalence, e.g.\ to algebraic varieties via the derived category of quasi-coherent sheaves, or to symplectic manifolds via the Fukaya category. It is a compelling and difficult problem to understand concretely how algebraic deformations of $A$ change the representation-theoretic or geometric properties of these associated objects.

In this book we develop a general combinatorial method to study the deformation theory of path algebras of quivers with relations. More precisely, when $A$ can be written as the path algebra $\Bbbk Q / I$ of a finite quiver $Q$ modulo a two-sided ideal $I$ of relations, we study the deformations of $A$ via the combinatorics of a so-called reduction system, a notion which was introduced by Bergman \cite{bergman} to state and prove his Diamond Lemma (see \S\ref{subsection:reductionsystems}). Our method relies on the choice of a reduction system, which always exists (Proposition \ref{proposition:existencereduction}) and which can be computed algorithmically --- often by hand --- or obtained from a noncommutative Gröbner basis, if available. This combinatorial approach to deformations of associative algebras turns out to be of great practical value, allowing one to easily compute particular classes of examples, as well as give criteria for the existence of algebraizations of formal deformations.

As every finitely generated associative algebra can be written as $\Bbbk Q / I$ where $Q$ has a single vertex with a finite number of loops at this vertex, this method can be used for instance to study deformations of
\begin{enumerate}
\item \label{list1} commutative algebras such as the polynomial algebra $\Bbbk [x_1, \dotsc, x_d]$ whose deformation theory is related to quantizations of Poisson structures
\item \label{list2} graded associative algebras such as $N$-Koszul algebras.
\end{enumerate}
Associative algebras of the form $\Bbbk Q/I$, where the quiver $Q$ has multiple vertices, also naturally appear in various guises for example in representation theory, commutative and noncommutative algebraic geometry or symplectic geometry, for example as
\begin{enumerate}[resume]
\item \label{list3} finite-dimensional associative algebras
\item \label{list4} endomorphism algebras of tilting bundles on algebraic varieties
\item \label{list5} diagram algebras associated to a presheaf of algebras 
\item \label{list6} noncommutative resolutions of singularities
\item \label{list7} derived endomorphism algebras of (formal) generators of Fukaya categories.
\end{enumerate}
We study examples of type \ref{list1}--\ref{list3} in Chapters \ref{section:pbw} and \ref{section:relationtoquantization} of the present book and \ref{list4}--\ref{list6}, which describe the deformation theory of the Abelian category of (quasi)coherent sheaves, in \cite{barmeierwang}. In \ref{list7} the derived endomorphism algebras of formal generators are naturally graded and generalizations of this method for A$_\infty$ deformations of graded algebras, with applications to deformations of Fukaya and Fukaya--Seidel categories, are studied in \cite{barmeierwang2,barmeierschrollwang}.

\section{Main results}

Let $\Bbbk$ be any field of characteristic $0$. Our main theoretical result is the following.

\begin{theorem}[(Theorem \ref{theorem:deformationreduction})]
\label{theorem:main}
Given any finite quiver $Q$ and any two-sided ideal of relations $I \subset \Bbbk Q$, let $A = \Bbbk Q / I$ be the quotient algebra and let $R$ be any reduction system satisfying \textup{($\diamond$)} for $I$.

There is an equivalence of formal deformation problems between
\begin{enumerate}
\item \label{main1} deformations of the associative algebra $A$
\item \label{main2} deformations of the reduction system $R$
\item \label{main3} deformations of the relations $I$.
\end{enumerate}
\end{theorem}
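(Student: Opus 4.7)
The plan is to construct an explicit L$_\infty$ algebra $L_R$ from the combinatorics of $R$, identify its Maurer--Cartan elements modulo gauge with deformations of $R$, and match its deformation functor with the Hochschild-controlled deformation functor of $A$; the equivalence with deformations of $I$ will then follow essentially tautologically. Write $S$ for the set of sources (tips) of $R$ and $\Irr$ for the set of irreducible paths, so that by the Diamond Lemma $A \cong \Bbbk \Irr$ as a $\Bbbk$-vector space and $R$ itself is encoded by a $\Bbbk$-linear map $\phi \colon \Bbbk S \to \Bbbk \Irr$ satisfying $(\diamond)$. A deformation of $R$ then amounts to a formal perturbation $\phi_\hbar = \phi + \hbar \phi_1 + \hbar^2 \phi_2 + \dotsb$ whose overlap and inclusion ambiguities still resolve. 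Since $I$ is generated by $\{s - \phi(s)\}_{s \in S}$, a deformation of $\phi$ and a deformation of the generating set of $I$ determine each other uniquely once the set of tips $S$ has been fixed, which gives $(ii) \Leftrightarrow (iii)$ after checking that the natural gauge actions agree.

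For the substantial equivalence $(i) \Leftrightarrow (ii)$, I would replace the bar resolution by a much smaller $A$-bimodule resolution $P_\ldot$ of $A$ built combinatorially from $R$ (an Anick / Chouhy--Solotar-style resolution), whose component $P_n$ is generated by the $n$-fold ambiguities of $R$, so in particular $P_2$ is generated by $S$. Applying $\Hom_{A^\e}(P_\ldot, A)$ yields a cochain complex whose $n$-cochains are $\Hom_{\Bbbk}(\Bbbk S^{(n)}, \Bbbk \Irr)$ and whose cohomology computes $\Ext^\ldot_{A^\e}(A,A) = \HH^\ldot(A,A)$. I would then transport the Gerstenhaber DG Lie structure along an explicit comparison quasi-isomorphism $P_\ldot \simeq \Bar_\ldot$; by the homotopy transfer theorem this equips the reduction complex with an L$_\infty$ structure $L_R$, and the heart of the matter is to identify the transferred brackets $\ell_n$ as the natural combinatorial ``iterated partial compositions'' of reduction rules along $n$-fold overlaps. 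Unpacking the resulting MC equation should reveal that an element $\phi_1 + \hbar \phi_2 + \dotsb$ is Maurer--Cartan in $L_R$ precisely when the perturbed map $\phi_\hbar$ still satisfies $(\diamond)$, so that $\MC(L_R)$ modulo gauge is equinumerous with isomorphism classes of deformations of $R$.

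The main obstacle, and the bulk of the work, is the explicit identification of these higher brackets and the verification that the MC equation encodes $(\diamond)$ for the deformed data. Homotopy transfer only guarantees the existence of some L$_\infty$ structure up to isomorphism, whereas here one needs a concrete, combinatorial description --- both to justify an intrinsic notion of deformation of reduction systems, and to make the MC equation tractable for the applications foreshadowed in the abstract (PBW-type criteria, combinatorial star products, comparison with Kontsevich graphs). I expect the cleanest route is to define the $\ell_n$ directly by iterated overlap resolution, generalizing Bergman's confluence argument, and then verify the L$_\infty$ identities in arity $n$ by a categorified diamond-lemma calculation: the Jacobi-type relation at order $n$ records the compatibility of two different ways of fully reducing an $(n+1)$-fold ambiguity. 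Once the L$_\infty$ structure and the MC$\leftrightarrow(\diamond)$ dictionary are in place, the equivalence of formal deformation functors $(i) \Leftrightarrow (ii)$ follows from the standard principle that L$_\infty$-quasi-isomorphic algebras control equivalent formal deformation problems, and composing with $(ii) \Leftrightarrow (iii)$ yields the full chain of equivalences.
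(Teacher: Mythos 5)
Your overall architecture --- replace the bar resolution by the Chouhy--Solotar resolution $P_\ldot$ built from the ambiguities of $R$, transfer the Gerstenhaber structure to an L$_\infty$ structure on $\Hom_{A^\e}(P_{\ldot+2},A)$, and identify the Maurer--Cartan equation with resolvability of the deformed overlaps --- is exactly the paper's. But the two steps you treat as routine are where the real content lies. First, you propose to define the higher brackets \emph{directly} by ``iterated overlap resolution'' and then verify the L$_\infty$ identities by a ``categorified diamond-lemma calculation''; the paper does not do this, and it is not clear such a direct verification is feasible. Instead it constructs an explicit \emph{special} homotopy deformation retract $(F_\ldot, G_\ldot, h_\ldot)$ between $P_\ldot$ and $\Bar_\ldot$, so that the transferred brackets and the L$_\infty[1]$ quasi-isomorphism come with closed formulas (contrary to your remark that transfer only gives the structure ``up to isomorphism''), and then performs the combinatorial identification not bracket-by-bracket but at the level of the induced operations: the transferred maps $\tfrac{1}{k!}G_k^\hdot(\widetilde\phi^{\otimes k})$ are shown to coincide with the operation $\star^{\mathrm C,k}_{\phi+\widetilde\phi}$ given by right-most reductions (Theorem \ref{theorem:interpretation-star}), and the $k$-th bracket evaluated on a $1$-ambiguity $uvw\in S_3$ is then computed as an associator of $\star^{\mathrm C}_{\phi+\widetilde\phi}$ (Lemma \ref{lemma:higher-brackets}, Theorem \ref{theorem:higher-brackets}). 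This requires the explicit low-degree formulas for $G_2$, $h_2$, $\partial_3$, $F_3$ and an induction over the reduction order; it is the technical heart of the proof and is absent from your outline.

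Second, the equivalence between deformations of $R$ and deformations of $I$ is not tautological. A formal deformation of $I$ is an arbitrary complete two-sided ideal $\widehat I\subset\Bbbk Q\llrr{t}$ with $\widehat I\otimes_{\Bbbk\llrr{t}}\Bbbk=I$ and $\Bbbk Q\llrr{t}/\widehat I$ free as a $\Bbbk\llrr{t}$-module reducing to $A$; it is \emph{not} handed to you as a perturbation $\{s-\phi_s-\widetilde\phi_s\}_{s\in S}$ of the chosen generators, so ``a deformation of $\phi$ and a deformation of the generating set of $I$ determine each other uniquely'' begs the question. The paper closes this gap by associating to $\widehat I$ a generalized Gutt star product on $A\llrr{t}$, invoking the already-established equivalence with deformations of $A$ to replace it up to gauge by $\star^{\mathrm C}_{\phi+\widetilde\phi}=\star^{\mathrm G}_{\phi+\widetilde\phi}$ for some Maurer--Cartan element $\widetilde\phi$, and concluding that $\widehat I$ is equivalent to $\widehat I_{\phi+\widetilde\phi}$ (Proposition \ref{proposition:formalideal}); so this direction routes through the algebra side rather than being independent of it. With these two points supplied, your plan does reproduce the paper's proof.
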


This equivalence can be understood as an isomorphism of the corresponding deformation functors. (See \S\ref{subsection:summary} for a description of these deformation problems.) For the second deformation problem, we introduce a new and intrinsic notion of equivalence of reduction systems (see Definitions \ref{definition:equivalencereduction} and \ref{definition:equivalenceformalreductionsystem}).

From the equivalence of \ref{main1} and \ref{main2} we obtain a bijection between
\begin{equation}
\begin{array}{c}
\text{formal deformations of $A$} \\ \text{up to equivalence}
\end{array}
\,\leftrightarrow\,
\begin{array}{c}
\text{formal deformations of $R$} \\ \text{up to equivalence.}
\end{array}
\end{equation}
The right-hand side is often easier to determine explicitly (see e.g. \S\ref{subsubsection:formalbrauertree}). The point of view of reduction systems can thus also be used to simplify the classification of deformations up to gauge equivalence.

Let us give a rough idea on the proof of  the equivalence between \ref{main1} and \ref{main2} in Theorem \ref{theorem:main}. For any choice of reduction system $R$ satisfying the diamond condition ($\diamond$) for the ideal $I$, Chouhy--Solotar \cite{chouhysolotar} proved the existence of an $A$-bimodule resolution $P_\ldot$ of $A$, generalizing Bardzell's resolution\index{resolution!Bardzell}\index{resolution!Chouhy--Solotar} for monomial algebras \cite{bardzell1,bardzell2}. We give a recursive formula for the differential of $P_\ldot$ (see Proposition \ref{proposition:resolution}) and construct an explicit homotopy deformation retract between the bar resolution\index{resolution!bar} $\Bar_\ldot$ and $P_\ldot$ (see Chapter \ref{section:homotopydeformationretract}), giving the following theorem.

\begin{theorem}[(Theorems \ref{theorem:linfinitytransfer} and \ref{theorem:equivalenceformal})]
\label{theorem:mainquasi}
\index{L$_\infty$!algebra}\index{algebra!L$_\infty$}
There exists an L$_\infty[1]$ algebra $\mathbf p (Q, R)$
with underlying cochain complex $P^{\hdot+2} = \Hom_{A^\e} (P_{\ldot+2}, A)$ and an L$_\infty[1]$ quasi-isomorphism\index{L$_\infty$!quasi-isomorphism} between $\mathbf p(Q, R)$ and $\Hom_{A^\e} (\Bar_{\ldot+2}, A)$ with its DG Lie$[1]$\index{DG Lie algebra}\index{algebra!DG Lie} algebra structure.

This L$_\infty[1]$ \index{L$_\infty$!algebra}\index{algebra!L$_\infty$} algebra naturally controls the deformation theory of $R$ and in light of the L$_\infty[1]$ quasi-isomorphism also the deformation theory of $A$.
\end{theorem}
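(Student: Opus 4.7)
The plan is to obtain both the L$_\infty[1]$ algebra structure on $P^{\hdot+2}$ and the quasi-isomorphism to $\Hom_{A^\e}(\Bar_{\ldot+2}, A)$ simultaneously via homotopy transfer. The key input is the explicit homotopy deformation retract constructed in \S\ref{section:homotopydeformationretract} between the Chouhy--Solotar resolution $P_\ldot$ and the bar resolution $\Bar_\ldot$ of $A$ as $A^\e$-modules. Since both complexes are projective over $A^\e$, applying $\Hom_{A^\e}(-, A)$ preserves the retract data and yields a homotopy deformation retract of cochain complexes
\begin{equation*}
\Hom_{A^\e}(P_{\ldot+2}, A) \rightleftarrows \Hom_{A^\e}(\Bar_{\ldot+2}, A)
\end{equation*}
together with an explicit contracting homotopy $h$ satisfying the usual side conditions of the homological perturbation lemma.

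The right-hand complex carries its classical DG Lie$[1]$ structure coming from the Gerstenhaber bracket. By the homotopy transfer theorem (Kadeishvili, Markl), this structure transfers along the retract to an L$_\infty[1]$ structure on $P^{\hdot+2}$, with brackets $\ell_n$ given by the standard sum over planar rooted trees: each internal vertex contributes a Gerstenhaber bracket applied to its incoming branches, each internal edge contributes a copy of $h$, and the root is projected back onto $P^{\hdot+2}$. This defines $\mathbf p(Q, R)$. The same theorem simultaneously promotes the projection to an L$_\infty[1]$ quasi-isomorphism $\mathbf p(Q, R) \to \Hom_{A^\e}(\Bar_{\ldot+2}, A)$, whose higher Taylor components are again given by analogous tree sums. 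This covers the content of Theorem \ref{theorem:linfinitytransfer}.

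For the content of Theorem \ref{theorem:equivalenceformal} it suffices to invoke that L$_\infty[1]$ quasi-isomorphisms induce isomorphisms of the associated Maurer--Cartan deformation functors (Goldman--Millson, Kontsevich). Hence $\mathbf p(Q, R)$ controls the same formal deformation problem as $\Hom_{A^\e}(\Bar_{\ldot+2}, A)$, which by definition is that of the associative algebra $A$ in the sense of Gerstenhaber. To obtain the equivalence with deformations of the reduction system $R$, one identifies MC elements of $\mathbf p(Q, R)$ combinatorially with formal deformations of $R$ in the sense of Definition \ref{definition:equivalenceformalreductionsystem}, and combines this with Theorem \ref{theorem:main} to recover the equivalence between items \ref{main1} and \ref{main2}.

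The principal obstacle is twofold. First, the contracting homotopy on $\Bar_\ldot$ relative to $P_\ldot$ is not canonical and must be defined recursively, in parallel with the recursive differential of $P_\ldot$ from Proposition \ref{proposition:resolution}, while preserving the side conditions needed to apply homological perturbation cleanly; this is the technical heart of \S\ref{section:homotopydeformationretract}. Second, the transferred brackets $\ell_n$ must be sufficiently explicit that the Maurer--Cartan equation
\begin{equation*}
\sum_{n \geq 1} \frac{1}{n!}\, \ell_n(g, \dotsc, g) = 0
\end{equation*}
can be read off directly as the diamond condition ($\diamond$) for the perturbed reduction system associated to $g$, rather than as an opaque formal identity. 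This explicitness is what makes $\mathbf p(Q, R)$ usable in the applications \ref{list1}--\ref{list6} mentioned above, and is the technical payoff that motivates the whole construction.
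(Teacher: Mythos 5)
Your proposal follows essentially the same route as the paper: dualize the special homotopy deformation retract of \S\ref{section:homotopydeformationretract} by $\Hom_{A^\e}(\blank, A)$, apply the homotopy transfer theorem to obtain both $\mathbf p(Q,R)$ and the L$_\infty[1]$ quasi-isomorphism (Theorem \ref{theorem:linfinitytransfer}), and then identify Maurer--Cartan elements with formal deformations of $R$ via the combinatorial star product, which is exactly how Theorems \ref{theorem:interpretation-star}, \ref{theorem:higher-brackets} and \ref{theorem:equivalenceformal} proceed. You also correctly locate the two technical burdens (the recursive construction of the special retract, and making the transferred brackets explicit enough to read off the diamond condition), which is where the bulk of the paper's work lies.
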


\begin{remark}
Here, and indeed throughout most of the book, we work with the notions of L$_\infty[1]$\index{L$_\infty$!algebra} algebras and DG Lie$[1]$ algebras\index{DG Lie algebra}\index{algebra!DG Lie}\index{DG Lie algebra!DG Lie$[1]$|textbf}. We have been careful in working with a consistent choice of sign conventions. The notions of L$_\infty[1]$ and DG Lie$[1]$ algebras --- which are simply shifted versions of the more classical notions of L$_\infty$ algebras and DG Lie algebras (see Remark \ref{ordinarydgalgebra}) --- allow us to avoid otherwise rather complicated signs that would appear in the various formulas involving higher brackets. Except when taking a close look at signs, we invite the reader simply to ignore the $[1]$.
\end{remark}

In a general L$_\infty$ or L$_\infty [1]$ algebra\index{L$_\infty$!algebra}\index{algebra!L$_\infty$}, the notion of equivalence between Maurer--Cartan elements\index{Maurer--Cartan!elements} is that of homotopy equivalence, which can be rather cumbersome in practice. Theorem \ref{theorem:mainquasi} implies in particular that we may instead work with the intrinsic and practicable notion of equivalence between reduction systems.

To any complete local Noetherian $\Bbbk$-algebra $(B, \mathfrak m)$, e.g.\ $B = \Bbbk \llrr{t}$ and $\mathfrak m = (t)$, and any element $\widetilde \phi \in P^2 \hatotimes \mathfrak m$ we associate a certain combinatorially defined operation $\star \colon A \otimes A \to A \hatotimes B$, which we call {\it combinatorial star product}\index{star product!combinatorial}\index{combinatorial star product}. Roughly speaking, this operation can be described by performing rightmost reductions with respect to a new, formal reduction system associated to $\widetilde \phi$ (see Definition \ref{definition:star_2}). 

We show that $\star$ coincides with the bilinear operation obtained from transferring $\widetilde \phi$ via the L$_\infty[1]$ quasi-isomorphism of Theorem \ref{theorem:mainquasi} (see Theorem \ref{theorem:interpretation-star}), which has the following two important consequences.  On the one hand, $\star$ can be used to give a combinatorial criterion for the Maurer--Cartan equation\index{Maurer--Cartan!equation} of $\mathbf p (Q, R) \hatotimes \mathfrak m$ (Theorem \ref{theorem:higher-brackets}). On the other hand, when $\widetilde \phi$ is a Maurer--Cartan element of $\mathbf p (Q, R) \hatotimes \mathfrak m$, then $\star$ gives an explicit formula for the corresponding formal deformation of the associative multiplication on $A$. Indeed, up to gauge equivalence, any formal deformation of $A$ over $(B, \mathfrak m)$ is of the form $(A \hatotimes B, \star)$ for some Maurer--Cartan element\index{Maurer--Cartan!elements} $\widetilde \phi$ (Corollary \ref{theorem-summary}).

When restricting to deformations over $B = \Bbbk [\epsilon] / (\epsilon^2)$, the combinatorial star product provides a simple combinatorial method for computing $\HH^2 (A, A)$ --- as first-order deformations of the reduction system $R$ modulo equivalence (see \S\ref{subsection:firstorder}).

For proving the equivalence between \ref{main1} and \ref{main3} of Theorem \ref{theorem:main} we use the same idea underlying the construction of the Gutt star product\index{Gutt star product}\index{star product!Gutt} (see Definition \ref{definition:gutt}). For any Maurer--Cartan element $\widetilde \phi$ of $\mathbf p (Q, R) \hatotimes \mathfrak m$, the associated Gutt star product coincides with the combinatorial star product\index{star product!combinatorial}\index{combinatorial star product} (Theorem \ref{theorem:equivalenceformal}). In particular, any formal deformation of $A = \Bbbk Q / I$ can be obtained from a formal deformation of the ideal $I$ and the point of view of reduction systems provides a systematic method for constructing such a formal deformation. \\


Using the equivalences in Theorem \ref{theorem:main} and the explicit nature of the combinatorial star product\index{star product!combinatorial}\index{combinatorial star product} one obtains a surprisingly workable description of the deformation theory of $A$.

Let us also mention two further aspects of the deformation theory of reduction system we address in this book.

\subsection{Actual deformations}

Our main result stated in Theorem \ref{theorem:main} is about {\it formal} deformations. In Chapter \ref{section:nonformal} we consider certain finiteness conditions under which deformations of reduction systems can be understood in the non-formal setting. We also consider finiteness conditions which are weaker than reduction-uniqueness, which naturally give rise to a generalization of the Diamond Lemma (see Theorem \ref{theorem:nonformal}).

It is often desirable to obtain ``actual'' deformations of an algebra, say, by finding an algebraization of a formal deformation and evaluating the deformation parameters to a constant --- if possible. In this case, the original algebra and its ``actual'' deformation have the {\it same} basis, and may thus be compared on equal footing. In Chapter \ref{section:pbw} we study this problem of algebraization from a geometric point of view: by considering certain subspaces of $P^2$ containing elements satisfying certain degree conditions.

In particular, this allows us to define a variety of reduction systems, whose equations are given by the Maurer--Cartan equation\index{Maurer--Cartan!equation} of $\mathbf p (Q, R)$ (Theorem \ref{theorem:variety}). These varieties were studied by Green--Hille--Schroll \cite{greenhilleschroll} and we show that the natural notion of equivalence of reduction systems gives rise to a natural groupoid action on the variety, where two reduction systems lie in the same orbit if and only if their associated algebras are isomorphic. These varieties can provide algebraizations of formal deformations, even for possibly infinite-dimensional algebras (see \S\ref{subsubsection:geometricinterpretation}).

Moreover, we show how in this non-formal context the Maurer--Cartan equation\index{Maurer--Cartan!equation} of $\mathbf p (Q, R)$ can be viewed as a natural generalization of the Braverman--Gaitsgory criterion for PBW deformations\index{PBW deformation} of Koszul algebras, to the setting of non-Koszul algebras and non-PBW deformations (see Proposition \ref{proposition:pbw}). 

\subsection{A graphical description of the combinatorial star product}

In Chapter \ref{section:relationtoquantization} we show that for the polynomial algebra $A = \Bbbk [x_1, \dotsc, x_d]$ the combinatorial star product $\star$ admits the following graphical description.

\begin{theorem}[(Theorem \ref{theorem:loopless} and Proposition \ref{proposition:graphs})]
\label{theorem:main3}
Given any degree zero element $\widetilde \varphi \in \mathbf p (Q, R) \hatotimes \mathfrak m$, the associated combinatorial star product\index{star product!combinatorial!graphical calculus}\index{combinatorial star product!graphical calculus} $\star$ can be given as
\begin{flalign}
\label{combinatorialstarintro}
&& f \star g &= \sum_{k \geq 0} \sum_{\Pi \in \mathfrak G_{k,2}^{\mathrm C}} C_\Pi (f, g) && \mathllap{f, g \in A}
\end{flalign}
where $\mathfrak G_{k,2}^{\mathrm C}$ is a set of graphs in bijection to the set of Kontsevich graphs without cycles and with an ordering of the incoming edges at each vertex, and each $C_\Pi$ is a bidifferential operator associated to a graph $\Pi$.
\end{theorem}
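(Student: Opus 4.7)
The plan is to unfold the recursive definition of the combinatorial star product $\star$ for $A = \Bbbk[x_1, \dotsc, x_d]$, viewed as the path algebra of the quiver $Q$ with one vertex and $d$ loops modulo the commutator relations $I = (x_j x_i - x_i x_j)_{i<j}$, and to encode each sequence of right-most reductions as a decorated graph. I would fix the reduction system $R = \{(x_j x_i, x_i x_j) \mid i < j\}$, so that reduced paths are precisely the ordered monomials $x_{i_1}\dotsb x_{i_n}$ with $i_1 \leq \dotsb \leq i_n$; a degree-zero deformation datum $\widetilde \varphi \in \mathbf p(Q, R) \hatotimes \mathfrak m$ is then specified by formal series $\widetilde \varphi_{ji} \in A \hatotimes \mathfrak m$ for each pair $i < j$, and defines the deformed reduction rule $x_j x_i \rightsquigarrow x_i x_j + \widetilde \varphi_{ji}$.

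To compute $f \star g$ for two ordered monomials, I would run the right-most reduction algorithm on the concatenation $fg$ step by step using these deformed rules. At each step the algorithm either performs a pure swap of adjacent variables or inserts a correction term $\widetilde \varphi_{ji}$; I would record each insertion as an aerial vertex of a graph with two incoming edges remembering the ordered pair $(j, i)$ of variables that triggered it. The crucial observation is that once $\widetilde \varphi_{ji}$ has been inserted, subsequent right-most reductions only ever move variables lying to its right past it, and each such passage expands by the Leibniz rule into the sum of a term in which the variable commutes past $\widetilde \varphi_{ji}$ unchanged and a term in which the variable has been differentiated into $\widetilde \varphi_{ji}$; the latter corresponds to an outgoing edge of the aerial vertex in the associated graph, which is exactly the combinatorial datum defining a Kontsevich-style bidifferential operator $C_\Pi$.

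Collecting contributions according to the combinatorial shape of the resulting graph then yields the formula \eqref{combinatorialstarintro}. To identify the resulting set with Kontsevich graphs without cycles and with an ordering of the incoming edges at each aerial vertex, I would observe that the ordering is naturally induced by the ordered pair $(j, i)$ labelling the insertion together with the order in which subsequent reductions hit $\widetilde \varphi_{ji}$, and that the acyclicity of the graph follows because the right-most algorithm equips the aerial vertices with a strict partial order (time of insertion) with respect to which all outgoing edges are strictly decreasing, ruling out oriented cycles.

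The main obstacle is the final step: verifying that the induced map from right-most reduction sequences to $\mathfrak G_{k,2}^{\mathrm C}$ is a bijection and that each $C_\Pi$ appears with exactly the correct combinatorial coefficient once one has quotiented by the ordering of incoming edges. I expect this to reduce to a careful induction on the total number of reduction steps, using both the uniqueness of the right-most reducible subword at each stage and an explicit Leibniz-rule matching between variables crossing a previously inserted correction and individual outgoing edges of the graph. The loopless conclusion of Theorem \ref{theorem:loopless} should then fall out automatically, since a self-loop at an aerial vertex would require a variable to be simultaneously inserted by and differentiated into the same $\widetilde \varphi_{ji}$, which cannot occur in a single right-most reduction sequence.
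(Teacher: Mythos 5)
Your proposal follows essentially the same route as the paper: unfold the right-most reduction algorithm, record each use of $\widetilde\varphi$ as a vertex of a graph whose edges track which earlier insertions (or base monomials) get differentiated, read off acyclicity from the time-ordering of insertions and the edge-ordering from the left-to-right order in which later reductions land on a given insertion. The one bookkeeping point the paper handles differently from what you anticipate is the coefficient matching: there is no quotient by orderings of incoming edges (the bijection is with Kontsevich graphs \emph{equipped with} such orderings), and the binomial counts arising from repeated variables are absorbed by defining $C_\Pi$ with \emph{divided} partial derivatives $\frac{1}{l!}\partial_i^l$, which is exactly the Leibniz-rule matching you flag as the remaining obstacle.
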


Here $Q$ is the quiver with a single vertex and $d$ loops $x_1, \dotsc, x_d$ and $R = \{ (x_j x_i, x_i x_j) \}_{1 \leq i < j \leq d}$ is a reduction system for the ideal of commutativity relations (see Example \ref{example:polynomialreduction}).

Phrased in the context of deformation quantization, and working with deformations over $\Bbbk \llrr{\hbar}$, we have the following result.\index{deformation quantization}

\begin{theorem}[(Theorem \ref{theorem:theoretical} and Proposition \ref{proposition:quantizations})]
Any Poisson structure on $\mathbb A^d$ can be quantized using the combinatorial star product.\index{Poisson structure}\index{star product!combinatorial}\index{combinatorial star product}

Given any Maurer--Cartan element $\widetilde \phi = \widetilde \phi_1 \hbar + \widetilde \phi_2 \hbar^2 + \dotsb$ of $\mathbf p (Q, R) \hatotimes (\hbar)$, the combinatorial star product gives an explicit formula for the quantization of the Poisson structure corresponding to $\widetilde \phi_1$.\index{deformation quantization}
\end{theorem}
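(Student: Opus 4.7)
The plan is to combine three ingredients: the identification of first-order terms with Poisson bivectors under the formality of Theorem \ref{theorem:mainquasi}, Kontsevich's formality theorem to lift a Poisson bivector to a full Maurer--Cartan element, and Corollary \ref{theorem-summary} together with the graphical description in Theorem \ref{theorem:main3} to extract the explicit quantization formula.

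First I would analyze the first-order term $\widetilde\phi_1$. Since $\mathbf p(Q,R)$ is L$_\infty[1]$ quasi-isomorphic to the Hochschild cochain complex of $A = \Bbbk[x_1,\dotsc,x_d]$ (Theorem \ref{theorem:mainquasi}), its cohomology is identified via the Hochschild--Kostant--Rosenberg theorem with the polyvector fields on $\mathbb A^d$. Under this identification, a degree-zero element $\widetilde\phi_1$ satisfying the first-order Maurer--Cartan equation $\ell_2(\widetilde\phi_1,\widetilde\phi_1) = 0$ modulo gauge corresponds, again up to gauge equivalence, to a Poisson bivector $\pi$. Conversely, any Poisson bivector $\pi$ on $\mathbb A^d$ defines a well-defined cohomology class in $\HH^2(A,A)$, which I would lift to an explicit representative in $\mathbf p(Q,R)$ (using the explicit homotopy deformation retract mentioned around Theorem \ref{theorem:mainquasi} to produce a concrete preimage).

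Next, to promote this first-order datum to a full Maurer--Cartan element, I would invoke Kontsevich's formality theorem, which gives an L$_\infty$-quasi-isomorphism $\mathcal U$ from the DG Lie algebra of polyvector fields on $\mathbb A^d$ (with Schouten bracket) to the Hochschild cochain complex. Composing $\mathcal U$ with a quasi-inverse of the L$_\infty[1]$ quasi-isomorphism of Theorem \ref{theorem:mainquasi} yields an L$_\infty$-morphism from polyvectors to $\mathbf p(Q,R)$. Since Maurer--Cartan elements transfer along L$_\infty$-morphisms, the scaled Poisson bivector $\pi\hbar$ maps to a Maurer--Cartan element $\widetilde\phi = \widetilde\phi_1\hbar + \widetilde\phi_2\hbar^2 + \dotsb$ of $\mathbf p(Q,R)\hatotimes(\hbar)$ whose linear part represents $\pi$.

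Finally, Corollary \ref{theorem-summary} asserts that the combinatorial star product $\star$ associated to any such Maurer--Cartan element gives a formal deformation, i.e.\ an associative multiplication on $A\hatotimes\Bbbk\llrr{\hbar}$, whose commutator at first order in $\hbar$ recovers the Poisson bracket determined by $\widetilde\phi_1$. Hence $(A\hatotimes\Bbbk\llrr{\hbar},\star)$ is a quantization of the Poisson structure $\pi$. Applying Theorem \ref{theorem:main3} rewrites $f\star g$ as the graph sum $\sum_{k\geq 0}\sum_{\Pi\in\mathfrak G_{k,2}^{\mathrm C}} C_\Pi(f,g)$, giving the explicit formula. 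The main obstacle here is the transfer step: while the L$_\infty$-quasi-isomorphism of Theorem \ref{theorem:mainquasi} is constructed internally via the homotopy deformation retract, the existence of a Maurer--Cartan lift of an arbitrary Poisson bivector ultimately relies on Kontsevich formality as an external input; a secondary subtlety is keeping the degree shifts and sign conventions of $\mathbf p(Q,R)$ straight so that the first-order Maurer--Cartan equation matches the Jacobi identity for $\pi$.
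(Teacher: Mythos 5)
Your proposal follows essentially the same route as the paper: Kontsevich's formality quasi-isomorphism composed with (a quasi-inverse of) the transfer quasi-isomorphism of Theorem \ref{theorem:linfinitytransfer}, together with Corollary \ref{theorem-summary} and the graphical formula, is exactly how Theorem \ref{theorem:theoretical} and Proposition \ref{proposition:quantizations} are proved. The one refinement in the paper worth noting is Lemma \ref{lemma:koszulquantization}, which shows that for this reduction system $\mathbf p (Q, R)[-1]$ has trivial differential and its binary bracket coincides with the Schouten--Nijenhuis bracket on the nose, so that $\widetilde \phi_1$ is literally a Poisson bivector (the Jacobi identity being precisely the $\hbar^2$-component of the Maurer--Cartan equation via Theorem \ref{theorem:higher-brackets}) rather than only up to gauge as in your HKR-based phrasing.
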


It is interesting that the purely combinatorial approach we take in this book recovers the graphs without cycles used in Kontsevich's universal quantization formula \eqref{kontsevichstar}, which can be motivated from Feynman diagrams \cite{cattaneofelder}. However, no extra weights enter the formula \eqref{combinatorialstarintro}, which may prove useful in obtaining a quantization formula defined over $\mathbb Q$ (for examples see \S\ref{subsection:combinatorialquantization}). It has also proved useful in studying convergence and continuity properties of star products needed to produce strict quantizations \cite{barmeierschmitt}.\index{deformation quantization!strict}

\section{Further applications}

The methods developed in this book can also be applied to study deformations arising in algebraic geometry, singularity theory, representation theory and symplectic geometry. In \cite{barmeierwang} we show how deformations of path algebras of quivers with relations can be used to study deformations of the Abelian category of coherent sheaves on any separated Noetherian scheme in the sense of Lowen--Van den Bergh \cite{lowenvandenbergh1,lowenvandenbergh2}. Deformations of $\coh (X)$ for some separated Noetherian scheme $X$ are equivalent to deformations of the diagram algebra $\mathcal O_X \vert_{\mathfrak U}!$ obtained from restricting the structure sheaf of $X$ to an affine open cover $\mathfrak U$ closed under intersections. This diagram algebra can naturally be written as the path algebra of a quiver with relations. We further study deformations of $\coh (X)$ for varieties with a tilting bundle, whose endomorphism algebra is also of the form $\Bbbk Q / I$ which also give rise to noncommutative deformations of singularities.

This method also allows us to study the effect of associative, or more generally, A$_\infty$ deformations of graded gentle algebras on their representation theory and on the symplectic geometry of the associated surface models \cite{barmeierschrollwang}. In \cite{barmeierwang2} we also apply our results in this book to settle Stroppel's Conjecture from her ICM 2010 address \cite{stroppel2} on the Hochschild cohomology and A$_\infty$ deformations of extended Khovanov arc algebras appearing in geometric representation theory, link homology and symplectic geometry \cite{stroppel}, which also describe algebraic deformations of Fukaya--Seidel categories associated to certain Hilbert schemes on surfaces.

\section{Structure of the book}

In Chapter \ref{section:quivers} we recall some basic notions on quivers and their path algebras. 
In Chapter \ref{section:preliminaries} we recall the notion of a reduction system and Bergman's Diamond Lemma. We introduce a new intrinsic notion of equivalence between reduction systems (Definition \ref{definition:equivalencereduction}). We also recall the notion of a noncommutative Gröbner basis which naturally gives a reduction system. We also study the combinatorics of reductions. In Chapter \ref{section:resolution} we recall the projective resolution $P_\ldot$ constructed by Chouhy--Solotar and give explicit recursive formulae for the differential and the homotopy in $P_\ldot$ (see Proposition \ref{proposition:resolution}). 
In Chapter \ref{section:homotopydeformationretract} we construct an explicit homotopy deformation retract between the projective resolution $P_\ldot$ and the bar resolution $\Bar_\ldot$. We describe explicitly the maps in low degrees, which will be used to describe the deformation theory of $A$. 
In Chapter \ref{section:deformationtheory} we give a brief review of deformation theory via DG Lie and L$_\infty [1]$ algebras and we recall the classical theory of ``actual'' and formal deformations of associative algebras.
In Chapter \ref{section:deformations-of-path-algebras} we prove our main result on the equivalences between the formal deformations of reduction systems, ideals and path algebras of quivers with relations. In \S\ref{subsection:firstorder} we also give a combinatorial description for computing $\HH^2 (A, A)$ by restricting to deformations over $\Bbbk [\epsilon] / (\epsilon^2)$. In Chapter \ref{section:nonformal} we study deformations in the non-formal setting and give a deformation-theoretic interpretation for Bergman's Diamond Lemma. In Chapter \ref{section:pbw} we study algebraic varieties of reduction systems, whose points may be viewed as ``actual'' deformations of $A$, by introducing certain degree conditions. We show that the Maurer--Cartan equation of $\mathbf p (Q, R)$ can be viewed as a generalization of the classical Braverman--Gaitsgory criterion for PBW deformations of $N$-Koszul algebras (see Proposition \ref{proposition:pbw}) and give several examples.
In Chapter \ref{section:relationtoquantization} we give a graphical description of the combinatorial star product for the polynomial algebra. The combinatorial star product can be used to give explicit formulae for deformation quantizations of algebraic Poisson structures\index{Poisson structure} on $\mathbb A^d$.\index{deformation quantization}

\chapter{Quivers, path algebras and relations}
\label{section:quivers}

A {\it quiver} $Q$ consists of a set $Q_0$ of vertices and a set $Q_1$ of arrows together with source and target maps $\mathrm s, \mathrm t \colon Q_1 \to Q_0$ assigning to each arrow $x \in Q_1$ its source and target vertices $\mathrm s (x)$ and $\mathrm t (x)$, respectively. A quiver $Q$ is called {\it finite} if $Q_0$ and $Q_1$ are finite sets. {\it All quivers in this book are finite quivers.}

To each vertex $i \in Q_0$ we associate {\it a path of length $0$} denoted by $e_i$ with $\mathrm s (e_i) = i = \mathrm t (e_i)$. For $n \geq 1$ a {\it path of length $n$} in $Q$ is a sequence $p = x_1 x_2 \cdots x_n$ of $n$ arrows with $\mathrm t (x_i) = \mathrm s (x_{i+1})$ for $1 \leq i < n$ and we sometimes denote the path length of an arbitrary path $p$ by $\lvert p \rvert$. We denote the set of paths of length $n$ by $Q_n$ and write $Q_{\geq N} = \bigcup_{n \geq N} Q_n$ for the set of paths of length $\geq N$ and also $Q_\ldot$ for the set $Q_{\geq 0}$ of all paths. For $p = x_1 x_2 \cdots x_n$, we call $\mathrm s (x_1)$ the source of $p$, denoted by $\mathrm s (p)$, and call $\mathrm t (x_n)$ the target of $p$, denoted by $\mathrm t (p)$. The paths $x_k \cdots x_l$ for $1 \leq k \leq l \leq n$ are called {\it subpaths} of $p$.

An arrow $x \in Q_1$ with $\mathrm s (x) = \mathrm t (x)$ is called a {\it loop} and a path $p$ of length $\geq 2$ with $\mathrm s (p) = \mathrm t (p)$ is called a {\it cycle}. A finite quiver $Q$ is called {\it acyclic} if it contains no loops or cycles in which case $Q_n = \emptyset$ for $n \gg 0$.

The {\it path algebra} $\Bbbk Q = \bigoplus_{n \geq 0} \Bbbk Q_n$ has the set of all paths as a $\Bbbk$-basis and the product $p q$ of two paths $p$ and $q$ is defined to be their concatenation if $\mathrm t (p) = \mathrm s (q)$ and zero otherwise. The paths of length $0$ are orthogonal idempotents (i.e.\ $e_i e_j = 0$ for $i \neq j$ and $e_i^2 = e_i$) satisfying $e_{\mathrm s (p)} p = p = p e_{\mathrm t (p)}$ for each path $p$ and their sum $\sum_{i \in Q_0} e_i = 1_{\Bbbk Q}$ is the identity element of the path algebra $\Bbbk Q$.

Two paths $p, q$ are said to be {\it parallel} if $\mathrm t (p) = \mathrm t (q)$ and $\mathrm s (p) = \mathrm s (q)$ and for a linear combination of paths $f = \sum_k \lambda_k q_k \in \Bbbk Q$, we say that $p$ is parallel to $f$ if $p$ is parallel to $q_k$ whenever $\lambda_k \neq 0$.

The subspace $\Bbbk Q_0 \simeq \Bbbk \times \dotsb \times \Bbbk$ is a subalgebra of $\Bbbk Q$ and $\Bbbk Q_1$ is a $\Bbbk Q_0$-bimodule. The path algebra $\Bbbk Q$ is isomorphic to the tensor algebra of $\Bbbk Q_1$ over $\Bbbk Q_0$
$$
\Bbbk Q \simeq \mathrm T_{\Bbbk Q_0} (\Bbbk Q_1) = \Bbbk Q_0 \oplus \bigoplus_{n\geq 1} (\Bbbk Q_1)^{\otimes_{\Bbbk Q_0} n}.
$$
The path algebra $\Bbbk Q$ can be written as a matrix algebra $\Bbbk Q = (\Bbbk \hair e_i Q e_j)_{ij}$ where $e_i Q e_j$ is the set of paths from $i$ to $j$ and the multiplication is given by matrix multiplication.

Given a two-sided ideal $I \subset \Bbbk Q$, the algebra $A = \Bbbk Q / I$ is called a {\it path algebra of a quiver with relations}.

\begin{figure}
\centering
\begin{tikzpicture}[baseline=-2.75pt,x=1em,y=1em]
\node at (0,0) {$Q$};
\node[anchor=east] at (-4.5,-3) {\itemi};
\node[anchor=east] at (-4.5,-7) {\itemii};
\node[anchor=east] at (-4.5,-10) {\itemiii};
\begin{scope}[shift={(-1.2,-3)}]
\draw[line width=1pt, fill=black] (0,0) circle(0.2ex);
\node[shape=circle, scale=0.7](L) at (0,0) {};
\node[shape=circle, scale=0.9](LL) at (0,0) {};
\path[->,line width=.4pt,font=\scriptsize, looseness=16, in=35, out=325,transform canvas={xshift=-.5pt,yshift=-.3pt}]
(L.340) edge (L.20)
;
\path[->,line width=.4pt,font=\scriptsize, looseness=18, in=40, out=320,transform canvas={xshift=-3pt},overlay]
(LL.320) edge (LL.40)
;
\node[font=\scriptsize] at (1.25,-1.3) {$x_1{,} ..., x_d$}
;
\node[font=\scriptsize] at (1.8,0) {$...$}
;
\end{scope}
\node at (13,0) {$I$};
\begin{scope}[shift={(13,-3)}]
\node at (0,0) {$\langle x_j x_i - x_i x_j\rangle_{1 \leq i < j \leq d}$};
\end{scope}
\begin{scope}[shift={(13,-7)}]
\node at (0,0) {$\langle x_1 x_2, y_2 y_1, x_2 y_2 - y_1 x_1\rangle $};
\end{scope}
\begin{scope}[shift={(13,-10)}]
\node at (0,0) {$\langle x_0 y_j - x_1 y_{j-1}, y_{j-1} x_1 - y_j x_0\rangle_{0 < j \leq k-1}$};
\end{scope}
\begin{scope}[shift={(-3.8,-7)},x=3.8em]
\draw[line width=1pt, fill=black] (0,0) circle(0.2ex);
\draw[line width=1pt, fill=black] (1,0) circle(0.2ex);
\draw[line width=1pt, fill=black] (2,0) circle(0.2ex);
\node[shape=circle, scale=0.7](0) at (0,0) {};
\node[shape=circle, scale=0.7](1) at (1,0) {};
\node[shape=circle, scale=0.7](2) at (2,0) {};
\path[->,line width=.4pt,font=\scriptsize, looseness=1, in=155, out=25]
(0) edge node[above=-.2ex] {$x_1$} (1)
(1) edge node[above=-.2ex] {$x_2$} (2)
;
\path[<-,line width=.4pt,font=\scriptsize, looseness=1, in=-155, out=-25]
(0) edge node[below=-.2ex] {$y_1$} (1)
(1) edge node[below=-.2ex] {$y_2$} (2)
;
\end{scope}
\begin{scope}[shift={(-2,-10)}]
\draw[line width=1pt, fill=black] (4,0) circle(0.2ex);
\draw[line width=1pt, fill=black] (0,0) circle(0.2ex);
\node[shape=circle, scale=0.7](L) at (0,0) {};
\node[shape=circle, scale=0.7](R) at (4,0) {};
\path[->, line width=.4pt]
(L) edge[transform canvas={yshift=.6ex}] node[above=-.3ex, font=\scriptsize] {$x_0, x_1$} (R)
(L) edge[transform canvas={yshift=-.2ex}] (R)
;
\path[->, line width=.4pt, line cap=round]
(R) edge[out=-160, in=-20, transform canvas={yshift=-.6ex}] (L)
(R) edge[out=-125, in=-55, transform canvas={yshift=-.6ex}, looseness=1.2] node[below=-.4ex, font=\scriptsize] {$y_0{,}..., y_{k-1}$} (L);
\draw (2,-.95) node[font=\scriptsize] {$.$};
\draw (2,-1.15) node[font=\scriptsize] {$.$};
\draw (2,-1.35) node[font=\scriptsize] {$.$};
\end{scope}
\end{tikzpicture}
\caption{Three examples of quivers with relations}
\label{figure:quivers}
\end{figure}
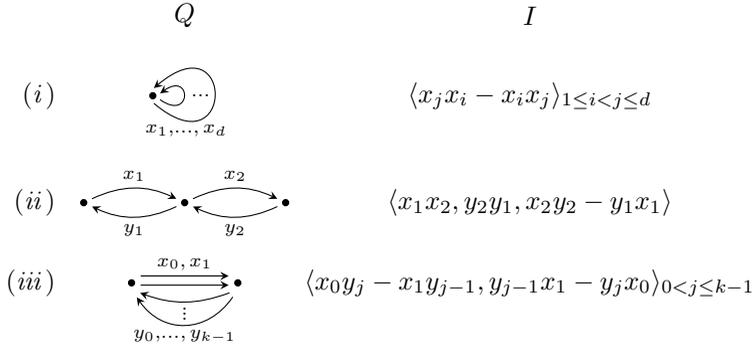

Three simple examples of quivers with relations are given in Fig.~\ref{figure:quivers}, where the quotient algebras $\Bbbk Q / I$ are the following
\begin{enumerate}
\item \label{alg1} the polynomial algebra on $d$ variables $\Bbbk [x_1, \dotsc, x_d]$
\item \label{alg2} the $10$-dimensional Brauer tree algebra
\item \label{alg3} a noncommutative resolution of the toric $\frac1k (1,1)$ singularity.
\end{enumerate}
Deformations of the algebras in \ref{alg1} and \ref{alg2} are studied in Chapter \ref{section:relationtoquantization} and \S\ref{subsection:brauertree}, respectively. (See \cite{barmeierwang} for deformations of the algebra in \ref{alg3} and for further applications in algebraic geometry.)

Note that if a finite quiver $Q$ is acyclic, then $\Bbbk Q$ is finite-dimensional, and so is $\Bbbk Q / I$ for any two-sided ideal $I$ (e.g.\ Fig.~\ref{figure:quivers} {\itemii}). We generally allow the quivers to have loops and cycles, in which case quotients of their path algebras are finitely generated, but may also be infinite-dimensional over $\Bbbk$ (e.g.\ Fig.~\ref{figure:quivers} {\itemi} and {\itemiii}).

\begin{notation}
\label{notation:tensorhom}
Unless indicated otherwise, we use the following shorthand notations
\begin{align*}
\otimes &= \otimes_{\Bbbk Q_0} \\
\Hom &= \Hom_{\Bbbk Q_0^\e}
\end{align*}
where $\Bbbk Q_0^\e = \Bbbk Q_0 \otimes_{\Bbbk} \Bbbk Q_0^{\mathrm{op}}$ is the enveloping algebra of $\Bbbk Q_0$.
\end{notation}

\chapter{Reduction systems and the Diamond Lemma}
\label{section:preliminaries}
\index{Diamond Lemma|textbf}
\index{reduction system|textbf}

The notion of a reduction system introduced by Bergman \cite{bergman} is central to our description of deformations. It formalizes the choice of a $\Bbbk$-basis for the path algebra $A = \Bbbk Q / I$ of a quiver $Q$ with ideal of relations $I$ and deformations of $A$ may then be described in this basis. For example, given a path $s$ in $Q$ which appears in a relation for $I$, we have that $s - \varphi_s \in I$ for some linear combination of paths $\varphi_s \in \Bbbk Q$ and $s = \varphi_s$ in $A$. A reduction system is a collection of pairs $(s, \varphi_s)$, where $s$ will be ``reduced'' to $\varphi_s$. (And deformations of $A$ may then be given by reducing $s$ to $\varphi_s + \widetilde \varphi_s$.)

The choice of a reduction system will give a $\Bbbk$-basis of $A$ consisting of ``irreducible'' paths (see the Diamond Lemma \ref{lemma:basis}). Following Chouhy--Solotar \cite{chouhysolotar} the combinatorics of the reduction system can be used to construct a projective $A$-bimodule resolution of $A$ (see \S\ref{subsection:projectiveresolutions}), which we later use to study the deformation theory of $A$ (see Chapter \ref{section:deformations-of-path-algebras}).

We also introduce a notion of equivalence between reduction systems in Definition \ref{definition:equivalencereduction} which in the context of formal deformations will be equivalent to the notion of (gauge) equivalence between formal deformations of the associative multiplication.

\section{Reduction systems}
\label{subsection:reductionsystems}

\begin{definition}[{(Bergman \cite[\S 1]{bergman})}]
\index{reduction system}
A {\it reduction system} $R$ for $\Bbbk Q$ is a set of pairs
\[
R = \{ (s, \varphi_s) \mid s \in S \text{ and } \varphi_s \in \Bbbk Q \}
\]
where
\begin{itemize}
\item $S$ is a subset of $Q_{\geq 2}$ such that $s$ is not a subpath of $s'$ when $s \neq s' \in S$
\item for all $s \in S$, $s$ and $\varphi_s$ are parallel
\item for each $(s, \varphi_s) \in R$, $\varphi_s$ is irreducible, i.e.\ it is a linear combination of irreducible paths.
\end{itemize}
Here a path is {\it irreducible} if it does not contain elements in $S$ as a subpath and we denote by $\Irr_S = \Irr_S (Q) = Q_\ldot \setminus Q_\ldot S \hair Q_\ldot$ the set of all irreducible paths.

Given a two-sided ideal $I$ of $\Bbbk Q$, we say that a reduction system $R$ {\it satisfies the condition \textup{($\diamond$)} for $I$} if
\begin{enumerate}
\item $I$ is equal to the two-sided ideal generated by the set $\{ s - \varphi_s \}_{(s, \varphi_s) \in R}$
\item every path is {\it reduction-unique}\index{reduction system!reduction-unique} (see Definition \ref{definitionreductionfiniteunique} below).
\end{enumerate}
We call a reduction system $R$ {\it finite} if $R$ is a finite set. 
\end{definition}

\begin{remark}
\label{remark:f}
It follows from the definition that a reduction system $R = \{ (s, \varphi_s) \}$ is uniquely determined by the set $S \subset Q_{\geq 2}$ together with a $\Bbbk Q_0$-bimodule homomorphism $\phi \in \Hom (\Bbbk S, \Bbbk \Irr_S)$ with $\phi (s) = \phi_s$ (cf.\ Notation \ref{notation:tensorhom}). We sometimes write $R = R_\phi$ to indicate the dependence on the map $\phi$.
\end{remark}

Let $(s, \varphi_s) \in R$ and let $q, r \in Q_\ldot = \bigcup_{n \geq 0} Q_n$ be two paths such that $qsr \neq 0$ in $\Bbbk Q$. Following \cite[\S 2]{chouhysolotar} a {\it basic reduction} $\mathfrak r_{q, s, r} \colon \Bbbk Q \to  \Bbbk Q$ is defined as the $\Bbbk$-linear map uniquely determined by the following: for any path $p \in Q_\ldot$
$$
\mathfrak r_{q, s, r} (p) =
\begin{cases}
q \hair \varphi_s r & \text{if $p = q s r$}\\
p & \text{if $p \neq qsr.$}
\end{cases}
$$
A {\it reduction}\index{reduction} $\mathfrak r$ is defined as a composition $\mathfrak r_{q_n, s_n, r_n} \circ \dotsb \circ \mathfrak r_{q_2, s_2, r_2} \circ \mathfrak r_{q_1, s_1, r_1}  $ of basic reductions for some $n \geq 1$. 

A path may contain many subpaths which lie in $S$ and so one may obtain different elements in $\Bbbk Q$ after performing different reductions.  
\begin{definition}
\label{definitionreductionfiniteunique}
We say that a path $p \in Q_\ldot$ is
\begin{itemize}
\item {\it reduction-finite}\index{reduction system!reduction-finite|textbf} if for any infinite sequence of reductions $(\mathfrak r_i)_{i \in \mathbb N}$ there exists $n_0 \in \mathbb N$ such that for all $n \geq n_0$, we have $\mathfrak r_n \circ \dotsb \circ \mathfrak r_1 (p) = \mathfrak r_{n_0} \circ \dotsb \circ \mathfrak r_1(p)$
\item {\it reduction-unique}\index{reduction system!reduction-unique|textbf} if $p$ is reduction-finite and, moreover, for any two reductions $\mathfrak r$ and $\mathfrak r'$ such that $\mathfrak r(p)$ and $\mathfrak r'(p)$ are both irreducible, we have $\mathfrak r (p) = \mathfrak r' (p)$.
\end{itemize}
\end{definition}

Note that an element $a \in \Bbbk Q$ is irreducible if and only if $\mathfrak r (a) = a$ for all reductions $\mathfrak r$. The combinatorics of reductions is described in \S\ref{subsubsection:reductions}.

We will see many examples of reduction systems\index{reduction system}, but two simple and important examples are the following.

\begin{example}
\label{example:monomialreduction}
{\it Monomial algebras.}\; Let $A = \Bbbk Q / \langle S\rangle$ be a monomial algebra where $\langle S \rangle$ is the two-sided ideal generated by the set $S \subset Q_{\geq 2}$ of minimal relations, i.e.\ $s$ is not a subpath of $s'$ when $s \neq s' \in S$. Any path in the ideal $\langle S \rangle$, i.e.\ any path containing an element of $S$ as a subpath, is equal to $0$ in $A$ and $R = \{ (s, 0) \mid s \in S \}$ is a reduction system\index{reduction system} satisfying the condition ($\diamond$) for the ideal $\langle S \rangle$. In this case, it is clear that the set of irreducible paths forms a $\Bbbk$-basis of $A$ (even without invoking the Diamond Lemma \ref{lemma:basis} below). 
\end{example}

\begin{figure}
\centering
\begin{tikzpicture}[baseline=-2.6pt,description/.style={fill=white,inner sep=1.75pt}]
\matrix (m) [matrix of math nodes, outer sep=1pt, row sep={3.4em,between origins}, text height=1.2ex, column sep={3.4em,between origins}, text depth=0.25ex, ampersand replacement=\&]
{
\&\& x_3 x_2^2 x_1 \&\& \\
\& x_2 x_3 x_2 x_1 \&\& x_3 x_2 x_1 x_2 \\
x_2^2 x_3 x_1 \&\& x_2 x_3 x_1 x_2 \&\& x_3 x_1 x_2^2 \\
x_2^2 x_1 x_3 \&\& x_2 x_1 x_3 x_2 \&\& x_1 x_3 x_2^2 \\
\& x_2 x_1 x_2 x_3 \&\& x_1 x_2 x_3 x_2 \\
\&\& x_1 x_2^2 x_3 \\
};
\path[|->,line width=.4pt] 
(m-1-3) edge (m-2-2)
(m-1-3) edge (m-2-4)
(m-2-2) edge (m-3-1)
(m-2-2) edge (m-3-3)
(m-2-4) edge (m-3-3)
(m-2-4) edge (m-3-5)
(m-3-1) edge (m-4-1)
(m-3-3) edge (m-4-3)
(m-3-5) edge (m-4-5)
(m-4-1) edge (m-5-2)
(m-4-3) edge (m-5-2)
(m-4-3) edge (m-5-4)
(m-4-5) edge (m-5-4)
(m-5-2) edge (m-6-3)
(m-5-4) edge (m-6-3)
;
\end{tikzpicture}
\caption{An example of reductions in a reduction system for the polynomial algebra}
\label{figure:diamond}
\end{figure}
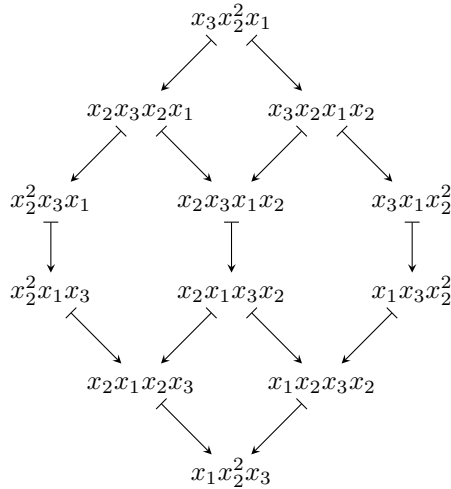

\begin{example}
\label{example:polynomialreduction}
{\it Polynomial algebras.}\; Let $Q$ be the quiver with one vertex and $d$ loops $x_1, \dotsc, x_d$ and let $I = \langle x_j x_i - x_i x_j \rangle_{1 \leq i < j \leq d}$ as in Fig.~\ref{figure:quivers} {\itemi}. Then $A = \Bbbk Q / I \simeq \Bbbk [x_1, \dotsc, x_d]$ and
\[
R = \bigl\{ (x_j x_i, x_i x_j) \bigr\}_{1 \leq i < j \leq d}
\]
is a reduction system\index{reduction system} satisfying the condition ($\diamond$) for $I$. See Fig.~\ref{figure:diamond} for an example of a ``diamond'' of reductions with respect to $R$, where every arrow corresponds to a basic reduction.
\end{example}

Reductions can be described systematically by defining maps
\begin{flalign*}
&& \widetilde{\mathrm{split}}{}_2^* &\colon \Bbbk Q \to \Bbbk Q \otimes \Bbbk S \otimes \Bbbk Q && \mathllap{* \in \{ \mathrm R, \mathrm L, \emptyset \}}
\end{flalign*}
by
\begin{flalign}
\label{split_2}
&& \widetilde{\mathrm{split}}{}_2 (p) &= \sum_{\substack{s \in S \\ q s r = p}} q \otimes s \otimes r &
\begin{aligned}
\widetilde{\mathrm{split}}{}^{\mathrm R}_2 (p) &= q \otimes s_{\mathrm R} \otimes r \\
\widetilde{\mathrm{split}}{}^{\mathrm L}_2 (p) &= q \otimes s_{\mathrm L} \otimes r
\end{aligned} &&
\end{flalign}
where $s_{\mathrm R}$ (resp.\ $s_{\mathrm L}$) is the rightmost (resp.\ leftmost) subpath of $p$ which lies in $S$ as illustrated in Fig.~\ref{figure:split} and $\widetilde{\mathrm{split}}{}^{*}_2(p) = 0$ if $p$ is irreducible. (In \S\ref{subsubsection:reductions} we will define $\mathrm{split}_2^*$ by passing to the quotient $A = \Bbbk Q / I$ on the first and last factor of the tensor product, whence the notation $\widetilde{\mkern20mu}$ here, as well as ``higher analogues'' of this map denoted by $\mathrm{split}_n^*$, whence the index $_2$ here.)

\begin{figure}
\centering
\begin{tikzpicture}
[x=2em,y=1em]
\node[right] at (10.7,0) {$\in \Bbbk Q$};
\node (p) at (-1.9,-.1) {$p$};
\node at (-1,-.1) {$=$};
\path[|->,line width=.4pt] (p) edge ++(0,2.2);
\path[|->,line width=.4pt] (p) edge ++(0,-2.2);
\foreach \n in {0,1,2,3,4,5,6,7,8,9,10} {
\node[inner sep=0] (\n) at (\n,0) {};
}
\path[->,line width=.4pt] (0) edge (1);
\path[->,line width=.4pt] (1) edge (2);
\path[->,line width=.4pt] (2) edge (3);
\path[->,line width=.4pt] (3) edge (4);
\path[->,line width=.4pt] (4) edge (5);
\path[->,line width=.4pt] (5) edge (6);
\path[->,line width=.4pt] (6) edge (7);
\path[->,line width=.4pt] (7) edge (8);
\path[->,line width=.4pt] (8) edge (9);
\path[->,line width=.4pt] (9) edge (10);
\node at (3,.9) {$\overbracket[.3pt]{\hspace{3.85em}}^{\in S}$};
\node at (6,.9) {$\overbracket[.3pt]{\hspace{3.85em}}^{\in S}$};
\node at (7.5,-.9) {$\underbracket[.3pt]{\hspace{5.85em}}_{\in S}$};
\begin{scope}[shift={(0,-3.3)}]
\node at (-2.3,.1) {$\widetilde{\mathrm{split}}{}_2^{\mathrm R} (p)$};
\node at (-1,-.1) {$=$};
\node[right] at (10.7,0) {$\in \Bbbk Q \otimes S \otimes \Bbbk Q$};
\foreach \n in {0,1,2,3,4,5,6,7,8,9,10} {
\node[inner sep=0] (\n) at (\n,0) {};
}
\path[->,line width=.4pt,transform canvas={xshift=-1.2em}] (0) edge (1);
\path[->,line width=.4pt,transform canvas={xshift=-1.2em}] (1) edge (2);
\path[->,line width=.4pt,transform canvas={xshift=-1.2em}] (2) edge (3);
\path[->,line width=.4pt,transform canvas={xshift=-1.2em}] (3) edge (4);
\path[->,line width=.4pt,transform canvas={xshift=-1.2em}] (4) edge (5);
\path[->,line width=.4pt,transform canvas={xshift=-1.2em}] (5) edge (6);
\path[->,line width=.4pt] (6) edge (7);
\path[->,line width=.4pt] (7) edge (8);
\path[->,line width=.4pt] (8) edge (9);
\path[->,line width=.4pt,transform canvas={xshift=1.2em}] (9) edge (10);
\node at (5.7,0) {$\otimes$};
\node at (9.3,0) {$\otimes$};
\end{scope}
\begin{scope}[shift={(0,3.3)}]
\node[right] at (10.7,0) {$\in \Bbbk Q \otimes S \otimes \Bbbk Q$};
\node at (-2.3,.1) {$\widetilde{\mathrm{split}}{}_2^{\mathrm L} (p)$};
\node at (-1,-.1) {$=$};
\foreach \n in {0,1,2,3,4,5,6,7,8,9,10} {
\node[inner sep=0] (\n) at (\n,0) {};
}
\path[->,line width=.4pt,transform canvas={xshift=-1.2em}] (0) edge (1);
\path[->,line width=.4pt,transform canvas={xshift=-1.2em}] (1) edge (2);
\path[->,line width=.4pt] (2) edge (3);
\path[->,line width=.4pt] (3) edge (4);
\path[->,line width=.4pt,transform canvas={xshift=1.2em}] (4) edge (5);
\path[->,line width=.4pt,transform canvas={xshift=1.2em}] (5) edge (6);
\path[->,line width=.4pt,transform canvas={xshift=1.2em}] (6) edge (7);
\path[->,line width=.4pt,transform canvas={xshift=1.2em}] (7) edge (8);
\path[->,line width=.4pt,transform canvas={xshift=1.2em}] (8) edge (9);
\path[->,line width=.4pt,transform canvas={xshift=1.2em}] (9) edge (10);
\node at (1.7,0) {$\otimes$};
\node at (4.3,0) {$\otimes$};
\end{scope}
\end{tikzpicture}
\caption{An illustration of the maps $\widetilde{\mathrm{split}}{}^{\mathrm L}_2$ and $\widetilde{\mathrm{split}}{}^{\mathrm R}_2$}
\label{figure:split}
\end{figure}

The map $\widetilde{\mathrm{split}}{}_2^{\mathrm R}$ records the rightmost subpath lying in $S$ and can be used to define the {\it rightmost reduction}, which is the $\Bbbk$-linear map \index{reduction}\index{reduction!rightmost}
\begin{equation}
\label{rightmost-reduction}
\mathrm{red}_\varphi \colon \Bbbk Q \to \Bbbk Q
\end{equation}
given by 
\[
\mathrm{red}_\varphi(p)=
\begin{cases}
p & \text{if $p$ is an irreducible path}\\
q \varphi_s r & \text{if $p$ is a path such that $\widetilde{\mathrm{split}}{}^{\mathrm R}_2(p)=q \otimes s \otimes r$}
\end{cases}
\]
and will be used to define a combinatorial star product in \S\ref{subsection:combinatorialstarproduct}.

If $R$ is reduction-finite, then for every path $p$ we have $\mathrm{red}_\varphi^N (p) = \mathrm{red}_\varphi^{N+1} (p)$ for $N \gg 0$ so that $\mathrm{red}_\varphi^N (p)$ is irreducible for $N \gg 0$ and we define $\mathrm{red}_\varphi^{(\infty)} \colon \Bbbk Q \to \Bbbk \Irr_S$ as 
\begin{align}\label{align:redinfty}
\mathrm{red}_\varphi^{(\infty)} (p) = \mathrm{red}_\varphi^N (p) \quad \text{for $N \gg 0$}.
\end{align}
(This definition can also be recovered from a more general construction by taking $z = 0$ in \eqref{red}.) Note that $\mathrm{red}_\phi^{(\infty)} \rvert_{\Bbbk \Irr_S} = \id_{\Bbbk \Irr_S}$.

We will now recall Bergman's {\it Diamond Lemma}\index{Diamond Lemma}, which shows that a reduction system\index{reduction system} $R$ satisfying ($\diamond$) for an ideal $I \subset \Bbbk Q$ will give rise to a $\Bbbk$-basis of $A = \Bbbk Q / I$.

\begin{definition}[{\cite[\S 1]{bergman}}]
\label{definition:overlap}
\index{overlap ambiguity|textbf}\index{ambiguity!overlap|textbf}
Let $R$ be a reduction system for $\Bbbk Q$. A path $pqr \in Q_{\geq 3}$ for $p,q,r \in Q_{\geq 1}$ is an {\it overlap ambiguity} (or simply {\it overlap}) of $R$ if $pq, qr \in S$.

We say that an overlap ambiguity\index{overlap ambiguity}\index{ambiguity!overlap|textbf} $pqr$ with $pq = s$ and $qr = s'$ is {\it resolvable} if $\varphi_s r$ and $p \varphi_{s'}$ are reduction-finite and $\mathfrak r (\varphi_s r) = \mathfrak r' (p \varphi_{s'})$ for some reductions $\mathfrak r, \mathfrak r'$.
\end{definition}

\begin{theorem}[(Diamond Lemma) {\cite[Thm.~1.2]{bergman}}]
\label{lemma:basis}
\index{Diamond Lemma|textbf}\index{overlap ambiguity}\index{ambiguity!overlap}
Let $R = \{ (s, \phi_s) \}$ be a reduction system for $\Bbbk Q$ and let $S = \{ s \mid (s, \phi_s) \in R \}$. Denote by $I = \langle s - \varphi_s \rangle_{s \in S} \subset \Bbbk Q$ be the corresponding two-sided ideal and by $A = \Bbbk Q / I$ the quotient algebra. If $R$ is reduction-finite, then the following are equivalent:
\begin{enumerate}
\item all overlap ambiguities of $R$ are resolvable \label{diamond1}
\item $R$ is reduction-unique, i.e.\ $R$ satisfies \textup{($\diamond$)} for $I$ \label{diamond2}
\item the image of the set $\Irr_S$ of irreducible paths under the projection $\pi \colon \Bbbk Q \to A$ forms a $\Bbbk$-basis of $A$. \label{diamond3}
\end{enumerate}
\end{theorem}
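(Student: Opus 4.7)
The cleanest route is to prove the cycle $(iii) \Rightarrow (ii) \Rightarrow (i) \Rightarrow (iii)$, with the implication $(i) \Rightarrow (iii)$ (equivalently $(i) \Rightarrow (ii)$) being the technical heart of the lemma. The two easy implications exploit the fact that reduction finiteness guarantees that every reduction sequence terminates, so any path $p$ admits \emph{some} irreducible reduction $\mathrm{red}_\varphi^{(\infty)}(p) \in \Bbbk \Irr_S$ along a chosen strategy (e.g.\ the right-most strategy of \eqref{rightmost-reduction}); the content of reduction-uniqueness is that this element does not depend on the strategy. The hard implication $(i) \Rightarrow (ii)$ is a local-to-global confluence (``Newman-type'') argument: resolvability of overlaps, together with the trivial resolvability of disjoint reductions, must be amplified to resolvability of \emph{all} pairs of reduction sequences, using reduction finiteness as a well-foundedness hypothesis.

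\textbf{The easy implications.} For $(ii) \Rightarrow (iii)$: reduction-uniqueness means $\mathrm{red}_\varphi^{(\infty)} \colon \Bbbk Q \to \Bbbk \Irr_S$ is a well-defined $\Bbbk$-linear map (independent of the chosen sequence of reductions) and satisfies $\mathrm{red}_\varphi^{(\infty)}(qsr) = \mathrm{red}_\varphi^{(\infty)}(q\varphi_s r)$ for every $(s,\varphi_s) \in R$ and every $q,r \in Q_\ldot$. Hence $\mathrm{red}_\varphi^{(\infty)}$ vanishes on the two-sided ideal $I = \langle s - \varphi_s \rangle$ and descends to a section $\overline{\mathrm{red}}{}^{(\infty)} \colon A \to \Bbbk \Irr_S$ of the $\Bbbk$-linear map $\Bbbk \Irr_S \hookrightarrow \Bbbk Q \twoheadrightarrow A$. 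Since $\mathrm{red}_\varphi^{(\infty)}$ is the identity on $\Bbbk \Irr_S$, this section is a two-sided inverse, proving $\pi(\Irr_S)$ is a $\Bbbk$-basis. For $(iii) \Rightarrow (ii)$: if $\mathfrak r(p)$ and $\mathfrak r'(p)$ are two irreducible reductions of a path $p$, then $\mathfrak r(p) - \mathfrak r'(p) \in I$ (since each basic reduction replaces $qsr$ by $q\varphi_s r$, changing $p$ by an element of $I$). But by $(iii)$ no nonzero element of $\Bbbk \Irr_S$ lies in $I$, forcing $\mathfrak r(p) = \mathfrak r'(p)$.

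\textbf{The hard implication $(i) \Rightarrow (ii)$.} The key lemma is \emph{local confluence}: whenever $\mathfrak r_1(p)$ and $\mathfrak r_2(p)$ are obtained from a single path $p$ by two distinct basic reductions $\mathfrak r_i = \mathfrak r_{q_i, s_i, r_i}$, there exist reductions $\mathfrak r_1', \mathfrak r_2'$ such that $\mathfrak r_1'(\mathfrak r_1(p)) = \mathfrak r_2'(\mathfrak r_2(p))$. The two occurrences of $s_1,s_2$ inside $p$ either
\begin{enumerate}
\item are disjoint, in which case $\mathfrak r_1$ and $\mathfrak r_2$ already commute and resolve each other trivially, or
\item overlap properly; since no element of $S$ is a subpath of another, the only possibility is an overlap ambiguity $s_1 = pq$, $s_2 = qr$ (or the mirror image) inside a subpath $pqr$ of the original path, which is resolvable by hypothesis $(i)$.
\end{enumerate}
Now one upgrades local confluence to global confluence (reduction-uniqueness) by Noetherian induction on paths using reduction finiteness: call $p$ \emph{confluent} if all irreducible reductions of $p$ agree, define a well-founded partial order by $p > p'$ whenever $p'$ appears in $\mathfrak r(p)$ for some nontrivial reduction $\mathfrak r$ (well-foundedness follows from reduction finiteness on each monomial and $\Bbbk$-linearity), and show by induction that every path is confluent using local confluence to bridge any two initial basic reductions and the induction hypothesis to complete the resolution beyond.

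\textbf{Main obstacle.} The subtle step is the Newman-style induction: one must carefully formulate the partial order on $\Bbbk Q$ (not just on paths) in a way compatible with $\Bbbk$-linearity, so that a reduction of a linear combination $q \varphi_s r$ can be decomposed into reductions of its monomial summands without circularity. Since $\varphi_s$ is itself irreducible, each monomial in $q \varphi_s r$ is strictly smaller than $qsr$ in the reduction preorder on $\Bbbk Q$ obtained from reduction finiteness, which is precisely what makes the induction go through. Once this order is set up correctly, the diamond-shaped resolution diagram produced by local confluence can be closed off using the induction hypothesis applied to the two intermediate elements.
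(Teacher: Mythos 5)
The paper itself offers no proof of this statement --- it is quoted from Bergman and cited as \cite[Thm.~1.2]{bergman} --- so your argument has to stand on its own; its architecture (two easy implications plus a Newman-type local-to-global confluence argument for the hard one) is exactly Bergman's. Your proofs of \itemii\ $\Rightarrow$ \itemiii\ and \itemiii\ $\Rightarrow$ \itemii\ are correct. One bookkeeping point: you announce the cycle \itemiii\ $\Rightarrow$ \itemii\ $\Rightarrow$ \itemi\ $\Rightarrow$ \itemiii, but what you actually establish is \itemii\ $\Leftrightarrow$ \itemiii\ and \itemi\ $\Rightarrow$ \itemii, so no implication with conclusion \itemi\ is ever proved. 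The missing \itemii\ $\Rightarrow$ \itemi\ is one line (for an overlap $pqr$ with $pq=s$, $qr=s'$, both $\varphi_s r$ and $p\varphi_{s'}$ are one-step reducts of the path $pqr$, so reduction-uniqueness of $pqr$ forces their irreducible forms to agree), but it must be said.

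The genuine gap is in the Noetherian induction for \itemi\ $\Rightarrow$ \itemii. You correctly identify that the induction must run over a well-founded order in which every monomial of $q\varphi_s r$ lies strictly below $qsr$, and you assert that the order generated by ``$p > p'$ whenever $p'$ appears in $\mathfrak r(p)$'' is well-founded ``by reduction finiteness and $\Bbbk$-linearity''. This is not automatic. Reduction-finiteness says that every sequence of reductions applied to a fixed element stabilizes; a strictly descending chain $p_0 > p_1 > p_2 > \dotsb$ instead tracks a single monomial through successive one-step reducts, and when one tries to realize it as a non-stabilizing reduction sequence starting from $p_0$, the coefficient of $p_{i+1}$ in the accumulated linear combination can be destroyed by cancellation against the other summands, so no contradiction with reduction-finiteness is immediate; even antisymmetry (absence of cycles) requires an argument beyond the observation that $\varphi_s$ is irreducible, which only rules out cycles of length one. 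This is precisely why Bergman states his Theorem 1.2 with the extra hypothesis of a semigroup partial ordering on monomials, compatible with the reduction system and satisfying the descending chain condition: that hypothesis supplies both the well-founded induction and the fact that the span of the monomials strictly below a given one is closed under reductions --- a fact you also use silently when you invoke the induction hypothesis to close the local diamond, since the intermediate elements are linear combinations of paths and you need reduction-uniqueness to be stable under $\Bbbk$-linear combinations (Bergman's Lemma 1.1). To repair the argument, either construct such a compatible well-founded monomial order (or prove directly that reduction-finiteness implies the descending chain condition for your relation), or reorganize the induction as Bergman does, first showing that the reduction-unique elements form a $\Bbbk$-submodule on which $\mathrm{red}_\varphi^{(\infty)}$ is linear and then inducting through that submodule structure.
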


\noindent (We will give a deformation-theoretic interpretation of the Diamond Lemma in \S\ref{subsection:proofdiamond}.)

The Diamond Lemma \ref{lemma:basis} implies that we have a $\Bbbk$-linear isomorphism $\sigma \colon A \to \Bbbk \Irr_S$
\begin{equation}
\label{sigma}
\begin{tikzpicture}[baseline=-2.6pt,description/.style={fill=white,inner sep=1pt,outer sep=0}]
\matrix (m) [matrix of math nodes, row sep=0em, text height=1.5ex, column sep=3em, text depth=0.25ex, ampersand replacement=\&, inner sep=3pt]
{
\Bbbk Q \\[.3em]
\rotatebox{90}{$\subset$} \\
\Bbbk \Irr_S \& A \\
};
\path[->,line width=.4pt,font=\scriptsize]
(m-1-1) edge node[above=-.2ex] {$\pi$} (m-3-2)
;
\path[->,line width=.4pt,font=\scriptsize]
(m-3-2) edge node[below=-.3ex,pos=.45] {$\sigma$} (m-3-1)
;
\end{tikzpicture}
\end{equation}
such that  $\pi \sigma = \id_A$ and $\sigma \pi = \mathrm{red}_\phi^{(\infty)}$. In particular
\begin{itemize}
\item $\sigma \pi (u) = u$ for all irreducible paths $u \in \Irr_S$ and
\item $\sigma \pi (s) = \phi_s$ for any $s \in S$.
\end{itemize}

\begin{remark}
\label{remark:reduction}
Note that $\pi \rvert_{\Bbbk \Irr_S} \colon \Bbbk \Irr_S \to A$ is a $\Bbbk Q_0$-bimodule isomorphism with inverse $\sigma$. In particular we may naturally identify $\Hom (\Bbbk S, \Bbbk \Irr_S) \simeq \Hom (\Bbbk S, A)$ (cf.\ Remark \ref{remark:f}).
\end{remark}

The following result shows that the deformation theory developed in later sections via reduction systems can be applied to any algebra $A = \Bbbk Q / I$ and the choice of a reduction system can be thought of the choice of a basis in which the deformations can be described explicitly.

\begin{proposition}[(Chouhy--Solotar {\cite[Prop.~2.7]{chouhysolotar}})]
\label{proposition:existencereduction}
If $I \subset \Bbbk Q$ is any two-sided ideal, then there exists a reduction system\index{reduction system!existence} $R$ satisfying the condition \textup{($\diamond$)} for $I$.
\end{proposition}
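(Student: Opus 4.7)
The plan is to build the reduction system through a noncommutative Gröbner basis argument. First, I would fix an admissible well-order $<$ on $Q_\ldot$, for instance the length-lexicographic order with respect to some total order on $Q_1$: this is compatible with concatenation in the sense that $p < p'$ implies $q p r < q p' r$ whenever the products are nonzero, and it satisfies $p < p'$ whenever $p$ is a proper subpath of $p'$. For any nonzero $f \in \Bbbk Q$, let $\tip(f)$ denote its leading path with respect to $<$. Set
\[
T = \bigl\{\tip(f) \,\bigm|\, 0 \neq f \in I\bigr\} \cap Q_{\geq 2}
\]
and let $S$ consist of the elements of $T$ that are minimal with respect to the subpath ordering. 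By construction no $s \in S$ is a proper subpath of any other $s' \in S$, so $S$ satisfies the first axiom of a reduction system, and an elementary argument (pick a subpath of minimal length) shows that every element of $T$ contains some element of $S$ as a subpath.

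Next, for each $s \in S$ I would choose $f \in I$ with $\tip(f) = s$ and leading coefficient $1$, so $f = s - \psi_s$ with every path in $\psi_s$ strictly less than $s$; after applying the idempotents $e_{\mathrm s(s)}$ and $e_{\mathrm t(s)}$ one may further assume $\psi_s$ is parallel to $s$. To produce an irreducible $\varphi_s$ with $s - \varphi_s \in I$ I would argue by well-founded induction on $s$ with respect to $<$. For each reducible path $p$ occurring in $\psi_s$, choose a subpath of $p$ lying in $T$, and inside it an element $s_0 \in S$; admissibility gives $s_0 \leq p < s$, so $\varphi_{s_0}$ is available by the inductive hypothesis. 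Writing $p = u s_0 v$, replace $p$ with $u \varphi_{s_0} v$; the resulting element still differs from $s$ by an element of $I$, and every newly introduced path has tip strictly below $p$. Well-foundedness of $<$ forces this iteration to terminate, producing an irreducible $\varphi_s \in \Bbbk \Irr_S$ with $s - \varphi_s \in I$.

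It remains to verify the two clauses of $(\diamond)$. For the identification $I = \langle s - \varphi_s \rangle_{s \in S}$ only the inclusion $I \subset \langle s - \varphi_s \rangle_{s \in S}$ needs proof: given $0 \neq f \in I$, the tip $\tip(f) \in T$ contains some $s \in S$ as a subpath, say $\tip(f) = u s v$, and subtracting a scalar multiple of $u(s - \varphi_s)v$ strictly lowers the tip, so well-founded induction on $\tip(f)$ concludes. Reduction finiteness is immediate from admissibility (each basic reduction strictly decreases the tip of the current expression). Reduction uniqueness reduces to showing that the projection $\Bbbk \Irr_S \to A$ is bijective: surjectivity follows from reducing any path to an element of $\Bbbk \Irr_S$ modulo $I$, and injectivity because any nonzero $\sum \lambda_i p_i \in I$ with $p_i \in \Irr_S$ would have tip lying in $T$, hence contain an element of $S$ as a subpath, contradicting $p_i \in \Irr_S$. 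Two reductions of the same path must then yield the same irreducible element of $\Bbbk \Irr_S$. The main technical obstacle is the inductive construction of $\varphi_s$, whose termination relies crucially on the admissibility of $<$ ensuring that every substitution strictly decreases the tip of the intermediate expression.
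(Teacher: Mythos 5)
Your argument is correct in its essentials and follows exactly the route the paper itself points to: the paper gives no proof of this proposition (it cites Chouhy--Solotar), and Remark \ref{remark:algorithm} together with \S\ref{subsubsection:reduction} indicates that the intended construction is precisely the noncommutative Gröbner basis argument you carry out --- choose an admissible order, let $S$ be the set of minimal tips, and produce $\varphi_s$ by reduction.

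Two small points deserve attention. First, intersecting the set of tips with $Q_{\geq 2}$ silently assumes that no nonzero element of $I$ has a tip of length $\leq 1$. If such elements exist (e.g.\ $I = \langle x \rangle$ for a single loop $x$), they can never lie in the ideal generated by your elements $s - \varphi_s$, and indeed the proposition itself fails in that case; so a tacit hypothesis such as $I \subset \Bbbk Q_{\geq 2}$ (the setting of Chouhy--Solotar) is being used, and your proof should say so rather than discard the short tips. Second, a basic reduction need not decrease the tip of the whole expression, since it may act on a non-leading path; the correct justification both of reduction-finiteness and of the termination of your inner iteration defining $\varphi_s$ is that each nontrivial basic reduction replaces one path by a linear combination of strictly smaller paths, so the multiset of paths occurring strictly decreases in the multiset extension of the admissible well-order, which is again well-founded. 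With these two repairs the proof is complete.
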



\begin{remark}\label{remark:algorithm}
The proof of this proposition consists essentially of an algorithm similar to the Buchberger algorithm used to compute commutative or noncommutative Gröbner bases. This algorithm starts with choosing a total order on $Q_{\leq 1}$. There may be other natural choices of reduction systems which cannot be obtained from this algorithm (see \cite[Ex.~2.10.1]{chouhysolotar} for an example).
\end{remark}

In practice it is not always necessary to choose a total order on $Q_{\leq 1}$. Instead one may apply the following heuristic, which is less systematic, but can often be used to find a natural reduction system, at least when the ideal $I$ is finitely generated.

\begin{heuristic}
\label{heuristic}
Let $Q$ be a finite quiver and $I = \langle f_1, \dotsc, f_m \rangle \subset \Bbbk Q$ be the two-sided ideal generated by a finite set $X = \{ f_1, \dotsc, f_m \} \subset \Bbbk Q$, each $f_i$ being a (finite) linear combination of paths, say $f_i = \lambda_{i,1} p_{i,1} + \dotsb + \lambda_{i,n_i} p_{i,n_i}$ for some $n_i \geq 1$, paths $p_{i,j} \in Q_\ldot$ and coefficients $\lambda_{i,j} \in \Bbbk$. Construct a reduction system $R$ (starting from the empty set) as follows.

\begin{description}[labelindent=0em,itemindent=0em,labelwidth=3.5em,leftmargin=4.5em,before={\renewcommand\makelabel[1]{\bfseries ##1}},labelsep=!]
\item[\bf Step 1.] For each $f_i = \lambda_{i,1} p_{i,1} + \dotsb + \lambda_{i,n_i} p_{i,n_i} \in X$, choose some leading path $p_{i,k_i}$ with $1 \leq k_i \leq n_i$, divide through by its coefficient and add the pair
\[
\Bigl( p_{i,k_i}, - \textstyle\sum\limits_{j \neq k_i} \tfrac{\lambda_{i,j}}{\lambda_{i,k_i}} p_{i,j} \Bigr)
\]
to the set $R$. At this step, try to avoid overlaps of the paths $p_{i,k_i}$ with themselves or with other paths $p_{j,k_j}$ already chosen.
\item[\bf Step 2.] Compute the overlaps of the paths $p_{i,k_i}$ for all $1 \leq i \leq m$.
\item[\bf Step 3.] When the overlaps do not resolve, add the difference as a new element to $X$ and proceed as in Steps 1 and 2.\index{overlap ambiguity}\index{ambiguity!overlap}
\item[\bf Step 4.] Prove that the final reduction system is reduction-finite. (If so, it will by construction be reduction-unique.)
\end{description}
\end{heuristic}

\noindent (See Remark \ref{remark:strong}, \S\ref{subsection:brauertree} and \cite{barmeierwang} for applications of this heuristic.)

\subsection{Reduction systems from noncommutative Gröbner bases}
\label{subsubsection:reduction}
\index{reduction system!from noncommutative Gröbner basis}

In the theory of noncommutative Gröbner bases (also called Gröbner--Shirshov bases), an {\it admissible order} $\prec$ \index{admissible order|textbf} is a total order on $Q_\ldot = \bigcup_{n\geq 0} Q_n$ such that
\begin{enumerate}
\item every non-empty subset of $Q_\ldot$ has a minimal element, and
\item for any elements $p, q, r \in Q_\ldot$ the following conditions hold
\begin{itemize}
\item if $p \prec q$ then $pr \prec qr$ whenever $pr$ and $qr$ are non-zero
\item if $p \prec q$ then $rp \prec rq$ whenever $rp$ and $rq$ are non-zero
\item and $q \preceq pqr$ whenever $pqr$ is non-zero.
\end{itemize}
\end{enumerate} 

Fix an admissible order $\prec$ on $Q_\ldot$. Let $F = \sum_{p\in Q_\ldot} \lambda_p p$ be a non-zero element of  $\Bbbk Q$, where $\lambda_p \in \Bbbk$ and almost all $\lambda_p$ are zero. Then we define 
$$
\tip_\prec (F) = \tip (F) := p \quad \text{if $\lambda_p \neq 0$ and $p \succ q$ for all $q$ with $\lambda_q \neq 0$}.
$$
If $X \subset \Bbbk Q$ then let $\tip (X) = \bigl\{ \tip (F) \mid F \in X \backslash \{ 0 \} \bigr\}$ and $\langle \tip (X) \rangle$ be the ideal of $\Bbbk Q$ generated by $\tip (X)$.

Furthermore, an element $F \in \Bbbk Q$ is called {\it uniform} if it is a linear combination of parallel paths.

\begin{definition}
\index{Gröbner basis!noncommutative|textbf}
Let $I$ be an ideal of $\Bbbk Q$ and let $\prec$ be an admissible order on $Q_\ldot$. The {\it (reduced) noncommutative Gröbner basis} for $I$ with respect to $\prec$ is the set $\mathcal G$ of uniform elements in $I$ such that
$$
\langle \tip (I) \rangle = \langle \tip (\mathcal G) \rangle
$$
and such that the coefficient $\lambda_{\tip (F)} = 1$ for any $F \in \mathcal G$ and $\tip (F_i)$ is not a subpath of $\tip (F_j)$ when $F_i \neq F_j \in \mathcal G$.
\end{definition}

For any fixed admissible order $\prec$ the (reduced) noncommutative Gröbner basis is unique (see e.g.\ \cite[Def.~3.2]{greenhilleschroll}).

Let $\{ F_j \}_{j \in J}$ be a noncommutative Gröbner basis for an ideal $I \subset \Bbbk Q$ (with respect to an order $\prec$). Then one immediately obtains a reduction system\index{reduction system!from noncommutative Gröbner basis}
\begin{equation}
\label{reductiongroebner}
R = \bigl\{ \bigl( \tip (F_j), -F_j + \tip (F_j) \bigr) \bigr\}_{j \in J}
\end{equation}
satisfying ($\diamond$) for $I$.

So every noncommutative Gröbner basis gives a reduction system, but conversely not every reduction system is obtained from a Gröbner basis (cf.\ \cite[Ex.~2.10.1]{chouhysolotar}). In the commutative case, any ideal admits a finite commutative Gröbner basis which may be computed by Buchberger's algorithm. For ideals in noncommutative algebras a similar algorithm computes a noncommutative Gröbner basis and this algorithm terminates if and only if the ideal admits a finite Gröbner basis\index{Gröbner basis!noncommutative!finite}. If $\Bbbk Q / I$ is commutative or finite-dimensional, then $I$ admits a finite noncommutative Gröbner basis \cite{eisenbudpeevasturmfels,green1} and this property also extends to certain diagrams of commutative algebras \cite{barmeierwang}. However, in general the property of admitting a finite noncommutative Gröbner basis is undecidable (see e.g.\ \cite{bokutkolesnikov,fmora,tmora}). That said, noncommutative Gröbner bases have been computed for a growing number of noncommutative algebras --- see e.g.\ \cite{bokutchen,bueso} for examples including Weyl algebras, Iwahori--Hecke algebras, quantum groups, Ore extensions, universal enveloping algebras of finite-dimensional Lie algebras, \cite{schedler} for preprojective algebras, and also \cite{bremnerdotsenko} for noncommutative Gröbner bases in the more general setting of algebras over an operad. All of these can be used to describe the deformations of these algebras using the deformation theory via reduction systems as developed in the present book.

A basic example is the Gröbner basis for the polynomial algebra.

\begin{example}
\label{example:reductiongroebnersymmetric}
Let $R = \{ (x_j x_i, x_i x_j) \}_{1 \leq i < j \leq d}$ be the reduction system for the algebra $$\Bbbk \langle x_1, \dotsc, x_d \rangle / \langle x_j x_i - x_i x_j \rangle \simeq \Bbbk [x_1, \dotsc, x_d]$$ as in Example \ref{example:polynomialreduction}. (The corresponding quiver is drawn in Fig.~\ref{figure:quivers} {\itemi}.)

Then $R$ can be given via a Gröbner basis as in (\ref{reductiongroebner}) where the Gröbner basis\index{Gröbner basis!noncommutative} is obtained from the ordering $x_1 \prec \dotsb \prec x_d$ on the linear monomials, extended to all monomials using the so-called \index{degree--lexicographic order|textbf}\index{admissible order!degree--lexicographic|textbf}{\it degree--lexicographic ordering}, i.e.\ monomials are ordered first by degree (i.e.\ path length) and monomials of the same degree are ordered by the lexicographic order on linear monomials. In particular, $x_j x_i \succ x_i x_j$ whenever $j > i$, so that $\tip (x_j x_i - x_i x_j) = x_j x_i$.
\end{example}

\section{Equivalence of reduction systems}
\label{subsection:equivalencereduction}

We introduce the following new notion of equivalence of reduction systems. Roughly speaking, an equivalence of reduction systems is a $\Bbbk$-linear automorphism of the irreducible paths which ``commutes with'' performing reductions.

\begin{definition}
\label{definition:equivalencereduction}
\index{reduction system!equivalence|textbf}
Let $\mathrm{MC} \subset \Hom (\Bbbk S, \Bbbk \Irr_S)$ denote the subset of elements whose associated reduction systems (cf.\ Remark \ref{remark:f}) are reduction-unique.\footnote{Later $\mathrm{MC}$ will be seen to be the set of Maurer--Cartan elements for the L$_\infty[1]$ algebra $\mathbf p (Q, R)$ controlling the deformation theory of the reduction system $R$.} Given any $\varphi, \varphi' \in \mathrm{MC}$, let $R$ and $R'$ denote the corresponding reduction systems. We say that $R$ and $R'$ (or $\varphi$ and $\varphi'$) are {\it equivalent} if there exists a $\Bbbk$-linear automorphism $T \in \GL (\Bbbk \Irr_S)$ satisfying $T (e_i) = e_i$ for each $i \in Q_0$ and 
\begin{equation}
\label{equivred}
T (\mathrm{red}_{\phi'}^{(\infty)} (p)) = \mathrm{red}_{\phi}^{(\infty)} (T (p_1) \dotsb T (p_m))
\end{equation}
for any path $p = p_1 \dotsb p_m$ with $p_i \in Q_1$, where $\mathrm{red}_{\phi}^{(\infty)}$ was defined in \eqref{align:redinfty}.

Let $G (\varphi, \varphi')$ denote the set of all such $\Bbbk$-linear automorphisms of $\Bbbk \Irr_S$. Then we have a groupoid $G \rightrightarrows \mathrm{MC}$ (see e.g.\ \cite[Part VI]{cannasdasilvaweinstein}) and $R$ and $R'$ are equivalent precisely when $G (\varphi, \varphi') \neq \emptyset$.
\end{definition}

\begin{lemma}
\label{lemma:equivalencereduction}
Let $R, R'$ be two reduction-unique reduction systems determined by $\phi, \phi' \in \Hom (\Bbbk S, \Bbbk \Irr_S)$ respectively. Then the following are equivalent:
\begin{enumerate}
\item \label{equivred1} $R$ and $R'$ are equivalent.
\item \label{equivred2} There exists $T \in \GL (\Bbbk \Irr_S)$ satisfying $T (e_i) = e_i$ and
\begin{flalign*}
&& T (u) &= \mathrm{red}_{\varphi}^{(\infty)} (T (u_1) \dotsb T (u_m)) && \llap{$u = u_1 \dotsb u_m$} \\
&& T (\phi' (s)) &= \mathrm{red}_{\varphi}^{(\infty)} (T (s_1) \dotsb T (s_n)) && \mathllap{s = s_1 \dotsb s_n}
\end{flalign*}
for any $u \in \Irr_S$ and any $s \in S$.
\item \label{equivred3} The algebras $A = \Bbbk Q / \langle s - \phi_s \rangle_{s \in S}$ and $A' = \Bbbk Q / \langle s - \phi'_s \rangle_{s \in S}$ are isomorphic.
\end{enumerate}
\end{lemma}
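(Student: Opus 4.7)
The plan is to prove the cycle \itemi${}\Rightarrow{}$\itemii${}\Rightarrow{}$\itemiii${}\Rightarrow{}$\itemii${}\Rightarrow{}$\itemi, using throughout the Diamond Lemma identifications $\pi_A \sigma_A = \id_A$, $\sigma_A \pi_A = \mathrm{red}_\phi^{(\infty)}$ and the corresponding identities for $A'$, which come from reduction uniqueness of both $\phi$ and $\phi'$.

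\emph{\itemi${}\Rightarrow{}$\itemii.} Specialize the relation $T(\mathrm{red}_{\phi'}^{(\infty)}(p)) = \mathrm{red}_\phi^{(\infty)}(T(p_1)\cdots T(p_m))$ first to a path $p = u \in \Irr_S$ (so $\mathrm{red}_{\phi'}^{(\infty)}(u) = u$) and then to a path $p = s \in S$ (so $\mathrm{red}_{\phi'}^{(\infty)}(s) = \phi'(s)$). This yields the two conditions in \itemii.

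\emph{\itemii${}\Rightarrow{}$\itemiii.} Since $\Bbbk Q$ is the tensor algebra $\mathrm T_{\Bbbk Q_0}(\Bbbk Q_1)$, there is a unique $\Bbbk$-algebra homomorphism $\hat T \colon \Bbbk Q \to A$ with $\hat T(e_i) = e_i$ and $\hat T(x) = \pi_A(T(x))$ for $x \in Q_1$. For any path $p = p_1 \cdots p_m$ one then has $\hat T(p) = \pi_A(T(p_1)\cdots T(p_m)) = \pi_A(\mathrm{red}_\phi^{(\infty)}(T(p_1)\cdots T(p_m)))$. For $s \in S$ the second condition of \itemii{} gives $\hat T(s) = \pi_A(T(\phi'_s))$, and expanding $\phi'_s$ as a $\Bbbk$-linear combination of irreducible paths and applying the first condition of \itemii{} to each summand shows $\hat T(\phi'_s) = \pi_A(T(\phi'_s)) = \hat T(s)$. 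Hence $\hat T$ kills each generator $s - \phi'_s$ of the defining ideal of $A'$ and descends to an algebra homomorphism $\Phi \colon A' \to A$. Composing with the $\Bbbk Q_0$-bimodule isomorphisms $\pi'|_{\Bbbk \Irr_S}$ and $\sigma_A$ we obtain $\sigma_A \circ \Phi \circ \pi'|_{\Bbbk \Irr_S} = T$ (again by the first condition of \itemii), which exhibits $\Phi$ as a bijection since $T$ is. Thus $\Phi$ is an algebra isomorphism. (Note $\Phi$ restricts to the identity on $\Bbbk Q_0$.)

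\emph{\itemiii${}\Rightarrow{}$\itemii.} Given a $\Bbbk$-algebra isomorphism $F \colon A' \to A$ which may, and by the semisimplicity of $\Bbbk Q_0$ always can, be chosen to fix the idempotents $e_i$, define $T := \sigma_A \circ F \circ \pi'|_{\Bbbk \Irr_S} \in \GL(\Bbbk \Irr_S)$. It clearly fixes the $e_i$, and writing $u = u_1\cdots u_m \in \Irr_S$ respectively $s = s_1\cdots s_n \in S$ as products of arrows and using that $F$ is multiplicative together with $F \pi'(u_i) = \pi_A(T(u_i))$ yields the two identities in \itemii.

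\emph{\itemii${}\Rightarrow{}$\itemi.} Using the $\Phi$ constructed in \itemii${}\Rightarrow{}$\itemiii, compute for any path $p = p_1\cdots p_m$:
\[
T(\mathrm{red}_{\phi'}^{(\infty)}(p)) \;=\; \sigma_A \Phi \pi'(\mathrm{red}_{\phi'}^{(\infty)}(p)) \;=\; \sigma_A \Phi \pi'(p) \;=\; \sigma_A \hat T(p) \;=\; \mathrm{red}_\phi^{(\infty)}(T(p_1)\cdots T(p_m))
\]
using in turn $T = \sigma_A \Phi \pi'|_{\Bbbk \Irr_S}$, $\mathrm{red}_{\phi'}^{(\infty)}(p) \equiv p \bmod I'$, the definition of $\Phi$, and $\sigma_A \pi_A = \mathrm{red}_\phi^{(\infty)}$.

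The main subtlety is the bookkeeping in the \itemii${}\Rightarrow{}$\itemiii{} step, namely checking that the algebra map $\hat T \colon \Bbbk Q \to A$ really kills each $s - \phi'_s$; this is where both conditions of \itemii{} are used simultaneously. A minor point is the remark in \itemiii${}\Rightarrow{}$\itemii{} that any algebra isomorphism between $A$ and $A'$ may be adjusted to preserve the idempotents $e_i$, since both algebras have $\Bbbk Q_0$ as a maximal semisimple subalgebra (up to conjugation by a unit, which can be absorbed into $T$).
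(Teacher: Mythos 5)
Your proposal is correct and follows essentially the same route as the paper: the same specialization for \itemi${}\Rightarrow{}$\itemii, the same construction of the algebra map $\Bbbk Q\to A$ via $x\mapsto \pi T(x)$ on the free tensor algebra and the verification that it kills $s-\phi'_s$ for \itemii${}\Rightarrow{}$\itemiii, and the same computation with $\pi=\pi\,\mathrm{red}_\phi^{(\infty)}$, $\pi'=\pi'\,\mathrm{red}_{\phi'}^{(\infty)}$ for the return direction (which the paper does as a direct \itemiii${}\Rightarrow{}$\itemi{} rather than factoring through \itemii, an immaterial difference). Your explicit remark that an arbitrary isomorphism $A'\to A$ must be adjusted to fix the idempotents $e_i$ is a point the paper's proof passes over silently, so flagging it is a small improvement rather than a deviation.
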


\begin{proof}
That \ref{equivred1} implies \ref{equivred2} is clear since $\mathrm{red}_{\phi'}^{(\infty)} (u) = u$ and $\mathrm{red}_{\phi'}^{(\infty)} (s) = \phi' (s)$.

Since $R$ and $R'$ are reduction-unique, the Diamond Lemma implies that $A \simeq \Bbbk \Irr_S \simeq A'$ as $\Bbbk Q_0$-bimodules. Let $\pi \colon \Bbbk Q \to A$ and $\pi' \colon \Bbbk Q \to A'$ be the natural projections. Let $\Psi \colon \Bbbk Q \to A$ be the algebra homomorphism uniquely determined by $\Psi (x) = \pi \, T (x)$ for $x \in Q_1$, since $\Bbbk Q$ is a free tensor algebra over $\Bbbk Q_0$.

Now let $p = p_1 \dotsb p_m$ be any path. Then
\begin{equation}
\label{psired}
\begin{split}
\Psi (p) &= \Psi (p_1) \dotsb \Psi (p_m) \\
&= \pi \, T (p_1) \dotsb \pi \, T (p_m) \\
&= \pi (T (p_1) \dotsb T (p_m)) \\
&= \pi \, \mathrm{red}_{\phi}^{(\infty)} (T (p_1) \dotsb T (p_m))
\end{split}
\end{equation}
where the last equality follows from the observation that $\sigma \pi = \mathrm{red}_{\phi}^{(\infty)}$, whence $\pi = \pi \sigma \pi = \pi \, \mathrm{red}_{\phi}^{(\infty)}$ (cf.\ \eqref{sigma}). Now \ref{equivred2} implies $\Psi (u) = \pi \, T (u)$ for any $u \in \Irr_S$ and $\Psi (s) = \pi \, T (\phi_s')$ for any $s \in S$. Since $\phi_s'$ is irreducible, this implies $\Psi (s - \phi_s') = 0$ and thus $\Psi$ induces an algebra homomorphism $\bar \Psi \colon A' \to A$ so that the following diagram commutes.
\begin{equation*}
\begin{tikzpicture}[baseline=-2.6pt,description/.style={fill=white,inner sep=1pt,outer sep=0}]
\matrix (m) [matrix of math nodes, row sep=2em, text height=1.5ex, column sep=3em, text depth=0.25ex, ampersand replacement=\&, inner sep=3pt]
{
A' \& A \\
\Bbbk \Irr_S \& \Bbbk \Irr_S \\
};
\path[->,line width=.4pt,font=\scriptsize]
(m-1-1) edge node[above=-.3ex] {$\bar \Psi$} (m-1-2)
;
\path[->,line width=.4pt,font=\scriptsize]
(m-2-1) edge node[above=-.3ex] {$T$} (m-2-2)
;
\path[->,line width=.4pt,font=\scriptsize]
(m-2-1) edge node[left=-.3ex] {$\pi'$} (m-1-1)
;
\path[->,line width=.4pt,font=\scriptsize]
(m-2-2) edge node[right=-.3ex] {$\pi$} (m-1-2)
;
\end{tikzpicture}
\end{equation*}
Since $\pi, \pi', T$ are isomorphisms of $\Bbbk$-vector spaces, $\bar \Psi$ is an algebra isomorphism. Thus \ref{equivred2} implies \ref{equivred3}.

Finally, to show that \ref{equivred3} implies \ref{equivred1}, let $\bar \Psi \colon A' \to A$ be an algebra isomorphism and define $T = \pi^{-1} \bar \Psi \pi'$. Similar to \eqref{psired}, using $\pi = \pi \, \mathrm{red}_{\phi}^{(\infty)}$ and $\pi' = \pi' \hair \mathrm{red}_{\phi'}^{(\infty)}$, we may prove that for any path $p = p_1 \dotsb p_m$ we have
\begin{align*}
\pi \, T (\mathrm{red}_{\phi'}^{(\infty)} (p)) 
&= \pi \, \mathrm{red}_\phi^{(\infty)} (T (p_1) \dotsb T (p_m)).
\end{align*}
Since $\pi \rvert_{\Bbbk \Irr_S}$ is an isomorphism, \eqref{equivred} holds, i.e.\ $R$ and $R'$ are equivalent.
\end{proof}

\begin{remark}
Lemma \ref{lemma:equivalencereduction} \ref{equivred2} implies that $T \in G (\varphi, \varphi')$ is determined by its value on arrows and thus $T$ is determined by a map in $\Hom (\Bbbk Q_1, \Bbbk \Irr_S)$. In \S\ref{subsection:projectiveresolutions}, $\Hom (\Bbbk Q_1, \Bbbk \Irr_S)$ will appear as the space of $1$-cochains in a cochain complex computing the Hochschild cohomology of $A = \Bbbk Q / I$ with $\Bbbk$-basis $\Irr_S$.

We will see a formal version of the equivalence between \ref{equivred1} and \ref{equivred3} of Lemma \ref{lemma:equivalencereduction} in \S\ref{subsubsection:notation}, where up to equivalence formal deformations of the reduction system are in one-to-one correspondence with formal deformations of the associative algebra structure.
\end{remark}

\section{Higher ambiguities}
\label{subsection:chains}
\index{ambiguity!higher, $n$-|textbf}

Let $Q$ be a finite quiver and let $S$ be any subset of $Q_{\geq 2}$ such that $s$ is not a subpath of $s'$ when $s \neq s' \in S$. We now recall the definition of $n$-ambiguities for $n \geq 0$, which can be viewed as a generalization of overlap ambiguities.\index{overlap ambiguity}\index{ambiguity!overlap}\index{ambiguity!higher, $n$-|textbf}

\begin{definition}
Let $p \in Q_{\geq 0}$ be a path. If $p = qr$ for some paths $q, r$ we call $q$ a {\it proper left subpath} of $p$ if $p \neq q$. 

Now let $n \geq 0$. A path $p \in Q_\ldot$ is a {\it (left) $n$-ambiguity}\index{ambiguity!higher, $n$-|textbf} if there exist $u_0 \in Q_1$ and irreducible paths $u_1, \dotsc, u_{n+1}$ such that
\begin{enumerate}
\item $p = u_0 \dotsb u_{n+1}$
\item for all $i$, $u_i u_{i+1}$ is reducible, and $u_i d$ is irreducible for any proper left subpath $d$ of $u_{i+1}$.
\end{enumerate}
The notion of right $n$-ambiguity\index{ambiguity!higher, $n$-|textbf} may be defined analogously (as $p = v_0 \dotsb v_{n+1}$ with $v_{n+1} \in Q_1$ and $v_0, \dotsc, v_n$ irreducible paths satisfying analogous conditions), but these notions turn out to be equivalent \cite[Lem.~3.1]{bardzell2}.

The set of $n$-ambiguities\index{ambiguity!higher, $n$-|textbf} can be visualized as ``overlaps'' of at least $n+1$ elements in $S$, as illustrated in Fig.~\ref{figure:overlap} where we have also illustrated the paths $u_i, v_i$ appearing in the definition of left or right $n$-ambiguity\index{ambiguity!higher, $n$-|textbf}. (Note that for each $1 \leq i \leq n$ the overlap of $u_i$ and $v_i$ is an overlap of subpaths lying in $S$.)
\end{definition}

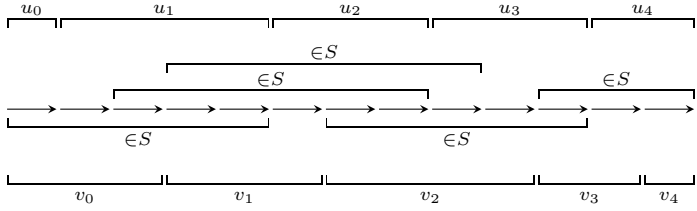
\begin{figure}
\centering
\begin{tikzpicture}[x=2em,y=1em]
\foreach \n in {0,1,2,3,4,5,6,7,8,9,10,11,12,13} {
\node[inner sep=0] (\n) at (\n,0) {};
}
\path[->,line width=.4pt] (0) edge (1);
\path[->,line width=.4pt] (1) edge (2);
\path[->,line width=.4pt] (2) edge (3);
\path[->,line width=.4pt] (3) edge (4);
\path[->,line width=.4pt] (4) edge (5);
\path[->,line width=.4pt] (5) edge (6);
\path[->,line width=.4pt] (6) edge (7);
\path[->,line width=.4pt] (7) edge (8);
\path[->,line width=.4pt] (8) edge (9);
\path[->,line width=.4pt] (9) edge (10);
\path[->,line width=.4pt] (10) edge (11);
\path[->,line width=.4pt] (11) edge (12);
\path[->,line width=.4pt] (12) edge (13);
\node at (2.5,-.9) {$\underbracket[.3pt]{\hspace{9.85em}}_{\in S}$};
\node at (5,.9)    {$\overbracket[.3pt]{\hspace{11.85em}}^{\in S}$};
\node at (6,1.9)   {$\overbracket[.3pt]{\hspace{11.85em}}^{\in S}$};
\node at (8.5,-.9) {$\underbracket[.3pt]{\hspace{9.85em}}_{\in S}$};
\node at (11.5,.9) {$\overbracket[.3pt]{\hspace{5.85em}}^{\in S}$};
\node at (.5,3.5)  {$\overbracket[.3pt]{\hspace{1.85em}}^{u_0}$};
\node at (3,3.5)   {$\overbracket[.3pt]{\hspace{7.85em}}^{u_1}$};
\node at (6.5,3.5) {$\overbracket[.3pt]{\hspace{5.85em}}^{u_2}$};
\node at (9.5,3.5) {$\overbracket[.3pt]{\hspace{5.85em}}^{u_3}$};
\node at (12,3.5)  {$\overbracket[.3pt]{\hspace{3.85em}}^{u_4}$};
\node at (1.5,-3)  {$\underbracket[.3pt]{\hspace{5.85em}}_{v_0}$};
\node at (4.5,-3)  {$\underbracket[.3pt]{\hspace{5.85em}}_{v_1}$};
\node at (8,-3)    {$\underbracket[.3pt]{\hspace{7.85em}}_{v_2}$};
\node at (11,-3)   {$\underbracket[.3pt]{\hspace{3.85em}}_{v_3}$};
\node at (12.5,-3) {$\underbracket[.3pt]{\hspace{1.85em}}_{v_4}$};
\end{tikzpicture}
\caption{A $3$-ambiguity in $S_5$}
\label{figure:overlap}
\end{figure}

Now set
\[
\begin{aligned}
S_0 &= Q_0 \\
S_1 &= Q_1 \\
S_2 &= S \subset Q_{\geq 2}
\end{aligned}
\]
and let $S_{n+2} \subset Q_{\geq n+2}$ for $n \geq 0$ denote the set of $n$-ambiguities\index{ambiguity!higher, $n$-|textbf}.

When $S \subset Q_2$, then overlaps can have no ``gaps'' and can thus be visualized as follows
\[
\begin{tikzpicture}[x=2em,y=1em]
\foreach \n in {0,1,2,3,4,5,6} {
\node[inner sep=0] (\n) at (\n,0) {};
}
\path[->,line width=.4pt] (0) edge (1);
\path[->,line width=.4pt] (1) edge (2);
\path[->,line width=.4pt] (2) edge (3);
\path[->,line width=.4pt] (3) edge (4);
\path[->,line width=.4pt] (4) edge (5);
\path[->,line width=.4pt] (5) edge (6);
\node at (1,-.9) {$\underbracket[.3pt]{\hspace{3.85em}}_{\in S}$};
\node at (2,.9) {$\overbracket[.3pt]{\hspace{3.85em}}^{\in S}$};
\node at (3,-.9) {$\underbracket[.3pt]{\hspace{3.85em}}_{\in S}$};
\node at (4,.9) {$\overbracket[.3pt]{\hspace{3.85em}}^{\in S}$};
\node at (5,-.9) {$\underbracket[.3pt]{\hspace{3.85em}}_{\in S}$};
\end{tikzpicture}
\]
so that in this case
\[
S_n = \bigl\{ x_1 \dotsb x_n \in Q_n \bigm| x_i \in Q_1 \text{ \rm for all } i \text{ \rm and } x_{i-1} x_i \in S \bigr\} \subset Q_n
\]
(cf.\ \cite[Prop.~3.4]{chouhysolotar}).

\begin{remark}
The definition of $n$-ambiguities\index{ambiguity!higher, $n$-} was first given in Anick \cite{anick} under the name of ``$(n+1)$-chain''. This naming would be confusing in the context of this book since in \S\ref{subsection:projectiveresolutions} elements of the form $a \otimes w \otimes b$ for $w \in S_{n+2}$ will be $(n+2)$-chains in a projective $A$-bimodule resolution of $A$ (see Theorem \ref{theorem:projective} and Proposition \ref{proposition:resolution}).

Although our numbering also does not agree with the degree of the resulting element in the chain complex, we have that a $0$-ambiguity is just an element in $S$ (performing reductions is unambiguous) and a $1$-ambiguity is an overlap ambiguity (see Definition \ref{definition:overlap}).\index{overlap ambiguity}\index{ambiguity!overlap}\index{ambiguity!higher, $n$-}
\end{remark}

\subsection{The combinatorics of reductions}
\label{subsubsection:reductions}

We now study the higher combinatorics of reductions. The maps defined here will be used to give a recursive formula for the differential in the projective resolution of Theorem \ref{theorem:projective} and later to give an explicit description of the deformations in Chapter \ref{section:deformations-of-path-algebras}.

Generalizing (\ref{split_2}) we define maps $\mathrm{split}_n^*$, $\delta_n$ and $\gamma_{n-1}$ for any $n \geq 1$
\begin{flalign*}
&&&
\begin{tikzpicture}[baseline=-2.6pt,description/.style={fill=white,inner sep=1pt,outer sep=0}]
\matrix (m) [matrix of math nodes, row sep=0em, text height=1.5ex, column sep=3em, text depth=0.25ex, ampersand replacement=\&, inner sep=3pt]
{
\Bbbk Q \& A \otimes \Bbbk S_n \otimes A \&[-.5em] A \otimes \Bbbk S_{n-1} \otimes A \\
};
\path[->,line width=.4pt,font=\scriptsize]
(m-1-1) edge node[above=-.4ex] {$\mathrm{split}^*_n$} (m-1-2)
;
\path[->,line width=.4pt,font=\scriptsize]
(m-1-2) edge node[above=-.4ex] {$\delta_n$} (m-1-3)
;
\path[->,line width=.4pt,font=\scriptsize,out=200,in=340,looseness=1]
(m-1-3.185) edge node[below=-.4ex] {$\gamma_{n-1}$} (m-1-2.355)
;
\end{tikzpicture}
&& \mathllap{* \in \{ \varnothing, {\mathrm R}, {\mathrm L} \}}
\end{flalign*}
by
\begin{flalign*}
\label{split}
&& \mathrm{split}_n (p) &= \sum_{\substack{w \in S_n \\ q w r = p}} \pi (q) \otimes w \otimes \pi (r) &
\begin{aligned}
\mathrm{split}^{\mathrm R}_n (p) &= \pi (q) \otimes w_{\mathrm R} \otimes \pi (r) \\
\mathrm{split}^{\mathrm L}_n (p) &= \pi (q) \otimes w_{\mathrm L} \otimes \pi (r) \\
\end{aligned}
&&
\end{flalign*}
where $w_{\mathrm R}$ (resp.\ $w_{\mathrm L}$) is the rightmost (resp.\ leftmost) subpath of $p$ which is an element of $S_n$. We set $\mathrm{split}^*_n(p) = 0$ if there are no subpaths of $p$ contained in $S_n$.

Define $\delta_n$ as the $A$-bimodule morphism determined by
\[
\delta_n(1 \otimes w \otimes 1) = \begin{cases}
\pi (w) \otimes 1 - 1 \otimes \pi (w) & \text{if $n = 1$} \\
\mathrm{split}{}^{\mathrm R}_{n-1} (w) - \mathrm{split}{}^{\mathrm L}_{n-1} (w) & \text{if $n > 2$ is odd} \\
\mathrm{split}{}_{n-1} (w) & \text{if $n \geq 2$ is even}
\end{cases}
\]
and $\gamma_{n-1}$ as the morphism of {\it left} $A$-mod\-ules determined by 
\begin{flalign*}
&& \gamma_{n-1} (1 \otimes w \otimes \pi(u)) &= (-1)^n \mathrm{split}_n (wu) && \llap{for any $u \in \Irr_S$.}
\end{flalign*}

We will use the following identities for $n \geq 1$ in the proof of Proposition \ref{proposition:resolution}:
\begin{equation}
\label{gammadelta}
\begin{aligned}
\gamma_{n-1} \gamma_{n-2} = 0 \quad \text{and} \quad \gamma_{n-1} \delta_n \gamma_{n-1} = \gamma_{n-1}
\end{aligned}
\end{equation}
where we set $\gamma_{-1} \colon A \to A \otimes A$ defined by $\gamma_{-1} (a) = a \otimes 1$ for any $a \in A$.

The first identity in (\ref{gammadelta}) can be seen to hold since for any $w \in S_{n-2}$ and any irreducible path $u$ the path $wu$ never contains elements in $S_n$ as a subpath. From Sköldberg \cite[Thm.~1]{skoldberg} it is not difficult to show $\gamma_{n-1} = \gamma_{n-1} \gamma_{n-2} \delta_{n-1} + \gamma_{n-1} \delta_n \gamma_{n-1}$ so that the second identity in (\ref{gammadelta}) follows then directly from the first.

We make repeated use of the map $\mathrm{split}_1 \colon \Bbbk Q \to A \otimes \Bbbk Q_1 \otimes A$ which for $p = x_1 \cdots x_m$ with $x_i \in Q_1$ is given by
\begin{equation}
\label{split1}
\begin{split}
\mathrm{split}_1 (p) &= 1 \otimes x_1 \otimes \pi (x_2 \cdots x_m) \\
&\quad{} + \sum_{1 < i < m} \pi (x_1 \cdots x_{i-1}) \otimes x_i \otimes \pi (x_{i+1} \cdots x_m) \\
&\quad{}+ \pi (x_1 \cdots x_{m-1}) \otimes x_m \otimes 1.
\end{split}
\end{equation}
We define $\mathrm{split}_1 (e_i) = 0$ for each $i \in Q_0$.  Note that $\mathrm{split}_1$ satisfies a Leibniz rule
\begin{equation}
\label{split1pq}
\mathrm{split}_1 (p q) = \pi (p) \mathrm{split}_1 (q) + \mathrm{split}_1 (p) \pi (q).
\end{equation}

\chapter{Projective resolutions}
\label{section:resolution}

The Hochschild cohomology of an associative algebra $A$ can be calculated from any projective $A$-bimodule resolution of $A$. We recall the definition of the standard and the normalized bar resolutions (\S\ref{subsection:barresolutions}) and the construction of a resolution for path algebras of quivers with relations obtained from a reduction system\index{reduction system} (\S\ref{subsection:projectiveresolutions}).

\section{Bar resolutions}
\label{subsection:barresolutions}
\index{resolution!bar|textbf}

The {\it bar resolution} of an associative $\Bbbk$-algebra $A$ is given by
\[
\dotsb \to A \otimes_{\Bbbk} A^{\otimes_{\Bbbk} n} \otimes_{\Bbbk} A \to \dotsb \to A \otimes_{\Bbbk} A \otimes_{\Bbbk} A \to A \otimes_{\Bbbk} A  \to 0
\]
with differential $d_n \colon A \otimes_\Bbbk A^{\otimes_{\Bbbk} n} \otimes_\Bbbk A \to A \otimes_\Bbbk A^{\otimes_{\Bbbk} n-1} \otimes_\Bbbk A$ given by 
\begin{equation}
\label{differential}
\begin{aligned}
d_n (a_0 \otimes_{\Bbbk} a_{1 ... n} \otimes_{\Bbbk} a_{n+1}) &= a_0 a_1 \otimes_{\Bbbk} a_{2 ... n} \otimes_{\Bbbk} a_{n+1}\\
&\quad + \sum_{i=1}^{n-1} (-1)^i a_0 \otimes_{\Bbbk} a_{1 ... i-1} \otimes_{\Bbbk} a_i a_{i+1} \otimes_{\Bbbk} a_{i+2 ... n} \otimes_{\Bbbk} a_{n+1} \\
&\quad + (-1)^{n} a_0 \otimes_{\Bbbk} a_{1 ... n-1} \otimes_{\Bbbk} a_n a_{n+1}
\end{aligned}
\end{equation}
where for $i \leq j$ we have written $a_{i ... j}$ to denote $a_i \otimes_{\Bbbk} \dotsb \otimes_{\Bbbk} a_j \in A^{\otimes_{\Bbbk} (j-i+1)}$.

For $A = \Bbbk Q / I$ one can consider the ({\it $\Bbbk Q_0$-relative}) {\it normalized bar resolution}\index{resolution!bar!normalized}\index{resolution!bar} $\Bar_\ldot$
\[
\dotsb \to A \otimes \bar A^{\otimes n} \otimes A \to \dotsb \to A \otimes \bar A \otimes A \to A \otimes A  \to 0
\]
where here $\otimes = \otimes_{\Bbbk Q_0}$ (cf.\ Notation \ref{notation:tensorhom}) and $\bar A = A / (\Bbbk Q_0 \cdot 1_A)$ is the quotient $\Bbbk Q_0$-bimodule. The differential \eqref{differential} induces a differential on $\Bar_\ldot$, which we continue to denote by $d_n$, and we shall refer to this simply as ``the bar resolution''\index{resolution!bar} (cf.\ Remark \ref{relativestandard}). It is well known that $\Bar_\ldot$ is a projective $A$-bimodule resolution of $A$, where the augmentation map $d_0 \colon A \otimes A \to A$ is given by the multiplication of $A$ (see e.g.\ \cite[Ch.~1]{loday}).

\section{Projective resolutions from reduction systems}
\label{subsection:projectiveresolutions}
\index{resolution!Chouhy--Solotar|textbf}

For $A = \Bbbk Q / I$, a reduction system\index{reduction system} $R$ satisfying ($\diamond$) for $I$ gives rise to a much smaller resolution that uses the set of $n$-ambiguities\index{ambiguity!higher, $n$-} (see \S\ref{subsection:chains}). The resolution is based on the combinatorics of the reduction system $R = \{ (s, \varphi_s) \}$, replacing subpaths which lie in $S = \{ s \mid (s, \varphi_s) \in R \}$ by linear combinations of irreducible paths. (Under certain conditions this resolution can sometimes be shown to be minimal\index{resolution!minimal}, see \cite[Thm.~8.1]{chouhysolotar}.)

\begin{theorem}[(Chouhy--Solotar {\cite[\S 4]{chouhysolotar}})]
\label{theorem:projective}
\index{resolution!Chouhy--Solotar|textbf}
Let $A = \Bbbk Q / I$, let $R$ be a reduction system satisfying \textup{($\diamond$)} for $I$, let $S_m = Q_m$ for $m = 0, 1$ and let $S_{n+2}$ for $n \geq 0$ denote the set of $n$-ambiguities\index{ambiguity!higher, $n$-}.
\begin{enumerate}
\item There is a projective $A$-bimodule resolution $P_\ldot$ of $A$
\[
\dotsb \toarglong{\partial_{n+1}} P_n \toarglong{\partial_n} P_{n-1} \toarglong{\partial_{n-1}} \dotsb \toarglong{\partial_2} P_1 \toarglong{\partial_1} P_0 
\]
where $P_n = A \otimes \Bbbk S_n \otimes A$ and the augmentation map $\partial_0 \colon A \otimes A \to A$ is given by the multiplication of $A$. 
\item For each $n \geq 0$, there is a homomorphism of left $A$-modules $\rho_{n-1} \colon P_{n-1}\to P_n$, where we set $P_{-1} = A$, such that for any $n \geq 0$
\begin{align*}
\partial_{n} \rho_{n-1} + \rho_{n-2} \partial_{n-1} = \id_{P_{n-1}} \qquad\text{and}\qquad \rho_{n}(a \otimes w \otimes 1) = 0
\end{align*}
for any $w\in S_{n}$ and $a\in A$. Here we set $\partial_{-1} = 0$ and $\rho_{-2} = 0$.
\end{enumerate}
\end{theorem}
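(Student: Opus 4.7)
The plan is to construct the differential $\partial_\bullet$ and the contracting left-$A$-linear homotopy $\rho_\bullet$ simultaneously by induction on $n$, using the combinatorial maps $\delta_n$ and $\gamma_{n-1}$ introduced at the end of \S\ref{subsubsection:reductions}. Projectivity of each $P_n = A \otimes \Bbbk S_n \otimes A$ as an $A$-bimodule is immediate since it is a free bimodule, so the entire content lies in constructing the differential together with the homotopy and verifying the two stated identities; once the homotopy exists, exactness is automatic.

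For the base cases, set $\partial_0(a \otimes b) = ab$ and $\rho_{-1} = \gamma_{-1}$, so that $\rho_{-1}(a) = a \otimes 1$ and $\partial_0 \rho_{-1} = \id_A$. Define $\partial_1$ as the $A$-bilinear extension of $\delta_1$, namely $\partial_1(1 \otimes x \otimes 1) = \pi(x) \otimes 1 - 1 \otimes \pi(x)$ for $x \in Q_1$. Define $\rho_0 \colon P_0 \to P_1$ as the left-$A$-linear map determined by $\rho_0(1 \otimes 1) = 0$ and $\rho_0(1 \otimes \pi(p)) = -\mathrm{split}_1(p)$ for any path $p$. The Leibniz rule \eqref{split1pq} for $\mathrm{split}_1$ then gives $\partial_1 \rho_0 + \rho_{-1}\partial_0 = \id_{P_0}$, and $\rho_0(a \otimes 1) = 0$ by construction.

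For the inductive step, assume $\partial_k$ and $\rho_{k-1}$ have been defined for $k \leq n$ satisfying both conditions. On a generator $1 \otimes w \otimes 1$ with $w \in S_{n+1}$, I would put
\[
\partial_{n+1}(1 \otimes w \otimes 1) \;=\; \bigl(\id - \rho_{n-1}\partial_n\bigr)\bigl(\delta_{n+1}(1 \otimes w \otimes 1)\bigr)
\]
and extend $A$-bilinearly, and simultaneously define $\rho_{n+1}$ on generators by $\rho_{n+1}(1 \otimes w \otimes \pi(u)) = \gamma_{n+1}(1 \otimes w \otimes \pi(u))$ for $u \in \Irr_S$, extended left-$A$-linearly. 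The vanishing $\rho_{n+1}(a \otimes w \otimes 1) = 0$ follows because $\gamma_{n+1}(1 \otimes w \otimes \pi(e_{\mathrm t(w)}))$ enumerates occurrences of $(n+2)$-ambiguities inside $w$ itself, of which there are none. The homotopy identity at level $n+1$ closes by construction; the cocycle condition $\partial_n \partial_{n+1} = 0$ then follows by applying $\partial_n$ to $\partial_{n+1}\rho_n + \rho_{n-1}\partial_n = \id$, invoking the inductive hypothesis and the combinatorial relations \eqref{gammadelta}. Exactness is automatic: any cycle $x \in \ker \partial_n$ satisfies $x = \partial_{n+1}\rho_n(x)$, and $\partial_0 \rho_{-1} = \id_A$ gives surjectivity in degree zero.

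The main obstacle is checking that the inductively defined $\partial_{n+1}$ is indeed a bimodule map, not merely left-$A$-linear. The raw operation $\delta_{n+1}$ is already bimodule-linear but is not a differential on its own, while the correction $\rho_{n-1}\partial_n$ is a priori only left-$A$-linear, so the cancellation of right-action defects in their difference must be verified. This rests on the canonical presentation of each element of $S_{n+2}$ as a shorter ambiguity extended by a final arrow (as depicted in Fig.~\ref{figure:overlap}) together with the identity $\gamma_{n-1}\delta_n\gamma_{n-1} = \gamma_{n-1}$ from \eqref{gammadelta}, which pins down the right-$A$-equivariance of the corrected differential and reproduces the Chouhy--Solotar construction in explicit recursive form.
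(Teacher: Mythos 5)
Your construction of the differential matches the paper's (Proposition~\ref{proposition:resolution} defines $\partial_n(a\otimes w\otimes b)=a\bigl((\id-\rho_{n-2}\partial_{n-1})\delta_n\bigr)(1\otimes w\otimes 1)b$, extended bilinearly on generators, so your worry about right-$A$-linearity is a non-issue), but there is a genuine gap in the homotopy: you set $\rho_{n+1}=\gamma_{n+1}$ on generators, whereas the correct homotopy is $\rho_{n-1}=\gamma_{n-1}+\sum_{i\geq 1}\gamma_{n-1}(\delta_n\gamma_{n-1}-\partial_n\gamma_{n-1})^i$. The leading term $\gamma$ alone records only the \emph{first} (right-most) ambiguity created, and the identity $\partial_{n}\rho_{n-1}+\rho_{n-2}\partial_{n-1}=\id$ fails for it as soon as a single reduction does not terminate the process. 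Concretely, already for $n=2$: take $\Bbbk[x_1,x_2,x_3]$ with $R=\{(x_jx_i,x_ix_j)\}$ and the element $1\otimes x_3\otimes\pi(x_2x_1)\in P_1$. Here $\gamma_1$ gives only $1\otimes x_3x_2\otimes\pi(x_1)$, while the true $\rho_1$ (Lemma~\ref{lemma:rhosplit}) is $1\otimes x_3x_2\otimes 1+\rho_1(\mathrm{split}_1(x_2x_3)\pi(x_1))$, which contains the additional summand $\pi(x_2)\otimes x_3x_1\otimes 1+\dotsb$ coming from the further reduction of $x_2x_3x_1$. Running your own degree-$1$ verification with $\rho_1=\gamma_1$ leaves an uncancelled term $\mathrm{split}_1\bigl(\mathrm{red}_\phi^{(\infty)}(\phi_s v)\bigr)-\pi(\phi_s)\mathrm{split}_1(v)-\mathrm{split}_1(\phi_s)\pi(v)$, which vanishes only when $\phi_s v$ is already irreducible. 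So the statement ``the homotopy identity at level $n+1$ closes by construction'' is exactly where the content of the theorem lies, and it is false for the $\rho$ you propose.

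The fix is the correction series above, and proving that it works is the bulk of the paper's argument: one sets $C_n=\id-\rho_{n-2}\partial_{n-1}-\partial_n\gamma_{n-1}$, shows $\partial_{n-1}C_n=0$ and that $C_n$ is pointwise nilpotent (this is where reduction-finiteness enters, via \cite[Prop.~5.8]{chouhysolotar}), and then establishes $(\partial_n\rho_{n-1}+\rho_{n-2}\partial_{n-1})(C_n^{k-i}(y))=C_n^{k-i}(y)$ by downward induction on the power of $C_n$. The identities \eqref{gammadelta} are used only to put $\rho_{n-1}$ into the form $\gamma_{n-1}+\sum_i\gamma_{n-1}C_n^i$, not to force the homotopy identity by themselves. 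Your derivation of $\partial_n\partial_{n+1}=0$ also needs adjusting: applying $\partial_n$ to the level-$(n+1)$ identity only yields $\partial_n\partial_{n+1}\rho_n=0$; the paper instead evaluates the level-$n$ homotopy identity at $y=\partial_n\delta_{n+1}(1\otimes w\otimes 1)$ to conclude.
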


The actual maps in the resolution $P_\ldot$ of Theorem \ref{theorem:projective} shall be useful later, so we give explicit formulae for $\partial_n$ and $\rho_n$ using the maps $\gamma_n$ and $\delta_n$ defined in \S\ref{subsubsection:reductions}. 

\begin{proposition}
\label{proposition:resolution}
\index{resolution!Chouhy--Solotar|textbf}
Let $A = \Bbbk Q / I$ and let $R$ be a reduction system satisfying \textup{($\diamond$)} for $I$. Then the differential $\partial_\ldot$ and the homotopy $\rho_\ldot$ in Theorem \ref{theorem:projective} can be defined from   
\begin{align*}
\partial_0 (a \otimes b) = a b \qquad\text{and}\qquad \rho_{-1} (a) = a \otimes 1
\end{align*}
by the following recursive formulae for $n \geq 1$
\begin{flalign*}
&& \partial_n (a \otimes w \otimes b) &= a \big( (\id - \rho_{n-2} \partial_{n-1}) \delta_n \big) (1 \otimes w \otimes 1) b && \mathllap{a, b \in A, \, w \in S_n} \\
&& \rho_{n-1} &= \gamma_{n-1} + \textstyle\sum\limits_{i\geq 1} \gamma_{n-1} (\delta_n \gamma_{n-1} - \partial_n \gamma_{n-1})^i.
\end{flalign*}
\end{proposition}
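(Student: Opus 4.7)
The plan is to proceed by simultaneous induction on $n$, establishing both recursive formulae together with the three properties required by Theorem~\ref{theorem:projective}: the chain-complex relation $\partial_{n-1}\partial_n = 0$, the homotopy relation $\partial_n\rho_{n-1} + \rho_{n-2}\partial_{n-1} = \id_{P_{n-1}}$, and the vanishing $\rho_{n-1}(a \otimes w \otimes 1) = 0$. The base case $n = 0$ is immediate from the stipulated values of $\partial_0$ and $\rho_{-1}$.

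At each inductive step, I would first check that the infinite sum defining $\rho_{n-1}$ is well-defined. Writing $X := \delta_n\gamma_{n-1} - \partial_n\gamma_{n-1}$, iterating $X$ on a generator $1 \otimes w \otimes \pi(u)$ should be controlled by reduction-finiteness of $R$: each step strictly decreases a well-founded measure on the right-hand path content $u$, so $X^i$ eventually vanishes on any fixed input. The condition $\rho_{n-1}(a \otimes w \otimes 1) = 0$ then follows since $w \in S_{n-1}$ contains no element of $S_n$ as a subpath, hence $\mathrm{split}_n(w) = 0$ and $\gamma_{n-1}(1 \otimes w \otimes 1) = 0$; since $\gamma_{n-1}$ appears on the outside of every summand of $\rho_{n-1}$ and the operators involved are left $A$-linear, this conclusion propagates.

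The heart of the proof is the homotopy identity. The strategy is to write $\rho_{n-1} = \gamma_{n-1}\sum_{i\geq 0} X^i$, substitute $\partial_n\gamma_{n-1} = \delta_n\gamma_{n-1} - X$ into $\partial_n\rho_{n-1}$, and let the geometric series telescope. Combining the resulting expression with the Sköldberg identity $\gamma_{n-1}\delta_n\gamma_{n-1} = \gamma_{n-1}$ from \eqref{gammadelta} and the inductive homotopy $\partial_{n-1}\rho_{n-2} + \rho_{n-3}\partial_{n-2} = \id$ should then yield $\partial_n\rho_{n-1} = \id - \rho_{n-2}\partial_{n-1}$ on generators. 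Given this identity, the chain-complex relation $\partial_{n-1}\partial_n = 0$ is a short calculation: from $\partial_n(1\otimes w\otimes 1) = (\id - \rho_{n-2}\partial_{n-1})\delta_n(1\otimes w\otimes 1)$, applying $\partial_{n-1}$ and using the inductive $\partial_{n-1}\rho_{n-2} = \id - \rho_{n-3}\partial_{n-2}$ together with $\partial_{n-2}\partial_{n-1} = 0$ gives $\partial_{n-1}\partial_n(1\otimes w\otimes 1) = 0$, and bimodule linearity of $\partial_n$ extends this to all of $P_n$.

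The main obstacle is the rigorous verification of the homotopy identity: although the telescoping is formally clean, care is needed to reconcile the bimodule linearity of $\partial_n$ and $\delta_n$ with the merely left $A$-linear $\gamma_{n-1}$ and $\rho_{n-1}$, and to apply the Sköldberg identities from \eqref{gammadelta} at precisely the right spots. An alternative and arguably cleaner route is to invoke Sköldberg's perturbation lemma \cite[Thm.~1]{skoldberg} to produce maps $\partial_n$ and $\rho_{n-1}$ with the required properties abstractly, and then verify by induction that these abstractly-defined maps are in fact given by the explicit recursive formulae --- the recursions can be viewed as the unfolding of Sköldberg's construction in this specific setting.
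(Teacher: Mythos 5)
Your proposal is correct and follows essentially the same route as the paper: simultaneous induction on $n$, convergence of the geometric series from reduction-finiteness (the paper cites \cite[Prop.~5.8]{chouhysolotar} for the eventual vanishing of the iterates), the identities \eqref{gammadelta} to rewrite $\rho_{n-1}$, a telescoping argument for the homotopy identity, and $\partial\partial=0$ deduced from it afterwards. The one step you flag as delicate is handled in the paper by introducing $C_n := \id - \rho_{n-2}\partial_{n-1} - \partial_n\gamma_{n-1}$ (which agrees with your $X$ after precomposition with $\gamma_{n-1}$), observing $\partial_{n-1}C_n = 0$, and running a secondary downward induction on the power of $C_n$ applied to a fixed element --- this is exactly the rigorous form of your telescoping.
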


\begin{proof}
Note that the formula for $\rho_{n-1}$ can be rewritten as 
$$
\rho_{n-1} = \gamma_{n-1} + \textstyle\sum\limits_{i \geq 1} \gamma_{n-1} (\id - \rho_{n-2} \partial_{n-1} - \partial_n \gamma_{n-1})^i
$$ 
since by $\gamma_{n-1} \gamma_{n-2} = 0$ in (\ref{gammadelta}) we get that $\gamma_{n-1} \rho_{n-2} = 0$ and thus
\begin{align*}
\gamma_{n-1} (\id - \rho_{n-2} \partial_{n-1} - \partial_n \gamma_{n-1})^i &= \gamma_{n-1} (\id - \partial_n \gamma_{n-1})^i = \gamma_{n-1} (\delta_n \gamma_{n-1} - \partial_n \gamma_{n-1})^i
\end{align*}
where the second identity uses $\gamma_{n-1} \delta_n \gamma_{n-1} = \gamma_{n-1}$ in (\ref{gammadelta}). 

We now prove that $\partial_n$ is a differential, i.e.\ that $\partial_n \partial_{n+1} = 0$, and $\rho_n$ is a homotopy, i.e.\ $\partial_n \rho_{n-1} + \rho_{n-2} \partial_{n-1} = \id$. (Note that this implies that $(P_\ldot, \partial_\ldot)$ is a projective resolution of $A$.) The proof proceeds by induction on $n$. For this, it is clear that $\partial_0\partial_{1} = 0$ and $\partial_0 \rho_{-1} + \rho_{-2} \partial_{-1} = \id$. Assume that this holds for all $i \leq n-1$. We need to prove it holds for $n$. For simplicity, we set
\begin{align}\label{align:Cn}
C_n := \id - \rho_{n-2} \partial_{n-1} - \partial_n \gamma_{n-1}
\end{align}
so that $\rho_{n-1} = \gamma_{n-1} +   \sum_{j\geq 1} \gamma_{n-1} C_n^j$. Here, $C_n^j$ denotes the $j$th iterated composition of $C_n$. We set $C_n^0 = \id$. Note that $\partial_{n-1} C_n = 0$ since 
\begin{align*}
\partial_{n-1} C_n  &= \partial_{n-1} - \partial_{n-1} \rho_{n-2} \partial_{n-1} -\partial_{n-1} \partial_n \gamma_{n-1} \\
&= \partial_{n-1} - (\id - \rho_{n-3} \partial_{n-2})\partial_{n-1} \\
&= 0
\end{align*}
where by the induction hypothesis we have $\partial_{n-1}\partial_n = 0 = \partial_{n-2} \partial_{n-1}$.  

Let us prove that $(\partial_n\rho_{n-1} + \rho_{n-2}\partial_{n-1})(y)=y$ for any nonzero $y \in P_{\ldot}$. By the proof of \cite[Prop.~5.8]{chouhysolotar} we have that $C_n^i (y)  = 0$ for $i \gg 0$. Then there exists $k \geq 1$ (depending on $y$) such that  $C_n^k(y)=0$ but $C_{n}^{k-1}(y)\neq 0$. 

We claim that for any $0\leq i\leq k$ we have
\begin{equation}
\label{partialrhocn}
(\partial_n \rho_{n-1} + \rho_{n-2} \partial_{n-1}) (C_n^{k-i}(y)) = C_n^{k-i}(y). 
\end{equation}
Note that \eqref{partialrhocn} yields the desired identity since $C_n^0(y)=y$.
Indeed, \eqref{partialrhocn} can be proved by induction on $i$ as follows. Clearly, it holds for $i = 0$ since $C_n^k(y) = 0$. Assume that (\ref{partialrhocn}) holds for $i-1 < k$.  Then it also holds for $i$ since
\begin{align*}
& (\partial_n \rho_{n-1} + \rho_{n-2} \partial_{n-1})(C_n^{k - i}(y))\\
={} & (\partial_n \gamma_{n-1} + \textstyle\sum\limits_{j\geq 1} \partial_n \gamma_{n-1} C_n^{j} + \rho_{n-2} \partial_{n-1})(C_n^{k - i}(y))\\
={} &\textstyle\sum\limits_{j\geq 1} \partial_n \gamma_{n-1} C_n^{k - i  + j}(y) +  C_n^{k-i}(y) - C_n^{k-(i-1)}(y) \\
={} & C_n^{k-i}(y) - \rho_{n-2} \partial_{n-1}(C_n^{k-(i-1)}(y))\\
={} & C_n^{k-i}(y)
\end{align*}
where the first identity follows from $\rho_{n-1} = \gamma_{n-1} + \sum_{j\geq 1} \gamma_{n-1} C_n^j$, the second identity from \eqref{align:Cn}, the third identity from the induction hypothesis $(\partial_n \rho_{n-1} + \rho_{n-2} \partial_{n-1}) (C_n^{k-(i-1)}(y)) = C_n^{k-(i-1)}(y)$, and the fourth identity from $\partial_{n-1} C_n = 0$. This proves the claim. 

Let us prove that $\partial_{n}\partial_{n+1}=0$. For this, we take $y: = \partial_n \delta_{n+1} (1 \otimes w \otimes 1)$ into $(\partial_n \rho_{n-1} + \rho_{n-2} \partial_{n-1}) (y) = y$ and get
\begin{flalign*}
&& \partial_n \rho_{n-1} \partial_n \delta_{n+1} (1 \otimes w \otimes 1) = \partial_n \delta_{n+1}(1 \otimes w \otimes 1) && \llap{for $w\in S_{n+1}$.}
\end{flalign*}
This yields
$\partial_n\partial_{n+1}(1\otimes w\otimes 1)=(\partial_n \delta_{n+1}-\partial_n\rho_{n-1}\partial_n \delta_{n+1})(1\otimes w\otimes 1)=0.$
\end{proof}

\begin{lemma}
\label{lemma:rhosplit}
\begin{enumerate} 
\item \label{rhosplit1} For $n \geq 0$ we have
$$
\rho_{n} = \gamma_{n} + \rho_{n} (\delta_{n+1} \gamma_{n} - \partial_{n+1} \gamma_{n}).
$$

\item \label{rhosplit2} For $u \in \Irr_S$ an irreducible path and for $x \in Q_1$, we have the following useful identities
\begin{align*}
\partial_1 (a\otimes x\otimes b) & = a\pi(x)\otimes b-a\otimes \pi(x)b\\
\partial_2 (a \otimes s \otimes b) & = a \, \mathrm{split_1} (s-\varphi_s) b \\
\rho_0 (a \otimes \pi(u)) &= -a \, \mathrm{split}_1 (u) \\
\rho_1 (a \otimes x \otimes \pi(u)) &= \begin{cases}
0 & \text{if $xu \in \Irr_S$}\\
a \otimes s \otimes 1 + a \rho_1(\mathrm{split}_1(\phi_s) \pi(v))
 & \text{if $xu = sv$ with $s \in S$.}
\end{cases}
\end{align*}
\end{enumerate} 
\end{lemma}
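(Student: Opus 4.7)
For part \itemi{} the idea is just to factor one copy of $E_n := \delta_{n+1}\gamma_n - \partial_{n+1}\gamma_n$ out of the defining series for $\rho_n$ in Proposition \ref{proposition:resolution}:
\[
\rho_n = \gamma_n + \textstyle\sum\limits_{i\geq 1}\gamma_n E_n^i = \gamma_n + \Bigl(\gamma_n + \textstyle\sum\limits_{i\geq 1}\gamma_n E_n^i\Bigr)E_n = \gamma_n + \rho_n E_n.
\]
Termwise convergence on any element is already guaranteed by the proof of Proposition \ref{proposition:resolution}.

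For part \itemii{} the plan is to specialize the recursions of Proposition \ref{proposition:resolution} and part \itemi{} to low degrees and unfold the definitions of $\delta_n$, $\gamma_n$ and $\mathrm{split}_n$. The formula for $\partial_1$ is immediate: $\delta_1(1\otimes x\otimes 1) = \pi(x)\otimes 1 - 1\otimes\pi(x)$, and $\rho_{-1}\partial_0$ annihilates this expression because $\partial_0$ sends both summands to $\pi(x)$. For $\rho_0(1\otimes\pi(u))$ with $u\in\Irr_S$ I apply part \itemi{} at $n = 0$: the definition of $\gamma$ gives $\gamma_0(1\otimes\pi(u)) = -\mathrm{split}_1(u)$, and a direct telescoping computation using \eqref{split1} together with the formula for $\partial_1$ just established shows that $\delta_1\gamma_0$ and $\partial_1\gamma_0$ both send $1\otimes\pi(u)$ to $-(\pi(u)\otimes 1 - 1\otimes\pi(u))$; hence $E_0$ annihilates $1\otimes\pi(u)$ and $\rho_0 = \gamma_0$ there, with left $A$-linearity extending the formula to all $a\otimes\pi(u)$.

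The formula for $\partial_2$ then follows from $\partial_2(1\otimes s\otimes 1) = (\id - \rho_0\partial_1)\mathrm{split}_1(s)$ together with the telescoping identity $\partial_1\mathrm{split}_1(s) = \pi(s)\otimes 1 - 1\otimes\pi(s)$, the vanishing $\rho_0(a\otimes 1) = 0$ from Theorem \ref{theorem:projective}, and the previously computed formula for $\rho_0$ evaluated at the irreducible element $\varphi_s$ (legitimate because $\pi(s) = \pi(\varphi_s)$); the expression collapses to $\mathrm{split}_1(s) - \mathrm{split}_1(\varphi_s) = \mathrm{split}_1(s-\varphi_s)$. For $\rho_1(1\otimes x\otimes\pi(u))$ I apply part \itemi{} at $n = 1$. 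The key combinatorial observation is that irreducibility of $u$ forces any element of $S$ sitting inside $xu$ to begin at the source of $x$, and the non-nesting assumption on $S$ then makes such an element unique. Consequently $\gamma_1(1\otimes x\otimes\pi(u)) = \mathrm{split}_2(xu)$ vanishes when $xu$ is irreducible --- and the recursion terminates at $0$ --- and otherwise equals $1\otimes s\otimes \pi(v)$ for the unique factorization $xu = sv$. In the latter case the formula for $\partial_2$ gives $(\delta_2\gamma_1 - \partial_2\gamma_1)(1\otimes x\otimes\pi(u)) = \mathrm{split}_1(\varphi_s)\pi(v)$, and reinserting into the recursion of part \itemi{} yields the stated formula.

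The whole argument is a sequence of mechanical calculations once part \itemi{} is in place; the main points of care are the uniqueness of the decomposition $xu = sv$ (immediate from the hypotheses on $S$) and the correct collapsing of trivial $\Bbbk Q_0$-factors through the $\otimes_{\Bbbk Q_0}$-identifications so that, e.g., $1\otimes s\otimes\pi(v)$ reduces to $1\otimes s\otimes 1$ whenever $v$ has length zero.
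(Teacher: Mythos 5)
Your proposal is correct and follows essentially the same route as the paper's proof: part \itemi{} by factoring one copy of $\delta_{n+1}\gamma_{n} - \partial_{n+1}\gamma_{n}$ out of the defining series for $\rho_n$, and part \itemii{} by specializing the recursions to low degrees and unwinding $\gamma_n$, $\delta_n$ and $\mathrm{split}_1$ (the paper shortens your $\rho_0$ step by simply noting $\partial_1 = \delta_1$, but your telescoping argument is equivalent). One remark: your computation correctly yields $a \otimes s \otimes \pi(v)$ in the reducible case for $\rho_1$, which is exactly what the paper's own proof obtains and what is used later (cf.\ Lemma \ref{lemma:G2}); the ``$a \otimes s \otimes 1$'' in the printed statement is a typo, so your closing caveat about $v$ having length zero is unnecessary --- do not try to force that collapse in general.
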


\begin{proof}
The first assertion follows since by definition  
\begin{align*}
\rho_{n} &= \gamma_{n} + \Big( \gamma_{n} + \textstyle\sum\limits_{i\geq 1} (\gamma_{n} \delta_{n+1} - \gamma_{n} \partial_{n+1})^i \gamma_{n} \Big) (\delta_{n+1} \gamma_{n} - \partial_{n+1} \gamma_{n}) \\
&= \gamma_{n} + \rho_{n} (\delta_{n+1} \gamma_{n} - \partial_{n+1} \gamma_{n}).
\end{align*}
Let us prove the second assertion. Since $\partial_0 \delta_1 =0$ it follows that $$\partial_1 = (\id - \rho_{-1} \partial_0)  \delta_1 = \delta_1$$ 
which yields the first identity. Thus by \ref{rhosplit1} for $n = 0$ we have $$\rho_0 (a \otimes \pi(u)) = \gamma_0 (a \otimes \pi (u)) = -a \, \mathrm{split}_1 (u).$$

Let us compute $\partial_2$: 
\begin{align*}
\partial_2(a \otimes s \otimes b) & = a (\id - \rho_0 \partial_1) \delta_2(1 \otimes s \otimes 1) b\\
& =a\, \mathrm{split}_1(s) b - a \rho_0 ( \phi_s \otimes 1 - 1 \otimes \phi_s)b \\
& = a\, \mathrm{split}_1(s - \phi_s)b 
\end{align*}
where the third identity follows from the fact that $\rho_0 ( \phi_s \otimes 1) = 0$. 

If $xu\in \Irr_S$ is irreducible, then $\gamma_1 (a \otimes x \otimes \pi(u)) = 0$. It follows from \ref{rhosplit1} that $\rho_1 (a \otimes x \otimes \pi(u)) = 0$.

If $xu = s v$ with $s \in S$, then $\gamma_1 (a \otimes x \otimes \pi(u)) = a \otimes s \otimes \pi(v)$. By \ref{rhosplit1} we obtain $
\rho_1 (a \otimes x \otimes \pi(u)) = a \otimes s \otimes \pi(v) + a \rho_1(\mathrm{split}_1(\phi_s) \pi(v)).$ 
\end{proof}

We give two simple and well-known examples of the resolution\index{resolution!Chouhy--Solotar} $P_\ldot$ in Theorem \ref{theorem:projective} which may be obtained from the reduction systems of Examples \ref{example:monomialreduction} and \ref{example:polynomialreduction}.

\begin{example}
\label{example:bardzell}
{\it Bardzell's resolution\index{resolution!Bardzell|textbf} of monomial algebras.}\; Let $A = \Bbbk Q /\langle S \rangle$ be a monomial algebra and let $R = \{(s, 0) \mid s\in S\}$ be the reduction system for $A$. Then the resolution\index{resolution!Chouhy--Solotar} $P_\ldot$ coincides with Bardzell's resolution \cite{bardzell1,bardzell2} and the homotopy $\rho_\ldot $ with the one given in Sköldberg \cite{skoldberg}. 
\end{example}

\begin{example}
\label{example:polynomialkoszul}
{\it Koszul resolution\index{resolution!Koszul}\index{Koszul!resolution} of the polynomial algebra.}\; Let $A = \Bbbk [x_1, \dotsc, x_d]$ and let $R = \bigl\{ (x_j x_i, x_i x_j) \bigr\}_{1 \leq i < j \leq d}$ be the reduction system in Example \ref{example:polynomialreduction}. Then we have that $S = \{ x_j x_i\}_{1\leq i < j \leq d}$ and 
\begin{align*}
S_k & =\begin{cases} \{ x_{i_k} x_{i_{k-1}} \dotsb x_{i_1} \}_{1 \leq i_1 < \dotsb < i_k \leq d} & \text{for $1\leq k\leq d$}\\
 \emptyset & \text{for $k > d$}.
 \end{cases}
\end{align*}
The resolution $P_\ldot$ is given by 
\[
0 \toarglong{} P_d \toarglong{\partial_d} P_{d-1} \toarglong{\partial_{d-1}} \dotsb \toarglong{\partial_2} P_1 \toarglong{\partial_1} P_0 \toarglong{\partial_0} A
\]
with differential
\begin{align*}
\partial_k(1 \otimes x_{i_k} \dotsb x_{i_1} \otimes 1) ={} \sum_{j=1}^k (-1)^{k-j} \biggl( \biggl\lceil
\begin{aligned}
x_{i_j} & \otimes x_{i_k} \dotsb \widehat x_{i_j} \dotsb x_{i_1} \otimes 1 \\ - 1 &\otimes x_{i_k} \dotsb \widehat x_{i_j} \dotsb x_{i_1} \otimes x_{i_j}
\end{aligned}
\biggr\rfloor \biggr)
\end{align*}
for any $x_{i_k} x_{i_{k-1}} \dotsb x_{i_1} \in S_k$. (Here we use $\lceil \blank \rfloor$ to indicate a line break within the parentheses.) Note that $P_\ldot$ coincides with the usual Koszul resolution\index{resolution!Koszul}\index{Koszul!resolution} of $A$ via the map $1 \otimes x_{i_k} \cdots x_{i_1} \otimes 1 \mapsto 1 \otimes x_{i_k} \wedge \dotsb \wedge x_{i_1} \otimes 1$.
\end{example}

\chapter{Homotopy deformation retract}
\label{section:homotopydeformationretract}

In this section we construct a homotopy deformation retract between the projective resolution\index{resolution!Chouhy--Solotar} $(P_\ldot, \partial_\ldot)$ given in Theorem \ref{theorem:projective} and the bar resolution\index{resolution!bar} $(\Bar_\ldot, d_\ldot)$ (see \S\ref{subsection:barresolutions}).

\begin{notation}
Let $\varpi \colon A \to \bar A = A / (\Bbbk Q_0 \cdot 1_A)$ denote the natural projection and for $n \geq 0$ let $\varpi_n = \id_{A \otimes \bar A^{\otimes n}} \otimes \varpi$ so that
\begin{align}
\label{varpi}
\varpi_n \colon A \otimes \bar A^{\otimes n} \otimes A \to A \otimes \bar A^{\otimes n} \otimes\bar A.
\end{align}
We also set $\varpi_{-1} = \id_A\colon A \to A$.

Given a path $p \in \Bbbk Q$ we write $\bar p = \varpi \pi (p)$, where $\pi \colon \Bbbk Q \to \Bbbk Q / I = A$ is the natural projection.
\end{notation}

Note that by the definition of $d_n$, the maps $\varpi_n$ satisfy the following identities
\begin{flalign}
\label{varpid}
&& d_{n+1} (\varpi_n (y) \otimes b) &= \varpi_{n-1} d_n (y) \otimes b + (-1)^{n+1} y b && \mathllap{\text{for } n \geq 0.}
\end{flalign}

\section{Comparison maps and homotopy}

We first define comparison maps
\begin{align*}
\begin{tikzpicture}[baseline=-2.6pt,description/.style={fill=white,inner sep=1pt,outer sep=0}]
\matrix (m) [matrix of math nodes, row sep=0em, text height=1.5ex, column sep=3em, text depth=0.25ex, ampersand replacement=\&, inner sep=3pt]
{
(P_\ldot, \partial_\ldot) \& (\Bar_\ldot, d_\ldot) \\
};
\path[->,line width=.4pt,font=\scriptsize, transform canvas={yshift=.4ex}]
(m-1-1) edge node[above=-.4ex] {$F_\ldot$} (m-1-2)
;
\path[->,line width=.4pt,font=\scriptsize, transform canvas={yshift=-.4ex}]
(m-1-2) edge node[below=-.4ex] {$G_\ldot$} (m-1-1)
;
\end{tikzpicture}
\end{align*}
where $F_\ldot$ and $G_\ldot$ are morphisms of complexes of $A$-bimodules satisfying $G_\ldot F_\ldot = \id_{P_\ldot}$. (In the monomial case similar comparison maps were given in Redondo--Román \cite{redondoroman}.)

The complexes $P_\ldot$ and $\Bar_\ldot$ are zero in negative degrees. However, it is convenient to set $P_{-1} = \Bar_{-1} = A$ and $F_{-1} = G_{-1} = \varpi_{-1} = \id_A$ and let $\partial_0 = d_0 \colon A \otimes A \to A$ denote the augmentations given by multiplication in $A$.

We can then define
\begin{flalign*}
&& \begin{tikzpicture}[baseline=-2.6pt,description/.style={fill=white,inner sep=1pt,outer sep=0}]
\matrix (m) [matrix of math nodes, row sep=0em, text height=1.5ex, column sep=3em, text depth=0.25ex, ampersand replacement=\&, inner sep=3pt]
{
A \otimes \Bbbk S_n \otimes A \& A \otimes \bar A^{\otimes n} \otimes A \\
};
\path[->,line width=.4pt,font=\scriptsize, transform canvas={yshift=.4ex}]
(m-1-1) edge node[above=-.4ex] {$F_n$} (m-1-2)
;
\path[->,line width=.4pt,font=\scriptsize, transform canvas={yshift=-.4ex}]
(m-1-2) edge node[below=-.4ex] {$G_n$} (m-1-1)
;
\end{tikzpicture}
&& \mathllap{n \geq 0}
\end{flalign*}
recursively by
\begin{align}
F_n (a \otimes \eqmakebox[usw]{$w$} \otimes b) &= (-1)^{n} \varpi_{n-1} F_{n-1} \partial_n (a \otimes w \otimes 1) \otimes b \label{Fn} \\
G_n (a \otimes \eqmakebox[usw]{$y$} \otimes b) &= (\rho_{n-1} G_{n-1} d_n (a \otimes y \otimes 1)) b \label{Gn}
\end{align}
where $\rho_{n-1} \colon A \otimes \Bbbk S_{n-1} \otimes A \to A \otimes \Bbbk S_n \otimes A$ are the morphisms of {\it left} $A$-mod\-ules in Proposition \ref{proposition:resolution}. (Note that $P_0 = \Bar_0 = A \otimes A$ and $F_0 = G_0 = \id_{A \otimes A}$.)

\begin{remark}
Since $\varpi (1) = 0$, the definitions of $F_n$ and $\varpi_n$ immediately give that
\begin{flalign}
\label{Fn1}
&& \varpi_n F_n (a \otimes w \otimes 1) &= 0 && \mathllap{\text{for } n \geq 0.}
\end{flalign}
\end{remark}

\begin{lemma}\label{lemma:FG}
$F_n$ and $G_n$ satisfy the following identities for $n \geq 0$
\begin{enumerate}
\item \eqmakebox[lemmaFG][r]{$d_n F_n = {}$}$F_{n-1} \partial_n$ \label{FG1}
\item \eqmakebox[lemmaFG][r]{$\partial_n G_n = {}$}$G_{n-1} d_n$ \label{FG2}
\item \eqmakebox[lemmaFG][r]{$G_{n-1} F_{n-1} = {}$}$\id_{P_{n-1}}$. \label{FG3}
\end{enumerate}
\end{lemma}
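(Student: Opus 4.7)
The plan is to prove the three identities simultaneously by induction on $n$, exploiting both the recursive definitions of $F_n$ and $G_n$ in \eqref{Fn}--\eqref{Gn} and the properties of the homotopy $\rho_\ldot$ from Theorem \ref{theorem:projective}. The base case $n=0$ is immediate: $F_0 = G_0 = \id_{A \otimes A}$ and $\partial_0 = d_0$ is the multiplication, while $G_{-1} F_{-1} = \id_A = \id_{P_{-1}}$.

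For the inductive step, I would first prove \ref{FG1}. Starting from \eqref{Fn} and applying $d_n$, the key is to invoke the identity \eqref{varpid} with $y = F_{n-1}\partial_n(a \otimes w \otimes 1)$ and $b$ the right-most tensor factor. This expresses $d_n F_n(a \otimes w \otimes b)$ as the sum of a term $\varpi_{n-2} d_{n-1} F_{n-1} \partial_n(a \otimes w \otimes 1) \otimes b$ and a correction term $F_{n-1}\partial_n(a \otimes w \otimes 1) \cdot b$ with matching signs. The first term vanishes because by the inductive hypothesis of \ref{FG1} one has $d_{n-1} F_{n-1} = F_{n-2}\partial_{n-1}$, and $\partial_{n-1}\partial_n = 0$. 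Using that $F_{n-1}$ and $\partial_n$ are right $A$-linear, the remaining term is exactly $F_{n-1}\partial_n(a \otimes w \otimes b)$.

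The proof of \ref{FG2} is dual. From \eqref{Gn}, $\partial_n G_n(a \otimes y \otimes b)$ reads $\partial_n \rho_{n-1} G_{n-1} d_n(a \otimes y \otimes 1) \cdot b$. The homotopy identity $\partial_n \rho_{n-1} + \rho_{n-2}\partial_{n-1} = \id_{P_{n-1}}$ from Theorem \ref{theorem:projective} rewrites this as $G_{n-1} d_n(a \otimes y \otimes 1) \cdot b - \rho_{n-2}\partial_{n-1} G_{n-1} d_n(a \otimes y \otimes 1) \cdot b$. The inductive hypothesis of \ref{FG2} gives $\partial_{n-1} G_{n-1} = G_{n-2} d_{n-1}$, and $d_{n-1} d_n = 0$ kills the second summand. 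Right $A$-linearity of $d_n$ on the bar resolution finishes the identification with $G_{n-1} d_n(a \otimes y \otimes b)$.

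The most delicate step, and the one I expect to be the main obstacle, is \ref{FG3}, which requires careful combination of the previous two. Applying \eqref{Gn} and \eqref{Fn} in turn yields
\[
G_n F_n(a \otimes w \otimes b) = (-1)^n \rho_{n-1} G_{n-1} d_n\bigl(\varpi_{n-1} F_{n-1}\partial_n(a \otimes w \otimes 1) \otimes 1\bigr) \cdot b.
\]
One then applies \eqref{varpid} to the inner $d_n$ term; the $\varpi_{n-2} d_{n-1} F_{n-1}\partial_n$ piece vanishes thanks to \ref{FG1} (already established for $n$) together with $\partial_{n-1}\partial_n = 0$, leaving only the ``correction'' summand which precisely cancels the sign $(-1)^n$ and collapses to $\rho_{n-1} G_{n-1} F_{n-1} \partial_n(a \otimes w \otimes 1) \cdot b$. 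Invoking the inductive hypothesis of \ref{FG3}, this becomes $\rho_{n-1}\partial_n(a \otimes w \otimes 1) \cdot b$. To conclude, I would use the homotopy identity at degree $n$, namely $\partial_{n+1}\rho_n + \rho_{n-1}\partial_n = \id_{P_n}$, combined with the vanishing $\rho_n(a \otimes w \otimes 1) = 0$ for $w \in S_n$ from Theorem \ref{theorem:projective}, giving $\rho_{n-1}\partial_n(a \otimes w \otimes 1) = a \otimes w \otimes 1$. Multiplying on the right by $b$ yields the desired identity. The signs and careful tracking of what is ``tensored with $1$'' versus ``tensored with $b$'' are the only real bookkeeping hurdle; all the structural ingredients (the homotopy identity, the vanishing of $\rho_n$ on $a \otimes w \otimes 1$, and \eqref{varpid}) are already in place.
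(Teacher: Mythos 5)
Your proof is correct and follows essentially the same route as the paper's: simultaneous induction on $n$, using the recursions \eqref{Fn}--\eqref{Gn}, the identity \eqref{varpid}, the homotopy relation $\partial_{n+1}\rho_n + \rho_{n-1}\partial_n = \id_{P_n}$ together with $\rho_n(a\otimes w\otimes 1)=0$, and the vanishing of $d_{n-1}d_n$ and $\partial_{n-1}\partial_n$. The paper only writes out part \itemiii{} in detail (your computation for it matches theirs step for step, including the final collapse to $\rho_{n-1}\partial_n(a\otimes w\otimes 1)b = a\otimes w\otimes b$), and your arguments for \itemi{} and \itemii{} are exactly the ``similar'' inductions the paper leaves to the reader; the only cosmetic slip is that the vanishing term in \itemiii{} uses \itemi{} at level $n-1$ rather than at level $n$, which is of course still available from the induction hypothesis.
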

\begin{proof}
Each part can be proved by induction on $n$ using the identity (\ref{varpid}), the recursive definitions of $F_n$ and $G_n$ given in (\ref{Fn}) and (\ref{Gn}), the fact that $\rho_\ldot$ is a homotopy retract and the fact that $d_{n-1} d_n = 0$ and $\partial_{n-1} \partial_n = 0$. For each part the case $n = 0$ is immediate from the definitions. A proof can also be found in \cite[Lem.~2.4 \& Lem.~2.5]{ivanovivanovvolkovzhou}.

We illustrate the proof of \ref{FG3}, where indeed $G_0 F_0 = \id_{A \otimes A}$ by definition. For $n \geq 1$, we then have 
\begin{align*}
G_n F_n (a \otimes w \otimes b) &= (-1)^n (\rho_{n-1} G_{n-1} d_n (\varpi_{n-1} F_{n-1} \partial_n (a \otimes w \otimes 1) \otimes 1)) b \\
&= (-1)^{n} (\rho_{n-1} G_{n-1} (\varpi_{n-2} F_{n-2} \partial_{n-1} \partial_n (a \otimes w \otimes 1) \otimes 1)) b \\ &\quad + (\rho_{n-1} G_{n-1} F_{n-1} \partial_n (a \otimes w \otimes 1)) b \\
&= (\rho_{n-1} \partial_n (a \otimes w \otimes 1)) b \\
&= a \otimes w \otimes b
\end{align*}
where the second identity follows from (\ref{varpid}) and $d_{n-1}F_{n-1} = F_{n-2}\partial_{n-1}$, the third identity  from $\partial_{n-1}\partial_n = 0$ and the induction hypothesis $G_{n-1}F_{n-1}=\id$, and the last identity  from $\id_{P_n} = \rho_{n-1} \partial_n + \partial_{n+1} \rho_{n}$ and $\rho_{n} (a \otimes w \otimes 1) = 0$. 
\end{proof}

We can now give a homotopy $h_\ldot$ by $A$-bimodule morphisms $h_n \colon \Bar_n \rightarrow \Bar_{n+1}$ for $n \geq 0$, defining 
\begin{align}
\label{varphi1}
h_n (a_0 \otimes \bar a_{1 ... n} \otimes a_{n+1}) &= \sum_{i=1}^{n}(-1)^{i+1} \varpi_i F_i G_i (a_0 \otimes \bar a_{1 ... i} \otimes 1) \otimes \bar a_{i+1 ... n} \otimes a_{n+1}. 
\end{align}
Note that for $n = 0$ the sum is empty and $h_0 = 0$. Alternatively, $h_n$ can be defined recursively for $n \geq 1$ by
\begin{align}
\label{varpih}
\begin{split}
h_n (a_0 \otimes \bar a_{1 ... n} \otimes a_{n+1}) &= \varpi_n h_{n-1}(a_0 \otimes \bar a_{1 ... n-1}\otimes a_{n})\otimes a_{n+1} \\
&\quad {}+ (-1)^{n+1} \varpi_n F_n G_n (a_0 \otimes \bar a_{1 ... n} \otimes 1) \otimes a_{n+1}.
\end{split}
\end{align}

\begin{theorem}\label{theorem:hdr}
We have a homotopy deformation retract
\begin{align}
\begin{tikzpicture}[baseline=-2.6pt,description/.style={fill=white,inner sep=1pt,outer sep=0}]
\matrix (m) [matrix of math nodes, row sep=0em, text height=1.5ex, column sep=3em, text depth=0.25ex, ampersand replacement=\&, inner sep=1.5pt]
{
(P_\ldot, \partial_\ldot) \& (\Bar_\ldot, d_\ldot) \\
};
\path[->,line width=.4pt,font=\scriptsize, transform canvas={yshift=.4ex}]
(m-1-1) edge node[above=-.4ex] {$F_\ldot$} (m-1-2)
;
\path[->,line width=.4pt,font=\scriptsize, transform canvas={yshift=-.4ex}]
(m-1-2) edge node[below=-.4ex] {$G_\ldot$} (m-1-1)
;
\path[->,line width=.4pt,font=\scriptsize, looseness=8, in=30, out=330]
(m-1-2.355) edge node[right=-.4ex] {$h_\ldot$} (m-1-2.5)
;
\end{tikzpicture}
\end{align}
namely $G_n F_n = \id$ and $F_n G_n - \id = h_{n-1} d_n + d_{n+1} h_n$ for each $n \geq 0$ (setting $h_{-1} = 0$).
Moreover, the homotopy deformation retract is {\rm special}, i.e.\ it satisfies
\begin{flalign}
\label{special}
&& h_n F_n &= 0 && G_{n+1} h_n=0 && h_{n+1} h_n=0. &&
\end{flalign}
\end{theorem}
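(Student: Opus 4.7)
The identity $G_n F_n = \id_{P_n}$ is exactly Lemma~\ref{lemma:FG}\ref{FG3}, so no further work is needed there. The remaining four claims — the homotopy identity $F_n G_n - \id = h_{n-1} d_n + d_{n+1} h_n$ and the three side conditions \eqref{special} — will each be proved by induction on $n$, using the recursive definitions \eqref{Fn}, \eqref{Gn}, and \eqref{varpih}, together with the derivation-like identity \eqref{varpid} as the main workhorse.

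For the homotopy identity, the base case $n = 0$ is immediate. For the inductive step I evaluate both sides on $a_0 \otimes \bar a_{1\dotsc n} \otimes a_{n+1}$, expand $h_n$ via \eqref{varpih}, and use \eqref{varpid} to commute $d_{n+1}$ past each $\varpi_n$. This produces four pieces: an $F_{n-1} G_{n-1}$-piece coming from applying the inductive hypothesis to $d_n h_{n-1}$ inside the first summand of $h_n$; a $-\id$-piece from the same source which will supply the global $-\id$; a $-h_{n-2} d_{n-1}$-piece which combines with the inner bar-face summands of $h_{n-1} d_n$; and, finally, a piece $F_n G_n(a_0 \otimes \bar a_{1\dotsc n} \otimes a_{n+1})$ arising from $d_n F_n G_n = F_{n-1} G_{n-1} d_n$ (Lemma~\ref{lemma:FG}\ref{FG1}--\ref{FG2}) against the ``unbarred'' contribution of \eqref{varpid}. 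A careful cancellation of the leftover pieces against the two outer face summands of $h_{n-1} d_n$ then collapses the sum to $F_n G_n - \id$.

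For the three side conditions, each has a clean inductive proof. For $h_n F_n = 0$: by right $A$-linearity $F_n(a \otimes w \otimes b) = (-1)^n Y \otimes b$ with $Y := \varpi_{n-1} F_{n-1} \partial_n(a \otimes w \otimes 1)$; feeding $Y \otimes b$ into \eqref{varpih}, the second summand vanishes because $F_n G_n(Y \otimes 1) = Y \otimes 1$ (using $G_n F_n = \id$) and $\varpi_n$ kills anything ending in $\otimes 1$, while the first summand vanishes upon choosing the natural lift $F_{n-1} \partial_n(a \otimes w \otimes 1) \in \Bar_{n-1}$ and invoking the inductive hypothesis $h_{n-1} F_{n-1} = 0$. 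For $G_{n+1} h_n = 0$: reduce by right $A$-linearity to $a_{n+1} = 1$, expand $G_{n+1}(\varpi_n y \otimes 1) = \rho_n G_n(\varpi_{n-1} d_n(y) \otimes 1) + (-1)^{n+1} \rho_n G_n(y)$ via \eqref{Gn} and \eqref{varpid} for each $y$ appearing as a summand of $h_n$, and simplify using Lemma~\ref{lemma:FG}, the vanishing $\rho_n(a \otimes w \otimes 1) = 0$ from Theorem~\ref{theorem:projective}, and the inductive hypothesis. Finally $h_{n+1} h_n = 0$ follows once $G_{n+1} h_n = 0$ is available: expanding $h_{n+1}$ on $h_n(x)$ via \eqref{varpih} kills the second summand by $G_{n+1} h_n = 0$, and the first summand is killed by choosing the natural lift and invoking the inductive hypothesis $h_n h_{n-1} = 0$.

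The main obstacle is the sign- and index-bookkeeping in the homotopy identity, where correctly pairing the two bar-level terms produced by each use of \eqref{varpid} against the bar-face summands of $d_n h_{n-1}$ (themselves expanded via \eqref{differential} and the inductive hypothesis) and against the face summands of $h_{n-1} d_n$ requires careful accounting. Conceptually, however, the result is clean: $(F_\ldot, G_\ldot)$ is a strict retraction in the $P$-direction by Lemma~\ref{lemma:FG}, while the assignment $y \mapsto (-1)^n \varpi_n y \otimes 1$ furnishes a contracting cochain homotopy along the bar direction as witnessed by \eqref{varpid}. The formula \eqref{varphi1} simply iterates this contraction one bar-level at a time, and it is exactly this telescoping structure that forces the side conditions \eqref{special} to hold.
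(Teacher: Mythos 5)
Your proposal is correct and follows essentially the same route as the paper: the paper's proof consists of citing Lemma \ref{lemma:FG} \ref{FG3} for $G_nF_n=\id$ and then asserting that the remaining identities follow by induction from Lemma \ref{lemma:FG}, the identities \eqref{varpid}--\eqref{Fn1} and \eqref{varpih}, and $\rho_n(a\otimes w\otimes 1)=0$ --- exactly the ingredients and inductive scheme you spell out (your expansion of $d_{n+1}h_n$ via \eqref{varpid} and the cancellations against $h_{n-1}d_n$, as well as the treatment of the side conditions, check out; the only tiny omission is that the first summand in the $h_{n+1}h_n=0$ step also uses $h_nF_n=0$ for the top index $i=n$, which you have already established).
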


\begin{proof}
The first identity is proved in Lemma \ref{lemma:FG} \ref{FG3}. The other identities can be proved similarly by induction, using the identities of Lemma \ref{lemma:FG} as well as identities (\ref{varpid})--(\ref{Fn1}) and (\ref{varpih}) as well as $\rho_n (a \otimes w \otimes 1) = 0$ (cf.\ Theorem \ref{theorem:projective}).
\end{proof}

\begin{remark}
\label{relativestandard}
For an augmented algebra $A \toarg{\epsilon} \Bbbk Q_0$ (i.e.\ augmented over $\Bbbk Q_0 \simeq \Bbbk \times \dotsb \times \Bbbk$), the maps $\bar A \hookrightarrow A$ given by $\bar A \simeq \ker \epsilon \subset A$ and $\varpi \colon A \to \bar A = A / (\Bbbk Q_0 \cdot 1_A)$ induce comparison maps between the ($\Bbbk Q_0$-relative) normalized bar resolution\index{resolution!bar}\index{resolution!bar!normalized} and the standard bar resolution. Moreover, it is straightforward to construct a homotopy deformation retract using a formula similar to (\ref{varpih}), so that $\Bar_\ldot$ in Theorem \ref{theorem:hdr} can also be taken to be the standard bar resolution. The homotopy $h_\ldot$ is inspired by He--Li--Li \cite[Def.~3.3]{helili}.
\end{remark}

\section{Maps in low degrees}
\label{subsection:lowdegrees}

In this section we give explicit expressions for the maps $\partial_\ldot$, $F_\ldot$ and $G_\ldot$ in low degrees, namely for the labelled maps in
\begin{align*}
\begin{tikzpicture}[baseline=-2.6pt,description/.style={fill=white,inner sep=1.75pt}]
\matrix (m) [matrix of math nodes, row sep={5em,between origins}, text height=1.5ex, column sep={5em,between origins}, text depth=0.25ex, ampersand replacement=\&]
{
\cdots \&    P_3 \&    P_2 \&    P_1 \&    P_0 \\
\cdots \& \Bar_3 \& \Bar_2 \& \Bar_1 \& \Bar_0. \\
};
\path[->,line width=.4pt]
(m-1-1) edge (m-1-2)
(m-1-2) edge node[above=-.4ex,font=\scriptsize] {$\partial_3$} (m-1-3)
(m-1-3) edge node[above=-.4ex,font=\scriptsize] {$\partial_2$} (m-1-4)
(m-1-4) edge node[above=-.4ex,font=\scriptsize] {$\partial_1$} (m-1-5)
(m-2-1) edge (m-2-2)
(m-2-2) edge node[above=-.4ex,font=\scriptsize] {} (m-2-3)
(m-2-3) edge node[above=-.4ex,font=\scriptsize] {} (m-2-4)
(m-2-4) edge node[above=-.4ex,font=\scriptsize] {} (m-2-5)
;
\path[->,line width=.4pt,transform canvas={xshift=2pt}]
(m-1-2) edge node[right=-.3ex,font=\scriptsize,pos=.55] {$F_3$} (m-2-2)
(m-1-3) edge node[right=-.3ex,font=\scriptsize,pos=.55] {$F_2$} (m-2-3)
(m-1-4) edge node[right=-.3ex,font=\scriptsize,pos=.55] {$F_1$} (m-2-4)
;
\path[->,line width=.4pt,transform canvas={xshift=-2pt}]
(m-2-2) edge (m-1-2)
(m-2-3) edge node[left=-.3ex,font=\scriptsize,pos=.45] {$G_2$} (m-1-3)
(m-2-4) edge node[left=-.3ex,font=\scriptsize,pos=.45] {$G_1$} (m-1-4)
;
\path[->,line width=.4pt,out=200,in=340,looseness=1]
(m-1-5) edge node[below=-.4ex,font=\scriptsize] {$\rho_0$} (m-1-4)
(m-1-4) edge node[below=-.4ex,font=\scriptsize] {$\rho_1$} (m-1-3)
(m-1-3) edge (m-1-2)
(m-1-2) edge (m-1-1)
;
\draw [double equal sign distance,line width=.4pt]
(m-1-5) to (m-2-5)
;
\end{tikzpicture}
\end{align*}
These maps will be used in Chapter \ref{section:deformations-of-path-algebras} to describe the deformation theory of $A = \Bbbk Q / I$.

Recall from Lemma \ref{lemma:rhosplit} that the differential $\partial_\ldot$ and the homotopy $\rho_\ldot$ are given in low degrees by
\begin{align*}
\partial_1 (a\otimes x\otimes b) & = a\pi(x)\otimes b-a\otimes \pi(x)b\\
\partial_2 (a \otimes s \otimes b) & = a \, \mathrm{split_1} (s-\varphi_s) b \\
\rho_0 (a \otimes \pi(u)) &= -a \, \mathrm{split}_1 (u) \\
\rho_1 (a \otimes x \otimes \pi(u)) &= \begin{cases}
0 & \text{if $xu \in \Irr_S$}\\
a \otimes s \otimes 1 + a \rho_1(\mathrm{split}_1(\phi_s) \pi(v))
 & \text{if $xu = sv$.}
\end{cases}
\end{align*}
(A formula for $\mathrm{split}_1$ was given in (\ref{split1}).)

\begin{lemma}
\label{lemma:low-degree}
In low degrees we have the following formulae for $\partial_\ldot$, $F_\ldot$ and $G_\ldot$.
\begin{enumerate}[itemsep=.75em]
\item \eqmakebox[lowdeg][r]{$G_1 (a \otimes \bar u \otimes b) = {}$}$a \, \mathrm{split}_1 (u) \hair b$ \\[.4em]
      \eqmakebox[lowdeg][r]{$G_2 (a \otimes \bar u \otimes \bar v \otimes b) = {}$}$a \hair \rho_1 (\mathrm{split}_1 (u) \pi(v)) \hair b$
\label{lowdeg1}
\item \eqmakebox[lowdeg][r]{$\partial_3 (a \otimes \makebox[.7em]{$w$} \otimes b) = {}$}$\delta_3 (a \otimes w \otimes b) + G_2 (a \otimes \bar u \otimes \bar \varphi_s \otimes b)$ \\[.4em]
      \eqmakebox[lowdeg][r]{}${} - G_2 (a \otimes \bar \varphi_{s'} \otimes \bar v \otimes b)$
\label{lowdeg2}
\item \eqmakebox[lowdeg][r]{$F_1 (a \otimes \makebox[.7em]{$x$} \otimes b) = {}$}$ a \otimes \bar x \otimes b$ \\[.4em]
      \eqmakebox[lowdeg][r]{$F_2 (a \otimes \makebox[.7em]{$s$} \otimes b) = {}$}$a \hair \varpi_1\mathrm{split_1}(s - \varphi_s)\otimes b$ \\[.4em]
      \eqmakebox[lowdeg][r]{$F_3 (a \otimes \makebox[.7em]{$w$} \otimes b) = {}$}$  a \hair \varpi_1 \mathrm{split}_1(s' - \phi_{s'}) \otimes \bar v \otimes b$ \\[.4em]
      \eqmakebox[lowdeg][r]{}${} - a \hair \varpi_2 F_2 G_2 (1\otimes \bar u \otimes \bar \varphi_s \otimes 1 - 1 \otimes \bar \varphi_{s'} \otimes \bar v \otimes 1 ) \otimes b$
\label{lowdeg3}
\end{enumerate}
where $a, b \in A$, and in \textup{\ref{lowdeg1}} $u, v$ are arbitrary irreducible paths and in \textup{\ref{lowdeg2}--\ref{lowdeg3}} $x \in Q_1$, $s, s' \in S$ and $w \in S_3$ with $w = u s = s' v$ for some (irreducible) paths $u, v$.
\end{lemma}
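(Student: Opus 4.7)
The plan is to derive each formula by directly unravelling the recursions \eqref{Fn} and \eqref{Gn} (combined with the expression for $\partial_n$ from Proposition \ref{proposition:resolution}) and then simplifying via the already-established low-degree formulas for $\partial_1$, $\partial_2$, $\rho_0$, $\rho_1$ recalled at the start of \S\ref{subsection:lowdegrees}. I would prove the three parts in the order \ref{lowdeg1}, \ref{lowdeg2}, \ref{lowdeg3}, since each uses the preceding. The key observation underlying everything is the vanishing
\[
\rho_1\,\mathrm{split}_1 (w') = 0 \qquad \text{for every irreducible path } w' \in \Irr_S,
\]
which is immediate from the formula for $\rho_1$: every summand of $\mathrm{split}_1 (w')$ has a tail $w'_i w'_{i+1} \cdots w'_n$ that is a suffix of $w'$, hence irreducible. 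Together with the Leibniz identity \eqref{split1pq} and the left $A$-linearity of $\rho_1$, this makes the whole verification mechanical.

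For \ref{lowdeg1}, the formula for $G_1 = \rho_0 d_1$ is immediate from $\rho_0 (c \otimes 1) = 0$ (the $n=0$ case of $\rho_n (a \otimes w \otimes 1) = 0$ in Theorem \ref{theorem:projective}). For $G_2 = \rho_1 G_1 d_2$, applying $G_1$ to the three summands of $d_2 (a \otimes \bar u \otimes \bar v \otimes 1)$ yields
\[
a \pi(u)\,\mathrm{split}_1 (v) \;-\; a\,\mathrm{split}_1 (\sigma\pi (uv)) \;+\; a\,\mathrm{split}_1 (u)\,\pi(v),
\]
where $\sigma\pi(uv) \in \Bbbk\Irr_S$ is the canonical representative of the middle factor $\overline{\pi(u)\pi(v)}$. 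Applying $\rho_1$ and using left $A$-linearity: the first term vanishes by the key observation applied to the irreducible $v$, the middle term vanishes by the same observation applied (by linearity) to the irreducible combination $\sigma\pi(uv)$, and only $a\,\rho_1 (\mathrm{split}_1 (u)\pi(v))\,b$ survives.

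For \ref{lowdeg2}, use $\partial_3 = (\id - \rho_1 \partial_2)\delta_3$ together with $\delta_3 (1 \otimes w \otimes 1) = \pi(u) \otimes s \otimes 1 - 1 \otimes s' \otimes \pi(v)$ for the $1$-ambiguity $w = us = s'v$. For the first summand, left $A$-linearity combined with $\rho_1\,\mathrm{split}_1(\varphi_s) = 0$ give $\rho_1 \partial_2 (\pi(u) \otimes s \otimes 1) = \pi(u)\,\rho_1\,\mathrm{split}_1 (s - \varphi_s) = \pi(u)\,\rho_1\,\mathrm{split}_1 (s) = \pi(u) \otimes s \otimes 1$, exactly cancelling the corresponding $\delta_3$-contribution. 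For the second summand, apply Leibniz twice to write $\mathrm{split}_1(s')\pi(v) = \mathrm{split}_1(us) - \pi(s')\mathrm{split}_1(v) = \pi(u)\mathrm{split}_1(s) + \mathrm{split}_1(u)\pi(s) - \pi(s')\mathrm{split}_1(v)$. The $\pi(s')\mathrm{split}_1(v)$-term is killed by $\rho_1$ (via the key observation on $v$); $\rho_1(\pi(u)\mathrm{split}_1(s)) = \pi(u) \otimes s \otimes 1$ as above; and using $\pi(s) = \varphi_s$ in $A$ together with part \ref{lowdeg1}, $\rho_1(\mathrm{split}_1(u)\varphi_s) = G_2 (1 \otimes \bar u \otimes \bar \varphi_s \otimes 1)$. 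Combining these with $-\rho_1(\mathrm{split}_1(\varphi_{s'})\pi(v)) = -G_2(1 \otimes \bar \varphi_{s'} \otimes \bar v \otimes 1)$ yields the claim.

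For \ref{lowdeg3}, $F_1$ is immediate from $F_1 = -\varpi_0 \partial_1 (\cdot \otimes 1) \otimes b$ and $\varpi(1_A) = 0$, and $F_2 = \varpi_1 F_1 \partial_2$ follows at once from the explicit $\partial_2 = \mathrm{split}_1(s - \varphi_s)$ together with $F_1 (a \otimes x \otimes b) = a \otimes \bar x \otimes b$. For $F_3 = -\varpi_2 F_2 \partial_3$, I would substitute the formula for $\partial_3$ from \ref{lowdeg2}: the $\pi(u) \otimes s \otimes 1$-summand of $\delta_3$ gives, after $F_2$, a term with last slot $1_A$, killed by $\varpi_2$; the $-1 \otimes s' \otimes \pi(v)$-summand contributes $-a\,\varpi_1\mathrm{split}_1(s'-\varphi_{s'}) \otimes \bar v$, which becomes the first term of the claim after the outer sign flip; and the two $G_2$-summands assemble into the $-\varpi_2 F_2 G_2(\cdot - \cdot)$-term. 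The main obstacle throughout is the careful handling of the middle factor $\overline{\pi(u)\pi(v)}$ appearing in $d_2$ and the sign-tracking in the recursive computation of $F_3$; both are managed cleanly by the key vanishing observation together with Leibniz.
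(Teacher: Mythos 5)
Your proposal is correct and follows essentially the same route as the paper: unwind the recursions \eqref{Fn}, \eqref{Gn} and the formula for $\partial_n$, and simplify using the vanishing $\rho_1(\mathrm{split}_1(u')) = 0$ for irreducible $u'$ (which the paper extracts from Lemma \ref{lemma:rhosplit} \ref{rhosplit2}) together with the Leibniz rule \eqref{split1pq}. Your reorganization of the $\partial_3$ computation --- cancelling the summand $\pi(u)\otimes s\otimes 1$ of $\delta_3$ against $\rho_1\partial_2$ and regenerating the full $\delta_3$ from the other summand --- rests on exactly the same Leibniz manipulation of $\mathrm{split}_1(s')\pi(v)$ as the paper's version, so the two arguments coincide.
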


\begin{proof}
The formulae for  $G_1$ and $F_1, F_2$ follow from the definitions. Let us compute $G_2$:
\begin{align*}
G_2 (a \otimes \bar u \otimes \bar v \otimes b) &= (\rho_1 G_1 d_2 (a \otimes \bar u \otimes \bar v \otimes 1)) b \\
&= a \rho_1 \Big(\pi(u) \mathrm{split}_1 (v) - \mathrm{split}_1 (\sigma \pi (uv)) +  \mathrm{split_1} (u) \pi (v)\Big) b \\
&= a \rho_1 (\mathrm{split}_1 (u) \pi(v)) b
\end{align*} 
where the third identity follows from the identity $\rho_1 (a \hair \mathrm{split}_1 (u)) = 0$ for any irreducible path $u$ (see Lemma \ref{lemma:rhosplit} \ref{rhosplit2}). 

To compute $\partial_3$, let $w \in S_3$. Then
\begin{align*}
\partial_3 (a \otimes w \otimes b) &= \delta_3 (a \otimes w \otimes b) - a(\rho_1 \partial_2 \delta_3 (1 \otimes w \otimes 1)) b \\
&= \delta_3 (a \otimes w \otimes b) - a \rho_1 (\pi(u) \mathrm{split_1} (s)) b + a \rho_1 (\pi (u) \mathrm{split_1} (\varphi_s)) b \\
& \quad + a \rho_1 (\mathrm{split_1} (s') \pi (v)) b - a \rho_1 (\mathrm{split_1} (\varphi_{s'}) \pi (v)) b \\
&= \delta_3 (a \otimes w \otimes b) + G_2 (a \otimes \bar u \otimes \bar \varphi_s \otimes b) - G_2 (a \otimes \bar \varphi_{s'} \otimes \bar v \otimes b)
\end{align*}
where the third identity follows since by Lemma \ref{lemma:rhosplit} \ref{rhosplit2} we have $\rho_1 (\pi (u) \mathrm{split_1} (\varphi_s)) = 0$ and by \eqref{split1pq} we have 
\begin{align*}
 - a \rho_1 (\pi(u) \mathrm{split_1} (s)) b + a \rho_1 (\mathrm{split_1} (s') \pi (v)) b &= a \rho_1(\mathrm{split_1} (u) \pi (\varphi_s)) b \\ &\quad -  a \rho_1( \pi(\phi_{s'}) \mathrm{split_1} (v)) b \\
 & = G_2 (a \otimes \bar u \otimes \bar \varphi_s \otimes b). 
\end{align*} 

The formula of $F_3$ follows straight from the formulae of $\partial_3$ and $F_2$ and the identity $\varpi_2 F_2 (\pi(u) \otimes s \otimes 1) = 0$.
\end{proof}

The following formulae for $G_2$ and $h_2$ will be used in Chapter \ref{section:deformations-of-path-algebras}.

\begin{lemma}\label{lemma:G2}
Let $u$ be an irreducible path.
\begin{enumerate}
\item \label{G2i} For $x \in Q_1$ such that $x u = s v$ with $s \in S$ we have 
\begin{align*}
G_2 (1 \otimes \bar x \otimes \bar u \otimes 1) &= 1 \otimes s \otimes \pi (v) + G_2 (1\otimes \bar \varphi_s \otimes \bar v \otimes 1)\\
h_2 (1 \otimes \bar x \otimes \bar u \otimes 1) & = - \varpi_1 \, \mathrm{split}_1(s) \otimes \bar v \otimes 1 + h_2(1 \otimes \bar \phi_s \otimes \bar v \otimes 1)\\
\intertext{In particular, if $x u = s$ then}
G_2 (1 \otimes \bar x \otimes \bar u \otimes 1) &= 1 \otimes s \otimes 1\\
h_2 (1 \otimes \bar x \otimes \bar u \otimes 1) &= 0. 
\end{align*}
\item \label{G2ii} Write $u = u_1 \cdots u_n$ with $u_i \in Q_1$ for $i = 1, \dotsc, n$. Then for any (irreducible) path $v$ we have
\begin{align*}
G_2 (1 \otimes \bar u \otimes \bar v \otimes 1) & = \sum_{j=1}^n \pi (u_1 \cdots u_{j-1}) G_2 (1 \otimes \bar u_j \otimes \overbar{u_{j+1}\cdots u_n v} \otimes 1)\\
h_2 (1 \otimes \bar u \otimes \bar v \otimes 1) & = \sum_{j=1}^{n-1} \pi(u_1\dotsc u_{j-1}) \otimes \bar u_j \otimes \overbar{u_{j+1}\dotsc u_n} \otimes \bar v \otimes 1 \\
&\quad +\sum_{j=1}^n \pi (u_1 \cdots u_{j-1}) h_2 (1 \otimes \bar u_j \otimes \overbar{u_{j+1}\cdots u_n v} \otimes 1).
\end{align*}
In particular, if $uv$ is also irreducible then we have 
$$G_2 (1 \otimes \bar u \otimes \bar v \otimes 1)  = 0.$$
\end{enumerate}
\end{lemma}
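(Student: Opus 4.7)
My plan is to prove both parts by direct computation starting from the explicit expressions for $G_2$, $F_2$ and $\rho_1$ given in Lemmas \ref{lemma:low-degree} and \ref{lemma:rhosplit}, together with the recursive definition \eqref{varpih} of $h_n$. Part \ref{G2i} is the single-arrow base case, and part \ref{G2ii} is reduced to single arrows by expanding $\mathrm{split}_1(u)$ via \eqref{split1} and using the left-$A$-linearity of $\rho_1$.

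For part \ref{G2i}, the $G_2$ identity is immediate: since $x \in Q_1$, $\mathrm{split}_1(x) = 1 \otimes x \otimes 1$, so Lemma \ref{lemma:low-degree}\ref{lowdeg1} gives $G_2(1 \otimes \bar x \otimes \bar u \otimes 1) = \rho_1(1 \otimes x \otimes \pi(u))$, and Lemma \ref{lemma:rhosplit}\ref{rhosplit2} yields the claimed expansion when $xu = sv$. For the $h_2$ formula I would expand via \eqref{varpih}; the $\varpi_2 h_1(1 \otimes \bar x \otimes \pi(u)) \otimes 1$ contribution vanishes because $h_1(1 \otimes \bar x \otimes \pi(u)) = \varpi_1 F_1 G_1(1 \otimes \bar x \otimes 1) \otimes \pi(u) = (1 \otimes \bar x \otimes \bar 1) \otimes \pi(u) = 0$ by $\bar 1 = 0$. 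Substituting the $G_2$ formula and the expression for $F_2$ from Lemma \ref{lemma:low-degree}\ref{lowdeg3}, the right-hand side becomes $-\varpi_1 \mathrm{split}_1(s - \varphi_s) \otimes \bar v \otimes 1 - \varpi_2 F_2 G_2(1 \otimes \bar \varphi_s \otimes \bar v \otimes 1) \otimes 1$. The auxiliary identity
\[
\varpi_2 h_1(1 \otimes \bar f \otimes \pi(v)) \otimes 1 = \varpi_1 \mathrm{split}_1(f) \otimes \bar v \otimes 1
\]
for any irreducible $f$ (which follows directly from the explicit form of $h_1$, with the boundary term again killed by $\bar 1 = 0$) then lets me combine the $+\varpi_1 \mathrm{split}_1(\varphi_s) \otimes \bar v \otimes 1$ piece with the $G_2$-remainder into $h_2(1 \otimes \bar \varphi_s \otimes \bar v \otimes 1)$ via \eqref{varpih}, yielding exactly the stated identity. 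The special case $xu = s$ corresponds to $v$ being a vertex idempotent, so $\bar v = 0$ kills the remainder terms, leaving $G_2(1 \otimes \bar x \otimes \bar u \otimes 1) = 1 \otimes s \otimes 1$ and $h_2(1 \otimes \bar x \otimes \bar u \otimes 1) = 0$.

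For part \ref{G2ii}, the formula for $G_2$ follows from Lemma \ref{lemma:low-degree}\ref{lowdeg1} by expanding $\mathrm{split}_1(u) = \sum_{j=1}^n \pi(u_1 \cdots u_{j-1}) \otimes u_j \otimes \pi(u_{j+1} \cdots u_n)$ via \eqref{split1}, multiplying by $\pi(v)$ on the right, and pulling out each factor $\pi(u_1 \cdots u_{j-1})$ by left-$A$-linearity of $\rho_1$; the remaining pieces are precisely $G_2(1 \otimes \bar u_j \otimes \overline{u_{j+1} \cdots u_n v} \otimes 1)$. For $h_2$ I would again apply \eqref{varpih}: the $\varpi_2 h_1$ summand computes directly from the explicit form of $h_1$, with the $j = n$ term vanishing because $\bar 1 = 0$, which produces the first sum in the statement. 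The $-\varpi_2 F_2 G_2$ summand, after inserting the $G_2$ expansion, decomposes as a sum over $j$ of $\pi(u_1 \cdots u_{j-1})$ times $-\varpi_2 F_2 G_2(1 \otimes \bar u_j \otimes \overline{u_{j+1} \cdots u_n v} \otimes 1) \otimes 1$; since each $u_j \in Q_1$ is a single arrow, the vanishing of $h_1$ established in part \ref{G2i} identifies each such term with $h_2(1 \otimes \bar u_j \otimes \overline{u_{j+1} \cdots u_n v} \otimes 1)$, yielding the second sum. The final vanishing statement, when $uv$ is irreducible, is immediate: each $u_j u_{j+1} \cdots u_n v$ is then an irreducible subpath of $uv$, so Lemma \ref{lemma:rhosplit}\ref{rhosplit2} forces $\rho_1(1 \otimes u_j \otimes \pi(u_{j+1} \cdots u_n v)) = 0$.

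The whole argument is essentially bookkeeping, and the main pitfall to guard against is tracking on which tensor factor each of $\varpi_n$, $F_n$, $G_n$ and $\mathrm{split}_1$ is acting. Once the auxiliary identity relating $\varpi_2 h_1 \otimes 1$ and $\varpi_1 \mathrm{split}_1 \otimes \bar v \otimes 1$ is in hand, both identities close up cleanly through the repeated use of $\bar 1 = 0$.
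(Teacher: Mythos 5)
Your proof is correct and follows essentially the same route as the paper: both compute $G_2$ from the explicit formula $G_2(1\otimes\bar u\otimes\bar v\otimes 1)=\rho_1(\mathrm{split}_1(u)\pi(v))$ together with the recursion for $\rho_1$ in Lemma \ref{lemma:rhosplit}, and then read off the $h_2$ identities from the formula \eqref{varphi1}/\eqref{varpih} for $h_\ldot$, using $\bar 1 = 0$ to kill the boundary terms. Your auxiliary identity $\varpi_2 h_1(1\otimes\bar f\otimes\pi(v))\otimes 1=\varpi_1\mathrm{split}_1(f)\otimes\bar v\otimes 1$ is exactly the step the paper leaves implicit in the phrase ``combining this with \eqref{varphi1}''.
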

\begin{proof}
Let us prove \ref{G2i}. It follows from Lemma \ref{lemma:low-degree} that 
\begin{align*}
G_2 (1\otimes \bar x \otimes \bar u \otimes 1) &= \rho_1 (\mathrm{split_1} (x) \pi (u)) \\
&= 1 \otimes s \otimes \pi(v) + \rho_1 (\mathrm{split_1} (\varphi_s) \pi (v)) \\
&= 1 \otimes s \otimes \pi(v) + G_2 (1 \otimes \bar \varphi_s \otimes \bar v \otimes 1)
\end{align*}
where the second identity follows from Lemma \ref{lemma:rhosplit} \ref{rhosplit2}.  Combining this with \eqref{varphi1} and Lemma \ref{lemma:low-degree} \ref{lowdeg2}, we obtain the formula for $h_2(1 \otimes \bar x \otimes \bar u \otimes 1)$. Here we use $\varpi_1 F_1G_1(1 \otimes \bar x \otimes 1) = 0$.

Let us prove \ref{G2ii}. We have \begin{align*}
G_2 (1 \otimes \bar u \otimes \bar v \otimes 1) &= \rho_1 (\mathrm{split_1} (u) \pi(v)) \\
&= \sum_{j=1}^n \rho_1 (\pi (u_1 \cdots u_{j-1}) \otimes \bar u_j \otimes \pi(u_{j+1}\cdots u_n v))\\
&= \sum_{j=1}^n \pi (u_1 \cdots u_{j-1}) G_2 (1 \otimes \bar u_j \otimes \overbar{u_{j+1} \cdots u_n v} \otimes 1)
\end{align*}
where the first and the third identities follow from Lemma \ref{lemma:low-degree} \ref{lowdeg1}. Combining this with \eqref{varphi1} and Lemma \ref{lemma:low-degree} \ref{lowdeg1} again, we obtain the formula for $h_2 (1 \otimes \bar u \otimes \bar v \otimes 1)$.
\end{proof}

\chapter{Deformation theory}
\label{section:deformationtheory}

We give a brief review of general deformation theory as used in this book. In \S\ref{subsection:deformationtheoryassociative} we recall the classical deformation theory of associative algebras first studied by Gerstenhaber \cite{gerstenhaber}, which can be phrased in the language of the Maurer--Cartan formalism for DG Lie or L$_\infty$ algebras as explained in \S\ref{subsection:dglielinfinity}. 

\section{Deformations of associative algebras}
\label{subsection:deformationtheoryassociative}

An associative $\Bbbk$-algebra $A$ is a pair $(W, \mu)$ where $W$ is a $\Bbbk$-vector space and $\mu \in \Hom_{\Bbbk} (W \otimes_{\Bbbk} W, W)$ a bilinear map satisfying the associativity condition
\begin{flalign}
\label{associativity}
&& \mu (u \otimes \mu (v \otimes w)) = \mu (\mu (u \otimes v) \otimes w) && \llap{for any $u, v, w \in W$.}
\end{flalign}
This condition of associativity can be expressed in the following form.

\begin{proposition}
\label{mcassociative}
Let $\mu \in \Hom_{\Bbbk} (W \otimes_{\Bbbk} W, W)$. Then $\mu$ is associative if and only if $[\mu, \mu]_{\mathrm G} = 0$.
\end{proposition}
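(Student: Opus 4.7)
The plan is to expand the bracket $[\mu,\mu]_{\mathrm G}$ and observe that, up to a nonzero scalar, it is exactly the associator of $\mu$.

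First I would recall the definition of the Gerstenhaber bracket: for homogeneous cochains $f \in \Hom_\Bbbk (W^{\otimes m}, W)$ and $g \in \Hom_\Bbbk (W^{\otimes n}, W)$, one sets
\[
[f,g]_{\mathrm G} \;=\; f \circ g \;-\; (-1)^{(m-1)(n-1)}\, g \circ f,
\]
where $f \circ g = \sum_{i=0}^{m-1} (-1)^{i(n-1)}\, f \circ_i g$ and $f \circ_i g$ denotes insertion of $g$ into the $(i+1)$st slot of $f$. Specializing to $f = g = \mu$ (so $m = n = 2$), the sign $(-1)^{(m-1)(n-1)}$ equals $-1$, hence $[\mu,\mu]_{\mathrm G} = 2\, \mu \circ \mu$.

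Next I would compute $\mu \circ \mu$ on a simple tensor $u \otimes v \otimes w$. With $n-1 = 1$, we have $\mu \circ \mu = \mu \circ_0 \mu - \mu \circ_1 \mu$, so
\[
(\mu \circ \mu)(u \otimes v \otimes w) \;=\; \mu(\mu(u \otimes v) \otimes w) \;-\; \mu(u \otimes \mu(v \otimes w)).
\]
Consequently,
\[
[\mu,\mu]_{\mathrm G}(u \otimes v \otimes w) \;=\; 2\bigl(\mu(\mu(u \otimes v) \otimes w) - \mu(u \otimes \mu(v \otimes w))\bigr),
\]
which vanishes for all $u,v,w \in W$ if and only if the associativity condition \eqref{associativity} holds, since $\Bbbk$ has characteristic $0$ (a standing assumption of the paper) so that the factor $2$ is invertible.

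The only real obstacle is matching sign conventions: one must verify that the conventions in use for $\circ_i$, for the outer sign in $[\,\cdot\,,\,\cdot\,]_{\mathrm G}$, and for the symmetric-bracket factor produce the clean coefficient $2$ on the associator. Once this bookkeeping is fixed (which is routine and can be stated once and for all), the proposition is an immediate three-line calculation.
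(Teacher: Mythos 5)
Your proof is correct and is essentially the same computation the paper relies on: the paper states the proposition without a separate proof, but the identity $\tfrac12 [\mu, \mu]_{\mathrm G} (u \otimes v \otimes w) = \mu (\mu (u \otimes v) \otimes w) - \mu (u \otimes \mu (v \otimes w))$ that you derive is displayed verbatim in \S\ref{subsubsection:variety}, and your sign bookkeeping (with arity-indexed degrees) agrees with the paper's convention in Definition \ref{definitiongerstenhaber} (shifted degrees, outer sign $(-1)^{mn}$) after the substitution $m \mapsto m+1$.
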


Here $[\blank {,} \blank]_{\mathrm G}$ is the Gerstenhaber bracket on $\Hom_\Bbbk (W^{\otimes_\Bbbk \hdot + 1}, W)$ defined as follows.

\begin{definition}
\label{definitiongerstenhaber}
\index{Gerstenhaber bracket|textbf}
Let $f \in \Hom_{\Bbbk} (W^{\otimes_{\Bbbk} m+1}, W)$ and $g \in \Hom_{\Bbbk} (W^{\otimes_{\Bbbk} n+1}, W)$ be two multilinear maps. Write
\begin{flalign*}
&& f \bullet_i g &= f (\id^{\otimes_{\Bbbk} i} \otimes_{\Bbbk} g \otimes_{\Bbbk} \id^{\otimes_{\Bbbk} m-i}) && \mathllap{0 \leq i \leq m}
\intertext{and also}
&& f \bullet g &= \sum_{0 \leq i \leq m} (-1)^{ni} f \bullet_i g. &&
\intertext{The {\it Gerstenhaber bracket} is then defined by}
&& [f, g]_{\mathrm G} &= f \bullet g - (-1)^{mn} g \bullet f.
\end{flalign*}
\end{definition}

\begin{remark}
\label{remark:restricts}
For later use we note that the Gerstenhaber bracket restricts to $\Hom (\bar A^{\otimes \hdot +1}, A)$, the (shifted) $\Bbbk Q_0$-relative Hochschild cochain complex (see Remark \ref{relativestandard} and Notation \ref{notation:tensorhom}). For simplicity, we still denote the bracket on $\Hom(\bar A^{\otimes \hdot +1}, A)$ by $[\blank {,} \blank]_{\mathrm G}$. 
\end{remark}

\subsection{Varieties of finite-dimensional algebras}
\label{subsubsection:variety}

\index{variety!of finite-dimensional algebras}
We recall the geometric interpretation of ``actual'' deformations of finite-dimensional algebras. In Chapter \ref{section:pbw} a similar geometric description will be given for actual deformations of reduction systems, which can be used to give a more accessible picture of the nontrivial variation of associative structures and which can also be used to describe actual deformations of infinite-dimensional algebras.

Let us just for a moment assume that $W$ (and thus $A$) is finite-dimensional, say $\dim W = d$. Choosing a $\Bbbk$-basis $\{ v_1, \dotsc, v_d \}$ of $W$, the bilinear map $\mu$ is determined by its structure constants $\mu_{ij}^k \in \Bbbk$ defined by
\[
\mu (v_i \otimes v_j) = \sum_{k = 1}^d \mu_{ij}^k \hair v_k.
\]
Thus $\mu \in \Hom_{\Bbbk} (W^{\otimes_{\Bbbk} 2}, W) \simeq \Bbbk^{d^3}$, and the structure constants $\mu_{ij}^k$ may be viewed as coordinate functions. Writing the Gerstenhaber bracket\index{Gerstenhaber bracket} $[\mu, \mu]_{\mathrm G}$ in this basis, the obstruction to associativity of $\mu$ is given by structure constants $\nu_{ijk}^l$ defined by
\begin{align*}
\frac12 [\mu, \mu]_{\mathrm G} (v_i \otimes v_j \otimes v_k) &= \mu (\mu (v_i \otimes v_j) \otimes v_k) - \mu (v_i \otimes \mu (v_j \otimes v_k)) = \sum_{l = 1}^d \nu_{ijk}^l v_l
\end{align*}
where
\begin{flalign}
\label{nu}
&& \nu_{ijk}^l &= \sum_{m = 1}^d \bigl( \mu_{ij}^m \mu_{mk}^l - \mu_{im}^l \mu_{jk}^m \bigr) && \mathllap{1 \leq i, j, k, l \leq d}
\end{flalign}
is quadratic in the $\mu_{ij}^k$'s. In geometric terms we may view $\Bbbk^{d^3}$ as the affine space $\mathbb A^{d^3}$, so that the elements (\ref{nu}) cut out an affine scheme
\[ 
\mathrm{Alg}_W \subset \mathbb A^{d^3} \simeq \Hom_{\Bbbk} (W^{\otimes_{\Bbbk} 2}, W)
\]
of {\it all} associative structures on $W$ given by $d^4 = \dim \Hom_{\Bbbk} (W^{\otimes_{\Bbbk} 3}, W)$ quadratic equations. This scheme is generally not irreducible, and its irreducible components organize the algebras into different types. Note also that \eqref{nu} may contain nilpotents, so that the corresponding scheme may not be reduced. Since we would like to view $\mathrm{Alg}_W$ as a geometric object providing families of algebras, e.g.\ families defined over $\Bbbk [t]$, say, we follow \cite{gabriel} and refer to $\mathrm{Alg}_W$ loosely as a {\it variety} of finite-dimensional algebras, even though for the infinitesimal theory the scheme structure (and not only the structure of the corresponding reduced scheme) is important.

Moreover, there is a natural action of $\GL (W)$ on $\mathrm{Alg}_W$ given by change of basis, i.e.\ for $T \in \GL (W)$ we put
\[
(T \cdot \mu) (v \otimes w) := T^{-1} (\mu (T (v) \otimes T (w)))
\]
and two algebras $A = (W, \mu)$ and $A' = (W, \mu')$ are isomorphic if and only if they lie in the same $\GL (W)$-orbit. In particular, the intuitive notion of ``variation'' of associative structures can be understood as varying an associative structure $\mu$ as a $\Bbbk$-bilinear map in $\mathrm{Alg}_W$. Varying the associative structure inside a $\GL (W)$-orbit will give an isomorphic algebra (a ``trivial deformation'') and passing to a different orbit will give a non-isomorphic algebra (a ``nontrivial deformation'').

Gabriel \cite{gabriel} showed that the variety\index{variety!of finite-dimensional algebras} $\mathrm{Alg}_W$ is always connected, so that any two algebras $A, A' \in \mathrm{Alg}_W$ may be viewed as ``actual deformations'' of each other. In \cite{gabriel,schaps} explicit descriptions of $\mathrm{Alg}_W$ are given for $\dim W \leq 6$. For example, when $\dim W = 4$, the variety $\mathrm{Alg}_W$ has five irreducible components with $18$ $\GL (W)$-orbits of dimensions between $6$ and $15$ and one $1$-dimensional family of $11$-dimensional orbits \cite{gabriel}. An explicit description of $\mathrm{Alg}_W$ becomes less and less accessible in higher dimensions.

Deformation theory is a way to study the local structure of the variety $\mathrm{Alg}_W$ and thus the ``local'' properties of the variation of associative structures, where by ``local'' we mean an {\it infinitesimal} or {\it formal} neighbourhood of a point in $\mathrm{Alg}_W$. In particular, the natural scheme structure of $\mathrm{Alg}_W$ allows us to relate its Zariski tangent space of $\mathrm{Alg}_W$ and its $\mathrm{GL} (W)$-orbits at a point to the Hochschild cohomology of the corresponding algebra.

However, the formal theory (and its relationship to Hochschild cohomology) makes sense for {\it any} associative algebra, not necessarily finite-dimensional.

\subsection{Infinitesimal and formal deformations}

Let $W$ be any $\Bbbk$-vector space, not necessarily finite-dimensional, and let $A = (W, \mu)$ for $\mu \in \Hom_{\Bbbk} (W^{\otimes_{\Bbbk} 2}, W)$ be an associative algebra (i.e.\ $\mu$ satisfies $[\mu, \mu]_{\mathrm G} = 0$).

A {\it first-order deformation} of $A = (W, \mu)$ (over $\Bbbk [t] / (t^2)$) is given by 
\begin{equation}
\label{fo}
\mu^t = \mu + \mu_1 t
\end{equation}
where $\mu_1 \colon A \otimes_{\Bbbk} A \to A$ is a bilinear map such that $(A [t] / (t^2), \mu^t)$ is an associative algebra, where $\mu^t$ is considered a multiplication on $A [t] / (t^2)$ by $t$-linear extension. A first-order deformation is thus determined by the linear map $\mu_1$ and the associativity condition (\ref{associativity}) for $\mu^t$ implies that $\mu_1$ is a Hochschild $2$-cocycle. Two first-order deformations $(A [t]/ (t^2), \mu^t)$ and $(A [t]/ (t^2), {\mu'}^t)$ of $(A, \mu)$ are {\it (gauge) equivalent} if there is a $\Bbbk [t] / (t^2)$-algebra isomorphism $T \colon (A [t]/ (t^2), \mu^t) \to  (A [t]/ (t^2), {\mu'}^t)$ which is the identity modulo $(t)$. Such an isomorphism is thus of the form $T = \id + T_1 t$ for some $\Bbbk$-linear map $T_1 \colon A \to A$ and a straightforward computation shows that $\mu_1 = \mu_1' + d (T_1)$, that is, two first-order deformations are equivalent if and only if they differ by a $2$-coboundary. Thus the set of equivalence classes of first-order deformations can be identified as the cohomology group $\HH^2 (A, A)$ (see \cite{gerstenhaber,gerstenhaberschack}).

\begin{remark}
When $A$ is finite-dimensional one can again interpret this geometrically: a first-order deformation of an algebra $A \in \mathrm{Alg}_W$ gives a tangent vector to $\mathrm{Alg}_W$ at the point $A$. The space of first-order deformations --- which by the above coincides with the space of Hochschild $2$-cocycles --- is the Zariski tangent space of $\mathrm{Alg}_W$ at $A$ and the subspace of $2$-coboundaries is the tangent space to the $\GL (W)$-orbit passing through the point $A$. The dimension of $\HH^2 (A, A)$ thus measures the number of independent tangent directions at the point $A \in \mathrm{Alg}_W$ in which the algebra structure changes.
\end{remark}

The definition of a first-order deformation \eqref{fo} can be extended to all orders as follows. A {\it formal deformation} of $(A, \mu)$ (over $\Bbbk \llrr{t}$) is given by
\[
\mu^t = \mu + \mu_1 t + \mu_2 t^2 + \dotsb
\]
where the maps $\mu_i$ are bilinear maps $A \otimes_{\Bbbk} A \to A$ such that the $\Bbbk \llrr{t}$-linear extension of $\mu^t$ (still denoted by $\mu^t$) makes $(A \llrr{t}, \mu^t)$ into an associative $\Bbbk \llrr{t}$-algebra.

It follows directly from Proposition \ref{mcassociative} that the Gerstenhaber bracket also controls the formal deformation theory of associative algebras.

\begin{corollary}
\label{corollary:gerstenhaber}
Let $A = (W, \mu)$ be an associative algebra and let $\widetilde \mu \in \Hom_{\Bbbk} (W \otimes_{\Bbbk} W, W) \hatotimes (t)$, i.e.\ $\widetilde\mu = \mu_1 t + \mu_2 t^2 + \dotsb$. Then $\widetilde A = (W \llrr{t}, \mu + \widetilde \mu)$ is associative if and only if $\widetilde \mu$ satisfies the Maurer--Cartan equation\index{Maurer--Cartan!equation}
\[
d \widetilde \mu + \tfrac12 [\widetilde \mu, \widetilde \mu]_{\mathrm G} = 0.
\]
\end{corollary}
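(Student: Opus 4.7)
The plan is to deduce the corollary directly from Proposition \ref{mcassociative} by applying it to the $\Bbbk\llrr{t}$-linear extension $\mu + \widetilde\mu$ and expanding the Gerstenhaber bracket via bilinearity. Concretely, one first observes that since $\mu$ and each $\mu_i$ are $\Bbbk$-bilinear, the Gerstenhaber bracket on $\Hom_\Bbbk(W^{\otimes_\Bbbk \hdot + 1}, W)$ extends naturally to a $\Bbbk\llrr{t}$-bilinear (in fact, $(t)$-adically continuous) bracket on $\Hom_\Bbbk(W^{\otimes_\Bbbk \hdot+1}, W) \hatotimes \Bbbk\llrr{t}$, because Definition \ref{definitiongerstenhaber} is given by compositions of multilinear maps, which are compatible with scalar extension.

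Next, by Proposition \ref{mcassociative}, the multiplication $\mu + \widetilde\mu$ on $W\llrr{t}$ is associative if and only if
\begin{equation*}
[\mu + \widetilde\mu,\, \mu + \widetilde\mu]_{\mathrm G} = 0.
\end{equation*}
Expanding by bilinearity and symmetry of the bracket on degree-$1$ elements (here $\mu$ and $\widetilde\mu$ both live in $\Hom_\Bbbk(W^{\otimes_\Bbbk 2}, W)$, so $[\mu,\widetilde\mu]_{\mathrm G} = [\widetilde\mu, \mu]_{\mathrm G}$ by Definition \ref{definitiongerstenhaber} with $m = n = 1$ and the sign $(-1)^{mn} = -1$ giving symmetry after the outer sign), we obtain
\begin{equation*}
[\mu + \widetilde\mu,\, \mu + \widetilde\mu]_{\mathrm G} = [\mu,\mu]_{\mathrm G} + 2\,[\mu, \widetilde\mu]_{\mathrm G} + [\widetilde\mu, \widetilde\mu]_{\mathrm G}.
\end{equation*}
The first term vanishes since $(W, \mu)$ is associative, again by Proposition \ref{mcassociative}.

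It remains to identify $[\mu, \widetilde\mu]_{\mathrm G}$ with $d\widetilde\mu$, where $d$ denotes the Hochschild coboundary on $\Hom_\Bbbk(W^{\otimes_\Bbbk \hdot+1}, W)$ (extended $\Bbbk\llrr{t}$-linearly). This is the standard fact that the Hochschild differential coincides with $[\mu, \blank]_{\mathrm G}$ on cochains: unwinding Definition \ref{definitiongerstenhaber} for $f = \mu$ (with $m = 1$) and $g = \mu_i$ (with $n = 1$), the expression $\mu \bullet \mu_i - (-1)^{1 \cdot 1} \mu_i \bullet \mu$ recovers precisely the alternating sum in the Hochschild coboundary formula (analogous to \eqref{differential} but for the cochain complex). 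Summing over $i\geq 1$ with coefficients $t^i$ gives $[\mu, \widetilde\mu]_{\mathrm G} = d\widetilde\mu$. Dividing by $2$ yields the Maurer--Cartan equation $d\widetilde\mu + \tfrac12[\widetilde\mu, \widetilde\mu]_{\mathrm G} = 0$, proving the corollary.

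The one mildly non-routine point, rather than a real obstacle, is justifying that the associativity condition applied to the $\Bbbk\llrr{t}$-linear extension can legitimately be expanded termwise and that the bracket behaves well with respect to $(t)$-adic completion; this is formal once one notes that all sums appearing have only finitely many contributions to each fixed power of $t$. Everything else is bookkeeping with signs and the bilinearity of the Gerstenhaber bracket.
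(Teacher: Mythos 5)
Your proof is correct and follows essentially the same route as the paper: apply Proposition \ref{mcassociative} to $\mu + \widetilde\mu$, expand $[\mu+\widetilde\mu,\mu+\widetilde\mu]_{\mathrm G}$ by bilinearity and the symmetry of the bracket in degree one, discard $[\mu,\mu]_{\mathrm G}=0$, and identify $[\mu,\blank]_{\mathrm G}$ with the Hochschild differential. The extra remarks on $(t)$-adic convergence and the sign check for $m=n=1$ are fine and only make explicit what the paper leaves implicit.
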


Recall that the Hochschild differential\index{Hochschild!differential}
\begin{equation}\label{align:differentialhochschildcochain}
d^n \colon  \Hom_{\Bbbk} (A^{\otimes_{\Bbbk} n}, A)  \to  \Hom_{\Bbbk} (A^{\otimes_{\Bbbk} n+1}, A)
\end{equation} 
is given by  $[\mu, \blank]_{\mathrm G}$,  so that
\[
0 = [\mu + \widetilde\mu, \mu + \widetilde\mu]_{\mathrm G} = \underbrace{[\mu, \mu]_{\mathrm G}}_{{} = {} 0} + 2 \underbrace{[\mu, \widetilde\mu]_{\mathrm G}}_{d \widetilde\mu} + [\widetilde\mu, \widetilde\mu]_{\mathrm G}.
\]

In fact, the Gerstenhaber bracket\index{Gerstenhaber bracket} endows the (shifted) Hochschild cochain complex $\Hom_{\Bbbk} (A^{\otimes_{\Bbbk} \hdot +1}, A)$ with the structure of a {\it DG Lie algebra}\index{DG Lie algebra}\index{algebra!DG Lie} and this structure can be used to study both ``actual'' and formal deformations --- not only over $\Bbbk \llrr{t}$ but over any complete local Noetherian $\Bbbk$-algebra.

\section{Deformation theory via DG Lie and L$_\infty$ algebras}
\label{subsection:dglielinfinity}
\index{L$_\infty$!algebra}\index{algebra!L$_\infty$}\index{DG Lie algebra}\index{algebra!DG Lie}

Deformations of associative algebras fit into the general framework of formal deformation theory via DG Lie or L$_\infty$ algebras. One has that deformations are controlled by a cochain complex, where
\begin{align*}
\text{first-order deformations} &\leftrightarrow \text{cochain complex } (C^\hdot, d) \\
\text{higher-order deformations} &\leftrightarrow (C^\hdot, d, \underbrace{\langle \blank {,} \blank \rangle, \langle \blank {,} \blank {,} \blank \rangle, \dotsc}_{\text{``higher structures''}}).
\end{align*}
The ``higher structures'' are multilinear brackets defining a DG Lie\index{DG Lie algebra}\index{algebra!DG Lie} or L$_\infty$ algebra, the latter being a DG Lie algebra ``up to homotopy'' (see Definitions \ref{definition:dglie} and \ref{definition:linfinity} below). The fact that L$_\infty$ algebras\index{L$_\infty$!algebra}\index{algebra!L$_\infty$} play a central role in deformation theory is illustrated in the following result, which might be called the Fundamental Theorem of Formal Deformation Theory.

\begin{theorem}
\label{theorem:quasiisomorphism}
\begin{enumerate}
\item \label{qi1} Let $\mathfrak g$ and $\mathfrak g'$ be two L$_\infty$ algebras. An L$_\infty$-quasi-isomorphism $\Phi_\ldot \colon \mathfrak g \to \mathfrak g'$ induces a natural transformation $\MC_{\mathfrak g} \to \MC_{\mathfrak g'}$ of Maurer--Cartan functors\index{Maurer--Cartan!functor} and a natural isomorphism of deformation functors $\Def_{\mathfrak g} \simeq \Def_{\mathfrak g'}$.
\item \label{qi2} For any (formal) deformation problem in characteristic $0$ with deformation functor $D$, there exists an L$_\infty$ algebra $\mathfrak g$ such that there is an isomorphism of deformation functors $D \simeq \Def_{\mathfrak g}$.
\end{enumerate}
\end{theorem}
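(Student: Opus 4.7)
The plan is to prove part \ref{qi1} by an explicit transfer formula combined with a homotopy-theoretic compatibility argument, and to reduce part \ref{qi2} to the Lurie--Pridham correspondence between formal moduli problems and differential graded Lie (equivalently, L$_\infty$) algebras in characteristic zero. Part \ref{qi2} is genuinely deep and would be cited rather than reproved; the technical content of what one actually carries out lies entirely in \ref{qi1}.

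For \ref{qi1}, given a complete local Noetherian test algebra $(B, \mathfrak m)$ and a Maurer--Cartan element $\phi \in \mathfrak g \hatotimes \mathfrak m$, define the transferred element by the Taylor-type formula
$$\Phi_* (\phi) = \sum_{n \geq 1} \frac{1}{n!}\, \Phi_n (\phi, \dotsc, \phi) \;\in\; \mathfrak g' \hatotimes \mathfrak m,$$
which converges in the $\mathfrak m$-adic topology because $\mathfrak m^N \to 0$. A direct substitution, using the defining L$_\infty$-morphism relations satisfied by $\Phi_\ldot$, rearranges into the Maurer--Cartan equation for $\Phi_*(\phi)$ in $\mathfrak g' \hatotimes \mathfrak m$, yielding the natural transformation $\mathrm{MC}_{\mathfrak g} \to \mathrm{MC}_{\mathfrak g'}$. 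To descend this to deformation functors, one needs to verify that $\Phi_*$ preserves gauge (or homotopy) equivalence of Maurer--Cartan elements. The cleanest way is to encode homotopies of MC elements as MC elements of $\mathfrak g \hatotimes \mathfrak m \hatotimes \Omega^\ldot_{[0,1]}$ (polynomial differential forms on the interval), apply $\Phi_*$ fiberwise, and observe that the result is again an MC element of $\mathfrak g' \hatotimes \mathfrak m \hatotimes \Omega^\ldot_{[0,1]}$, hence a homotopy between the images.

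Once the natural transformation is defined, bijectivity on each Artinian test ring follows from a standard obstruction-theoretic induction on the length of $B/\mathfrak m^n$, using that $\Phi_1$ is a quasi-isomorphism of the underlying cochain complexes: surjectivity at each step is controlled by $H^\ldot(\mathfrak g) \tosim H^\ldot(\mathfrak g')$, and injectivity by the same isomorphism applied to the tangent-obstruction sequence. Alternatively one can invoke the theorem of Kontsevich and Hinich that an L$_\infty$-quasi-isomorphism admits a homotopy inverse L$_\infty$-morphism, whose induced transfer map provides a two-sided inverse up to homotopy on Maurer--Cartan sets. Passing to the inverse limit over $B/\mathfrak m^n$ gives the natural isomorphism $\mathrm{Def}_{\mathfrak g} \simeq \mathrm{Def}_{\mathfrak g'}$.

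For \ref{qi2}, the plan is simply to cite the Lurie--Pridham theorem, which establishes an equivalence between formal moduli problems (in the sense of functors from local Artinian $\Bbbk$-algebras to sets or groupoids, satisfying Schlessinger-type conditions) and DG Lie (equivalently L$_\infty$) algebras in characteristic zero; one then uses that any L$_\infty$ structure on a cochain complex produces a deformation functor via $\mathrm{Def}_{\mathfrak g}$. The hardest point in the whole proof is really this part --- reproving it would require the full machinery of derived deformation theory and is well outside the scope of the paper. In terms of what we actually need for the rest of this article, only part \ref{qi1} is invoked: the L$_\infty[1]$ algebra $\mathbf p(Q, R)$ controlling deformations is constructed by hand in Theorem \ref{theorem:mainquasi}, so the abstract existence statement \ref{qi2} is never used in a non-tautological way and may be quoted as background motivation.
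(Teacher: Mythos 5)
Your proposal is correct in substance, but it is worth noting that the paper does not actually prove this theorem: its ``proof'' consists entirely of citations --- Getzler \cite{getzler} for part \ref{qi1} and Lurie \cite{lurie} and Pridham \cite{pridham} for part \ref{qi2}. Your treatment of \ref{qi2} therefore matches the paper exactly (both of you defer to the Lurie--Pridham correspondence, and you are right that the paper only ever uses \ref{qi2} as motivation). For \ref{qi1} you go further than the paper and sketch the standard argument: the push-forward $\Phi_*(\phi) = \sum_n \frac{1}{n!}\Phi_n(\phi,\dotsc,\phi)$, its $\mathfrak m$-adic convergence, compatibility with the Maurer--Cartan equation via the L$_\infty$-morphism identities, preservation of homotopies by tensoring with polynomial forms on the interval, and bijectivity on $\Def$ either by induction on the length of the Artinian quotients or by invoking the existence of a homotopy inverse (which the paper itself records separately as Lemma \ref{lemma:quasi-inverse}, citing Canonaco). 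This is essentially the content of the cited result of Getzler, so nothing is wrong; the one place where your sketch is looser than the literature is the obstruction-theoretic induction, since for general L$_\infty$ algebras the equivalence on Maurer--Cartan elements is a homotopy relation rather than a group action, and making the five-lemma-style step rigorous is precisely what requires Getzler's simplicial machinery (or, as you note as an alternative, the homotopy-inverse theorem, which cleanly sidesteps the issue). Either route you indicate closes the argument, so the proposal stands as a legitimate expansion of what the paper leaves to references.
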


(See \S\S\ref{subsubsection:dglie}--\ref{subsubsection:linfinity} for the definition of $\MC_{\mathfrak g}$ and $\Def_{\mathfrak g}$.) For \ref{qi1} see Getzler \cite[Prop.~4.9]{getzler}; \ref{qi2} was established in the more general setting of derived deformation theory by Pridham \cite{pridham} and Lurie \cite{lurie} as an equivalence of the homotopy or $\infty$-categories of formal moduli problems and of DG Lie algebras\index{DG Lie algebra}\index{algebra!DG Lie}, giving a formal proof of the ``philosophy'' formulated by Deligne \cite{deligne} that deformation problems in characteristic $0$ should be controlled by DG Lie algebras\index{DG Lie algebra}\index{algebra!DG Lie}.

The {\it deformation functor} $\Def_{\mathfrak g}$ associated to a DG Lie or L$_\infty$ algebra $\mathfrak g$ captures all the formal information about the (derived) deformation problem controlled by $\mathfrak g$.

In the remainder of this section we give the definitions of DG Lie and L$_\infty [1]$ algebras\index{L$_\infty$!algebra}\index{algebra!L$_\infty$} and their associated deformation functors. In Chapter \ref{section:deformations-of-path-algebras} we obtain an L$_\infty[1]$ algebra structure $\mathbf p (Q, R)$ on the cochain complex $P^{\hdot + 2}$ associated to a reduction-unique reduction system $R$ by homotopy transfer and show that this L$_\infty [1]$ algebra\index{L$_\infty$!algebra}\index{algebra!L$_\infty$} controls the (formal) deformations of $R$, so that by Theorem \ref{theorem:quasiisomorphism} deformations of $A = \Bbbk Q / I$ can be completely described as deformations of any chosen reduction system satisfying ($\diamond$) for $I$.

\subsection{DG Lie algebras}
\label{subsubsection:dglie}
\index{DG Lie algebra|textbf}\index{algebra!DG Lie|textbf}

\begin{definition}
\label{definition:dglie}
\index{DG Lie algebra|textbf}\index{algebra!DG Lie|textbf}
A \textit{differential graded (DG) Lie algebra} $(\mathfrak g, d, [\blank {,} \blank])$ consists of a graded vector space $\mathfrak g = \bigoplus_{n \in \mathbb Z} \mathfrak g^n$ together with a linear map $d \colon \mathfrak g \to \mathfrak g$ of degree $1$ satisfying $d^n d^{n-1} = 0$, the \textit{differential}, and a (graded) Lie bracket $[\blank {,} \blank] \colon \mathfrak g \otimes \mathfrak g \to \mathfrak g$ of degree $0$ satisfying
\begin{enumerate}
\item $[x, y] = (-1)^{\lvert x \rvert \lvert y \rvert + 1} [y, x]$ \hfill \textit{(graded) skew-symmetry}
\item $d [x, y] = [d x, y] + (-1)^{\lvert x \rvert} [x, d y]$ \hfill \textit{(graded) Leibniz rule}
\item $(-1)^{\lvert x \rvert \lvert z \rvert} [x, [y, z]] + (-1)^{\lvert y \rvert \lvert x \rvert} [y, [z, x]] + (-1)^{\lvert z \rvert \lvert y \rvert} [z, [x, y]] = 0$
{\vspace{0em}\leavevmode\unskip\nobreak\hfil\penalty50\hskip2em
  \hbox{}\nobreak\hfil{\it (graded) Jacobi identity\rlap{.}}%
  \parfillskip=0pt \finalhyphendemerits=0 \endgraf}
\end{enumerate}
\end{definition}

The deformation functor associated to a DG Lie algebra is defined as follows. This deformation functor can be defined on the category $\mathfrak{Art}_{\Bbbk}$ of (commutative) local Artinian $\Bbbk$-algebras (e.g.\ $\Bbbk [t] / (t^{n+1})$) and then by completion on the category $\widehat{\mathfrak{Art}}_{\Bbbk}$ of (commutative) complete local Noetherian $\Bbbk$-algebras (e.g.\ $\Bbbk \llrr{t}$). Deformations over a local Artinian $\Bbbk$-algebra are usually called (finite-order) {\it infinitesimal} deformations and deformations over a complete local Noetherian $\Bbbk$-algebra are {\it formal} deformations. (See \S\ref{subsubsection:formalalgebraization} below for a discussion about algebraization of formal deformations.)

We usually work with deformations over $\Bbbk \llrr{t}$ or sometimes $\Bbbk \llrr{t_1, \dotsc, t_n}$ and we write
\begin{align*}
A \llrr{t_1, \dotsc, t_n} = A \hatotimes_{\Bbbk} \Bbbk \llrr{t_1, \dotsc, t_n} = \varprojlim_N A \otimes_{\Bbbk} \Bbbk [t_1, \dotsc, t_n] / \mathfrak m^N
\end{align*}
where $\mathfrak m = (t_1, \dotsc, t_n)$ is the (unique) maximal ideal of $\Bbbk \llrr{t_1, \dotsc, t_n}$ so that $\hatotimes_{\Bbbk}$ denotes the completion of the tensor product with respect to the $\mathfrak m$-adic topology. (Note that if $A$ is finite-dimensional, one has $A \hatotimes_{\Bbbk} \Bbbk \llrr{t_1, \dotsc, t_n} = A \otimes_{\Bbbk} \Bbbk \llrr{t_1, \dotsc, t_n}$.)

\begin{definition}
\label{dgla}
Let $\mathfrak g$ be a DG Lie algebra over a field $\Bbbk$ of characteristic $0$. Define the \textit{gauge functor} $\mathrm G_{\mathfrak g}$ and the \textit{Maurer--Cartan functor}\index{Maurer--Cartan!functor|textbf} $\MC_{\mathfrak g}$ by
\begin{align*}
\begin{tikzpicture}[baseline=-2.6pt,description/.style={fill=white,inner sep=2pt}]
\matrix (m) [matrix of math nodes, row sep=0em, text height=1.5ex, column sep=0em, text depth=0.25ex, ampersand replacement=\&, column 3/.style={anchor=base west}, column 1/.style={anchor=base east}]
{
\mathrm G_{\mathfrak g} \colon \&[-.8em] \mathfrak{Art}_{\Bbbk} \&[2em] \mathfrak{Group} \\
\& (B, \mathfrak m) \& \exp (\mathfrak g^0 \otimes \mathfrak m) \\[1em]
\MC_{\mathfrak g} \colon \&[-.8em] \mathfrak{Art}_{\Bbbk} \&[2em] \mathfrak{Sets} \\
\& (B, \mathfrak m) \& \bigl\{ x \in \mathfrak g^1 \otimes \mathfrak m \bigm| d x + \tfrac12 [x, x] = 0 \bigr\}. \\
};
\path[->,line width=.4pt,font=\scriptsize]
(m-1-2) edge (m-1-3)
(m-3-2) edge (m-3-3)
;
\path[|->,line width=.4pt]
(m-2-2) edge (m-2-3)
(m-4-2) edge (m-4-3)
;
\end{tikzpicture}
\end{align*}
The group $\exp (\mathfrak g^0 \otimes \mathfrak m)$ is called the {\it gauge group} and is defined formally via the Baker--Campbell--Hausdorff formula\footnote{For a nilpotent Lie algebra $\mathfrak n$ one defines the group law $\cdot$ on $\exp (\mathfrak n)$ by $\exp (x) \cdot \exp (y) = \exp (x + y + \tfrac12 [x,y] + \dotsb)$ for $x, y \in \mathfrak n$, where the sum on the right-hand side is the Baker--Campbell--Hausdorff formula. Note that $\mathfrak g \otimes \mathfrak m$ is a nilpotent Lie algebra with Lie bracket $[x \otimes a, y \otimes b] = [x, y] \otimes ab$.}; the equation
\begin{equation}
\label{maurercartandglie}
d x + \tfrac12 [x, x] = 0
\end{equation}
is called the {\it Maurer--Cartan equation}\index{Maurer--Cartan!equation} and elements in $\mathfrak g^1 \otimes \mathfrak m$ satisfying (\ref{maurercartandglie}) are called {\it Maurer--Cartan elements}\index{Maurer--Cartan!elements|textbf}.

The \textit{deformation functor} $\Def_{\mathfrak g}$ associated to $\mathfrak g$ is then defined by $\Def_{\mathfrak g} = {\MC_{\mathfrak g}} \big/ \mathrm G_{\mathfrak g}$, where an element $\exp (w) \in \mathrm G_{\mathfrak g} (B, \mathfrak m)$ acts on $x \in \MC_{\mathfrak g} (B, \mathfrak m)$ by
\begin{flalign}
\label{gauge}
&& \exp (w) \cdot x = x + \sum_{n \geq 1} \frac{\mathrm{ad}_w^{n-1}}{n!} (\mathrm{ad}_w x - dw) && \mathllap{\mathrm{ad}_w = [w, \blank].}
\end{flalign}
\end{definition}

\begin{remark}
The formula \eqref{gauge} can be viewed as a perturbation of the adjoint action depending on the differential. Indeed, if $d = 0$, then $\exp (w) \cdot x = \sum_{n \geq 0} \frac{\mathrm{ad}_w^n}{n!} (x)$ (see Manetti \cite[\S 6.3]{manetti}).
\end{remark}

\begin{remark}
For each complete local Noetherian $\Bbbk$-algebra $(B, \mathfrak m) \in \widehat{\mathfrak{Art}}_{\Bbbk}$ we have that $B / \mathfrak m^n \in \mathfrak{Art}_{\Bbbk}$ for all $n > 0$ and also $B \simeq \varprojlim_n B/ \mathfrak m^n$ so we can define the deformation functor on $\widehat{\mathfrak{Art}}_{\Bbbk}$ simply by $\Def_{\mathfrak g} (B) = \varprojlim_n \Def_{\mathfrak g} (B / \mathfrak m^n)$.

More generally, the DG Lie\index{DG Lie algebra}\index{algebra!DG Lie} or L$_\infty [1]$ algebras\index{L$_\infty$!algebra}\index{algebra!L$_\infty$} give rise to {\it derived} deformation functors which are defined analogously as functors on the category of local commutative DG Artinian rings (concentrated in non-positive chain degrees) with values in the category of simplicial sets (see Pridham \cite{pridham}, also for the equivalence between the various approaches to derived deformation theory). The L$_\infty[1]$ algebra\index{L$_\infty$!algebra}\index{algebra!L$_\infty$} $\mathbf p(Q, R)$ we construct in Chapter \ref{section:deformations-of-path-algebras} can also be used in this more general setting, as it gives rise to a derived deformation functor and the equivalence given in Theorem \ref{theorem:equivalenceformal} is also valid in this setup. However, for this book we will content ourselves with the ``classical'' set-valued deformation functors defined on $\widehat{\mathfrak{Art}}_{\Bbbk}$.
\end{remark}

\subsection{L$_\infty$ algebras}
\label{subsubsection:linfinity}

L$_\infty$ algebras\index{L$_\infty$!algebra}\index{algebra!L$_\infty$} are a more flexible analogue of DG Lie\index{DG Lie algebra}\index{algebra!DG Lie} algebras, which naturally appear in deformation theory (cf.\ Theorem \ref{theorem:quasiisomorphism}). Although the notion of L$_\infty$ algebras is more standard in the literature, in this book we work with the following shifted version of L$_\infty$ algebras, sometimes called L$_\infty[1]$ algebras\index{L$_\infty$!algebra}\index{algebra!L$_\infty$}\index{L$_\infty$!algebra!L$_\infty[1]$}\index{algebra!L$_\infty$!L$_\infty[1]$|textbf}. As we remarked in the introduction, this is purely for sign reasons and the shift $[1]$ can be ignored for the most part --- except of course when carefully looking at the signs.

\begin{definition}
\label{definition:linfinity}
An {\it L$_\infty[1]$ algebra}\index{L$_\infty$!algebra|textbf}\index{algebra!L$_\infty$|textbf} $(\mathbf g, \{ l_n \}_{n \geq 1}) = (\mathbf{g}, \langle \blank \rangle, \langle \blank {,} \blank \rangle, \langle \blank {,} \blank {,} \blank \rangle, \dotsc)$ is a graded $\Bbbk$-vector space $\mathbf{g} = \bigoplus_{m \in \mathbb Z} \mathbf{g}^m$ together with a collection of multilinear maps $l_n \colon \mathbf{g}^{\otimes n} \to \mathbf{g}$ of degree $1$ satisfying for each $n$ the identities
\begin{enumerate}
\item $l_n (x_{\sigma (1)}, \dotsc, x_{\sigma (n)}) = \chi(\sigma) \, l_n (x_1, \dotsc, x_n)$ for any $\sigma \in \mathfrak S_n$ 
\item $\displaystyle\sum_{\substack{i+j=n+1 \\ i,j\geq 1}} \sum_{\sigma \in \mathfrak S_{i,n-i}}  \chi (\sigma) \,  l_j (l_i (x_{\sigma (1)}, \dotsc, x_{\sigma (i)}), x_{\sigma (i+1)}, \dotsc, x_{\sigma (n)}) = 0$
\end{enumerate}
for homogeneous elements $x_1, \dotsc, x_n$. Here
\begin{itemize}
\item $\mathfrak S_n$ is the set of permutations of $n$ elements
\item $\mathfrak S_{i,n-i} \subset \mathfrak S_n$ is the set of {\it shuffles}, {\it i.e.}\ permutations $\sigma \in \mathfrak S_n$ satisfying $\sigma (1) < \dotsb < \sigma (i)$ and $\sigma (i+1) < \dotsb < \sigma (n)$, and
\item $\chi (\sigma) :=  \epsilon(\sigma; x_1, \dotsc, x_n)$, where $\epsilon(\sigma; x_1, \dotsc, x_n)$ is the {\it Koszul sign\index{Koszul!sign}}\footnote{The Koszul sign of a transposition of two homogeneous elements $x_i, x_j$ is defined by $(-1)^{\lvert x_i \rvert \lvert x_j \rvert}$, where $\lvert x_i \rvert$ denotes the degree of $x_i$. This definition is then extended multiplicatively to an arbitrary permutation using a decomposition into transpositions.} of $\sigma$, which also depends on the degrees of the $x_i$.
\end{itemize}
We usually denote the $n$-ary multilinear maps $l_n (\blank, \dotsc {,} \blank )$ by $\langle \blank , \dotsc {,} \blank \rangle$ or sometimes by $\langle \blank, \dotsc {,} \blank \rangle_n$ to indicate the number of entries.
\end{definition}

\begin{remark}
\label{ordinarydgalgebra}
If the $n$-ary brackets are identically zero for $n \geq 2$, one obtains an ordinary cochain complex with the differential given by the unary bracket. We call an L$_\infty[1]$ algebra\index{L$_\infty$!algebra}\index{algebra!L$_\infty$} whose $n$-ary brackets vanish for $n \geq 3$ a {\it DG Lie$[1]$ algebra}.\index{DG Lie algebra}\index{DG Lie algebra!DG Lie$[1]$}\index{algebra!DG Lie}\index{algebra!DG Lie$[1]$|textbf}

There is an isomorphism between the categories of L$_\infty$ algebras and L$_\infty[1]$ algebras\index{L$_\infty$!algebra}\index{algebra!L$_\infty$} via the following rule
\[
(\mathfrak g, [\blank], [\blank {,} \blank], [\blank {,} \blank {,} \blank],  \dotsc)  \mapsto (\mathbf g, \langle\blank \rangle, \langle \blank {,} \blank \rangle, \langle \blank {,} \blank {,} \blank \rangle , \dotsc)
\]
where $\mathbf g = \mathfrak g [1]$  and for each $n \geq 1$
\begin{flalign*}
&& \langle x_1[1], \dotsc, x_n[1] \rangle = (-1)^{\epsilon} [x_1, \dotsc, x_n][1] && \llap{for $x_1, \dotsc, x_n \in \mathfrak g$.}
\end{flalign*}
Here $\epsilon = |x_1|(n-1) + |x_2|(n-2) + \dotsb + |x_{n-1}|$, which is precisely the kind of sign we can avoid by shifting. Note that because of the shift, the $n$-ary map $[\blank, \dotsc, \blank] \colon \mathfrak g^{\otimes n} \to \mathfrak g$ has degree $2-n$. We also note that the shift of any DG Lie algebra $(\mathfrak g, [\blank], [\blank {,} \blank])$ (so that $d = [\blank]$ has degree $1$ and $[\blank {,} \blank]$ has degree $0$) gives a DG Lie$[1]$\index{DG Lie algebra}\index{algebra!DG Lie}\index{DG Lie algebra!DG Lie$[1]$} algebra $(\mathbf{g} = \mathfrak g[1], \langle \blank \rangle, \langle \blank {,} \blank\rangle)$.
\end{remark}

\begin{notation}
\label{notation:h}
The DG Lie$[1]$ algebra\index{DG Lie algebra}\index{algebra!DG Lie}
$$
\mathbf h(A) := (\Hom_{\Bbbk} (A^{\otimes_{\Bbbk} \hdot +2}, A), [\blank], [\blank {,} \blank])
$$
which controls the deformation theory of associative algebras plays a central role in the following. Here $\bullet$ indicates the degree in $\mathbf h (A)$ and for any $f \in \Hom_{\Bbbk} (A^{\otimes_{\Bbbk} m+2}, A)$ and $g \in \Hom_{\Bbbk} (A^{\otimes_{\Bbbk} n +2}, A)$ we have
\begin{align} \label{align:dglie1}
\begin{aligned}
[f] & = - d^\hdot(f) \\
[f, g] & = (-1)^{m+1} [f, g]_{\mathrm G}
\end{aligned}
\end{align}
where $d^\hdot$ is given in \eqref{align:differentialhochschildcochain} and $[\blank {,} \blank]_{\mathrm G}$ is the Gerstenhaber bracket of Definition \ref{definitiongerstenhaber}.

If $A = \Bbbk Q / I$ we also consider the DG Lie$[1]$ subalgebra
\[
(\Hom (\bar A^{\otimes \hdot + 2}, A), [\blank], [\blank {,} \blank])
\]
obtained via \eqref{align:dglie1} by restricting the Gerstenhaber bracket to $\Hom (\bar A^{\otimes \hdot + 1}, A)$ (cf.\ Remark \ref{remark:restricts}) where $\otimes = \otimes_{\Bbbk Q_0}$ and $\Hom = \Hom_{\Bbbk Q_0^\e}$. Since the inclusion is a quasi-isomorphism, we continue to denote this DG Lie$[1]$ subalgebra by $\mathbf h (A)$.
\end{notation}

\begin{definition}
A morphism\index{L$_\infty$!morphism} of L$_\infty[1]$ algebras $\Phi_\ldot \colon (\mathbf g, \{ l_i \}) \to (\mathbf g', \{ l'_i \})$ is given by graded linear maps
\[
\Phi_n \colon \mathbf g^{\otimes n} \to \mathbf g'
\]
of degree zero for all $n \geq 1$ such that 
$\Phi_n (x_{\sigma (1)}, \dotsc, x_{\sigma (n)}) = \chi(\sigma) \, \Phi_n (x_1, \dotsc, x_n)$ for any $\sigma \in \mathfrak S_n$ and 
\begin{align*}
& \displaystyle\sum_{\substack{i+j=n+1 \\ i,j\geq 1}} \sum_{\sigma \in \mathfrak S_{i,n-i}} \chi(\sigma) \Phi_{j} (l_i (x_{\sigma (1)}, \dotsc, x_{\sigma(i)}), x_{\sigma(i+1)}, \dotsc,  x_{\sigma(n)})\\
& = \!\!\! \sum_{\substack{1 \leq r \leq n \\ i_1+\dotsb + i_r = n}} \!\! \sum_{\tau} \chi(\tau)\,  l_r'(\Phi_{i_1}(x_{\tau (1)}, \dotsc, x_{\tau (i_1)}), \dotsc, \Phi_{i_r}(x_{\tau (i_1 + \dotsb + i_{r-1}+1)}, \dotsc, x_{\tau(n)}))
\end{align*}
where $\tau$ runs over all $(i_1, \dotsc, i_r)$-shuffles\footnote{i.e.\ for each $0 \leq j <r$ we have $\tau(i_1+\dotsb+i_j+1)<\tau(i_1+\dotsb+i_j+2)<\dotsb<\tau(i_1+\dotsb+i_{j+1})$} satisfying
\begin{flalign*}
&& \tau (i_1+\dotsb+i_{l-1}+1) < \tau (i_1+\dotsb+i_{l}+1) && \llap{for each $1 \leq l \leq r-1$.}
\end{flalign*}
Here we set $i_0 = 0$.

A morphism of L$_{\infty}[1]$ algebras\index{L$_\infty$!morphism} $\Phi_\ldot$ is a {\it quasi-isomorphism}\index{L$_\infty$!quasi-isomorphism} if $\Phi_1$ is a quasi-isomorphism of complexes. 
\end{definition}

L$_\infty[1]$ algebras (and their morphisms) can also be viewed from a ``dual'' point of view, as a graded vector space $\mathbf g$ together with a coderivation on the reduced cofree cocommutative coalgebra generated by $\mathbf g$ (see \cite{ladastasheff} for more details).

\begin{lemma}[{\cite[Thm.~2.9]{canonaco}}]
\label{lemma:quasi-inverse}
Let $\Phi_\ldot \colon \mathbf g \to \mathbf g'$ be a quasi-isomorphism\index{L$_\infty$!quasi-isomorphism} between L$_\infty[1]$ algebras. Then $\Phi_\ldot$ admits a \emph{quasi-inverse}, i.e.\ there exists a quasi-isomorphism $\Psi_\ldot \colon \mathbf g' \to \mathbf g$ which induces the inverse isomorphism on cohomology.
\end{lemma}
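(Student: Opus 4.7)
The plan is to construct $\Psi_\ldot$ inductively on the arity $n$, extending a chosen $\Psi_1$ to components $\Psi_n$ so that every L$_\infty[1]$-morphism equation holds. At each step, the obstruction to extending will be a cocycle in a suitable Hom complex whose cohomology class is killed thanks to the quasi-isomorphism hypothesis on $\Phi_1$.

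For the base case, the hypothesis states that $\Phi_1 \colon \mathbf g \to \mathbf g'$ is a quasi-isomorphism of cochain complexes. Working over a field $\Bbbk$ (characteristic zero), every such quasi-isomorphism admits a homotopy inverse: choose splittings $\mathbf g^n \simeq \im (l_1) \oplus H^n (\mathbf g) \oplus C^n$ (and analogously for $\mathbf g'$), so that $\Phi_1$ is, up to homotopy, the isomorphism it induces on the $H$-components; the inverse of this isomorphism lifts to a chain map $\Psi_1 \colon \mathbf g' \to \mathbf g$ with $H^\ldot (\Psi_1) = H^\ldot (\Phi_1)^{-1}$.

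For the inductive step, suppose $\Psi_1, \dotsc, \Psi_{n-1}$ have been constructed and satisfy the L$_\infty[1]$-morphism equations up to arity $n-1$. Let $E_n \colon (\mathbf g')^{\otimes n} \to \mathbf g$ denote the signed shuffle sum of all terms, built from the brackets $l_i$, $l_i'$ and the known components $\Psi_{<n}$, that the $n$-ary morphism equation would force to cancel against $l_1 \Psi_n - \Psi_n \circ (\text{unary extension of } l_1')$. A direct computation using the L$_\infty[1]$ relations for $\mathbf g$ and $\mathbf g'$ together with the induction hypothesis shows that $E_n$ is a cocycle in $\Hom_{\Bbbk} ((\mathbf g')^{\otimes n}, \mathbf g)$ with respect to the differential induced by the unary brackets. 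If $E_n$ is also a coboundary, then any primitive furnishes (up to sign) the required $\Psi_n$ and the induction closes.

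To verify that $E_n$ is a coboundary, post-compose with $\Phi_1$ to land in $\Hom_{\Bbbk} ((\mathbf g')^{\otimes n}, \mathbf g')$. Using that $\id_{\mathbf g'}$ is an L$_\infty[1]$-morphism from $\mathbf g'$ to itself and comparing it against the partial composition $\Phi_\ldot \circ \Psi_\ldot$, one exhibits an explicit primitive for $\Phi_1 \circ E_n$ assembled from the components $\Phi_{<n}$, $\Psi_{<n}$ and the brackets $l_i'$. Since $\Phi_1$ is a quasi-isomorphism and $(\mathbf g')^{\otimes n}$ is a free graded $\Bbbk$-module, post-composition with $\Phi_1$ induces a quasi-isomorphism $\Hom_{\Bbbk} ((\mathbf g')^{\otimes n}, \mathbf g) \to \Hom_{\Bbbk} ((\mathbf g')^{\otimes n}, \mathbf g')$, so that $E_n$ itself was already a coboundary in the source complex. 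Finally, that $\Psi_\ldot$ is a quasi-isomorphism and induces the inverse map on cohomology is immediate from the choice of $\Psi_1$.

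The main obstacle is the sign- and shuffle-laden combinatorics: verifying cocycle-ness of $E_n$ and producing the explicit primitive for $\Phi_1 \circ E_n$ requires careful Koszul-sign bookkeeping. A conceptually cleaner route is to reformulate the statement via the reduced cocommutative coalgebras $\overbar{S^{\mathrm c}} (\mathbf g)$ and $\overbar{S^{\mathrm c}} (\mathbf g')$: an L$_\infty[1]$-morphism is a DG coalgebra morphism between them, a spectral sequence in the filtration by word length shows our hypothesis translates to a quasi-isomorphism of DG coalgebras, and such quasi-isomorphisms of conilpotent DG coalgebras are invertible up to homotopy, yielding $\Psi_\ldot$ in a single stroke.
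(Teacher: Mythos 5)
The paper does not actually prove this lemma --- it is quoted from Canonaco \cite[Thm.~2.9]{canonaco}, whose argument runs through minimal models: over a field of characteristic $0$ every L$_\infty[1]$ algebra splits (non-canonically) into a minimal one (trivial unary bracket) and a linearly contractible one, a quasi-isomorphism between minimal L$_\infty[1]$ algebras has invertible $\Phi_1$ and is therefore invertible as an L$_\infty[1]$ morphism, and the quasi-inverse in general is obtained by conjugating with the inclusions and projections of the minimal models. Your obstruction-theoretic induction is a genuinely different, equally standard route, and your closing paragraph (the coalgebra reformulation) is a third; none of the three appears in the paper itself.

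The one step of your induction that does not close as written is the verification that $E_n$ is exact. You propose a primitive for $\Phi_1 \circ E_n$ ``assembled from $\Phi_{<n}$, $\Psi_{<n}$ and the brackets $l_i'$'' by comparing the partial composite $\Phi_\ldot \circ \Psi_\ldot$ with $\id_{\mathbf g'}$. But the arity-$n$ defect of the truncated composite is cohomologous to zero only because the truncated composite is homotopic to the identity through arities $< n$, and the primitive is built from the components of that homotopy --- it cannot be written down from $\Phi_{<n}$, $\Psi_{<n}$ and $l_i'$ alone. The standard fix is to strengthen the inductive hypothesis: construct $\Psi_{<n}$ together with a partial L$_\infty[1]$ homotopy $H_{<n}$ from $\Phi_\ldot \circ \Psi_\ldot$ to $\id_{\mathbf g'}$ and extend both simultaneously at each stage. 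One should also arrange that $\Psi_n$ lands in the subcomplex of graded-symmetric maps, which is harmless in characteristic $0$ by averaging over $\mathfrak S_n$ with Koszul signs. With these amendments your argument is correct; the minimal-model and coalgebra routes avoid the bookkeeping entirely.
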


For an L$_\infty[1]$ algebra, one similarly obtains a Maurer--Cartan functor\index{Maurer--Cartan!functor|textbf} $\MC_{\mathbf g}$ by generalizing the Maurer--Cartan equation\index{Maurer--Cartan!equation} as follows.

\begin{definition}
Given an L$_\infty[1]$ algebra $\mathbf g$, a {\it Maurer--Cartan element}\index{Maurer--Cartan!elements|textbf} is an element $x \in \mathbf g^0$ satisfying the Maurer--Cartan equation\index{Maurer--Cartan!equation}
\begin{flalign}
\label{maurercartanlinfinity}
&& \sum_{n=1}^\infty \frac{x^{\langle n \rangle}}{n!} = 0 && \mathllap{x^{\langle n \rangle} = \langle x, \dotsc, x \rangle_n.}
\end{flalign}
\end{definition}

\begin{remark}
\label{remark:adic}
As the sum in (\ref{maurercartanlinfinity}) is infinite, the definition of a Maurer--Cartan element\index{Maurer--Cartan!elements} only makes sense in the right context, i.e.\ if one can establish that the sum is in fact finite or at least converges in some topology.

The former can sometimes be shown for elements in $\mathbf g^0$ which satisfy certain degree conditions. The latter can {\it always} be achieved in the context of formal deformations over any complete local Noetherian algebra $(B, \mathfrak m)$ by working with the $\mathfrak m$-adic topology. Given an L$_\infty [1]$ algebra $(\mathbf g, \{ l_n \}_n)$, formal deformations over $(B, \mathfrak m)$ are described as Maurer--Cartan elements\index{Maurer--Cartan!elements} in the L$_\infty [1]$ algebra $\mathbf g \hatotimes \mathfrak m$ obtained by linearly extending the brackets, i.e.\ by setting
\begin{flalign*}
&& l_n (x_1 \otimes m_1, \dotsc, x_n \otimes m_n) &= l_n (x_1, \dotsc, x_n) \otimes m_1 \cdots m_n && \qquad\qquad\mathllap{x_i \in \mathbf g, m_i \in \mathfrak m.}
\end{flalign*}
For the L$_\infty [1]$ algebra $\mathbf g \hatotimes \mathfrak m$, the sum in (\ref{maurercartanlinfinity}) then converges in the $\mathfrak m$-adic topology.
\end{remark}

In this more general setting of L$_\infty [1]$ algebras, equivalence of Maurer--Cartan elements\index{Maurer--Cartan!elements!equivalence} can no longer be described by the action of a gauge {\it group}. Rather, one says that two Maurer--Cartan elements $x, y \in \MC_{\mathbf g}$ are equivalent\index{Maurer--Cartan!elements!equivalence} if they are {\it homotopic} (see Canonaco \cite{canonaco}, Manetti \cite[\S 5]{manettidgla} and Markl \cite[Ch.~5]{markl} for nice expositions).

The deformation functor $\Def_{\mathbf g}$ associated to an L$_\infty [1]$ algebra $\mathbf g$ can now be defined as
\[
\Def_{\mathbf g} = \MC_{\mathbf g} / {\sim}
\]
where $\sim$ denotes homotopy equivalence.
(Note that two Maurer--Cartan elements in a DG Lie algebra are homotopic\index{Maurer--Cartan!elements!equivalence} if and only if they are related by the action of the gauge group, so that for DG Lie algebras the notions of gauge equivalence and homotopy equivalence coincide \cite[Thm.~5.5]{manettidgla}.)

In Chapter \ref{section:deformations-of-path-algebras} we will give a combinatorial description of the Maurer--Cartan equation\index{Maurer--Cartan!equation} of the L$_\infty [1]$ algebra $\mathbf p (Q, R)$ controlling deformations of the reduction system\index{reduction system!deformation}\index{deformation!of reduction system} $R$. In this context the abstract notion of homotopy equivalence of Maurer--Cartan elements\index{Maurer--Cartan!elements!equivalence} can be described concretely using the notion of equivalence of (formal) reduction systems\index{reduction system!equivalence} (see Definition \ref{definition:equivalenceformalreductionsystem}). In particular, it is sometimes possible to explicitly determine the set of all formal deformations up to equivalence. (See \S\ref{subsubsection:formalbrauertree} for an example.)

\section{Algebraization of formal deformations}
\label{subsubsection:formalalgebraization}
\index{algebraization|textbf}

Given a formal deformation over $\Bbbk \llrr{t}$, say, one may ask to what extent $t$ can be considered an ``actual'' parameter, i.e.\ if it is possible to evaluate $t$ at some value $\lambda \in \Bbbk$ to obtain an ``actual'' deformation.

Given a formal deformation $\widehat A = (A \llrr{t}, \star)$ of an associative algebra $A$, an {\it algebraization}\index{algebraization|textbf} of $\widehat A$ would be an algebra $\widetilde A = (A [t], \star)$ such that $\widehat A$ is isomorphic to the $(t)$-adic completion of $\widetilde A$ as formal deformation, i.e.\ $T \colon \widehat A \tosim \varprojlim_n \widetilde A / (t^n)$, where $T$ restricts to the identity modulo $t$, so that $T = \id + T_1 t + T_2 t^2 + \dotsb$ for some linear maps $T_i \colon A \to A$. For example, given the commutative algebra $A = \Bbbk [x, y] / (xy)$, we have that $\widehat A = \Bbbk [x, y] \llrr{t} \big/ (xy - t)^\compl$ is a formal (commutative) deformation of $A$ over $\Bbbk \llrr{t}$, which coincides with the $(t)$-adic completion of the algebra $\widetilde A = \Bbbk [x, y] [t] / (xy - t)$.

When $A = (W, \mu)$ is a finite-dimensional algebra, the formal deformation theory studies the ``local'' structure of the affine variety\index{variety!of finite-dimensional algebras} $\mathrm{Alg}_W$ of all associative algebra structures on $W$, and an algebraization of a formal deformation may be viewed as reconstructing an affine subvariety passing through $A \in \mathrm{Alg}_W$ from its formal neighbourhood.

When $A$ is infinite-dimensional, algebraizations \index{algebraization|textbf} may not always exist, but when they do one can consider the parameter $t$ as an actual parameter and evaluate at all values of $t$, giving an ``actual'' deformation $\widetilde A_\lambda = \widetilde A / (t - \lambda)$. For the above example, $\{ \widetilde A_\lambda \mid \lambda \in \Bbbk \}$ is a family of commutative algebras $ \widetilde A_\lambda$ which all have  the same $\Bbbk$-basis  as  $\widetilde A_0 = A$. In the context of deformations of $A = \Bbbk Q / I$ algebraizations \index{algebraization|textbf} can be shown to exist under some ``degree conditions'' (see Chapter \ref{section:pbw}).

\chapter{Deformations of path algebras of quivers with relations}
\label{section:deformations-of-path-algebras}

In this section we prove our main results about the formal deformation theory of path algebras of quivers with relations. We start with a summary of the results from the point of view of deformation problems and introduce the notation used in the remainder of this section.

\section{Summary}
\label{subsection:summary}

In the case of formal deformations our results can be summarized as follows.

\begin{theorem}
\label{theorem:deformationreduction}
\index{reduction system!deformation}\index{deformation!of reduction system}\index{deformation!of associative algebra}\index{deformation!of ideal of relations}
Given any finite quiver $Q$ and any two-sided ideal of relations $I \subset \Bbbk Q$, let $A = \Bbbk Q / I$ be the quotient algebra and let $R$ be any reduction system satisfying \textup{($\diamond$)} for $I$.

There is an equivalence of formal deformation problems between
\begin{enumerate}
\item \label{theorem:deformationreduction1} deformations of the associative algebra structure on $A$
\item \label{theorem:deformationreduction2} deformations of the reduction system $R$
\item \label{theorem:deformationreduction3} deformations of the relations $I$.
\end{enumerate}
\end{theorem}

This equivalence can be understood as an isomorphism of the corresponding deformation functors and is proved in \S\ref{subsection:generatorsrelations}.

Our approach for studying the deformations of $A = \Bbbk Q / I$ is based on replacing the bar resolution of $A$ by the projective $A$-bimodule resolution\index{resolution!Chouhy--Solotar} $P_\ldot$ obtained from a reduction system. The homotopy deformation retract between the bar resolution\index{resolution!bar} $\Bar_\ldot$ and the resolution $P_\ldot$ (see Chapter \ref{section:homotopydeformationretract}) allows one to transfer the DG Lie algebra\index{DG Lie algebra}\index{algebra!DG Lie} structure on the Hochschild cochain complex to an L$_\infty$ algebra\index{L$_\infty$!algebra}\index{algebra!L$_\infty$} structure on a cochain complex associated to $P_\ldot$. It turns out that this L$_\infty$ algebra naturally controls formal deformations of the reduction system $R$. Moreover, it also turns out to be rather convenient for concrete computations as one can give a combinatorial criterion for an element to satisfy the Maurer--Cartan equation\index{Maurer--Cartan!equation} (see \S\ref{subsection:computinghigherbrackets}).

We develop the theory for $A = \Bbbk Q / I$ for an arbitrary finite quiver $Q$ and an arbitrary two-sided ideal $I$ of relations. Already the case of a quiver with one vertex and $d$ loops has an interesting application: we can give a combinatorial construction of a quantization of certain algebraic Poisson structures on $\mathbb A^d$ (see Chapter \ref{section:relationtoquantization}).

We shall start by explaining the statement of Theorem \ref{theorem:deformationreduction} and fixing the notation used throughout this section.

\subsection{Outline and notation}
\label{subsubsection:notation}

Let $\Bbbk Q$ be the path algebra of a finite quiver $Q$, let $I \subset \Bbbk Q$ be a two-sided ideal of relations and let $A = \Bbbk Q / I$ denote the quotient algebra under the natural projection $\pi \colon \Bbbk Q \to \Bbbk Q / I$.

Fix a reduction system $R = \{ (s, \varphi_s) \}$ satisfying the condition ($\diamond$) for $I$ (cf.\ Proposition \ref{proposition:existencereduction}). Recall from Remark \ref{remark:f} that $R$ is determined by the set $S = \{ s \mid (s, \varphi_s) \in R \}$ and the $\Bbbk Q_0$-bimodule map $\varphi \in \Hom (\Bbbk S, \Bbbk \Irr_S)$ with $\varphi (s) = \varphi_s$.

Since $R$ satisfies ($\diamond$), we have an $\Bbbk Q_0$-bimodule isomorphism $\sigma \colon A \simeq \Bbbk \Irr_S$ \eqref{sigma} and thus may identify $\Hom (\Bbbk S, A)$ with $\Hom (\Bbbk S, \Bbbk \Irr_S)$ (see Remark \ref{remark:reduction}) and to emphasize the analogy with the Hochschild $2$-cochains $\Hom_{\Bbbk} (A \otimes_{\Bbbk} A, A)$ we view $\phi$ as an element in $\Hom (\Bbbk S, A)$, so that $\phi_s = \sigma \phi (s)$.

To keep the notation simple we usually state the results in the context of one-parameter deformations, i.e.\ as deformations over $\Bbbk \llrr{t}$ with maximal ideal $\mathfrak m = (t)$, but in all results one may replace $\Bbbk \llrr{t}$ by any complete local Noetherian $\Bbbk$-algebra $(B, \mathfrak m)$.

For the rest of the section let
\begin{equation}
\label{g}
\widetilde \varphi \in \Hom (\Bbbk S, A) \hatotimes \mathfrak m
\end{equation}
so that $\widetilde \varphi$ can be written as
\begin{flalign*}
&& \widetilde \varphi &= \widetilde \varphi_1 t + \widetilde \varphi_2 t^2 + \widetilde \varphi_3 t^3 + \dotsb && \mathllap{\widetilde \varphi_i \in \Hom (\Bbbk S, A)}
\end{flalign*}
and let $\widetilde \varphi^{(n)} = \widetilde \varphi_1 t + \dotsb + \widetilde \varphi_n t^n$ denote the image under tensoring by $\otimes_{\Bbbk \llrr{t}} \Bbbk [t] / (t^{n+1})$. We set $\widetilde \varphi_s = \sigma \widetilde \varphi (s)$ for any $s \in S$, where $\sigma$ is extended $\Bbbk \llrr{t}$-linearly. (The subscript on $\widetilde \varphi$ is thus either a natural number $i$ or $1, 2, \dotsc$ used as an index, or an element $s \in S$ used as a shorthand, but the usage should be clear both from the notation and from the context.)

The maps $\widetilde \varphi$ (\ref{g}) are candidates for deformations, deforming $R$ determined by $\varphi \in \Hom (\Bbbk S, A)$ to a (formal) reduction system determined by $\varphi + \widetilde \varphi$. A priori {\it we do not make any additional assumptions on $\widetilde \varphi$}, but we shall show that the objects (bilinear maps, reduction systems, ideals) that can be associated to $\widetilde \varphi$ have the expected nice properties precisely when $\widetilde \varphi$ is a Maurer--Cartan element\index{Maurer--Cartan!elements} of the L$_\infty[1]$ algebra $\mathbf p (Q, R) \hatotimes \mathfrak m$, which we construct via homotopy transfer in \S\ref{subsection:linfinity}. More precisely, to any $\widetilde \varphi \in \Hom (\Bbbk S, A) \hatotimes \mathfrak m$ we can associate
\begin{enumerate}
\item a $\Bbbk \llrr{t}$-bilinear map $\star^{\mathrm H}_{\phi + \widetilde \phi} \colon A \llrr{t} \otimes A \llrr{t} \to A \llrr{t}$
\item \label{ari2} a formal reduction system\index{reduction system!deformation}\index{deformation!of reduction system} $\widehat R_{\phi + \widetilde \phi}$ for $\Bbbk Q \llrr{t}$
\item an ideal $\widehat I_{\phi + \widetilde \phi}$ in $\Bbbk Q \llrr{t}$
\end{enumerate}
such that
\begin{enumerate}
\item \label{ari1} \eqmakebox[ARI][r]{$\widehat A_{\phi + \widetilde \phi}$}${} = (A \llrr{t}, \star^{\mathrm H}_{\phi + \widetilde \phi})$ is a formal deformation of $A$
\item  \eqmakebox[ARI][r]{$\widehat R_{\phi + \widetilde \phi}$}${} = \{ (s, \phi_s + \widetilde \phi_s) \}$ is a formal deformation of $R$
\item \label{ari3} \eqmakebox[ARI][r]{$\widehat I_{\phi + \widetilde \phi}$}${} = \langle s - \phi_s - \widetilde \phi_s \rangle_{s \in S}^\compl$ is a formal deformation of $I$
\end{enumerate}
if and only if $\widetilde \varphi$ is a Maurer--Cartan element\index{Maurer--Cartan!elements} of the L$_\infty[1]$ algebra $\mathbf p (Q, R) \hatotimes \mathfrak m$. In fact, to \ref{ari2} we can associate an (a priori different) {\it combinatorial} star product $\star^{\mathrm C}_{\phi + \widetilde \phi}$ on $A \llrr{t}$ defined by performing rightmost reductions, which turns out to coincide with $\star^{\mathrm H}_{\phi + \widetilde \phi}$ which is defined via {\it homotopy transfer} (see Definitions \ref{definition:star} and \ref{definition:star_2} and Theorem \ref{theorem:interpretation-star}).

When $\widetilde \phi$ is a Maurer--Cartan element\index{Maurer--Cartan!elements} of $\mathbf p (Q, R) \hatotimes \mathfrak m$, we can also define a (generalized) Gutt star product $\star^{\mathrm G}_{\phi + \widetilde \phi}$ on $A \llrr{t}$ associated to \ref{ari3} (see Definition \ref{definition:gutt}). In this case, the three star products $\star^{\mathrm H}_{\phi + \widetilde \phi}, \star^{\mathrm C}_{\phi + \widetilde \phi}$ and $\star^{\mathrm G}_{\phi + \widetilde \phi}$ are associative and coincide (see Theorem \ref{theorem:equivalenceformal}).

The above formal deformations are to be understood in the following sense.

\subsubsection*{Formal deformations of associative algebras}

Recall from Chapter \ref{section:deformationtheory} that a formal deformation of an associative algebra $A$ is given by $\widehat A = (A \llrr{t}, \star)$, where
\begin{itemize}
\item $\star$ is a $\Bbbk \llrr{t}$-bilinear associative product on $A \llrr{t}$ and
\item $\widehat A / (t) \simeq A$ as $\Bbbk$-algebras.
\end{itemize}
We say that two deformations $(A \llrr{t}, \star)$ and $(A\llrr{t}, \star')$ are {\it (gauge) equivalent} if there is an automorphism of $\Bbbk \llrr{t}$-modules $T \colon A\llrr{t} \to A\llrr{t}$ such that
\begin{itemize}
\item $T \otimes_{\Bbbk \llrr{t}} \Bbbk = \id_A$\footnote{here we naturally identify $A\llrr{t} \otimes_{\Bbbk \llrr{t}} \Bbbk$ with $A$} and
\item $T (a \star b) = T (a) \star' T (b)$ for any $a, b \in A$. 
\end{itemize}

\subsubsection*{Formal deformations of reduction systems}
\index{reduction system!deformation}\index{deformation!of reduction system}

A formal deformation of $R = \{ (s, \phi_s) \}_{s \in S}$ is given by
\[
\widehat R_{\phi + \widetilde \phi} = \{ (s, \varphi_s + \widetilde \phi_s) \}_{s \in S}
\]
where $\widetilde \varphi \in \Hom (\Bbbk S, A) \hatotimes \mathfrak m$ such that $\widehat R_{\phi + \widetilde \phi}$ satisfies the {\it formal diamond condition}, by which we mean that for each $n \geq 1$, the reduction system $R_{\phi + \widetilde \phi}^{(n)} = \{ (s, \varphi_s + \widetilde \varphi_s^{(n)}) \}_{s \in S}$ for $\Bbbk Q [t] / (t^{n+1})$ satisfies that every path in $Q$ is reduction-unique.

\begin{remark}
\label{remarkformaldiamondlemma}
In fact, $\widehat R_{\phi + \widetilde \phi}$ can be viewed as an inverse limit of reduction systems for the path algebra of a finite quiver --- which might be denoted $Q [t]$ --- obtained by adding a loop $t_i$ at each vertex $i \in Q_0$ and adding relations $t_{\mathrm{s}(x)} x = x\, t_{\mathrm{t}(x)}$ for each $x \in Q_1$.

It follows again from the Diamond Lemma\index{Diamond Lemma!formal} that $\widehat R_{\phi + \widetilde \phi}$ satisfies the formal diamond condition if and only if all the overlap ambiguities\index{overlap ambiguity}\index{ambiguity!overlap} are resolvable if and only if the natural map $\Bbbk \Irr_S \llrr{t} \to \Bbbk Q \llrr{t} / \widehat I_{\phi + \widetilde \phi} $ is an isomorphism of $\Bbbk \llrr{t}$-modules.
\end{remark}

The following is the formal analogue of the notion of equivalence of reduction systems given in Definition \ref{definition:equivalencereduction}.

\begin{definition}
\label{definition:equivalenceformalreductionsystem}
\index{reduction system!equivalence}
Two formal deformations $\widehat R_{\phi + \widetilde \phi}$ and $\widehat R_{\phi + \widetilde \phi'}$ of $R$ are {\it equivalent} if there is an automorphism $T$ of $\Bbbk \Irr_S \llrr{t}$, viewed as $\Bbbk \llrr{t}$-module, such that
\begin{itemize}
\item $T \otimes_{\Bbbk \llrr{t}} \Bbbk = \id_{\Bbbk \Irr_S}$
\item $T(e_i) = e_i$ for any $i \in Q_0$ and
\item $T (\mathrm{red}_{\phi + \widetilde \phi'}^{(\infty)} (p)) = \mathrm{red}_{\phi + \widetilde \phi}^{(\infty)} (T (p_1) \dotsb T (p_m))$ for any path $p = p_1 \dotsb p_m$ with $p_i \in Q_1$.
\end{itemize}
\end{definition}

\begin{remark}
\label{remark:equivalenceformalreductionsystem}
Similar to Lemma \ref{lemma:equivalencereduction} any equivalence $T$ between $\widehat R_{\phi + \widetilde \phi}$ and $\widehat R_{\phi + \widetilde \phi'}$ is determined by an element $\psi \in \Hom (\Bbbk Q_1, \Bbbk \Irr_S) \hatotimes \mathfrak m$ such that
\begin{itemize}
\item $T (x) = x + \psi (x)$ for any $x \in Q_1$ and
\item $T (\phi (s) + \widetilde \phi' (s)) = \mathrm{red}_{\phi + \widetilde \phi}^{(\infty)} (T (s_1) \dotsb T (s_m))$
for any $s \in S$ with $s = s_1 \dotsb s_m$ and $s_i \in Q_1$.
\end{itemize}
(Here $\mathrm{red}_{\phi + \widetilde \phi}^{(\infty)}$ is the formal analogue of \eqref{align:redinfty}, which in the formal context can be defined by taking $z = 1$ in \eqref{red}.)
\end{remark}

\subsubsection*{Formal deformations of ideals of relations}

Lastly, a formal deformation of an ideal $I \subset \Bbbk Q$ is given by a two-sided ideal $\widehat I$ of $\Bbbk Q \llrr{t}$ such that
\begin{itemize}
\item $\widehat I$ is complete as a $\Bbbk Q \llrr{t}$-bimodule
\item $\widehat I \otimes_{\Bbbk \llrr{t}} \Bbbk = I$ and
\item there is an isomorphism of $\Bbbk \llrr{t}$-modules $\Phi \colon A \llrr{t} \to \Bbbk Q \llrr{t} / \widehat I$ with $\Phi \otimes_{\Bbbk \llrr{t}} \Bbbk = \id_A$.
\end{itemize}
Let $\widehat I$ and ${\widehat I}'$ be two formal deformations of the ideal $I$ and let $\Phi$ and $\Phi'$ denote the corresponding isomorphisms. We say that $\widehat I$ and $\widehat I'$ are {\it equivalent} if there is an automorphism $T$ of $A \llrr{t}$ viewed as $\Bbbk \llrr{t}$-module such that
\begin{itemize}
\item $T \otimes_{\Bbbk \llrr{t}} \Bbbk = \id_A$ and
\item $\Phi'  T  \Phi^{-1} \colon \Bbbk Q \llrr{t} / \widehat I \to \Bbbk Q \llrr{t} / \widehat I'$ is an isomorphism of $\Bbbk \llrr{t}$-algebras.
\end{itemize}

\subsection{Overview}

We give a brief overview of the remainder of this section. In \S\ref{subsection:linfinity} we use the homotopy deformation retract constructed in Chapter \ref{section:homotopydeformationretract} to obtain the L$_\infty[1]$ algebra $\mathbf p (Q, R)$ controlling deformations of reduction systems. In \S\ref{subsection:combinatorialstarproduct} we define a combinatorial star product (see Definition \ref{definition:star_2}), which is defined by performing rightmost reductions and may be used to give a combinatorial criterion for the Maurer--Cartan equation for $\mathbf p (Q, R)$ (see Theorem \ref{theorem:higher-brackets} in \S\ref{subsection:computinghigherbrackets}). This criterion is given by checking associativity of the combinatorial star product on a (usually finite) collection of elements and when this holds, the same formula describes the deformed multiplication for {\it all} elements. Many of the computations are relegated to the proofs of various technical lemmas which can safely be skipped on a first reading.

In \S\ref{subsection:generatorsrelations} we prove the equivalence of the various deformation problems. In \S\ref{subsection:firstorder} we show how the combinatorial star product to first order can be used to compute the second Hochschild cohomology of the algebra.

\section{An L$_{\infty}$ algebra structure}
\label{subsection:linfinity}
\index{L$_\infty$!algebra}\index{algebra!L$_\infty$}

In Chapter \ref{section:homotopydeformationretract} we constructed a homotopy deformation retract
\begin{align}\label{hdr:resolution}
\begin{tikzpicture}[baseline=-2.6pt,description/.style={fill=white,inner sep=1pt,outer sep=0}]
\matrix (m) [matrix of math nodes, row sep=0em, text height=1.5ex, column sep=3em, text depth=0.25ex, ampersand replacement=\&, inner sep=1.5pt]
{
(P_\ldot, \partial_\ldot) \& (\Bar_\ldot, d_\ldot) \\
};
\path[->,line width=.4pt,font=\scriptsize, transform canvas={yshift=.4ex}]
(m-1-1) edge node[above=-.4ex] {$F_\ldot$} (m-1-2)
;
\path[->,line width=.4pt,font=\scriptsize, transform canvas={yshift=-.4ex}]
(m-1-2) edge node[below=-.4ex] {$G_\ldot$} (m-1-1)
;
\path[->,line width=.4pt,font=\scriptsize, looseness=8, in=30, out=330]
(m-1-2.355) edge node[right=-.4ex] {$h_\ldot$} (m-1-2.5)
;
\end{tikzpicture}
\end{align}
between the bar resolution of $A = \Bbbk Q / I$ and the resolution $P_\ldot$ obtained from a reduction system $R$ satisfying the condition ($\diamond$) for $I$.

In this section, we apply the functor $\Hom_{A^\e}(\blank, A)$ to (\ref{hdr:resolution}) and get a homotopy deformation retract for the Hochschild cochain complex\index{Hochschild!cochain complex} of $A$. By the Homotopy Transfer Theorem (see for example \cite[\S 10.3]{lodayvallette}), one can then transfer the DG Lie$[1]$ algebra\index{DG Lie algebra}\index{algebra!DG Lie} structure on $\Hom (\bar A^{\otimes \hdot + 2}, A)$ to an L$_\infty [1]$ algebra structure on $\Hom_{A^\e} (P_{\ldot + 2}, A)$.

First note that for any $\Bbbk Q_0$-bimodule $M$, there is a natural isomorphism
\begin{equation}
\label{naturalisomorphism}
\Hom_{A^\e} (A \otimes M \otimes A, A) \simeq \Hom (M, A)
\end{equation}
given by $\check f \mapsto f$, where $f (m) = \check f (1 \otimes m \otimes 1)$ (cf.\ Notation \ref{notation:tensorhom}).

Consider the complex $P^{\hdot + 2} = \Hom (\Bbbk S_{\ldot + 2}, A)$ whose differential $\langle \blank \rangle$ is given by 
\begin{equation}
\label{shifted}
\langle f \rangle (w) = (-1)^{i} \check f (\partial_{i+1} (1 \otimes w \otimes 1))
\end{equation}
for any $f \in P^i$ and $w \in S_{i+1}$. Recall from Remark \ref{notation:h} the DG Lie$[1]$ algebra\index{DG Lie algebra}\index{algebra!DG Lie}
\[
\mathbf h (A) = (\Hom (\bar A^{\otimes \hdot+2}, A), [\blank], [\blank {,} \blank])
\]
which controls the deformation theory of $A$.

From (\ref{hdr:resolution}), we get the following homotopy deformation retract 
\begin{align}\label{hdr:Hochschild}
\begin{tikzpicture}[baseline=-2.6pt,description/.style={fill=white,inner sep=1pt,outer sep=0}]
\matrix (m) [matrix of math nodes, row sep=0em, column sep=4em, text depth=0.25ex, ampersand replacement=\&, inner sep=2.5pt]
{
P^{\hdot+2} \& \Hom(\bar A^{\otimes \bullet +2}, A)\\
};
\path[->,line width=.4pt,font=\scriptsize, transform canvas={yshift=.4ex}]
(m-1-1) edge node[above=-.4ex] {$ G^{\hdot +2}$} (m-1-2)
;
\path[->,line width=.4pt,font=\scriptsize, transform canvas={yshift=-.4ex}]
(m-1-2) edge node[below=-.4ex] {$ F^{\hdot +2}$} (m-1-1)
;
\path[->,line width=.4pt,font=\scriptsize, looseness=8, in=30, out=330]
(m-1-2.357) edge node[right=-.4ex] {$h^{\hdot+2}$} (m-1-2.3)
;
\end{tikzpicture}
\end{align}
where $F^n$ and $G^n$ are respectively the duals of $F_n$ and $G_n$ under the natural isomorphism (\ref{naturalisomorphism}), and $h^n$ is given by $f \mapsto (-1)^n \check f \circ h_n$ for any $f \in \Hom_{\Bbbk} (A^{\otimes_{\Bbbk} n+1}, A)$. They satisfy
\begin{flalign*}
&&
\begin{aligned}
F^n G^n - \id &= 0 \\
G^n F^n - \id &= h^{n} d' + d' h^{n-1}
\end{aligned}
&& 
\begin{aligned}
F^n h^n &= 0 \\
h^{n-1} G^n &= 0 \\
h^{n-1} h^n &= 0
\end{aligned}
&&
\end{flalign*}
where $d'(f): = [f]$ is the differential of $\mathbf h(A)$.

Thus, applying the Homotopy Transfer Theorem for DG Lie$[1]$ algebras to the homotopy deformation retract (\ref{hdr:Hochschild}) we obtain the following.

\begin{theorem}
\label{theorem:linfinitytransfer}
There exists an L$_\infty[1]$ algebra\index{L$_\infty$!algebra}\index{algebra!L$_\infty$}
\begin{equation}
\label{pqr}
\mathbf p (Q, R) =  (P^{\hdot+2} , \langle \blank \rangle, \langle \blank {,} \blank \rangle, \langle \blank {,} \blank {,} \blank \rangle, \dotsc)
\end{equation}
such that for each $k > 1$ we have 
\[
\langle f_1, \dotsc, f_k \rangle = \sum_{i=1}^{k-1}\sum_{\substack{\sigma \in \mathfrak S_{i,k-i}\\ \sigma(1) < \sigma(i+1)}}  F^\hdot ([G_i^\hdot(f_{\sigma(1)}, \dotsc, f_{\sigma(i)}), G_{k-i}^\hdot(f_{\sigma(i+1)}, \dotsc, f_{\sigma(k)})]).
\]
Moreover, the injection of complexes $G^\hdot \colon \mathbf p (Q, R)  \to \mathbf h(A)$ extends to a quasi-isomorphism of L$_\infty[1]$ algebras
$$
(G^\hdot_1, G^\hdot_2, \dotsc)  \colon \mathbf p (Q, R) \tosim \mathbf h (A)
$$
with $G^\hdot_1 = G^\hdot$ and for $k > 1$ 
\[
G_k^\hdot(f_1, \dotsc, f_k) = \sum_{i=1}^{k-1}\sum_{\substack{\sigma \in \mathfrak S_{i,k-i}\\ \sigma(1) < \sigma(i+1)}}  h^\hdot ([G_i^\hdot(f_{\sigma(1)}, \dotsc, f_{\sigma(i)}), G_{k-i}^\hdot(f_{\sigma(i+1)}, \dotsc, f_{\sigma(k)})]).
\]
\end{theorem}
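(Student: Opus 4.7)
The strategy is to invoke the Homotopy Transfer Theorem (HTT) for L$_\infty[1]$ algebras applied to the homotopy deformation retract (\ref{hdr:Hochschild}). By Theorem \ref{theorem:hdr}, the HDR (\ref{hdr:resolution}) is \emph{special}, and dualizing via $\Hom_{A^\e}(\blank, A)$ preserves this property, so the HDR (\ref{hdr:Hochschild}) satisfies the side conditions $F^n h^n = 0$, $h^{n-1} G^n = 0$ and $h^{n-1} h^n = 0$. These side conditions are precisely what is needed for a clean Kadeishvili-type transferred structure. The HTT then supplies an L$_\infty[1]$ algebra structure on $P^{\hdot+2}$ whose unary bracket coincides with the differential $\langle - \rangle$ induced by $d'$, together with an L$_\infty[1]$ morphism $G_\bullet^\hdot \colon \mathbf{p}(Q,R) \to \mathbf{h}(A)$ whose linear component equals $G^\hdot$.

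The next step is to identify this transferred structure with the explicit formulas in the statement. Since the source $\mathbf{h}(A)$ is a DG Lie$[1]$ algebra, i.e. its $k$-ary brackets vanish for $k \geq 3$, the general tree-summation formula for the transferred brackets collapses: only rooted binary trees contribute, with the binary Gerstenhaber bracket at every internal vertex, $h^\hdot$ at every internal edge, $G^\hdot$ at the leaves, and $F^\hdot$ (respectively $h^\hdot$) at the root for the transferred bracket (respectively for $G_k^\hdot$). Cutting each such tree at its root into two subtrees of sizes $i$ and $k-i$ yields exactly the recursive expressions stated. The shuffle condition $\sigma \in \mathfrak S_{i,k-i}$ indexes the distribution of inputs among the two subtrees while preserving the inherited orderings, and the supplementary condition $\sigma(1) < \sigma(i+1)$ (equivalently $\sigma(1) = 1$) selects one representative per orbit under swapping the two subtrees, compensating for the graded symmetry of $[\blank, \blank]$.

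The main technical point is to verify the L$_\infty[1]$ relations for the transferred brackets and the L$_\infty[1]$ morphism equations for $G_\bullet^\hdot$. Both follow by induction on $k$: substituting the recursive definitions into the relations produces a telescoping sum in $\mathbf{h}(A)$ whose terms either cancel via the graded Jacobi identity for $[\blank, \blank]$ or vanish by the side conditions of the HDR (chiefly $F^n h^n = 0$ and $h^{n-1} h^n = 0$, together with $G^n F^n - \id = h^n d' + d' h^{n-1}$ so that $G^n F^n$ acts as the identity up to coboundary inside the argument of an outer $h$ or bracket). This is the classical inductive proof of the Kadeishvili/HTT formulas for DG Lie inputs, and the main subtlety is tracking the Koszul signs: the L$_\infty[1]$ conventions absorb the shifts that would otherwise appear in classical L$_\infty$ signs, which is precisely why the paper sets things up in degree-shifted form.

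Finally, that $(G_1^\hdot, G_2^\hdot, \dotsc)$ is a quasi-isomorphism of L$_\infty[1]$ algebras is automatic, because its linear component $G_1^\hdot = G^\hdot$ is a quasi-isomorphism of complexes by construction of the HDR (\ref{hdr:Hochschild}). The hardest part will be the bookkeeping in the inductive verification of the higher relations and morphism equations; I would manage this by recording each identity as a sum of tree contributions and checking that the only non-cancelling contributions are those predicted by the recursion.
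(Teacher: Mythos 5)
Your proposal is correct and follows essentially the same route as the paper: the paper obtains Theorem \ref{theorem:linfinitytransfer} by dualizing the special homotopy deformation retract of Theorem \ref{theorem:hdr} to get \eqref{hdr:Hochschild} and then directly invoking the Homotopy Transfer Theorem (citing Loday--Vallette), with the stated formulas being the binary-tree collapse of the general transfer formulas since $\mathbf h(A)$ is a DG Lie$[1]$ algebra. The only difference is that you sketch the inductive verification of the transferred relations, which the paper delegates entirely to the cited HTT reference.
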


\begin{remark}
The last formula of Theorem \ref{theorem:linfinitytransfer} is often expressed in terms of graftings of binary trees. For example, the following is a tree appearing in the expression for $G_7^\hdot$
\[
\begin{tikzpicture}[baseline=-2em,x=.35em,y=.35em]
\draw[line width=.4pt] (-6,0) node[draw,shape=circle,scale=.25] (one) {};
\draw[line width=.4pt] (-4,0) node[draw,shape=circle,scale=.25] (two) {};
\draw[line width=.4pt] (-2,0) node[draw,shape=circle,scale=.25] (thr) {};
\draw[line width=.4pt]  (0,0) node[draw,shape=circle,scale=.25] (fou) {};
\draw[line width=.4pt]  (2,0) node[draw,shape=circle,scale=.25] (fiv) {};
\draw[line width=.4pt]  (4,0) node[draw,shape=circle,scale=.25] (six) {};
\draw[line width=.4pt]  (6,0) node[draw,shape=circle,scale=.25] (sev) {};
\foreach \c in {{(-5,-3)},{(-3.5,-6.75)},{(1,-3)},{(5,-3)},{(3,-7)},{(-.25,-12.25)}} {%
\draw[fill,shift={\c},rotate=45] (-.8pt,-.8pt) rectangle ++(1.6pt,1.6pt);
}
\draw[line width=.4pt] (one) -- ++(0,-1) -- ++(1,-1) -- ++(0,-2) -- ++(1.5,-1.5) -- ++(0,-2.5) -- ++(3.25,-3.25) -- ++(0,-2);
\draw[line width=.4pt] (two) -- ++(0,-1) -- ++(-1,-1);
\draw[line width=.4pt] (thr) -- ++(0,-4) -- ++(-1.5,-1.5);
\draw[line width=.4pt] (fou) -- ++(0,-1) -- ++(1,-1) -- ++(0,-2) -- ++(2,-2) -- ++(0,-2) -- ++(-3.25,-3.25);
\draw[line width=.4pt] (fiv) -- ++(0,-1) -- ++(-1,-1);
\draw[line width=.4pt] (six) -- ++(0,-1) -- ++(1,-1) -- ++(0,-2) -- ++(-2,-2);
\draw[line width=.4pt] (sev) -- ++(0,-1) -- ++(-1,-1);
\end{tikzpicture}
= h^\hdot \biggl( \biggl[ \,
\begin{tikzpicture}[baseline=-1.4em,x=.3em,y=.3em]
\draw[line width=.4pt] (-6,0) node[draw,shape=circle,scale=.25] (one) {};
\draw[line width=.4pt] (-4,0) node[draw,shape=circle,scale=.25] (two) {};
\draw[line width=.4pt] (-2,0) node[draw,shape=circle,scale=.25] (thr) {};
\foreach \c in {{(-5,-3)},{(-3.5,-6.75)}} {%
\draw[fill,shift={\c},rotate=45] (-.75pt,-.75pt) rectangle ++(1.5pt,1.5pt);
}
\draw[line width=.4pt] (one) -- ++(0,-1) -- ++(1,-1) -- ++(0,-2) -- ++(1.5,-1.5) -- ++(0,-2.5);
\draw[line width=.4pt] (two) -- ++(0,-1) -- ++(-1,-1);
\draw[line width=.4pt] (thr) -- ++(0,-4) -- ++(-1.5,-1.5);
\end{tikzpicture}
\,,
\begin{tikzpicture}[baseline=-1.4em,x=.3em,y=.3em]
\draw[line width=.4pt]  (0,0) node[draw,shape=circle,scale=.25] (fou) {};
\draw[line width=.4pt]  (2,0) node[draw,shape=circle,scale=.25] (fiv) {};
\draw[line width=.4pt]  (4,0) node[draw,shape=circle,scale=.25] (six) {};
\draw[line width=.4pt]  (6,0) node[draw,shape=circle,scale=.25] (sev) {};
\foreach \c in {{(1,-3)},{(5,-3)},{(3,-7)}} {%
\draw[fill,shift={\c},rotate=45] (-.75pt,-.75pt) rectangle ++(1.5pt,1.5pt);
}
\draw[line width=.4pt] (fou) -- ++(0,-1) -- ++(1,-1) -- ++(0,-2) -- ++(2,-2) -- ++(0,-2);
\draw[line width=.4pt] (fiv) -- ++(0,-1) -- ++(-1,-1);
\draw[line width=.4pt] (six) -- ++(0,-1) -- ++(1,-1) -- ++(0,-2) -- ++(-2,-2);
\draw[line width=.4pt] (sev) -- ++(0,-1) -- ++(-1,-1);
\end{tikzpicture} \, \biggr] \biggr)
\]
with the two trees on the right-hand side appearing in the expressions for $G_3^\hdot$ and $G_4^\hdot$, respectively. (Here $\begin{tikzpicture}[baseline=-.5em,x=.65em,y=.65em]
\draw[line width=.4pt] (0,0) node[draw,shape=circle,scale=.3] (one) {}; \draw[line width=.4pt] (one.south) -- ++(0,-.3em); \end{tikzpicture}$\ denotes an input of $G^\hdot _1 (f_i)$,
$\begin{tikzpicture}[baseline=.2em,x=.65em,y=.65em]
\draw[line width=.4pt] (0,0) -- (0,.7) -- ++(-.8,.8);
\draw[line width=.4pt] (0,.7) -- ++(.8,.8);
\end{tikzpicture}$
the composition given by the Gerstenhaber bracket $[\blank{,}\blank]$
and \,
$\begin{tikzpicture}[baseline=.2em,x=.65em,y=.65em]
\draw[line width=.4pt] (0,1.5) -- (0,0);
\draw[fill,shift={(0,.75)},rotate=45] (-.9pt,-.9pt) rectangle ++(1.8pt,1.8pt);
\end{tikzpicture}$
\, an occurrence of $h^\hdot$.)
\end{remark}

Recall from Remark \ref{ordinarydgalgebra} that there is a one-to-one correspondence between L$_\infty$ algebras\index{L$_\infty$!algebra}\index{algebra!L$_\infty$} and L$_\infty[1]$ algebras\index{L$_\infty$!algebra!L$_\infty[1]$}\index{algebra!L$_\infty$}. Instead of using L$_\infty$ algebras, one of the advantages of using L$_\infty[1]$ algebras is that no signs appear in the formulae of the higher brackets $\langle \blank, \dotsc, \blank \rangle$ and the higher morphisms $G_k^\hdot$ in Theorem \ref{theorem:linfinitytransfer}.

\begin{definition}
\label{definition:star}
{\it The homotopical star product.}
\index{star product!homotopical|textbf}\index{homotopical star product|textbf}
Given $\widetilde \varphi \in \Hom (\Bbbk S, A) \hatotimes \mathfrak m$, one can use the higher morphisms $G^\hdot_k$ of Theorem \ref{theorem:linfinitytransfer} to define a $\Bbbk \llrr{t}$-bilinear map $\star^{\mathrm H}_{\phi + \widetilde \phi} \colon A \llrr{t} \otimes A \llrr{t} \to A \llrr{t}$ by
\begin{equation}
\label{starproduct}
a \star^{\mathrm H}_{\phi + \widetilde \phi} b = ab + \sum_{k \geq 1} \frac{1}{k!} G^\hdot_k (\widetilde \phi^{\otimes k}) (a \otimes b)
\end{equation}
for $a, b \in A$.
\end{definition}

\begin{corollary}
\label{corollary:controls}
The L$_\infty[1]$ algebra $\mathbf p (Q, R)$ controls the formal deformation theory of $A$. More precisely,
\begin{enumerate}
\item given any Maurer--Cartan element\index{Maurer--Cartan!elements} $\widetilde \phi \in \Hom (\Bbbk S, A) \hatotimes \mathfrak m$, the map $\star^{\mathrm H}_{\phi + \widetilde \phi}$ defines an associative multiplication on $A \llrr{t}$
\item if $\widetilde \phi, \widetilde \phi'$ are homotopic Maurer--Cartan elements\index{Maurer--Cartan!elements!equivalence}, then $\star^{\mathrm H}_{\phi + \widetilde \phi}$ and $\star^{\mathrm H}_{\phi + \widetilde \phi'}$ are gauge equivalent
\item any formal deformation $(A \llrr{t}, \star)$ of $A$ is gauge equivalent to $(A \llrr{t}, \star^{\mathrm H}_{\phi + \widetilde \phi})$ for some Maurer--Cartan element $\widetilde \phi$.
\end{enumerate}
\end{corollary}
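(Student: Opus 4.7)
The plan is to deduce all three statements from the general machinery of L$_\infty[1]$ morphisms, combined with Theorem \ref{theorem:linfinitytransfer} and the classical fact (Corollary \ref{corollary:gerstenhaber}) that $\mathbf{h}(A)$ controls the formal deformation theory of $A$. The key preliminary step I would carry out is to identify, under the natural isomorphism \eqref{naturalisomorphism}, the star product \eqref{starproduct} with the image of $\widetilde\phi$ under the natural transformation $\MC_{\mathbf{p}(Q,R)} \to \MC_{\mathbf{h}(A)}$ induced by the L$_\infty[1]$ quasi-isomorphism $G^\hdot_\ldot$. Concretely, the element
$$\widetilde\mu := \sum_{k\geq 1} \frac{1}{k!}\, G^\hdot_k(\widetilde\phi^{\otimes k}) \;\in\; \Hom(\bar A^{\otimes 2}, A) \hatotimes \mathfrak{m}$$
should correspond via \eqref{naturalisomorphism} to the bilinear map $\star^{\mathrm H}_{\phi+\widetilde\phi} - \mu_A$, where $\mu_A$ is the undeformed multiplication on $A$. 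This identification is essentially a matter of unwinding the construction of the transferred L$_\infty[1]$ morphism, but careful bookkeeping of the shift conventions for L$_\infty[1]$ algebras (cf.\ Remark \ref{ordinarydgalgebra}) will be required.

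With this identification in hand, \itemi\ follows from the standard fact that an L$_\infty$ morphism sends Maurer--Cartan elements to Maurer--Cartan elements: if $\widetilde\phi \in \MC_{\mathbf{p}(Q,R)}(\Bbbk\llrr{t})$, then $\widetilde\mu \in \MC_{\mathbf{h}(A)}(\Bbbk\llrr{t})$. Since $\mathbf{h}(A)$ is a DG Lie$[1]$ algebra whose brackets encode (up to shift and sign) the Hochschild differential and the Gerstenhaber bracket, this Maurer--Cartan equation is equivalent by Corollary \ref{corollary:gerstenhaber} to associativity of $\mu_A + \widetilde\mu = \star^{\mathrm H}_{\phi+\widetilde\phi}$ on $A\llrr{t}$. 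For \itemii, the same L$_\infty$ morphism sends homotopic Maurer--Cartan elements to homotopic ones, and in the DG Lie$[1]$ algebra $\mathbf{h}(A)$ homotopy equivalence of Maurer--Cartan elements coincides with gauge equivalence \cite[Thm.~5.5]{manettidgla}; under the correspondence $\widetilde\mu \leftrightarrow \star^{\mathrm H}_{\phi+\widetilde\phi}$ this is exactly the gauge equivalence of star products described in \S\ref{subsubsection:notation}.

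For \itemiii, I would appeal to Theorem \ref{theorem:quasiisomorphism}\,\ref{qi1}: the quasi-isomorphism $G^\hdot_\ldot$ induces an isomorphism of deformation functors $\Def_{\mathbf{p}(Q,R)} \tosim \Def_{\mathbf{h}(A)}$, and in particular a surjection on $\Bbbk\llrr{t}$-points. Any formal deformation $(A\llrr{t}, \star)$ of $A$ gives by Corollary \ref{corollary:gerstenhaber} a Maurer--Cartan element $\star - \mu_A$ of $\mathbf{h}(A) \hatotimes \mathfrak{m}$, which by this surjectivity is gauge equivalent to the image $\widetilde\mu$ of some Maurer--Cartan $\widetilde\phi$ in $\mathbf{p}(Q,R) \hatotimes \mathfrak{m}$; hence $(A\llrr{t}, \star)$ is gauge equivalent to $(A\llrr{t}, \star^{\mathrm H}_{\phi+\widetilde\phi})$ as formal deformations. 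The main obstacle in this programme is the initial identification of \eqref{starproduct} with the natural transformation on Maurer--Cartan elements induced by the transferred morphism: once that definitional step is cleanly established, all three parts follow from the general deformation-theoretic machinery.
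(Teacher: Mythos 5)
Your proposal is correct and follows essentially the same route as the paper, whose proof simply cites Theorem \ref{theorem:quasiisomorphism} \ref{qi1}, Theorem \ref{theorem:linfinitytransfer} and Corollary \ref{corollary:gerstenhaber}; the "key preliminary identification" you flag is in fact definitional, since Definition \ref{definition:star} already defines $\star^{\mathrm H}_{\phi+\widetilde\phi}$ by the standard formula $\sum_{k\geq 1}\frac{1}{k!}G^\hdot_k(\widetilde\phi^{\otimes k})$ for the image of a Maurer--Cartan element under the transferred L$_\infty[1]$ morphism.
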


\begin{proof}
This follows straight from Theorems \ref{theorem:quasiisomorphism} \ref{qi1} and \ref{theorem:linfinitytransfer} and Corollary \ref{corollary:gerstenhaber}.
\end{proof}

\begin{remark}
Note that if $R'$ is any other reduction system\index{reduction system!deformation}\index{deformation!of reduction system} satisfying ($\diamond$) for $I$, then the same construction gives an L$_\infty[1]$ algebra\index{L$_\infty$!algebra}\index{algebra!L$_\infty$} $\mathbf p (Q, R')$ and there is an L$_\infty[1]$ quasi-isomorphism\index{L$_\infty$!quasi-isomorphism}\index{algebra!L$_\infty$} $\mathbf p (Q, R) \simeq \mathbf p (Q, R')$ (cf.\ Lemma \ref{lemma:quasi-inverse}), i.e.\ any reduction system can be used to study the deformation theory of $A$.
\end{remark}

\begin{remark}
\label{remark:nooverlap}
If the reduction system $R$ has no overlaps\index{overlap ambiguity}\index{ambiguity!overlap}, then $P^3 = 0$ and the obstruction space $\HH^3 (A, A)$ vanishes, so that {\it any} $2$-cochain satisfies the Maurer--Cartan equation and thus $\star^{\mathrm H}_{\phi + \widetilde \phi}$ is associative for any $\widetilde \varphi$. In \S\ref{subsection:combinatorialstarproduct} we give an explicit combinatorial formula for this product.
\end{remark}

\begin{remark}
\label{remark:actualtransfer}
Let $\widetilde \varphi$ be a Maurer--Cartan element of $\mathbf p (Q, R)$ such that $G_k^\hdot (\widetilde \varphi^{\otimes k}) = 0$ for $k \gg 0$. Then similar to Corollary \ref{corollary:controls} we have that $\star^{\mathrm H}_{\phi + \widetilde \varphi}$ is well-defined and associative. 
\end{remark}

\section{Combinatorial star product}
\label{subsection:combinatorialstarproduct}
\index{star product!combinatorial}\index{combinatorial star product}

In this section we introduce a combinatorial star product $\star^{\mathrm C}_{\phi + \widetilde \phi}$ and prove that $\star^{\mathrm C}_{\phi + \widetilde \phi}$ always coincides with $\star^{\mathrm H}_{\phi + \widetilde \phi}$ \eqref{starproduct}, even if $\widetilde \varphi$ is not a Maurer--Cartan element of $\mathbf p(Q, R) \hatotimes \mathfrak m$. This gives a combinatorial interpretation of the homotopical star product $\star^{\mathrm H}_{\phi + \widetilde \phi}$ appearing in Corollary \ref{corollary:controls} which turns out to be very useful in giving explicit formulae for the deformed product and studying its properties.

Let $\widetilde \varphi \in \Hom (\Bbbk S, A) \hatotimes \mathfrak m$, viewed as a degree $0$ element in the L$_\infty[1]$ algebra $\mathbf p (Q, R) \hatotimes \mathfrak m$, and denote by $\widehat R_{\phi + \widetilde \phi}$ the formal reduction system
\[
\widehat R_{\phi + \widetilde \phi} = \{ (s, \varphi_s + \widetilde \varphi_s) \mid s \in S \}.
\]
At this point we do not assume that $\widehat R_{\phi + \widetilde \phi}$ is formally reduction-unique.

To define $\star^{\mathrm C}_{\phi + \widetilde \phi}$, corresponding to performing reductions with respect to $\widehat R_{\phi + \widetilde \phi}$, we adapt the definition of the rightmost reduction $\mathrm{red}_{\varphi}$ given in (\ref{rightmost-reduction}) as follows.

Let $z$ be simply a formal (bookkeeping) central variable which will keep track of how many times $\widetilde \varphi$ has been used in the reduction. Similar to (\ref{rightmost-reduction}), we define a $\Bbbk \llrr{t} [z]$-linear map 
$$
\mathrm{red}_{\varphi + z \widetilde \varphi} \colon \Bbbk Q \llrr{t} [z] \to \Bbbk Q\llrr{t} [z] 
$$ 
which is uniquely determined by: for any path $p$ in $Q$
\begin{align*}
\mathrm{red}_{\varphi + z \widetilde \varphi} (p) =
\begin{cases}
p & \text{if $p$ is irreducible}\\
q \varphi_s r + z \, q \hair \widetilde \varphi_s r & \text{if $\widetilde{\mathrm{split}}{}^{\mathrm R}_2 (p) = q \otimes s \otimes r$}.
\end{cases}
\end{align*}
We denote by $\mathrm{red}_{\varphi + z \widetilde \varphi}^k$ the $k$th iterated composition of $\mathrm{red}_{\varphi + z \widetilde \varphi}$.

For any fixed $n \geq 0$, the action of $\mathrm{red}_{\varphi + z \widetilde \varphi}$ on $\Bbbk Q \llrr{t} [z] / (z^{n+1})$ is {\it stable}, i.e.\ $\mathrm{red}_{\varphi + z \widetilde \varphi}^k = \mathrm{red}_{\varphi + z \widetilde \varphi}^{k+1}$ for $k \gg 0$ since $\mathrm{red}_\varphi$ is stable (because $R$ is reduction-finite). This induces a $\Bbbk$-linear map
\begin{align*}
\begin{tikzpicture}[baseline=-2.6pt,description/.style={fill=white,inner sep=1.75pt}]
\matrix (m) [matrix of math nodes, row sep=.25em, text height=1.5ex, column sep=0, text depth=0.25ex, ampersand replacement=\&, column 3/.style={anchor=base west}]
{
\mathrm{red}_{\varphi + z \widetilde \varphi}^{(n)} \colon \&[-.25em] \Bbbk Q \llrr{t} \&[1.5em] \Bbbk Q \llrr{t}  [z] / (z^{n+1}) \\
\& p \& {[\mathrm{red}_{\varphi + z \widetilde \varphi}^k (p)]} \\
};
\path[->,line width=.4pt]
(m-1-2) edge (m-1-3)
;
\path[|->,line width=.4pt]
(m-2-2) edge (m-2-3)
;
\end{tikzpicture}
\end{align*}
where $[\mathrm{red}_{\varphi + z \widetilde \varphi}^k(p)]$ denotes the image of $\mathrm{red}_{\varphi + z \widetilde \varphi}^k (p)$ for $k \gg 0$ (depending on $p$) under the quotient map $\Bbbk Q \llrr{t}  [z] \to \Bbbk Q \llrr{t} [z] / (z^{n+1})$. In fact, the image of $\mathrm{red}_{\varphi + z \widetilde \varphi}^{(n)}$ lies in $\Bbbk \Irr_S [z] / (z^{n+1})$ and we have the following commutative diagram
\begin{align*}
\begin{tikzpicture}[baseline=-.5em,description/.style={fill=white,inner sep=1.75pt}]
\matrix (m) [matrix of math nodes, row sep=3em, text height=1.5ex, column sep=3em, text depth=0.25ex, ampersand replacement=\&]
{
\Bbbk Q \llrr{t}  \\
\Bbbk \Irr_S \llrr{t}  [z] / (z^{n}) \& \Bbbk \Irr_S \llrr{t}  [z] / (z^{n+1}) \rlap{.} \\
};
\path[->,line width=.4pt]
(m-1-1) edge node[left=-.3ex,font=\scriptsize] {$\mathrm{red}_{\varphi + z \widetilde \varphi}^{(n-1)}$} (m-2-1)
(m-1-1) edge node[above=.3ex,font=\scriptsize,pos=.55] {$\mathrm{red}_{\varphi + z \widetilde \varphi}^{(n)}$} (m-2-2)
(m-2-2) edge (m-2-1)
;
\end{tikzpicture}
\end{align*}
Taking the inverse limit, this induces a $\Bbbk \llrr{t}$-linear map
\begin{align}
\label{red}
\mathrm{red}^{(\infty)}_{\varphi + z \widetilde \varphi} \colon \Bbbk Q \llrr{t}   \to \Bbbk \Irr_S \llrr{t}   \llrr{z}.
\end{align}
Note that   $\mathrm{red}^{(\infty)}_{\varphi + z \widetilde \varphi} (u) = u$ for any $u\in \Irr_S$ and $\mathrm{red}^{(\infty)}_{\varphi + z \widetilde \varphi} (s) = \phi_s + z \widetilde \phi_s$ for any $s \in S$.

\begin{definition}
\label{definition:star_2}
\index{star product!combinatorial|textbf}\index{combinatorial star product|textbf}
{\it The combinatorial star product.} 
Let $\widetilde \varphi \in \Hom(\Bbbk S, A) \hatotimes \mathfrak m$. The $\Bbbk \llrr{t}$-bilinear operation $\star^{\mathrm C}_{\phi + \widetilde \phi}$ on $A \llrr{t}$ is given by
\begin{flalign}
\label{star2product}
&& a \star^{\mathrm C}_{\phi + \widetilde \phi} b &= \pi \bigl( \mathrm{red}^{(\infty)}_{\varphi + z \widetilde \varphi} (\sigma (a) \sigma (b))\bigr\rvert_{z=1} \bigr) && \mathllap{a, b \in A.}
\end{flalign}
\end{definition}

The expression $a \star^{\mathrm C}_{\phi + \widetilde \phi} b$ can be understood in a purely combinatorial way as follows. For any $k \geq 0$ let $a \star^{\mathrm C, k}_{\phi + \widetilde \phi} b$ denote the expression obtained by performing rightmost reductions on $\sigma(a) \sigma(b) \in \Bbbk Q$ using $\widetilde \varphi$ exactly $k$ times and $\varphi$ an arbitrary number of times until all elements are irreducible. Then $a \star^{\mathrm C, k}_{\phi + \widetilde \phi} b$ is the coefficient of $z^k$ in \eqref{star2product} (before evaluating $z$ to $1$) and
$$
a \star^{\mathrm C}_{\phi + \widetilde \phi} b = \sum_{k \geq 0} a \star^{\mathrm C, k}_{\phi + \widetilde \phi} b.
$$
Here $a \star^{\mathrm C, 0}_{\phi + \widetilde \phi} b = ab$ and $a \star^{\mathrm C}_{\phi + \widetilde \phi} b$ can be viewed as obtained from performing rightmost reductions with respect to $\widehat R_{\phi + \widetilde \phi}$. Note that $a \star^{\mathrm C}_{\phi + \widetilde \phi} b$ is well-defined for any $a, b \in A$ and any $\widetilde \phi \in \Hom (\Bbbk S, A)  \hatotimes \mathfrak m$ since the coefficient of $t^n$ is a finite sum, as we can use $\widetilde \phi = \widetilde \phi_1 t + \widetilde \phi_2 t^2 + \dotsb$ at most $n$ times. In other words, the coefficient of $t^n$ in $a \star^{\mathrm C, k}_{\phi + \widetilde \phi} b$ is zero for any $k > n$.

The combinatorial star product\index{star product!combinatorial}\index{combinatorial star product} $\star^{\mathrm C}_{\phi + \widetilde \phi}$ has the following properties.
\begin{lemma}
\label{lemma:star}
Let $\widetilde \varphi \in \Hom (\Bbbk S, A) \hatotimes \mathfrak m$ and write $\star^k = \star^{\mathrm C, k}_{\phi + \widetilde \phi}$. Let $u \in \Irr_S$ be any irreducible path.
\begin{enumerate}
\item \label{guttproperty1} If there exists $x \in Q_1$ such that $x u = s v$ for some $s \in S$, then for any $k \geq 1$ we have
$$
\pi(x) \star^k \pi(u) = \pi (\varphi_s) \star^k \pi(v) + \pi (\widetilde \varphi_s) \star^{k-1} \pi(v).
$$
Similarly, if $v$ is an irreducible path such that $u v = u' s$ for some $s \in S$ and some irreducible path $u'$, then we have 
$$
\pi(u) \star^k \pi(v)=\pi(u') \star^k \pi(\varphi_s)+\pi(u') \star^{k-1} \pi (\widetilde \varphi_s).
$$
\item \label{guttproperty2} Let $v$ be an irreducible path such that $uv$ is irreducible. Then for any $k \geq 1$
$$
\pi(u) \star^k \pi (v) = 0.
$$
\item \label{guttproperty3} Let $v$ be an irreducible path and write $u = u_1 \cdots u_n$ for $u_i \in Q_1$. Then for any $k \geq 1$
\begin{align*}
\pi (u) \star^k \pi(v) &= \sum_{i=1}^{k-1} \sum_{j=1}^{n-1} \pi (u_1 \cdots u_{j-1}) (\pi (u_j) \star^i (\pi (u_{j+1} \cdots u_n) \star^{k-i} \pi (v))) \\
& \quad + \sum_{j=1}^n \pi (u_1 \cdots u_{j-1}) (\pi (u_j) \star^k \pi (u_{j+1} \cdots u_n v)).
\end{align*}
 Note that for $k =1$ the first summands are zero. 
\end{enumerate}
\end{lemma}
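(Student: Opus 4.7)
For part (i), the plan is to unpack the definition of $\star^{\mathrm C}_{\varphi + \widetilde \varphi}$ directly. The key observation is that when $u$ is irreducible and $x \in Q_1$, any $S$-subpath of $xu$ must start at position $0$ (otherwise it would lie inside the irreducible path $u$), so by the non-containment clause in the definition of a reduction system there is at most one such. The hypothesis $xu = sv$ with $s \in S$ pins down this unique $S$-subpath as $s$, hence $\widetilde{\mathrm{split}}{}_2^{\mathrm R}(xu) = 1 \otimes s \otimes v$ and one step of reduction gives $\mathrm{red}_{\varphi + z\widetilde\varphi}(xu) = \varphi_s v + z\widetilde\varphi_s v$. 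The $\Bbbk[z]$-linearity of $\mathrm{red}^{(\infty)}_{\varphi + z\widetilde\varphi}$ propagates through, and reading off the coefficient of $z^k$ and applying $\pi$ yields the first identity. The symmetric statement for $uv = u's$ is handled identically: using that both $u$ and $v$ are irreducible, a non-containment argument again shows $s$ is the (unique) right-most $S$-subpath of $uv$.

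Part (ii) is immediate: if $uv$ is irreducible then $\mathrm{red}^{(\infty)}_{\varphi + z\widetilde\varphi}(uv) = uv$, which has no $z$-dependence, so its coefficient of $z^k$ vanishes for $k \geq 1$.

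For part (iii) I propose induction on $n$, with the base case $n=1$ trivial. For the inductive step, split the right-hand side into terms with $j = 1$ and terms with $j \geq 2$. In the $j \geq 2$ terms, factor $\pi(u_1)$ out of $\pi(u_1 \cdots u_{j-1}) = \pi(u_1)\pi(u_2 \cdots u_{j-1})$ and re-index $j \mapsto j-1$; the resulting bracketed sum is exactly the right-hand side of (iii) applied to $u'' := u_2 \cdots u_n$ (of length $n-1$) and $v$, which by the inductive hypothesis equals $\pi(u'') \star^k \pi(v)$. The inductive step thus reduces to proving the Leibniz-like identity
\[
\pi(u_1 u'') \star^k \pi(v) = \sum_{i=1}^{k-1} \pi(u_1) \star^i (\pi(u'') \star^{k-i} \pi(v)) + \pi(u_1) \star^k \pi(u''v) + \pi(u_1)\bigl(\pi(u'') \star^k \pi(v)\bigr)
\]
for $u_1 \in Q_1$ and $u''$ irreducible with $u_1 u''$ irreducible.

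This Leibniz-like identity, which I expect to be the main obstacle, is proved by analyzing the right-most reductions on $u_1 u'' v$ according to where and when $u_1$ becomes involved. Contributions in which $u_1$ is never touched produce the last summand $\pi(u_1)(\pi(u'') \star^k \pi(v))$; contributions in which the first right-most reduction already engages $u_1$ give $\pi(u_1) \star^k \pi(u''v)$ via part (i) applied with $x = u_1$; and ``mixed'' contributions, in which $u''v$ is first partially reduced before $u_1$ joins in, produce the convolution sum, with $i$ counting the $\widetilde\varphi$-uses after $u_1$ becomes involved. The difficulty is in organizing this case analysis so that the three families of reduction patterns are exhausted exactly once and match the three summands on the right-hand side.
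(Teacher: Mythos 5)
Your proposal is correct and follows essentially the same route as the paper: parts \itemi{} and \itemii{} are read off directly from the combinatorial description of right-most reductions (the paper likewise treats them as immediate), and part \itemiii{} rests on exactly the convolution identity $\pi(u_j\cdots u_n)\star^k\pi(v)=\sum_{i=0}^{k}\pi(u_j)\star^i(\pi(u_{j+1}\cdots u_n)\star^{k-i}\pi(v))$ that the paper states for each $j$ and then telescopes, whereas you package the same identity into an induction on $n$ --- a purely cosmetic difference. Your level of justification for that key identity (a case analysis on how many $\widetilde\varphi$-uses occur before the suffix to the right of $u_1$ becomes irreducible) matches the paper's, which also derives it from ``the combinatorial description'' without further detail.
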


\begin{proof}
The first and second assertions follow from the combinatorial description of $\star^k$, which is given by performing rightmost reductions using $\widetilde \phi$ exactly $k$ times and $\phi$ an arbitrary number of times.

To see \ref{guttproperty3}, the combinatorial description implies that for any $1\leq j \leq n-1$ we have
\begin{align*}
\pi(u_j\cdots u_n) \star^k \pi(v) =
 \sum_{i=0}^{k} \pi(u_j) \star^i (\pi(u_{j+1}\cdots u_n) \star^{k-i} \pi(v)).
\end{align*}
Multiplying $\pi(u_1\cdots u_{j-1})$ and taking the sum for $1\leq j\leq n-1$, we get \ref{guttproperty3}. Here we need to use the fact that $a \star^0 b = ab$. 
\end{proof}

\subsection{Relation to the homotopical star product}
\index{star product!homotopical}\index{homotopical star product}

For a general element $\widetilde \varphi \in \Hom (\Bbbk S, A) \hatotimes \mathfrak m$, the bilinear map $\star^{\mathrm C}_{\phi + \widetilde \phi}$ need not be associative. Regardless, the next result shows that $\star^{\mathrm C}_{\phi + \widetilde \phi}$ always coincides with $\star^{\mathrm H}_{\phi + \widetilde \phi}$ (cf.\ Definition \ref{definition:star}).

\begin{theorem}
\label{theorem:interpretation-star}
Let $\widetilde \varphi \in \Hom (\Bbbk S, A) \hatotimes \mathfrak m$ be arbitrary. Let $a, b \in A$ and write $G_0^\hdot (a \otimes b) := ab$. For any $k \geq 0$ we have
\begin{flalign}
\label{align:G_i0}
 \frac1{k!} G^\hdot _k (\widetilde \varphi^{\otimes k}) (a \otimes b) = a \star^{\mathrm C, k}_{\phi + \widetilde \phi} b.
\end{flalign}
Thus, $\star^{\mathrm C}_{\phi + \widetilde \varphi} = \star^{\mathrm H}_{\phi + \widetilde \varphi}$.
\end{theorem}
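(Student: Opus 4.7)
The plan is to prove \eqref{align:G_i0} by induction on $k$, showing at each stage that $T_k(a \otimes b) := \frac{1}{k!} G_k^\hdot(\widetilde\phi^{\otimes k})(a \otimes b)$ satisfies the same recursive characterization of $a \star^{\mathrm C, k}_{\phi + \widetilde \phi} b$ furnished by Lemma \ref{lemma:star} \ref{guttproperty1}--\ref{guttproperty3}. By $\Bbbk\llrr{t}$-bilinearity one may restrict to irreducible paths $a, b \in \Irr_S$. Throughout, a secondary induction on the total ``reduction depth'' of $\sigma(a)\sigma(b)$ (a well-founded measure since $R$ is reduction finite) will be used to descend to shorter or already-reduced arguments.

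The base case $k = 0$ is immediate from the conventions $G_0^\hdot(a \otimes b) = ab = a \star^{\mathrm C, 0}_{\phi + \widetilde\phi} b$. For $k = 1$, I would unwrap $G_1^\hdot(\widetilde\phi)(\bar a \otimes \bar b) = \check{\widetilde\phi}\bigl(G_2(1 \otimes \bar a \otimes \bar b \otimes 1)\bigr)$ using the natural isomorphism \eqref{naturalisomorphism} and then apply Lemma \ref{lemma:G2} \ref{G2ii} to decompose according to the factorization $a = u_1 \dotsb u_n$. Each term of the form $\pi(u_1 \dotsb u_{j-1}) \, G_2(1 \otimes \bar u_j \otimes \overbar{u_{j+1} \dotsb u_n b} \otimes 1)$ then falls under \ref{G2i}, producing a leading contribution $1 \otimes s \otimes \pi(v)$ together with a recursive term $G_2(1 \otimes \bar\varphi_s \otimes \bar v \otimes 1)$ on arguments of strictly smaller reduction depth. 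Applying $\check{\widetilde\phi}$ and using the induction hypothesis on depth matches precisely Lemma \ref{lemma:star} \ref{guttproperty1}--\ref{guttproperty3} specialized to $k = 1$, since the sum $\sum_{i=1}^{k-1}$ in \ref{guttproperty3} is empty.

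For the inductive step $k \geq 2$, the explicit formula for $G_k^\hdot$ in Theorem \ref{theorem:linfinitytransfer}, combined with the count $\#\{\sigma \in \mathfrak S_{i,k-i} \mid \sigma(1) < \sigma(i+1)\} = \binom{k-1}{i-1}$ when all inputs coincide, together with the inductive hypothesis $G_i^\hdot(\widetilde\phi^{\otimes i}) = i!\, T_i$, gives
\[
T_k \;=\; \sum_{i=1}^{k-1} \frac{i}{k}\, h^\hdot\bigl([T_i, T_{k-i}]\bigr).
\]
Evaluating on $\bar a \otimes \bar b$ requires expanding the Gerstenhaber bracket of two bilinear cochains into its four pre-/post-composition terms, then applying the homotopy $h^\hdot$. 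Its action is computable via Lemma \ref{lemma:G2} \ref{G2i}--\ref{G2ii}, which expresses $h_2(1 \otimes \bar x \otimes \bar v \otimes 1)$ recursively in terms of $\mathrm{split}_1(s)$ and a smaller-depth $h_2(1 \otimes \bar\varphi_s \otimes \bar v' \otimes 1)$ whenever $xv = sv'$. Iterating this description and using the inductive hypothesis on the $T_i$, the right-hand side reorganizes into the two-part sum of Lemma \ref{lemma:star} \ref{guttproperty3} (for the factor of $a$ being a general irreducible path) and, in the reduced case where $a = x \in Q_1$, into the two-term identity of \ref{guttproperty1} or the vanishing of \ref{guttproperty2}.

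The main obstacle will be precisely this last combinatorial reorganization: showing that the weighted sum $\sum_{i=1}^{k-1} \frac{i}{k} h^\hdot([T_i, T_{k-i}])$ produces exactly the ``one right-most reduction step'' recursion of Lemma \ref{lemma:star}. The delicate point is that the four terms of each Gerstenhaber bracket, together with the asymmetric coefficient $i/k$ (which arises from the shuffle restriction $\sigma(1) < \sigma(i+1)$), must collectively reproduce a symmetric-looking decomposition $\pi(\phi_s) \star^k \pi(v) + \pi(\widetilde\phi_s) \star^{k-1} \pi(v)$; this relies on carefully pairing the telescoping contributions from $\mathrm{split}_1(s)$ in the formula for $h_2$ against the two associator terms $T_i(T_{k-i}(\blank, \blank), \blank)$ and $T_i(\blank, T_{k-i}(\blank, \blank))$. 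Once this matching is established, both sides of \eqref{align:G_i0} are determined by the same recursion and base cases, and the induction closes.
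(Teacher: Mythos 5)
Your plan is correct and follows essentially the same route as the paper's proof: a double induction on $k$ and on a well-founded reduction order, the recursive formula $G_k^\hdot(\widetilde\phi^{\otimes k}) = -\sum_i \binom{k}{i}\,h^2(G_{i,k-i}^\hdot(\widetilde\phi^{\otimes k}))$ obtained from the homotopy transfer (the paper's Lemma \ref{lemma:technical}), and the matching of the resulting recursion against Lemma \ref{lemma:star} via the formulas for $G_2$ and $h_2$ in Lemma \ref{lemma:G2}. The combinatorial reorganization you flag as the main obstacle is exactly the content of the paper's identity \eqref{align:G_i2}, which is resolved there by observing that all but the $i = k-1$ term of the weighted sum vanish because $G_{k-i}^\hdot(\widetilde\phi^{\otimes k-i})(\bar s_1 \otimes \overbar{s_2\cdots s_m})$ equals $\widetilde\phi_s$ for $k-i=1$ and $0$ otherwise.
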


In the remainder of this subsection we give a proof of Theorem \ref{theorem:interpretation-star}. In the proof of this theorem, as well as in Lemmas \ref{lemma:technical} and \ref{lemma:higher-brackets}, we use the following shorthand notation.

\begin{notation}
For $k \geq 2$ and $1 \leq i < k$ let
\[
G_{i,k-i}^\hdot (\widetilde \phi^{\otimes k}) := G_i^\hdot (\widetilde \phi^{\otimes i}) \bullet G_{k-i}^\hdot(\widetilde \phi^{\otimes k-i}) 
\in \Hom (\bar A^{\otimes 3}, A) \hatotimes \mathfrak m
\]
where $\bullet$ is the Gerstenhaber circle product (cf.\ Definition \ref{definitiongerstenhaber}) and let $\check G_{i,k-i}^\hdot (\widetilde \phi^{\otimes k})$ denote the preimage of $G_{i,k-i}^\hdot(\widetilde \phi^{\otimes k})$ under the isomorphism (\ref{naturalisomorphism}).
\end{notation}

The following lemma is very useful to compute the terms $G_k^\hdot (\widetilde \phi^{\otimes k})$ appearing in $\star^{\mathrm H}_{\phi + \widetilde \phi}$.

\begin{lemma}
\label{lemma:technical}
Let $u, v$ be irreducible paths.
\begin{enumerate}
\item \label{technical1} We have the following recursive formula for any $k \geq 2$
\begin{align*}
G_k^\hdot (\widetilde \phi^{\otimes k})(\bar u \otimes \bar v) = -\sum_{i=1}^{k-1} {k \choose i} \check G_{i, k-i}^{\hdot} (\widetilde \phi^{\otimes k}) (h_2 (1 \otimes \bar u \otimes \bar v \otimes 1))
\end{align*}
where $h_2$ is given in \eqref{varpih}.
\item \label{technical2} If $u v$ is irreducible, then for any $k \geq 1$ we have 
$$
G_k^\hdot (\widetilde \phi^{\otimes k}) (\bar u \otimes \bar v) = 0.
$$
\end{enumerate} 
\end{lemma}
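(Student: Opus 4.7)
For part \ref{technical1}, I would specialise the transfer formula of Theorem \ref{theorem:linfinitytransfer} to the constant input $\widetilde \phi^{\otimes k}$. Since $\widetilde \phi$ sits in degree zero, every Koszul sign $\chi(\sigma)$ is $+1$, and each term in the shuffle sum depends on $\sigma \in \mathfrak S_{i,k-i}$ only through its block sizes. The number of shuffles with $\sigma(1) < \sigma(i+1)$, i.e.\ with the element $1$ in the first block, equals $\binom{k-1}{i-1}$, so the formula collapses to
\[
G_k^\hdot(\widetilde \phi^{\otimes k}) = \sum_{i=1}^{k-1} \binom{k-1}{i-1}\, h^\hdot\bigl([G_i^\hdot(\widetilde \phi^{\otimes i}),\, G_{k-i}^\hdot(\widetilde \phi^{\otimes k-i})]\bigr).
\]
Next I would unfold the bracket $[\blank {,} \blank]$ in $\mathbf h(A)$: for the degree-zero (bilinear) cochains $G_i^\hdot(\widetilde \phi^{\otimes i})$ and $G_{k-i}^\hdot(\widetilde \phi^{\otimes k-i})$, the convention \eqref{align:dglie1} combined with Definition \ref{definitiongerstenhaber} gives $[f, g] = -(f \bullet g + g \bullet f)$, so the bracket becomes $-G_{i,k-i}^\hdot(\widetilde \phi^{\otimes k}) - G_{k-i,i}^\hdot(\widetilde \phi^{\otimes k})$. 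Finally, $h^\hdot$ applied to a trilinear cochain and evaluated at $\bar u \otimes \bar v$ is by definition precomposition of its $A$-bimodule extension with $h_2(1 \otimes \bar u \otimes \bar v \otimes 1)$ (with trivial sign, since the sign is $(-1)^2$). Relabelling $i \leftrightarrow k-i$ in the $\check G_{k-i,i}^\hdot$ summand and invoking Pascal's identity $\binom{k-1}{i-1} + \binom{k-1}{i} = \binom{k}{i}$ yields the stated formula.

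For part \ref{technical2} the plan is an induction on $k$. The base case $k=1$ reads $G_1^\hdot(\widetilde \phi)(\bar u \otimes \bar v) = \check{\widetilde \phi}\bigl(G_2(1 \otimes \bar u \otimes \bar v \otimes 1)\bigr)$, and the last assertion of Lemma \ref{lemma:G2}\hair\ref{G2ii} gives $G_2(1 \otimes \bar u \otimes \bar v \otimes 1) = 0$ when $uv$ is irreducible. For the inductive step, I would apply part \ref{technical1} and analyse $h_2(1 \otimes \bar u \otimes \bar v \otimes 1)$ via Lemma \ref{lemma:G2}\hair\ref{G2ii}: if $uv$ is irreducible, then each summand $h_2(1 \otimes \bar u_j \otimes \overline{u_{j+1} \cdots u_n v} \otimes 1)$ of the recursive tail falls under the single-letter case, where \eqref{varpih} degenerates to $-\varpi_2 F_2 G_2$ and therefore vanishes because the product $u_j(u_{j+1}\cdots u_n v)$ is an irreducible subword of $uv$. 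Hence only the leading sum
\[
h_2(1 \otimes \bar u \otimes \bar v \otimes 1) = \sum_{j=1}^{n-1} \pi(u_1 \cdots u_{j-1}) \otimes \bar u_j \otimes \overline{u_{j+1} \cdots u_n} \otimes \bar v \otimes 1
\]
survives.

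It then remains to show that $\check G_{i,k-i}^\hdot(\widetilde \phi^{\otimes k})$ kills each such summand. Expanding the circle product of two bilinear cochains, $\check G_{i,k-i}^\hdot(\widetilde \phi^{\otimes k})$ evaluated on $\bar u_j \otimes \overline{u_{j+1}\cdots u_n} \otimes \bar v$ is
\[
G_i^\hdot(\widetilde \phi^{\otimes i})\bigl(G_{k-i}^\hdot(\widetilde \phi^{\otimes k-i})(\bar u_j, \overline{u_{j+1}\cdots u_n}),\, \bar v\bigr) - G_i^\hdot(\widetilde \phi^{\otimes i})\bigl(\bar u_j,\, G_{k-i}^\hdot(\widetilde \phi^{\otimes k-i})(\overline{u_{j+1}\cdots u_n},\, \bar v)\bigr).
\]
Both inner $G_{k-i}^\hdot$-evaluations are applied to pairs of irreducible paths whose product is irreducible — the first product is precisely the irreducible $u$, and the second is a suffix of the irreducible $uv$ — so the inductive hypothesis forces them to vanish. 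Consequently every summand in the formula of part \ref{technical1} is zero, giving $G_k^\hdot(\widetilde \phi^{\otimes k})(\bar u \otimes \bar v) = 0$. The main obstacle is purely a bookkeeping one: correctly tracking the sign in the bracket of two degree-zero cochains and manipulating the shuffle sum via the Pascal identity so that the coefficient $-\binom{k}{i}$ emerges with the right sign.
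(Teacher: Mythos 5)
Your argument is correct and follows essentially the same route as the paper: part \ref{technical1} by specialising the homotopy-transfer formula of Theorem \ref{theorem:linfinitytransfer} to the single degree-zero input $\widetilde \phi$ and tracking the sign from \eqref{align:dglie1} (the paper writes the shuffle count as $\tfrac12\binom{k}{i}$ using the symmetry of the bracket where you use $\binom{k-1}{i-1}$ plus Pascal's identity, but these agree), and part \ref{technical2} by induction on $k$ using part \ref{technical1} together with Lemma \ref{lemma:G2}, your recursive evaluation of $h_2$ reducing to the same surviving sum $\varpi_1\,\mathrm{split}_1(u)\otimes \bar v\otimes 1$ that the paper reads off directly from \eqref{varphi1}. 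The only slip is cosmetic: in the inductive step the first inner product is the suffix $u_j\cdots u_n$ of $u$ rather than $u$ itself, but this is still irreducible, so the induction hypothesis applies unchanged.
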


\begin{proof}
We first prove \ref{technical1}. By Theorem \ref{theorem:linfinitytransfer} we have
$$
G_k^\hdot (\widetilde \phi^{\otimes k}) = \sum_{i=1}^{k-1} \frac{1}{2} {k \choose i} h^2 ([G_i^{\hdot} (\widetilde \phi^{\otimes i}), G_{k-i}^{\hdot} (\widetilde \varphi^{\otimes k-i})])=- \sum_{i=1}^{k-1} {k \choose i} h^2 (G_{i, k-i}^{\hdot} (\widetilde \phi^{\otimes k})),
$$
where the minus sign comes from \eqref{align:dglie1}. 

Next we prove \ref{technical2} by induction on $k$. It follows from Lemma \ref{lemma:G2} \ref{G2ii} that $G_\ldot (1 \otimes \bar u \otimes \bar v \otimes 1) = 0$, which dually implies that $G^\hdot _1 (\widetilde \phi) (\bar u \otimes \bar v) = 0$.

For $k \geq 2$, and writing $u = u_1 \dotsb u_n$ for $u_i \in Q_1$, we have
\begin{align*}
G_k^\hdot (\widetilde \phi^{\otimes k}) (\bar u \otimes \bar v) &= - \sum_{i=1}^{k-1} {k \choose i} \check G_{i, k-i}^{\hdot} (\widetilde \phi^{\otimes k}) (\varpi_1 \mathrm{split_1} (u) \otimes \bar v \otimes 1) \\
&=0
\end{align*}
where the first identity follows from \ref{technical1} and (\ref{varphi1}): 
\[ 
h_2(1 \otimes \bar u \otimes \bar v \otimes 1) =  \varpi_1 F_1G_1(1 \otimes \bar u\otimes 1) \otimes \bar v \otimes 1= \varpi_1 \mathrm{split_1} (u) \otimes \bar v \otimes 1
\]
and  the second  identity from the induction hypothesis since any subpath of $uv$ is still irreducible.
\end{proof}

\begin{proof}[Proof of Theorem \ref{theorem:interpretation-star}]
For the proof, we shall use the relation $\preceq$ introduced in \cite[\S2]{chouhysolotar} for any reduction system $R$ satisfying ($\diamond$) for an ideal $I$. Here $\preceq$ is a relation on the set $\{ \lambda p \mid \lambda \in \Bbbk \setminus \{ 0 \}, p \in Q_\ldot \}$ defined as the least reflexive and transitive relation such that $\lambda p\preceq \mu q$ if there is a reduction $\mathfrak r$ such that $\mathfrak r (\mu q) = \lambda p + r$, where $r$ is a linear combination of paths and $p$ does not appear as a summand of $r$. We write $\lambda p\prec \mu q$ if $\lambda p\preceq \mu q $ and $\lambda p\neq \mu q$. It follows from \cite[Lem.~2.11]{chouhysolotar} that $\preceq$ satisfies the descending chain condition since $R$ satisfies the condition ($\diamond$).

We now prove (\ref{align:G_i0}) for $a = \pi (u)$ and $b = \pi (v)$ by induction on $k$. (Note that for $k = 0$ (\ref{align:G_i0}) holds by definition so the base case is $k = 1$.) Let us write $\star^k = \star^{\mathrm C,k}_{\phi + \widetilde \phi}$.

Let $k = 1$. We use the induction on the order $\prec$ for $u v$. If $uv$ is irreducible (i.e.\ $uv$ is minimal with respect to $\prec$) then by Lemmas \ref{lemma:star} \ref{guttproperty2} and \ref{lemma:technical} \ref{technical2} 
\[
\pi (u) \star^1 \pi (v) = 0 =  G_1^\hdot(\widetilde \varphi) (\bar u \otimes \bar v).
\]
If $uv$ is not irreducible, we consider the following two cases. In the first case, we  assume that $u \in Q_1$.  Write $uv = sw$ for $s \in S$ and $w$ irreducible.  It follows from Lemmas \ref{lemma:star} \ref{guttproperty1} and \ref{lemma:G2} \ref{G2i} that
\begin{align*}
\pi (u) \star^1 \pi (v) &= \pi(\widetilde \phi_s w) + \pi(\phi_s) \star^1 \pi(w)\\
G_1^\hdot(\widetilde \varphi) (\bar u \otimes \bar v) & =  \pi(\widetilde \phi_s w) + G_1^\hdot(\widetilde \varphi) (\bar \phi_s \otimes \bar w).
\end{align*}
Since $\varphi_s w \prec u v$ (i.e.\ each summand of $\varphi_sw$ is ``smaller'' than $uv$ with respect to the order $\prec$), by the induction hypothesis we have 
$\pi(\phi_s) \star^1 \pi(w) =  G_1^\hdot(\widetilde \varphi) (\bar \phi_s \otimes \bar w).$ This yields $ \pi (u) \star^1 \pi (v) = G_1^\hdot(\widetilde \varphi) (\bar u \otimes \bar v)$. 

In the second case, we assume that  $u = u_1\dotsb u_n$ for $u_i \in Q_1$. By Lemmas \ref{lemma:star} \ref{guttproperty3} and \ref{lemma:G2} \ref{G2ii} 
\begin{align*}
\pi(u) \star^1 \pi(v) &= \sum_{j=1}^n \pi(u_1\dotsb u_{j-1}) ( \pi(u_j) \star^1 \pi(u_{j+1}\dotsb u_nv))\\
G_1^\hdot(\widetilde \phi) ( \bar u \otimes \bar v) & = \sum_{j=1}^n \pi(u_1\dotsb u_{j-1})  G_1^\hdot(\widetilde \phi) ( \bar u_j \otimes \overbar{u_{j+1}\dotsb u_nv}).
\end{align*}
Then from the first case it follows  that $\pi(u) \star^1 \pi(v) = G_1^\hdot(\widetilde \phi) ( \bar u \otimes \bar v)$.

Let $k \geq 2$.  We also use the induction on the order $\prec$ for $u v$. By Lemmas \ref{lemma:star} \ref{guttproperty2} and \ref{lemma:technical} \ref{technical2}, the identity (\ref{align:G_i0}) holds if $u v$ is irreducible. If $u v$ is not irreducible,  we consider the following two cases. In the first case, we assume that  $u \in Q_1$. Then we may write $u v = s w$ with $s \in S$ and $w$ irreducible. On the one hand, we have
\begin{equation}
\label{align:G_i}
\begin{aligned}
\pi (u) \star^k \pi (v) &= \pi (\varphi_s) \star^k \pi (w) + \pi (\widetilde \varphi_s) \star^{k-1} \pi (w) \\
&= \frac{1}{k!} G_k^\hdot (\widetilde \varphi^{\otimes k}) (\bar \varphi_s \otimes \bar w) + \pi (\widetilde \varphi_s) \star^{k-1} \pi (w)
\end{aligned}
\end{equation}
where the first identity follows from Lemma \ref{lemma:star} \ref{guttproperty1} and the second identity from the induction hypothesis since $\varphi_s w \prec u v$.

On the other hand, by Lemmas \ref{lemma:technical} \ref{technical1} and \ref{lemma:G2} \ref{G2i} we have
\begin{align}
\label{align:G_i1}
G_k^\hdot (\widetilde \varphi^{\otimes k}) (\bar u \otimes \bar v) = \sum_{i=1}^{k-1} {k \choose i}  \check G_{i, k-i}^{\hdot} (\widetilde \varphi^{\otimes k}) \bigl( \varpi_1 \, \mathrm{split}_1(s) \otimes \bar w \otimes 1  \bigr)  + G_k^\hdot (\widetilde \varphi^{\otimes k}) (\bar \varphi_s \otimes \bar w).
\end{align}
Combining (\ref{align:G_i}) and (\ref{align:G_i1}), we observe that in order to prove (\ref{align:G_i0}), it suffices to prove the following identity:
\begin{equation} 
\label{align:G_i2}
\pi (\widetilde \varphi_s) \star^{k-1} \pi (w) = \frac{1}{k!} \sum_{i=1}^{k-1} {k \choose i}  \check G_{i, k-i}^{\hdot} (\widetilde \varphi^{\otimes k}) \bigl( \varpi_1 \, \mathrm{split}_1(s) \otimes \bar w \otimes 1  \bigr)  .
\end{equation}

Writing $s = s_1 \dotsb s_m$ with $s_1, \dotsc, s_m \in Q_1$. By Definition \ref{definitiongerstenhaber} for each $1 \leq i<k$ we have 
\begin{align}\label{align:RHS}
\check G_{i, k-i}^{\hdot} (\widetilde \varphi^{\otimes k}) \bigl( \varpi_1 \, \mathrm{split}_1(s) \otimes \bar w \otimes 1  \bigr) =  G_{i}^\hdot (\widetilde \varphi^{\otimes i})(G_{k-i}^\hdot (\widetilde \varphi^{\otimes k-i})(\bar s_1 \otimes \overbar{s_{2}\dotsb s_m}) \otimes \bar w )  
\end{align}
since by Lemma  \ref{lemma:technical} \ref{technical2}  for each $1 < j<m$ we have 
\begin{align*}
G_{k-i}^\hdot (\widetilde \varphi^{\otimes k-i}) (\bar s_j \otimes \overbar{s_{j+1}\dotsb s_m})  = 0= G_{k-i}^\hdot (\widetilde \varphi^{\otimes k-i}) (\overbar{s_{j+1}\dotsb s_m} \otimes \bar w).
\end{align*}
From \eqref{align:RHS} it follows that the right-hand side of \eqref{align:G_i2} is equal to 
\begin{align}\label{align:RHS1}
\frac{1}{(k-1)!} G_{k-1}^\hdot (\widetilde \varphi^{\otimes k-1}) (\overbar{\widetilde \varphi_s} \otimes \bar w)
\end{align}
since by the induction hypothesis ($k-i<k$) we have
\[
G_{k-i}^\hdot (\widetilde \varphi^{\otimes k-i})(\bar s_1 \otimes \overbar{s_{2}\dotsb s_m})= \pi(s_1) \star^{k-i} \pi(s_2\dotsb s_m) = 
\begin{cases}
 \widetilde \phi_s & \text{if $k - i = 1$}\\
 0 & \text{if $k - i > 1$}
\end{cases} 
\] 
By the induction hypothesis  again, \eqref{align:RHS1} is further equal to $\pi (\widetilde \varphi_s) \star^{k-1} \pi (w).$ This proves \eqref{align:G_i2}.

Now we consider the second case where $u = u_1 \dotsb u_n$ for $u_i \in Q_1$ is an arbitrary irreducible path. By Lemmas \ref{lemma:technical} \ref{technical1} and \ref{lemma:G2} \ref{G2ii} we have
\begin{align}\label{align:729u}
\begin{aligned}
G_k^\hdot (\widetilde \varphi^{\otimes k}) (\bar u \otimes \bar v) ={}& - \sum_{i=1}^{k-1}\sum_{j=1}^{n-1}  { k \choose i}\pi(u_1\dotsc u_{j-1}) G_{i, k-i}^{\hdot}(\widetilde \varphi^{\otimes k}) ( \bar u_j \otimes \overbar{u_{j+1}\dotsc u_n} \otimes \bar v)\\
& +\sum_{j=1}^{n} \pi(u_1\dotsc u_{j-1}) G_k^\hdot (\widetilde \varphi^{\otimes k}) ( \bar u_j \otimes \overbar{u_{j+1}\dotsc u_nv}).
\end{aligned}
\end{align}

By Lemma \ref{lemma:technical} \ref{technical2} we have $G_{k-i}^\hdot (\widetilde \varphi^{\otimes k-i}) (\bar u_j \otimes \overbar{u_{j+1}\dotsc u_n} )= 0$  it follows that 
\[
G_{i, k-i}^{\hdot}(\widetilde \varphi^{\otimes k}) ( \bar u_j \otimes \overbar{u_{j+1}\dotsc u_n} \otimes \bar v) = - G_i^\hdot (\widetilde \varphi^{\otimes i}) ( \bar u_j \otimes G_{k-i} ^\hdot (\widetilde \varphi^{\otimes k-i})( \overbar{u_{j+1}\dotsc u_n} \otimes \bar v)).
\]
Substituting this into \eqref{align:729u}  we obtain
\begin{align*}
 \frac{1}{k!} G_k^\hdot (\widetilde \varphi^{\otimes k}) (\bar u \otimes \bar v) 
={} &\sum_{i=1}^{k-1}   \sum_{j=1}^{n-1}\pi (u_1 \cdots u_{j-1}) (\pi (u_j) \star^i (\pi(u_{j+1}\cdots u_n)\star^{k-i} \pi(v)))\\
& + \sum_{j=1}^n \pi(u_1\cdots u_{j-1}) (\pi(u_j) \star^k \pi(u_{j+1}\cdots u_n v)) \\
={} &\pi (u) \star^k \pi(v)
\end{align*}
where the first identity follows from the first case and  the induction hypothesis ($k-i<k$), and the second identity from Lemma \ref{lemma:star} \ref{guttproperty3}. 
\end{proof}

\begin{corollary}
\label{theorem-summary}
Let $A = \Bbbk Q / I$ and let $R$ be a reduction system satisfying \textup{($\diamond$)}. Then any formal deformation of $A$ over a complete local Noetherian $\Bbbk$-algebra $(B, \mathfrak m)$ is gauge equivalent to $(A \hatotimes B, \star^{\mathrm C}_{\phi + \widetilde \phi})$ for some Maurer--Cartan element $\widetilde \varphi$ of $\mathbf p (Q, R) \hatotimes \mathfrak m$.
\end{corollary}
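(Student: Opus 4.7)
The corollary is essentially a packaging result, combining the two principal results that precede it: Corollary \ref{corollary:controls} (which governs formal deformations via the \emph{homotopical} star product $\star^{\mathrm H}$) and Theorem \ref{theorem:interpretation-star} (which identifies $\star^{\mathrm H}$ with the \emph{combinatorial} star product $\star^{\mathrm C}$). The plan is simply to chain these two inputs, after first checking that the ambient ring $\Bbbk\llrr{t}$ in their statements may be replaced by an arbitrary complete local Noetherian $(B,\mathfrak m)$.

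First I would verify that the statements of Corollary \ref{corollary:controls} and Theorem \ref{theorem:interpretation-star} extend verbatim from $\Bbbk\llrr{t}$ to $(B, \mathfrak m)$. The only role of the coefficient ring in both arguments is to provide a filtration guaranteeing convergence of the relevant infinite sums. On the deformation-theoretic side (Corollary \ref{corollary:controls}), this is precisely the content of Remark \ref{remark:adic}: the $\mathfrak m$-adic topology on $\mathbf p(Q,R)\hatotimes\mathfrak m$ makes the Maurer--Cartan sum \eqref{maurercartanlinfinity} convergent, and Theorem \ref{theorem:quasiisomorphism} \itemi\ applied to the $L_\infty[1]$ quasi-isomorphism of Theorem \ref{theorem:linfinitytransfer} yields a bijection between deformation functors over any $(B,\mathfrak m)\in\widehat{\mathfrak{Art}}_\Bbbk$. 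On the combinatorial side (Theorem \ref{theorem:interpretation-star}), the definition \eqref{star2product} of $\star^{\mathrm C}_{\phi+\widetilde\phi}$ and the recursive identities in Lemma \ref{lemma:star} and Lemma \ref{lemma:technical} are phrased purely in terms of $\widetilde\phi\in\Hom(\Bbbk S,A)\hatotimes\mathfrak m$, so the same inductive proof applies.

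Next, let $\widehat A=(A\hatotimes B,\star)$ be any formal deformation of $A$ over $(B,\mathfrak m)$. By Corollary \ref{corollary:controls} (extended as above), there exists a Maurer--Cartan element $\widetilde\phi\in\mathbf p(Q,R)\hatotimes\mathfrak m$ and a gauge equivalence
\[
\widehat A\;\simeq\;(A\hatotimes B,\star^{\mathrm H}_{\phi+\widetilde\phi}).
\]
Finally, apply Theorem \ref{theorem:interpretation-star} to this particular $\widetilde\phi$ to conclude $\star^{\mathrm H}_{\phi+\widetilde\phi}=\star^{\mathrm C}_{\phi+\widetilde\phi}$, whence
\[
\widehat A\;\simeq\;(A\hatotimes B,\star^{\mathrm C}_{\phi+\widetilde\phi}),
\]
which is the desired conclusion.

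I do not expect any genuine obstacle: the substantive work has already been done in establishing the $L_\infty[1]$ quasi-isomorphism $\mathbf p(Q,R)\tosim\mathbf h(A)$ and in the inductive identification $\star^{\mathrm H}=\star^{\mathrm C}$. The only subtle point worth stating explicitly in the final write-up is that Theorem \ref{theorem:interpretation-star} holds for \emph{arbitrary} $\widetilde\phi$, not merely for Maurer--Cartan elements, so that once a Maurer--Cartan $\widetilde\phi$ is produced by Corollary \ref{corollary:controls} the identification of the two star products is immediate.
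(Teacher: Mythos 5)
Your proposal is correct and follows exactly the paper's own one-line proof, which cites Corollary \ref{corollary:controls} and Theorem \ref{theorem:interpretation-star}; the extension from $\Bbbk\llrr{t}$ to a general $(B,\mathfrak m)$ that you spell out is handled in the paper by the blanket convention stated in \S\ref{subsubsection:notation}. Your added observation that Theorem \ref{theorem:interpretation-star} holds for arbitrary $\widetilde\phi$ is a fair point but not needed beyond what the paper already records.
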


\begin{proof}
This follows from Corollary \ref{corollary:controls} and Theorem \ref{theorem:interpretation-star}.
\end{proof}

To understand all star products up to gauge equivalence, it thus suffices to look at all combinatorial star products up to gauge equivalence. The group of all gauge equivalences is very large, but Theorem \ref{theorem:equivalenceformal} will show that a gauge equivalence between two combinatorial star products is given by an equivalence of formal reduction systems (cf.\ \S\ref{subsubsection:notation}) and as such is already determined by a map in $\Hom (\Bbbk Q_1, A) \hatotimes \mathfrak m$.

In fact, the combinatorial star product\index{star product!combinatorial}\index{combinatorial star product} is not only useful for giving explicit formulae for formal deformations, it can also be used for giving a combinatorial criterion for the Maurer--Cartan equation\index{Maurer--Cartan!equation}, as will be shown in \S\ref{subsection:computinghigherbrackets}.

\section{Combinatorial criterion for the Maurer--Cartan equation}
\label{subsection:computinghigherbrackets}
\index{Maurer--Cartan!equation}

The following lemma gives a combinatorial way to compute the higher brackets $\langle \widetilde \varphi,  \dotsc, \widetilde \varphi \rangle$ which appear in the Maurer--Cartan equation\index{Maurer--Cartan!equation} for the L$_\infty[1]$ algebra $\mathbf p (Q, R) \hatotimes \mathfrak m$.

\begin{lemma}
\label{lemma:higher-brackets}
Let $\widetilde \varphi \in \Hom (\Bbbk S, A) \hatotimes \mathfrak{m}$ and write $\star^k = \star^{\mathrm C, k}_{\phi + \widetilde \phi}$. Then for $k \geq 1$, the $k$th bracket $\langle \widetilde \varphi, \dotsc, \widetilde \varphi \rangle_k \in \Hom (\Bbbk S_3, A) \hatotimes \mathfrak{m}$ is given by
\begin{align*}
& \frac{1}{k!}\langle \widetilde \varphi, \dotsc, \widetilde \varphi \rangle_k(uvw) \\ 
={} &  \pi(u) \star^k \pi(\varphi_s) + \pi (u) \star^{k-1} \pi (\widetilde \varphi_s) - \pi (\varphi_{s'}) \star^k \pi(w)  - \pi (\widetilde \varphi_{s'}) \star^{k-1} \pi(w) 
\end{align*}
where $u v w \in S_3$ with $uv, vw \in S$ and we write $s = v w$ and $s' = u v$.
\end{lemma}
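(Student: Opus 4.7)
The plan is to combine the homotopy transfer formula of Theorem~\ref{theorem:linfinitytransfer} with the identification $\tfrac{1}{j!}G_j^\hdot(\widetilde\varphi^{\otimes j})(\bar a\otimes\bar b)=a\star^j b$ from Theorem~\ref{theorem:interpretation-star}, and the explicit formulae for $F_3$ and $G_2$ in Lemmas~\ref{lemma:low-degree} and~\ref{lemma:G2}. The case $k=1$ is immediate: by definition of the differential $\langle\blank\rangle$, $\langle\widetilde\varphi\rangle(uvw)=\check{\widetilde\varphi}(\partial_3(1\otimes uvw\otimes 1))$, and substituting the formula for $\partial_3$ from Lemma~\ref{lemma:low-degree} \ref{lowdeg2} yields the four claimed summands (with $\pi(u)\star^1\pi(\varphi_s)=\check{\widetilde\varphi}(G_2(1\otimes\bar u\otimes\bar\varphi_s\otimes 1))$ and analogously for $\pi(\varphi_{s'})\star^1\pi(w)$). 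For $k\geq 2$, I would proceed in three steps.

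\emph{Step 1 (shuffle count).} Since $\widetilde\varphi$ has degree $0$ in $\mathbf{p}(Q,R)$, all Koszul signs are $+1$, and the number of shuffles $\sigma\in\mathfrak{S}_{i,k-i}$ with $\sigma(1)<\sigma(i+1)$ equals $\binom{k-1}{i-1}$ (the condition is equivalent to $\sigma(1)=1$). Combined with the symmetry $[A,B]=[B,A]$ of the DG Lie$[1]$ bracket on degree-$0$ cochains in $\mathbf{h}(A)$ -- which by \eqref{align:dglie1} equals $-(A\bullet B+B\bullet A)$ -- and Pascal's rule $\binom{k-1}{i-1}+\binom{k-1}{i}=\binom{k}{i}$, this rewrites Theorem~\ref{theorem:linfinitytransfer} as
\[
\tfrac{1}{k!}\langle\widetilde\varphi^{\otimes k}\rangle=\tfrac{1}{2}\sum_{i=1}^{k-1}\tfrac{1}{i!(k-i)!}\,F^\hdot\bigl([G_i^\hdot(\widetilde\varphi^{\otimes i}),G_{k-i}^\hdot(\widetilde\varphi^{\otimes k-i})]\bigr).
\]

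\emph{Step 2 (reduction to star-associators).} Setting $A_j=G_j^\hdot(\widetilde\varphi^{\otimes j})$, expanding the Gerstenhaber circle product $(A\bullet B)(\bar a,\bar b,\bar c)=A(B(\bar a,\bar b),\bar c)-A(\bar a,B(\bar b,\bar c))$, applying $\tfrac{1}{j!}A_j(\bar x,\bar y)=x\star^j y$, and using the $i\leftrightarrow k-i$ symmetry of the resulting sum, one obtains
\[
\tfrac{1}{2}\sum_{i=1}^{k-1}\tfrac{1}{i!(k-i)!}[A_i,A_{k-i}](\bar a,\bar b,\bar c)=-\sum_{i=1}^{k-1}\bigl[(a\star^i b)\star^{k-i}c-a\star^i(b\star^{k-i}c)\bigr].
\]
This is minus the degree-$k$ part of the associator of $\star$, restricted to the indices $1\leq i\leq k-1$.

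\emph{Step 3 (evaluation on $F_3$).} By Lemma~\ref{lemma:low-degree} \ref{lowdeg3}, $F_3(1\otimes uvw\otimes 1)$ decomposes into four pieces built from $\varpi_1\mathrm{split}_1(s'-\varphi_{s'})\otimes\bar w\otimes 1$ and $\varpi_2 F_2 G_2(1\otimes\bar u\otimes\bar\varphi_s\otimes 1-1\otimes\bar\varphi_{s'}\otimes\bar w\otimes 1)\otimes 1$. Unfolding $\varpi_1\mathrm{split}_1$ along the arrow decomposition of $s'=x_1\cdots x_m$ (with $x_1=u$) and unfolding $G_2$ via the recursive identities of Lemma~\ref{lemma:G2}, one obtains sums of tensors on which the bimodule extension of the $3$-cochain in Step~2 acts. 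Summands in which the ``inner'' product is applied to already-irreducible factors vanish by Lemma~\ref{lemma:star} \ref{guttproperty2}; the surviving sums telescope via Lemma~\ref{lemma:star} \ref{guttproperty3}. The base-case identities from Lemma~\ref{lemma:star} \ref{guttproperty1} --- giving $\pi(u)\star^i\pi(v)=\delta_{i,1}\widetilde\varphi_{s'}$ (for $i\geq 1$) when $uv=s'$, and symmetrically for $\pi(v)\star^i\pi(w)$ when $vw=s$ --- isolate the ``top-level'' summands $\widetilde\varphi_{s'}\star^{k-1}\pi(w)$ and $\pi(u)\star^{k-1}\widetilde\varphi_s$, while the residual collapses yield $\varphi_{s'}\star^k\pi(w)$ and $\pi(u)\star^k\varphi_s$ with matching signs.

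\emph{Main obstacle.} The principal difficulty is the bookkeeping in Step~3: the Gerstenhaber-bracket expansion over $i$ must be matched term-by-term with the recursive unfoldings of $\varpi_1\mathrm{split}_1$ and $G_2$. In essence the lemma asserts that $\tfrac{1}{k!}\langle\widetilde\varphi^{\otimes k}\rangle(uvw)$ equals minus the degree-$k$ part of the associator $(\pi(u)\star\pi(v))\star\pi(w)-\pi(u)\star(\pi(v)\star\pi(w))$ --- a compatibility already expected from Corollary~\ref{corollary:controls}, since $\widetilde\varphi$ is Maurer--Cartan iff $\star$ is associative --- and the content of the proof is that this associator admits the clean four-term form stated, as a consequence of the special reducibility structure at an overlap ambiguity $uvw\in S_3$ (namely, that the only reducible proper subpaths of $uvw$ are $uv=s'$ and $vw=s$).
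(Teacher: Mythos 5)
Your proposal follows essentially the same route as the paper's own proof: the homotopy transfer formula with the $\tfrac12\binom{k}{i}$ shuffle count, the explicit formula for $F_3$ from Lemma \ref{lemma:low-degree} \ref{lowdeg3}, the identification $\tfrac{1}{j!}G_j^\hdot(\widetilde\varphi^{\otimes j})(\bar a\otimes\bar b)=a\star^j b$ from Theorem \ref{theorem:interpretation-star}, and the reduction identities of Lemmas \ref{lemma:star} and \ref{lemma:technical} to evaluate the resulting $3$-cochain on $F_3(1\otimes uvw\otimes 1)$. The differences are presentational --- you make the shuffle count and the ``associator'' reading of the bracket explicit, while your Step~3 compresses into a sketch the term-by-term bookkeeping the paper carries out in \eqref{align:F3}--\eqref{align:theorem7.173} --- but the strategy and all the ingredients you invoke are exactly those of the paper.
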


\begin{proof}
For $k = 1$ this identity follows from Lemma \ref{lemma:low-degree} \ref{lowdeg2} and Theorem \ref{theorem:interpretation-star}. Let us assume that $k \geq 2$. For any $u v w \in S_3$, by Theorem \ref{theorem:linfinitytransfer} we have 
\begin{align*}
\langle \widetilde \varphi, \dotsc, \widetilde \varphi \rangle_k (uvw) & = \sum_{i=1}^{k-1} \frac{1}{2} {k \choose i} F^3 (\langle G_i^{\hdot} (\widetilde \phi^{\otimes i}), G_{k-i}^{\hdot} (\widetilde \varphi^{\otimes k-i})\rangle) \\
& = - \sum_{i=1}^{k-1} {k \choose i} \check G_{i, k-i}^{\hdot} (\widetilde \phi^{\otimes k}) (F_3 (1 \otimes uvw \otimes 1))
\end{align*}
where the minus sign comes from \eqref{align:dglie1}. 
Thus, by Lemma \ref{lemma:low-degree} \ref{lowdeg3} and Theorem \ref{theorem:interpretation-star} we have 
\begin{align*}
\frac1{k!} \langle \widetilde \varphi, \dotsc, \widetilde \varphi \rangle_k (uvw) &= \pi(u) \star^k \pi (\varphi_s) -  \pi(\varphi_{s'}) \star^k \pi(w) \\
&\; - \frac1{k!} \sum_{i=1}^{k-1} {k \choose i} \check G_{i, k-i}^{\hdot} (\widetilde \phi^{\otimes k}) 
\left( \left\lceil
{\small
\begin{aligned}
{} &  \quad \varpi_1 \mathrm{split_1} (s' - \varphi_{s'}) \otimes \bar w \otimes 1 \\
{} &- \varpi_1 F_1 G_1 (1 \otimes \bar u \otimes 1) \otimes \bar \varphi_s \otimes 1 \\
{} & + \varpi_1 F_1 G_1 (1 \otimes \bar \varphi_{s'} \otimes 1) \otimes \bar w \otimes 1
\end{aligned}
}
\right\rfloor \right).
\end{align*}
(Here we use again $\lceil \blank \rfloor$ to indicate a line break within the parentheses.)
Thus it remains to verify that 
\begin{multline}
\label{align:F3}
\frac{1}{k!} \sum_{i=1}^{k-1} {k \choose i} \check G_{i, k-i}^{\hdot} (\widetilde \phi^{\otimes k}) 
(\varpi_1 \mathrm{split_1} (s') \otimes \bar w \otimes 1
-\varpi_1 \mathrm{split_1}(u) \otimes \bar \varphi_s \otimes 1) \\
 = \pi (\widetilde \varphi_{s'}) \star^{k-1} \pi (w) - \pi (u) \star^{k-1} \pi (\widetilde \varphi_s)
\end{multline}
where we use $F_1 G_1 (1 \otimes \bar u \otimes 1)  = \mathrm{split_1}(u)$. 
For this, we write $uvw = x_1 \cdots x_n$ for $x_i \in Q_1$ so that
\[
\begin{tikzpicture}[x=3em,y=1em]
\node[left] at (0,0) {$uvw = {}$};
\foreach \c/\n in {0/0,1/1,1.25/15,2.25/25,3.25/35,3.5/4,4.5/5,5.5/6,5.75/65,6.75/75} {
\node[inner sep=0] (\n) at (\c,0) {};
}
\path[->,line width=.4pt] (0)  edge node[above=-.4ex,font=\scriptsize] {$x_1$} (1);
\path[->,line width=.4pt] (15) edge node[above=-.4ex,font=\scriptsize] {$x_l$} (25);
\path[->,line width=.4pt] (25) edge node[above=-.4ex,font=\scriptsize] {$x_{l+1}$} (35);
\path[->,line width=.4pt] (4)  edge node[above=-.4ex,font=\scriptsize] {$x_m$} (5);
\path[->,line width=.4pt] (5)  edge node[above=-.4ex,font=\scriptsize] {$x_{m+1}$} (6);
\path[->,line width=.4pt] (65) edge node[above=-.4ex,font=\scriptsize] {$x_n$} (75);
\node at (1.125,1.3) {$\overbracket[.3pt]{\hspace{6.58em}}^{u}$};
\node at (4.5,1.3) {$\overbracket[.3pt]{\hspace{13.35em}}^{s}$};
\node at (2.25,-1.2) {$\underbracket[.3pt]{\hspace{13.35em}}_{s'}$};
\node at (5.625,-1.2) {$\underbracket[.3pt]{\hspace{6.58em}}_{w}$};
\node[font=\scriptsize] at (1.125,0) {$...$};
\node[font=\scriptsize] at (3.375,0) {$...$};
\node[font=\scriptsize] at (5.625,0) {$...$};
\end{tikzpicture}
\]
for some $0 < l < m < n$. Then by Lemma \ref{lemma:star} \ref{guttproperty1}, for any $ i \geq 1$ and $ 1 \leq  j < m$ we have 
\begin{equation}
\label{align:theorem7.173}
\pi (x_{j+1} \dotsc x_m) \star^i \pi (w) \\
= 
\begin{cases}
\begin{aligned} &\pi (x_{j+1} \dotsc x_l) \star^i \pi (\varphi_s) \\ &\quad{} + \pi (x_{j+1} \dotsc x_l) \star^{i-1} \pi (\widetilde \varphi_s) \end{aligned} & \text{if $ j\leq l-1$} \\
\pi(\widetilde \phi_s) & \text{if $j = l$ and $i =1$} \\
0 & \text{otherwise.}
\end{cases}
\end{equation}
Note that we have
\begin{equation}
\label{align:theorem7.171}
\begin{aligned}
&\frac{1}{k!} \sum_{i=1}^{k-1} {k \choose i} \check G_{i, k-i}^{\hdot} (\widetilde \phi^{\otimes k}) (\varpi_1 \mathrm{split_1}(s')\otimes \bar w \otimes 1) \\
= {} & \frac{1}{(k-1)!} G_{k-1}^\hdot (\widetilde \phi^{\otimes k-1}) (G_1^\hdot (\widetilde \varphi) (\bar x_1 \otimes \overbar{x_2 \cdots x_m}) \otimes \bar w) \\
& - \frac{1}{k!} \sum_{j=1}^{m-1} \sum_{i=1}^{k-1} {k \choose i} \pi(x_1 \cdots x_{j-1}) G_i^\hdot (\widetilde \phi^{\otimes i}) (\bar x_j \otimes G_{k-i}^\hdot (\widetilde \phi^{\otimes k-i}) (\overbar{x_{j+1} \cdots x_m} \otimes \bar w))\\
={} & \pi (\widetilde \varphi_{s'}) \star^{k-1} \pi (w) - \sum_{j=1}^{m-1} \sum_{i=1}^{k-1} \pi (x_1 \cdots x_{j-1}) (\pi (x_j) \star^i (\pi(x_{j+1}\cdots x_{m}) \star^{k-i} \pi (w)))
\end{aligned}
\end{equation}
where in the first identity we need to use Lemma \ref{lemma:technical} \ref{technical2} and the minus sign comes from the Gerstenhaber circle product. 
By a similar computation, we have 
\begin{equation}
\label{align:theorem7.172}
\begin{aligned}
&\frac{1}{k!} \sum_{i=1}^{k-1} {k \choose i} \check G_{i, k-i}^{\hdot}(\widetilde \phi^{\otimes k}) (\varpi_1 \mathrm{split_1}(u) \otimes \bar \varphi_s \otimes 1 ) \\
 ={} & - \sum_{j=1}^{l-1} \sum_{i=1}^{k-1} \pi (x_1 \cdots x_{j-1}) (\pi (x_j) \star^i  (\pi (x_{j+1}\cdots x_l) \star^{k-i} \pi (\varphi_s))).
\end{aligned}
\end{equation}
Combining (\ref{align:theorem7.171}), (\ref{align:theorem7.172}) and (\ref{align:theorem7.173}), we obtain 
\begin{align*}
&\frac{1}{k!} \sum_{i=1}^{k-1} {k \choose i} \check G_{i, k-i}^{\hdot} (\widetilde \phi^{\otimes k}) \biggl( \biggl\lceil
\begin{aligned}
& \ \ \varpi_1 \mathrm{split_1} (s') \otimes \bar w \otimes 1 \\
&{} - \varpi_1 \mathrm{split_1}(u) \otimes \bar \varphi_s \otimes 1
\end{aligned}
\biggr\rfloor \biggr) \\
={} & \pi (\widetilde \varphi_{s'}) \star^{k-1} \pi(w) - \sum_{j=1}^l \pi (x_1 \cdots x_{j-1}) (\pi (x_j) \star^{k-1} \pi (x_{j+1}\cdots x_l \widetilde \varphi_s)) \\
&\ - \sum_{j=1}^{l-1}\sum_{i=1}^{k-2} \pi(x_1 \cdots x_{j-1}) (\pi(x_j) \star^i (\pi (x_{j+1} \cdots x_l) \star^{k-i-1} \pi (\widetilde \varphi_s)) \\
={} & \pi (\widetilde \varphi_{s'}) \star^{k-1} \pi (w) - \pi (u) \star^{k-1} \pi (\widetilde \varphi_s)
\end{align*}
where the second identity follows from  Lemma \ref{lemma:star} \ref{guttproperty3}. This verifies (\ref{align:F3}).
\end{proof}

Recall from Theorem \ref{theorem:interpretation-star} that the homotopical star product\index{star product!homotopical}\index{homotopical star product} $\star^{\mathrm H}_{\phi + \widetilde \phi}$ admits a combinatorial description in terms of reductions, namely $\star^{\mathrm H}_{\phi + \widetilde \phi} = \star^{\mathrm C}_{\phi + \widetilde \phi}$. The following theorem shows that this combinatorial description can be used to check the Maurer--Cartan equation.\index{Maurer--Cartan!equation}

\begin{theorem}
\label{theorem:higher-brackets}
Let $\widetilde \varphi \in \Hom (\Bbbk S, A) \hatotimes \mathfrak m$ and write $\star = \star^{\mathrm C}_{\phi + \widetilde \phi}$. Then $\widetilde \varphi$ satisfies the Maurer--Cartan equation\index{Maurer--Cartan!equation} for $\mathbf p (Q, R) \hatotimes \mathfrak m$ if and only if for any $u v w \in S_3$ with $u v, v w \in S$, we have
\begin{equation}
\label{conditionmc}
\pi (u) \star (\pi(v) \star \pi (w)) = (\pi (u) \star \pi (v)) \star \pi (w).
\end{equation}
\end{theorem}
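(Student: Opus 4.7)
The plan is to expand the Maurer--Cartan equation on each $3$-ambiguity using the explicit formula for the higher brackets proved in Lemma \ref{lemma:higher-brackets}, and then show that the resulting expression is precisely the associator $\pi(u)\star(\pi(v)\star\pi(w)) - (\pi(u)\star\pi(v))\star\pi(w)$. Since the Maurer--Cartan equation is an identity in $\Hom(\Bbbk S_3, A)\hatotimes\mathfrak m$, it is equivalent to the vanishing of $\sum_{k\geq 1} \tfrac{1}{k!} \langle \widetilde\varphi, \dotsc, \widetilde\varphi\rangle_k(uvw)$ for every $uvw \in S_3$, so it suffices to work pointwise on each $3$-ambiguity.

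Fix $uvw \in S_3$ and write $s = vw$, $s' = uv$. By Lemma \ref{lemma:higher-brackets}, summing the formula over $k \geq 1$ gives
\begin{align*}
\sum_{k\geq 1}\tfrac{1}{k!}\langle\widetilde\varphi,\dotsc,\widetilde\varphi\rangle_k(uvw) &= \sum_{k\geq 1}\bigl[\pi(u)\star^k\pi(\varphi_s) + \pi(u)\star^{k-1}\pi(\widetilde\varphi_s)\bigr] \\
&\quad - \sum_{k\geq 1}\bigl[\pi(\varphi_{s'})\star^k\pi(w) + \pi(\widetilde\varphi_{s'})\star^{k-1}\pi(w)\bigr].
\end{align*}
Since $a\star b = \sum_{k\geq 0} a \star^k b$ and $a\star^0 b = ab$, the first sum equals $\pi(u)\star\pi(\varphi_s) - \pi(u)\pi(\varphi_s) + \pi(u)\star\pi(\widetilde\varphi_s)$, and similarly the second sum equals $\pi(\varphi_{s'})\star\pi(w) - \pi(\varphi_{s'})\pi(w) + \pi(\widetilde\varphi_{s'})\star\pi(w)$.

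Next I would use the elementary reduction calculation that for any $s \in S$ we have $\pi(v)\star\pi(w) = \pi(\varphi_s) + \pi(\widetilde\varphi_s)$ when $vw = s$, and symmetrically $\pi(u)\star\pi(v) = \pi(\varphi_{s'}) + \pi(\widetilde\varphi_{s'})$. This follows directly from Definition \ref{definition:star_2}: performing $\mathrm{red}_{\varphi + z\widetilde\varphi}$ on $vw \in S$ terminates in one step producing $\varphi_s + z\widetilde\varphi_s$, which upon setting $z=1$ yields the stated formula. Combining these identities with $\pi(u)\pi(\varphi_s) = \pi(uvw) = \pi(\varphi_{s'})\pi(w)$ (both being equal in $A$ via $\pi(vw) = \pi(\varphi_s)$ and $\pi(uv) = \pi(\varphi_{s'})$), the displayed sum collapses to
\[
\pi(u)\star\bigl(\pi(\varphi_s) + \pi(\widetilde\varphi_s)\bigr) - \bigl(\pi(\varphi_{s'}) + \pi(\widetilde\varphi_{s'})\bigr)\star\pi(w) = \pi(u)\star(\pi(v)\star\pi(w)) - (\pi(u)\star\pi(v))\star\pi(w).
\]
This shows that the Maurer--Cartan equation at $uvw$ is equivalent to the associativity identity \eqref{conditionmc}, completing the proof.

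There is no single "hard step" here: the main content is already encoded in Lemma \ref{lemma:higher-brackets}, which supplies the combinatorial expansion of each $k$-ary bracket on a $3$-ambiguity. The only thing requiring a bit of care is the bookkeeping of the $k=0$ terms and the verification that $\pi(v)\star\pi(w)$ and $\pi(u)\star\pi(v)$ collapse to two-term expressions when $vw$ and $uv$ both lie in $S$ (so that only the reductions $s \mapsto \varphi_s + z\widetilde\varphi_s$ and $s' \mapsto \varphi_{s'} + z\widetilde\varphi_{s'}$ are ever triggered in the right-most reduction algorithm).
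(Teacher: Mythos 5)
Your argument is correct and follows essentially the same route as the paper's proof: sum the formula of Lemma \ref{lemma:higher-brackets} over $k \geq 1$, cancel the $k=0$ contributions via $\pi(u)\pi(\varphi_s) = \pi(uvw) = \pi(\varphi_{s'})\pi(w)$, and identify $\pi(v)\star\pi(w) = \pi(\varphi_s + \widetilde\varphi_s)$ (the paper cites Lemma \ref{lemma:star} \ref{guttproperty1} for this last step, which is the same one-step reduction you describe). Your explicit bookkeeping of the $k=0$ terms is carried out implicitly in the paper's two-line computation.
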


Namely, to check that $\widetilde \varphi$ satisfies the Maurer--Cartan equation, it suffices to check that $\star$ is associative on elements in $S_3$. In the terminology of Definition \ref{definition:overlap}, the condition (\ref{conditionmc}) is equivalent to the overlap\index{overlap ambiguity}\index{ambiguity!overlap} $u v w \in S_3$ being resolvable using rightmost reductions with respect to $\widehat R_{\phi + \widetilde \phi}$.

\begin{proof}[Proof of Theorem \ref{theorem:higher-brackets}]
The Maurer--Cartan equation for $\widetilde \varphi$ is given by
\[
\sum_{k \geq 1} \frac{1}{k!} \langle \widetilde \varphi, \dotsc, \widetilde \varphi \rangle_k = 0 \in \Hom (\Bbbk S_3, A) \hatotimes \mathfrak m
\]
(see (\ref{maurercartanlinfinity})). Writing $s = vw$ and $s' = u v$ we have
\begin{align*}
\sum_{k \geq 1} \frac{1}{k!} \langle \widetilde \varphi, \dotsc, \widetilde \varphi \rangle_k (u v w)
 &= \pi (u) \star \pi (\varphi_s + \widetilde \varphi_s) - \pi (\varphi_{s'} + \widetilde \varphi_{s'}) \star \pi (w)\\
 &= \pi (u) \star (\pi (v) \star \pi(w)) - (\pi (u) \star \pi(v)) \star \pi(w)
\end{align*}
where the first identity follows from Lemma \ref{lemma:higher-brackets} by summing over $k \geq 1$ and the second identity from Lemma \ref{lemma:star} \ref{guttproperty1}.
\end{proof}

\section{Equivalence of deformations}
\label{subsection:generatorsrelations}
\index{reduction system!deformation}\index{deformation!of reduction system}\index{deformation!of associative algebra}\index{deformation!of ideal of relations}

We will now prove the equivalences between deformations of the ideal of relations $I$, deformations of the reduction system $R$ and deformations of the algebra $A = \Bbbk Q / I$. Before we do so, let us also introduce the following notion, which naturally associates a star product to a formal deformation of $I$.

\begin{definition}
\label{definition:gutt}
\index{Gutt star product|textbf}\index{star product!Gutt|textbf}
Let $\widehat I$ be a formal deformation of the ideal $I \subset \Bbbk Q$ and $\Phi \colon A \llrr{t} \to \Bbbk Q \llrr{t} / \widehat I$ be the corresponding isomorphism of $\Bbbk \llrr{t}$-modules (see \S\ref{subsubsection:notation}). 

We associate to $\widehat I$ a (generalized) {\it Gutt star product} $\star^{\mathrm G}$ on $A\llrr{t}$ defined by
$$
a \star^{\mathrm G} b = \Phi^{-1} (\Phi (a) \cdot \Phi (b))
$$
for any $a, b \in A$, where $\cdot$ indicates the multiplication of $\Bbbk Q \llrr{t} / \widehat I$. (The original Gutt star product was introduced in Gutt \cite{gutt} as a quantization of the linear Lie--Kirillov Poisson structure\index{Poisson structure!linear}\index{Poisson structure} on the dual of a Lie algebra $\mathfrak g$, defining a star product on $\Sym (\mathfrak g) \llrr{t}$ by pulling back the associative structure on the universal enveloping algebra $\U_t (\mathfrak g) = \T (\mathfrak g) \llrr{t} / \langle x \otimes y - y \otimes x - [x, y] t\rangle_{x, y \in \mathfrak g}$ along the classical PBW isomorphism.)
\end{definition}

Since $\star^{\mathrm G}$ is associative by definition, $(A \llrr{t}, \star^{\mathrm G})$ is a formal deformation of $A = \Bbbk Q / I$. Note that, up to gauge equivalence, $\star^{\mathrm G}$ does not depend on the choice of $\Phi$.

\begin{proposition}
\label{proposition:formalideal}
Let $\widehat I$ and $\widehat I'$ be formal deformations of $I$. Then the following are equivalent:
\begin{enumerate}
\item $\widehat I$ and $\widehat I'$ are equivalent
\item $(A \llrr{t}, \star^{\mathrm G})$ and $(A \llrr{t}, \star'^{\mathrm G})$ are gauge equivalent.
\end{enumerate}
\end{proposition}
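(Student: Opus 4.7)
The plan is a direct verification by unwinding the definitions on both sides. The key observation is that, by construction, $\Phi$ (resp.\ $\Phi'$) is tautologically a $\Bbbk \llrr{t}$-algebra isomorphism from $(A\llrr{t}, \star^{\mathrm G})$ (resp.\ $(A\llrr{t}, \star'^{\mathrm G})$) to $\Bbbk Q \llrr{t}/\widehat I$ (resp.\ $\Bbbk Q \llrr{t}/\widehat I'$): this is literally what the formula $a \star^{\mathrm G} b = \Phi^{-1}(\Phi(a) \cdot \Phi(b))$ says. Hence an automorphism $T$ of the $\Bbbk\llrr{t}$-module $A\llrr{t}$ with $T \otimes_{\Bbbk \llrr{t}} \Bbbk = \id_A$ is a gauge equivalence of star products if and only if the conjugate $\Phi' T \Phi^{-1}$ is an algebra isomorphism of the quotients, which is precisely the notion of equivalence of ideal deformations recalled in \S\ref{subsubsection:notation}.

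For the implication \itemi $\Rightarrow$ \itemii, I would take the $T$ provided by an equivalence of $\widehat I$ and $\widehat I'$ and verify directly that the same $T$ is a gauge equivalence between $(A\llrr{t}, \star^{\mathrm G})$ and $(A\llrr{t}, \star'^{\mathrm G})$. For $a, b \in A\llrr{t}$,
\begin{align*}
T(a \star^{\mathrm G} b) &= T \Phi^{-1}\bigl(\Phi(a) \cdot \Phi(b)\bigr) = \Phi'^{-1} (\Phi' T \Phi^{-1})\bigl(\Phi(a) \cdot \Phi(b)\bigr) \\
&= \Phi'^{-1}\bigl(\Phi' T(a) \cdot \Phi' T(b)\bigr) = T(a) \star'^{\mathrm G} T(b),
\end{align*}
where the multiplicativity of $\Phi' T \Phi^{-1}$ is used in the third equality, and the residue condition $T \otimes_{\Bbbk\llrr{t}} \Bbbk = \id_A$ is carried over unchanged.

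For the converse \itemii $\Rightarrow$ \itemi, I would start from a gauge equivalence $T$ between the Gutt star products and show that the conjugate $\Phi' T \Phi^{-1} \colon \Bbbk Q \llrr{t}/\widehat I \to \Bbbk Q \llrr{t}/\widehat I'$ is an algebra isomorphism. Writing $a = \Phi^{-1}(x)$ and $b = \Phi^{-1}(y)$, the definition of $\star^{\mathrm G}$ gives $x \cdot y = \Phi(a \star^{\mathrm G} b)$, so
\begin{align*}
\Phi' T \Phi^{-1}(x \cdot y) = \Phi' T(a \star^{\mathrm G} b) = \Phi'\bigl(T(a) \star'^{\mathrm G} T(b)\bigr) = \Phi' T(a) \cdot \Phi' T(b) = \Phi' T \Phi^{-1}(x) \cdot \Phi' T \Phi^{-1}(y).
\end{align*}
Since $T$, $\Phi$ and $\Phi'$ are all $\Bbbk \llrr{t}$-module isomorphisms, so is $\Phi' T \Phi^{-1}$, and together with $T \otimes_{\Bbbk\llrr{t}} \Bbbk = \id_A$ this is precisely the data of an equivalence $\widehat I \sim \widehat I'$.

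Both implications thus reduce to the same bookkeeping identity, and there is no genuine technical obstacle: once one records that the Gutt construction was rigged exactly to make $\Phi$ and $\Phi'$ algebra maps, the proposition is essentially a tautology. The only care needed is to note that the statement is independent of the chosen identifications $\Phi, \Phi'$ up to equivalence, which follows because any two such identifications for a fixed $\widehat I$ differ by a $\Bbbk\llrr{t}$-module automorphism of $A\llrr{t}$ restricting to the identity modulo $\mathfrak m$, and such an automorphism visibly witnesses a gauge equivalence of the associated Gutt star products.
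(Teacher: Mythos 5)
Your proposal is correct and follows essentially the same route as the paper: both proofs observe that $\Phi$ and $\Phi'$ are by construction algebra isomorphisms for the respective Gutt star products, so that $T$ being a gauge equivalence is equivalent to $\Phi' T \Phi^{-1}$ being an algebra isomorphism, which is the definition of equivalence of ideal deformations. Your write-up merely spells out the verification that the paper leaves as "we infer" and "the converse follows similarly."
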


\begin{proof}
Let $\Phi \colon A \llrr{t} \to \Bbbk Q \llrr{t} / \widehat I$ and $\Phi' \colon A \llrr{t} \to \Bbbk Q \llrr{t} / \widehat I'$ be the corresponding isomorphisms of $\Bbbk \llrr{t}$-modules and let $a \star'^{\mathrm G} b = \Phi'^{-1} (\Phi'(a) \cdot \Phi'(b))$. By definition both $\Phi$ and $\Phi'$ become isomorphisms of $\Bbbk \llrr{t}$-algebras with respect to $\star^{\mathrm G}$ and $\star'^{\mathrm G}$, respectively.

If $\widehat I$ and $\widehat I'$ are equivalent, then by definition there is an isomorphism of $\Bbbk \llrr{t}$-modules $T \colon A \llrr{t} \to A \llrr{t}$ such that $\Phi' T \Phi^{-1}$ is an isomorphism of $\Bbbk \llrr{t}$-algebras. We infer that $T$ induces an $\Bbbk \llrr{t}$-algebra isomorphism between $(A \llrr{t}, \star^{\mathrm G})$ and $(A \llrr{t}, \star'^{\mathrm G})$. The converse follows similarly.
\end{proof}

Let $\widetilde \varphi \in \Hom (\Bbbk S, A) \hatotimes \mathfrak m$ and define the following two-sided ideal
\[
\widehat I_{\phi + \widetilde \phi} = \langle s - \varphi_s - \widetilde \varphi_s \rangle_{s \in S}^\compl \subset \Bbbk Q \llrr{t}.
\]
There is a natural $\Bbbk\llrr{t}$-linear map 
\begin{align}
\label{alignvarphi}
\Phi \colon A \llrr{t} \to \Bbbk Q \llrr{t} / \widehat I_{\phi + \widetilde \phi}
\end{align}
sending $\pi(u)$ to $[u]$ for any irreducible path $u$. If $\Phi$ is an isomorphism of $\Bbbk \llrr{t}$-modules, we denote the corresponding (generalized) Gutt star product\index{Gutt star product}\index{star product!Gutt} by $\star^{\mathrm G}_{\phi + \widetilde \phi}$.

We now have the following general result.

\begin{theorem}
\label{theorem:equivalenceformal}
The following conditions are equivalent
\begin{enumerate}
\item \label{cor1} $\widetilde \phi$ is a Maurer--Cartan element\index{Maurer--Cartan!elements} of $\mathbf p (Q, R) \hatotimes \mathfrak m$
\item \label{cor2} $\widehat R_{\phi + \widetilde \phi}$ is a formal deformation of $R$
\item \label{cor3} $(A \llrr{t}, \star^{\mathrm C}_{\phi + \widetilde \phi})$ is associative
\item \label{cor4} $\widehat I_{\phi + \widetilde \phi}$ is a formal deformation of $I$.
\end{enumerate}
In this case, $\star^{\mathrm G}_{\phi + \widetilde \phi} = \star^{\mathrm C}_{\phi + \widetilde \phi} = \star^{\mathrm H}_{\phi + \widetilde \phi}$.

Moreover, if $\widetilde \phi$ and $\widetilde \phi'$ are two Maurer--Cartan elements, the following are equivalent.
\begin{enumerate}
\item \label{core1} $\widetilde \phi$ and $\widetilde \phi'$ are homotopic \index{Maurer--Cartan!elements!equivalence}
\item \label{core2} $\widehat R_{\phi + \widetilde \phi}$ and $\widehat R_{\phi + \widetilde \phi'}$ are equivalent
\item \label{core3} $(A \llrr{t}, \star^{\mathrm C}_{\phi + \widetilde \phi})$ and $(A \llrr{t}, \star^{\mathrm C}_{\phi + \widetilde \phi'})$ are gauge equivalent
\item \label{core4} $\widehat I_{\phi + \widetilde \phi}$ and $\widehat I_{\phi + \widetilde \phi'}$ are equivalent.
\end{enumerate}
\end{theorem}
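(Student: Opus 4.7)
The plan is to assemble the four equivalences by combining the combinatorial criterion of Theorem~\ref{theorem:higher-brackets}, the identification $\star^{\mathrm C}_{\phi+\widetilde\phi}=\star^{\mathrm H}_{\phi+\widetilde\phi}$ from Theorem~\ref{theorem:interpretation-star}, the deformation-theoretic content of Corollary~\ref{corollary:controls}, and the formal Diamond Lemma as stated in Remark~\ref{remarkformaldiamondlemma}. First I would prove (\ref{cor1})$\Rightarrow$(\ref{cor3}): by Theorem~\ref{theorem:interpretation-star} the two star products coincide, and by Corollary~\ref{corollary:controls} the homotopical product is associative whenever $\widetilde\phi$ is Maurer--Cartan. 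The converse (\ref{cor3})$\Rightarrow$(\ref{cor1}) is immediate since associativity of $\star^{\mathrm C}_{\phi+\widetilde\phi}$ on triples $\pi(u),\pi(v),\pi(w)$ with $uvw\in S_3$ is already enough by Theorem~\ref{theorem:higher-brackets} to force the Maurer--Cartan equation.

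Next I would handle (\ref{cor1})$\Leftrightarrow$(\ref{cor2}) via the formal Diamond Lemma. Unwinding Definition~\ref{definition:star_2}, the two sides of \eqref{conditionmc} at an overlap $uvw\in S_3$ with $s=vw$, $s'=uv$ describe precisely the two right-most reductions of $uvw$ under $\widehat R_{\phi+\widetilde\phi}$, so their coincidence at every order in $t$ is equivalent to resolvability of all overlaps of $\widehat R_{\phi+\widetilde\phi}$ in the sense of Remark~\ref{remarkformaldiamondlemma}, which in turn is equivalent to the formal diamond condition on $\widehat R_{\phi+\widetilde\phi}$. The equivalence (\ref{cor2})$\Leftrightarrow$(\ref{cor4}) is then the second half of Remark~\ref{remarkformaldiamondlemma}: the formal diamond condition is equivalent to the map $\Bbbk\Irr_S\llrr{t}\to\Bbbk Q\llrr{t}/\widehat I_{\phi+\widetilde\phi}$ being an isomorphism of $\Bbbk\llrr{t}$-modules, which via \eqref{sigma} is precisely the existence of $\Phi$ in \eqref{alignvarphi}. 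Under these equivalent conditions the identity $\star^{\mathrm G}_{\phi+\widetilde\phi}=\star^{\mathrm C}_{\phi+\widetilde\phi}$ is obtained by tracing definitions: for $a=\pi(u)$ and $b=\pi(v)$ with $u,v\in\Irr_S$, the class $\Phi(a)\cdot\Phi(b)=[uv]\in\Bbbk Q\llrr{t}/\widehat I_{\phi+\widetilde\phi}$ is represented by $\mathrm{red}_{\phi+z\widetilde\phi}^{(\infty)}(uv)\rvert_{z=1}$ in $\Bbbk\Irr_S\llrr{t}$, which is exactly $a\star^{\mathrm C}_{\phi+\widetilde\phi} b$.

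For the second set of equivalences, (\ref{core1})$\Leftrightarrow$(\ref{core3}) is the L$_\infty[1]$-quasi-isomorphism of Theorem~\ref{theorem:linfinitytransfer} combined with Theorem~\ref{theorem:quasiisomorphism}(\ref{qi1}) and $\star^{\mathrm C}=\star^{\mathrm H}$, while (\ref{core3})$\Leftrightarrow$(\ref{core4}) is Proposition~\ref{proposition:formalideal} applied with $\star^{\mathrm G}=\star^{\mathrm C}$. The bridge (\ref{core2})$\Leftrightarrow$(\ref{core3}) is the formal analogue of Lemma~\ref{lemma:equivalencereduction}: using Remark~\ref{remark:equivalenceformalreductionsystem}, any equivalence of formal reduction systems is determined by its value on arrows, and the argument of Lemma~\ref{lemma:equivalencereduction} shows that the two defining identities of Remark~\ref{remark:equivalenceformalreductionsystem} translate, after extending $T$ as an algebra homomorphism into $(A\llrr{t},\star^{\mathrm C}_{\phi+\widetilde\phi})$ using the free tensor algebra structure of $\Bbbk Q\llrr{t}$ over $\Bbbk Q_0$, exactly into the statement that $T$ is a $\Bbbk\llrr{t}$-algebra isomorphism $(A\llrr{t},\star^{\mathrm C}_{\phi+\widetilde\phi'})\tosim(A\llrr{t},\star^{\mathrm C}_{\phi+\widetilde\phi})$ restricting to $\id_A$ modulo $(t)$.

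I expect the main obstacle to lie in this last step: while the equivalences among (\ref{cor1})--(\ref{cor4}) and among (\ref{core1}), (\ref{core3}), (\ref{core4}) follow by combining results already established, the translation (\ref{core2})$\Leftrightarrow$(\ref{core3}) requires a careful formal Diamond-Lemma argument showing that, given the MC condition already proved, a map on arrows extends to a well-defined automorphism of $A\llrr{t}$ and that the two conditions in Remark~\ref{remark:equivalenceformalreductionsystem} are equivalent to the multiplicativity of the extension. This is essentially a formal rerun of the proof of Lemma~\ref{lemma:equivalencereduction} but at the level of $\Bbbk\llrr{t}$-modules, and the care needed is to track convergence in the $\mathfrak m$-adic topology when comparing $T\circ\mathrm{red}_{\phi+\widetilde\phi'}^{(\infty)}$ with $\mathrm{red}_{\phi+\widetilde\phi}^{(\infty)}\circ T^{\otimes\bullet}$ on arbitrary paths.
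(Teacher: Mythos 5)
Your proposal is correct and follows essentially the same route as the paper: the combinatorial Maurer--Cartan criterion (Theorem \ref{theorem:higher-brackets}) together with $\star^{\mathrm C}=\star^{\mathrm H}$ (Theorem \ref{theorem:interpretation-star}) for \ref{cor1}$\Leftrightarrow$\ref{cor3}, the formal Diamond Lemma of Remark \ref{remarkformaldiamondlemma} for \ref{cor2}$\Leftrightarrow$\ref{cor4} and the link to \ref{cor1}, the same definition-chase for $\star^{\mathrm G}=\star^{\mathrm C}$, and Proposition \ref{proposition:formalideal} plus a formal rerun of Lemma \ref{lemma:equivalencereduction} for the second block. The only cosmetic difference is that you bridge \ref{core2} to \ref{core3} directly where the paper bridges \ref{core2} to \ref{core4}, but both rest on the identical Lemma \ref{lemma:equivalencereduction} argument.
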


\begin{proof}
Let us prove the first assertion. \ref{cor1} $\Leftrightarrow$ \ref{cor3} follows from Theorems \ref{theorem:linfinitytransfer}, \ref{theorem:interpretation-star} and \ref{theorem:higher-brackets}. To show \ref{cor1} $\Rightarrow$ \ref{cor2} let $\widetilde \varphi$ be a Maurer--Cartan element of $\mathbf p (Q, R) \hatotimes \mathfrak m$. It follows from Theorem \ref{theorem:higher-brackets} that all overlap ambiguities of $S$ are resolvable via rightmost reductions with respect to $\widehat R_{\phi + \widetilde \phi}$. This shows that $\widehat R_{\phi + \widetilde \phi}$ is a formal deformation of $R$, see Remark \ref{remarkformaldiamondlemma}. To show \ref{cor2} $\Rightarrow$ \ref{cor1} let $\widehat R_{\phi + \widetilde \phi}$ be a formal deformation of $R$. Then by definition each path in $Q$ is reduction-unique with respect to $\widehat R_{\phi + \widetilde \phi}$. In particular, we have \eqref{conditionmc} so \ref{cor1} follows from Theorem \ref{theorem:higher-brackets}. Finally, \ref{cor2} $\Leftrightarrow$ \ref{cor4} follows from the formal version of the Diamond Lemma, see again Remark \ref{remarkformaldiamondlemma}.

That $\star^{\mathrm C}_{\phi + \widetilde \phi} = \star^{\mathrm H}_{\phi + \widetilde \phi}$ was proved in Theorem \ref{theorem:interpretation-star}. To show that $\star^{\mathrm G}_{\phi + \widetilde \phi} = \star^{\mathrm C}_{\phi + \widetilde \phi}$ it is enough to show that they coincide for $\pi (u), \pi (v)$, where $u, v$ are irreducible paths. Assuming \ref{cor4} we have
\begin{align*}
\pi(u) \star^{\mathrm G}_{\phi + \widetilde \phi} \pi(v) = \Phi^{-1} (\Phi (\pi(u)) \Phi (\pi(v))) = \Phi^{-1} ([u] [v]) = \pi(u) \star^{\mathrm C}_{\phi + \widetilde \phi} \pi(v)
\end{align*} 
where the second identity follows from the definition of $\Phi$ (\ref{alignvarphi}) and the third identity follows because $[u] [v] = \Phi (\pi (u) \star^{\mathrm C}_{\phi + \widetilde \phi} \pi (v))$ since reductions (using $s \mapsto \varphi_s + \widetilde \varphi_s$) induce the identity on elements in the quotient $\Bbbk Q \llrr{t} / \widehat I_{\phi + \widetilde \phi}$.

Let us prove the last assertion. \ref{core1} $\Leftrightarrow$ \ref{core3} follows again from Theorems \ref{theorem:linfinitytransfer} and \ref{theorem:interpretation-star}. \ref{core3} $\Leftrightarrow$ \ref{core4} follows from Proposition \ref{proposition:formalideal} together with the fact that $\star^{\mathrm G}_{\phi + \widetilde \phi} = \star^{\mathrm C}_{\phi + \widetilde \phi}$. Finally \ref{core2} $\Leftrightarrow$ \ref{core4} can be proved exactly as the equivalence of \ref{equivred1} and \ref{equivred3} of Lemma \ref{lemma:equivalencereduction}.
\end{proof}

The equivalence between \ref{cor1} and \ref{cor2} in Theorem \ref{theorem:equivalenceformal} can be phrased as follows.

\begin{corollary}
\label{corollary:natural}
The L$_\infty[1]$ algebra $\mathbf p (Q, R)$ naturally controls the formal deformation theory of the reduction system $R$\index{reduction system!deformation}\index{deformation!of reduction system}.\index{L$_\infty$!algebra}\index{algebra!L$_\infty$}
\end{corollary}

We can now give the proof of Theorem \ref{theorem:deformationreduction}.

\begin{proof}[Proof of Theorem \ref{theorem:deformationreduction}]
The equivalence of \ref{theorem:deformationreduction1} and \ref{theorem:deformationreduction2} follows from Theorem \ref{theorem:linfinitytransfer} and Corollary \ref{corollary:natural}, as both the deformation theory of $A$ and the deformation theory of $R$ are controlled by $\mathbf p (Q, R)$. To prove the equivalence of \ref{theorem:deformationreduction2} and \ref{theorem:deformationreduction3}, by Theorem \ref{theorem:equivalenceformal} it only remains to show that any formal deformation $\widehat I$ of $I_\phi$ is equivalent to $\widehat I_{\phi + \widetilde \phi}$ for some Maurer--Cartan element $\widetilde \phi$ of $\mathbf p (Q, R) \otimes \mathfrak m$. We have an algebra isomorphism $(A \llrr{t}, \star^{\mathrm G}) \simeq \Bbbk Q \llrr{t} / \widehat I$ (see Definition \ref{definition:gutt}). But $(A \llrr{t}, \star^{\mathrm G})$ is a formal deformation of $A$ and by Corollary \ref{theorem-summary} gauge equivalent to $\star^{\mathrm C}_{\phi + \widetilde \phi} = \star^{\mathrm G}_{\phi + \widetilde \phi}$ for some Maurer--Cartan element $\widetilde \phi$. Then $\widehat I$ is equivalent to $\widehat I_{\phi + \widetilde \phi}$ by Proposition \ref{proposition:formalideal}.
\end{proof}

\begin{subappendices}

\section{Computing Hochschild cohomology in degree $2$}
\label{subsection:firstorder}
\index{Hochschild!cohomology $\HH^2$}

In \S\ref{subsection:deformationtheoryassociative} we mentioned that for any algebra $A$ the space of first-order deformations up to equivalence coincides with the second Hochschild cohomology group $\HH^2 (A, A)$, where the associative algebra structure is given by an element $\mu \in \Hom_{\Bbbk} (A^{\otimes_{\Bbbk} 2}, A)$. It may be useful to point out that as a consequence of Theorem \ref{theorem:linfinitytransfer} this still holds true if we identify deformations of the associative multiplication on $A = \Bbbk Q / I$ with deformations of a reduction system (cf.\ Theorem \ref{theorem:deformationreduction}).

\begin{corollary}
\label{corollary:firstorder}
Let $A = \Bbbk Q / I$ and let $R$ be any reduction system satisfying \textup{($\diamond$)} for $I$. Then $\HH^2 (A, A)$ is isomorphic to the space of first-order deformations of $R$ modulo equivalence.
\end{corollary}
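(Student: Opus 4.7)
The plan is to deduce the corollary directly from the L$_\infty[1]$ transfer set up in Theorem \ref{theorem:linfinitytransfer} together with Corollary \ref{corollary:natural}, which says that $\mathbf p(Q,R)$ controls the formal deformation theory of $R$. The strategy is to specialize the deformation functor of $\mathbf p(Q,R)$ to the Artinian ring $B = \Bbbk[\epsilon]/(\epsilon^2)$, identify the resulting set of first-order deformations up to equivalence with a cohomology group of the underlying complex of $\mathbf p(Q,R)$, and then transport this cohomology across the L$_\infty[1]$ quasi-isomorphism to $\mathbf h(A)$, where it becomes $\HH^2(A,A)$.

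First I would analyse the Maurer--Cartan equation modulo $\epsilon^2$. Writing a first-order element as $\widetilde\phi = \widetilde\phi_1 \epsilon \in \Hom(\Bbbk S, A) \otimes (\epsilon)/(\epsilon^2)$, all higher brackets $\langle \widetilde\phi,\dotsc,\widetilde\phi\rangle_k$ for $k\geq 2$ vanish modulo $\epsilon^2$, so the Maurer--Cartan equation reduces to the linear condition $\langle \widetilde\phi_1\rangle = 0$. By Corollary \ref{corollary:natural}, such $\widetilde\phi_1$ are precisely the first-order deformations of $R$; equivalently, these are $2$-cocycles in the cochain complex $(P^{\hdot}, \langle-\rangle)$ where $P^n = \Hom(\Bbbk S_n, A)$.

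Next I would check that the notion of equivalence in Definition \ref{definition:equivalenceformalreductionsystem} collapses at first order to the coboundary relation in $P^\hdot$. By Remark \ref{remark:equivalenceformalreductionsystem}, an equivalence between $\widehat R_{\phi + \widetilde\phi_1 \epsilon}$ and $\widehat R_{\phi + \widetilde\phi_1' \epsilon}$ is determined by $\psi = \psi_1 \epsilon \in \Hom(\Bbbk Q_1, A) \otimes (\epsilon)/(\epsilon^2) = P^1 \otimes (\epsilon)/(\epsilon^2)$, and expanding the defining identity modulo $\epsilon^2$ gives $\widetilde\phi_1' - \widetilde\phi_1 = \langle\psi_1\rangle$ after a short calculation using $\partial_2(1\otimes s\otimes 1) = \mathrm{split}_1(s - \phi_s)$ from Lemma \ref{lemma:rhosplit}. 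Hence first-order deformations of $R$ modulo equivalence are exactly $\H^2(P^\hdot)$.

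Finally, Theorem \ref{theorem:linfinitytransfer} provides an L$_\infty[1]$ quasi-isomorphism $(G^\hdot_1, G^\hdot_2, \dotsc) \colon \mathbf p(Q,R) \tosim \mathbf h(A)$ whose linear component $G^\hdot_1 = G^\hdot$ is a quasi-isomorphism of the underlying complexes. Passing to cohomology in degree corresponding to $P^2$ yields the isomorphism $\H^2(P^\hdot) \simeq \HH^2(A, A)$, finishing the proof. There is essentially no obstacle here, since all the heavy lifting was done in Theorem \ref{theorem:linfinitytransfer}; the only mildly subtle point is verifying that the intrinsic equivalence of reduction systems in Definition \ref{definition:equivalenceformalreductionsystem} linearises to the expected coboundary, but this is a direct computation using the explicit description of $\partial_2$ in Lemma \ref{lemma:rhosplit}.
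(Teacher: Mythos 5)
Your proposal is correct and follows essentially the same route as the paper: the paper deduces the corollary from Theorem \ref{theorem:linfinitytransfer} together with the equivalence of deformation problems, and the explicit linearisation you carry out (Maurer--Cartan mod $\epsilon^2$ becomes the cocycle condition, equivalence of formal reduction systems becomes the coboundary relation via Remark \ref{remark:equivalenceformalreductionsystem}) is exactly the computation the paper records immediately after the corollary in terms of $\star^{\mathrm C}_{\phi+\widetilde\phi t}$. The only cosmetic difference is that you phrase the cocycle/coboundary conditions via the differential $\langle-\rangle$ and $\partial_2$ directly, while the paper phrases them via associativity of the combinatorial star product modulo $t^2$; these agree by Theorem \ref{theorem:interpretation-star}.
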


In practice $\HH^2 (A, A)$ can thus be computed using only the combinatorial star product\index{star product!combinatorial}\index{combinatorial star product}, as follows. Let $\widetilde \phi \in \Hom (\Bbbk S, A) \simeq \Hom (\Bbbk S, \Bbbk \Irr_S)$. Then $\widetilde \phi$ is a $2$-cocycle if and only if for any $u v w \in S_3$ with $u v, v w \in S$ we have
\[
(\pi (u) \star \pi(v)) \star \pi(w) = \pi (u) \star (\pi (v) \star \pi(w)) \; \mathrm{mod} \ t^2
\]
where $\star = \star^{\mathrm C}_{\phi + \widetilde \phi t}$.

Moreover, two $2$-cocycles $\widetilde \phi, \widetilde \phi'$ are cohomologous, i.e.\ $\widetilde \phi' - \widetilde \phi = \langle \psi \rangle$ for some $\psi \in \Hom (\Bbbk Q_1, A) \simeq \Hom (\Bbbk Q_1, \Bbbk \Irr_S)$ where $\langle \blank \rangle = -\partial^1$ \eqref{shifted} is the differential of $\mathbf p (Q, R)$, if and only if $T \colon \Bbbk \Irr_S [t] / (t^2) \to \Bbbk \Irr_S [t] / (t^2)$ determined by $T (x) = x + \psi (x) t$ for $x \in Q_1$ satisfies
\begin{align*}
T (\phi (s)) + \widetilde \phi' (s) t = T (s_1) \star \dotsb \star T (s_m) \; \mathrm{mod} \ t^2
\end{align*}
for any $s \in S$ with $s = s_1 \dotsb s_m$ for $s_i \in Q_1$. Here we set
\begin{align}\label{align:tudetermined}
T (u) = T (u_1) \star \dotsb \star T (u_n) \; \mathrm{mod} \ t^2
\end{align}
for any irreducible path $u = u_1 \dotsb u_n$ with $u_i \in Q_1$. 
Here, note that by \eqref{shifted} we have
\begin{equation}
\label{align:partial1}
\begin{aligned}
\partial^1\psi(s) &= \sum_{i=1}^m s_1 \dotsb s_{i-1} \psi(s_i) s_{i+1} \dotsb s_m \\ &\quad{}- \sum_j\sum_{i=1}^{k_j} \lambda_j x_{j,1} \dotsb x_{j,i-1} \psi(x_{j,i}) x_{j,i+1} \dotsb x_{j,k_j}
\end{aligned}
\end{equation}
where we write $\phi(s) = \sum_j \lambda_j x_{j,1} \dotsb x_{j,k_j}$ for $\lambda_j \in \Bbbk$ and $x_{j,1} \dotsb x_{j,k_j} \in \Irr_S$ with $x_{j,i} \in Q_1$.

See Example \ref{example:weyl} and Lemma \ref{Lemma:hh24dimensional} for examples.
\end{subappendices}

\chapter{Deformations in the non-formal setting}
\label{section:nonformal}

In Chapter \ref{section:deformations-of-path-algebras} we proved that for any algebra $A = \Bbbk Q / I$ and any reduction system $R$ satisfying the condition ($\diamond$), there is an equivalence of {\it formal} deformation problems between deformations of $A$, deformations of $R$ and deformations of $I$ (Theorems \ref{theorem:deformationreduction} and \ref{theorem:equivalenceformal}), where a formal deformation could be given by a Maurer--Cartan element of the L$_\infty[1]$ algebra $\mathbf p (Q, R) \otimes \mathfrak m$. Here ``formal'' refers to the base of deformation, which could be any {\it complete local} Noetherian $\Bbbk$-algebra $(B, \mathfrak m)$.

In this section we will describe under which conditions these results hold in the non-formal setting, i.e.\ for $\mathbf p (Q, R)$ itself.

\section{Finiteness conditions}

The Maurer--Cartan equation\index{Maurer--Cartan!equation|textbf} of an L$_\infty[1]$ algebra involves an infinite sum which need not be well-defined in general (cf.\ Remark \ref{remark:adic}). To study the role of the Maurer--Cartan equation of $\mathbf p (Q, R)$ in the non-formal case we may thus consider the following increasingly restrictive finiteness conditions on elements $\widetilde \phi \in \Hom (\Bbbk S, A)$
\begin{enumerate}[leftmargin=*,label=(FC\arabic*),start=0]
\item \label{fc0} the Maurer--Cartan equation\index{Maurer--Cartan!equation} for $\widetilde \phi$ is well-defined 
\item \label{fc1} the combinatorial star product\index{star product!combinatorial}\index{combinatorial star product} $\star^{\mathrm C}_{\phi + \widetilde \phi}$ is well-defined
\item \label{fc2} the reduction system $R_{\phi + \widetilde \phi}$ is reduction-finite.
\end{enumerate}

\begin{remark}
\ref{fc0} is the minimal requirement for considering questions about the Maurer--Cartan equation\index{Maurer--Cartan!equation}, or about Maurer--Cartan elements, in the non-formal setting. Similarly, \ref{fc1} is the minimal requirement to consider relations to actual deformations of algebras. However, it is not immediately clear if the sets of elements satisfying \ref{fc0}--\ref{fc2} have any nice geometric properties. For instance, Example \ref{example:nf} shows that the sets of elements satisfying \ref{fc1} and \ref{fc2} are not vector subspaces of $\Hom (\Bbbk S, A)$. In Chapter \ref{section:pbw} we will introduce certain degree conditions, which will allow us to identify vector subspaces of $\Hom (\Bbbk S, A)$ satisfying \ref{fc2}. This allows us to obtain affine varieties of ``actual'' deformations, similar to the variety of associative structures on a finite-dimensional vector space.
\end{remark}

\begin{lemma}
\textup{\ref{fc2}} $\Rightarrow$ \textup{\ref{fc1}} $\Rightarrow$ \textup{\ref{fc0}}.
\end{lemma}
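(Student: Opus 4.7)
For the first implication \ref{fc2} $\Rightarrow$ \ref{fc1}, I would simply unfold the two definitions. If the reduction system $R_{\phi + \widetilde \phi} = \{(s, \phi_s + \widetilde \phi_s)\}_{s \in S}$ is reduction finite, then in particular the iterated right-most reductions $\mathrm{red}_{\phi + \widetilde \phi}$ terminate on every path in $Q$. Since $\mathrm{red}_{\phi + z \widetilde \phi}$ performs exactly the same sequence of right-most reductions as $\mathrm{red}_{\phi + \widetilde \phi}$ while merely tracking each $\widetilde \phi$-replacement with a formal power of $z$, the iterations $\mathrm{red}^{n}_{\phi + z \widetilde \phi}$ stabilize in the same number of steps. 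Consequently $\mathrm{red}^{(\infty)}_{\phi + z \widetilde \phi}(\sigma(a)\sigma(b))$ is a \emph{polynomial} in $z$ with coefficients in $\Bbbk \Irr_S$ for every $a, b \in A$, and evaluating at $z = 1$ yields a well-defined element of $A$. This is precisely the content of $\star^{\mathrm C}_{\phi + \widetilde \phi}$ being well defined.

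For the second implication \ref{fc1} $\Rightarrow$ \ref{fc0}, the plan is to use Lemma \ref{lemma:higher-brackets}, which expresses each higher bracket of $\mathbf p(Q, R)$ in terms of the combinatorial $k$-star products $\star^{\mathrm C, k}_{\phi + \widetilde \phi}$. Concretely, for any overlap $uvw \in S_3$ with $s = vw$ and $s' = uv$, Lemma \ref{lemma:higher-brackets} gives
\[
\tfrac{1}{k!}\langle \widetilde \varphi, \dotsc, \widetilde \varphi \rangle_k(uvw) = \pi(u) \star^k \pi(\varphi_s) + \pi(u) \star^{k-1} \pi(\widetilde \varphi_s) - \pi(\varphi_{s'}) \star^k \pi(w) - \pi(\widetilde \varphi_{s'}) \star^{k-1} \pi(w).
\]
Well-definedness of $\star^{\mathrm C}_{\phi + \widetilde \phi}$ means that $\mathrm{red}^{(\infty)}_{\phi + z\widetilde \phi}(\sigma(a)\sigma(b))$ is a $z$-polynomial of bounded degree for every $a,b$, which forces the coefficients $a \star^{\mathrm C, k}_{\phi + \widetilde \phi} b$ to vanish for all $k$ sufficiently large (depending on $a, b$). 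Applied to the four terms on the right-hand side above, it follows that each individual bracket $\langle \widetilde \phi^{\otimes k} \rangle$ is a well-defined element of $\Hom(\Bbbk S_3, A)$ and that, for any fixed $uvw$, the value $\langle \widetilde \phi^{\otimes k} \rangle(uvw)$ vanishes for all $k$ sufficiently large. Hence the Maurer--Cartan sum $\sum_{k \geq 1} \tfrac{1}{k!} \langle \widetilde \phi^{\otimes k} \rangle$ is pointwise a finite sum, and defines an element of $\Hom(\Bbbk S_3, A)$; this is exactly the well-definedness of the Maurer--Cartan equation for $\mathbf p(Q, R)$.

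Both implications are direct consequences of the combinatorial machinery developed in \S\ref{section:deformations-of-path-algebras}. The only point requiring a small amount of care is that Lemma \ref{lemma:higher-brackets} was originally proved for $\widetilde \phi \in \Hom(\Bbbk S, A) \hatotimes \mathfrak m$, but its proof is entirely combinatorial and the identity is polynomial in $\widetilde \phi$, so it remains valid in the non-formal setting whenever both sides are well defined — which under \ref{fc1} they are. There is no substantial obstacle.
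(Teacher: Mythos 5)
Your proof is correct and follows essentially the same route as the paper's: for \ref{fc2} $\Rightarrow$ \ref{fc1} both arguments reduce to the observation that reduction-finiteness of $R_{\phi+\widetilde\phi}$ bounds the number of times $\widetilde\phi$ can be used, so that $a \star^{\mathrm C,k}_{\phi+\widetilde\phi} b = 0$ for $k \gg 0$ and the sum over $k$ is finite; for \ref{fc1} $\Rightarrow$ \ref{fc0} both invoke Lemma \ref{lemma:higher-brackets} (noting, as you do, that its proof carries over verbatim to the non-formal setting) to express the higher brackets through the $\star^{\mathrm C,k}$ and conclude that the Maurer--Cartan sum is pointwise finite. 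No discrepancies worth flagging.
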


\begin{proof}
\textup{\ref{fc2}} $\Rightarrow$ \textup{\ref{fc1}}. Let us denote
\[
a \star^{\mathrm C, k}_{\phi + \widetilde \phi} b = \bigl( a \star^{\mathrm C, k}_{\phi + \widetilde \phi t} b \bigr) \bigr\rvert_{t=1}
\]
where the right-hand side is defined in \S\ref{subsection:combinatorialstarproduct}. Since $R_{\phi + \widetilde \phi}$ is reduction-finite, we have that for any $a, b \in A$ the element $a \star_{\phi + \widetilde \phi}^{\mathrm C, k} b = 0$ for $k \gg 0$. It follows that $a \star_{\phi + \widetilde \phi}^{\mathrm C} b = \sum_{k \geq 0} a \star_{\phi + \widetilde \phi}^{\mathrm C, k} b$ is a finite sum and thus well-defined. 

\noindent \textup{\ref{fc1}} $\Rightarrow$ \textup{\ref{fc0}}. Note that Lemma \ref{lemma:higher-brackets} still holds, with the same proof, for the non-formal element $\widetilde \phi \in \Hom (\Bbbk S, A)$. By assumption $a \star^{\mathrm C, k}_{\phi + \widetilde \phi} b = 0$ for $k \gg 0$. Then Lemma \ref{lemma:higher-brackets} implies that for any $uvw \in S_3$ we have that $\frac{1}{k!}\langle \widetilde \varphi, \dotsc, \widetilde \varphi \rangle_k(uvw) = 0$ for $k \gg 0$. 
\end{proof}

Note that in Definition \ref{definition:equivalencereduction} we denoted by $\mathrm{MC} \subset \Hom (\Bbbk S, A)$ the subset of Maurer--Cartan elements satisfying \ref{fc2} and showed that the equivalence of the associated reduction systems\index{reduction system!equivalence} could be viewed as a groupoid $G \rightrightarrows \mathrm{MC}$. The following theorem is the non-formal analogue of Theorem \ref{theorem:equivalenceformal}.

\begin{theorem}
\label{theorem:nonformal}
Let $\widetilde \phi \in \Hom (\Bbbk S, A)$ satisfy \textup{\ref{fc2}}. Then the following are equivalent
\begin{enumerate}
\item \label{nf1} $\widetilde \phi$ satisfies the Maurer--Cartan equation\index{Maurer--Cartan!equation}
\item \label{nf2} all overlaps\index{overlap ambiguity}\index{ambiguity!overlap} of $R_{\phi + \widetilde \phi}$ resolve via rightmost reductions
\item \label{nf3} $(A, \star^{\mathrm C}_{\phi + \widetilde \phi})$ is associative
\item \label{nf4} the image of the set $\Irr_S$ under the projection $\Bbbk Q \to \Bbbk Q / \langle s - \phi_s - \widetilde \phi_s \rangle_{s \in S}$ forms a $\Bbbk$-basis.
\end{enumerate}
If $\widetilde \phi$ satisfies only \textup{\ref{fc1}}, then the above equivalences hold between \textup{\ref{nf1}}--\textup{\ref{nf3}}, and if $\widetilde \phi$ satisfies only \textup{\ref{fc0}}, then the equivalences hold between \textup{\ref{nf1}} and \textup{\ref{nf2}}.

Moreover, if $\widetilde \phi$ and $\widetilde \phi'$ are two Maurer--Cartan elements satisfying \textup{\ref{fc2}}, then the following are equivalent
\begin{enumerate}
\item \label{nfmc1} $\widetilde \phi$ and $\widetilde \phi'$ lie in the same orbit of the groupoid $G \rightrightarrows \mathrm{MC}$
\item \label{nfmc2} $R_{\phi + \widetilde \phi}$ and $R_{\phi + \widetilde \phi'}$ are equivalent
\item \label{nfmc3} $(A, \star^{\mathrm C}_{\phi + \widetilde \phi})$ and $(A, \star^{\mathrm C}_{\phi + \widetilde \phi'})$ are isomorphic.
\end{enumerate}
\end{theorem}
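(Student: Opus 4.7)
The plan is to transport the proof of Theorem \ref{theorem:equivalenceformal} to the non-formal setting, being careful at each step about which of the finiteness conditions \ref{fc0}--\ref{fc2} is needed. The key observation is that Lemma \ref{lemma:higher-brackets} (rewriting $\frac{1}{k!}\langle\widetilde\phi,\ldots,\widetilde\phi\rangle_k$ evaluated on $uvw \in S_3$ in terms of $\star^{\mathrm{C},k}_{\phi+\widetilde\phi}$) and its consequence Theorem \ref{theorem:higher-brackets} are proved purely combinatorially and never invoke $\mathfrak{m}$-adic completeness. They therefore apply verbatim to any $\widetilde\phi \in \Hom(\Bbbk S, A)$, as soon as the sums appearing in them are known to be finite.

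I would first establish \ref{nf1} $\Leftrightarrow$ \ref{nf2} under the weakest hypothesis \ref{fc0}. By definition \ref{fc0} says that $\sum_{k\geq 1}\frac{1}{k!}\langle\widetilde\phi,\ldots,\widetilde\phi\rangle_k(w)$ is a finite sum for every $w\in S_3$; Lemma \ref{lemma:higher-brackets} then telescopes it into the associator
\[
\pi(u)\star(\pi(v)\star\pi(w))-(\pi(u)\star\pi(v))\star\pi(w)
\]
with $\star = \star^{\mathrm{C}}_{\phi+\widetilde\phi}$ (the relevant star-products on these specific inputs are automatically finite sums whenever \ref{fc0} holds). Its vanishing for all $uvw\in S_3$ is precisely resolvability of overlaps by right-most reductions, giving \ref{nf1} $\Leftrightarrow$ \ref{nf2}. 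Under the stronger hypothesis \ref{fc1} the operation $\star^{\mathrm{C}}_{\phi+\widetilde\phi}$ is globally defined, so \ref{nf3} makes sense and the implication \ref{nf3} $\Rightarrow$ \ref{nf2} is immediate. The converse \ref{nf2} $\Rightarrow$ \ref{nf3} I would prove by induction on the Chouhy--Solotar order $\prec$ used in the proof of Theorem \ref{theorem:interpretation-star}, using Lemma \ref{lemma:star} to reduce the associativity of a general triple $(\pi(u),\pi(v),\pi(w))$ to the overlap case $uvw\in S_3$. Finally, under \ref{fc2} the reduction system $R_{\phi+\widetilde\phi}$ is reduction finite, so Bergman's Diamond Lemma (Theorem \ref{lemma:basis}) applied to $R_{\phi+\widetilde\phi}$ yields the equivalence \ref{nf2} $\Leftrightarrow$ \ref{nf4}.

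For the second block, \ref{nfmc1} $\Leftrightarrow$ \ref{nfmc2} is the definition of the groupoid $G\rightrightarrows\mathrm{MC}$ in Definition \ref{definition:equivalencereduction}. For \ref{nfmc2} $\Leftrightarrow$ \ref{nfmc3}, the identification of $(A,\star^{\mathrm{C}}_{\phi+\widetilde\phi})$ with $\Bbbk Q/\langle s-\phi_s-\widetilde\phi_s\rangle_{s\in S}$, which holds under \ref{fc2} by \ref{nf4}, reduces the claim to Lemma \ref{lemma:equivalencereduction} \ref{equivred1} $\Leftrightarrow$ \ref{equivred3}. This identification is the non-formal analogue of the chain of equalities $\star^{\mathrm G}_{\phi+\widetilde\phi} = \star^{\mathrm C}_{\phi+\widetilde\phi}$ established in Theorem \ref{theorem:equivalenceformal}, whose proof carries over verbatim once the basis property \ref{nf4} is available.

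The main obstacle will be the step \ref{nf2} $\Rightarrow$ \ref{nf3} under only \ref{fc1}: one must propagate associativity from the base case $S_3$ to arbitrary irreducible triples without the cushion of $t$-adic completeness. The cleanest path is the order-theoretic induction sketched above, which mimics Bergman's original proof of the Diamond Lemma but applied to the deformed reduction $s\mapsto\phi_s+\widetilde\phi_s$; here \ref{fc1} plays exactly the role that finiteness of reductions plays in the classical case, ensuring that every intermediate expression in the induction is a legitimate finite sum in $A$.
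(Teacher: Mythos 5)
Your proposal is correct and, for most implications, coincides with the paper's proof: \ref{nf1} $\Leftrightarrow$ \ref{nf2} via the non-formal validity of Lemma \ref{lemma:higher-brackets} and Theorem \ref{theorem:higher-brackets}, \ref{nf2} $\Leftrightarrow$ \ref{nf4} via the Diamond Lemma \ref{lemma:basis}, \ref{nf3} $\Rightarrow$ \ref{nf2} immediate, and the second block reduced to Lemma \ref{lemma:equivalencereduction} after identifying $(A, \star^{\mathrm C}_{\phi+\widetilde\phi})$ with the quotient algebra. The one place you genuinely diverge is the implication giving associativity of $\star^{\mathrm C}_{\phi+\widetilde\phi}$ under \ref{fc1}: the paper deduces \ref{nf1} $\Rightarrow$ \ref{nf3} by observing that Theorem \ref{theorem:interpretation-star} (the identity $\star^{\mathrm C}_{\phi+\widetilde\phi}=\star^{\mathrm H}_{\phi+\widetilde\phi}$) holds non-formally with the same proof, so associativity falls out of the L$_\infty$ transfer formalism (cf.\ Remark \ref{remark:actualtransfer}), whereas you propose to prove \ref{nf2} $\Rightarrow$ \ref{nf3} directly by a Bergman-style double induction on the Chouhy--Solotar order and the number of uses of $\widetilde\phi$. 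Your route is viable and more self-contained on the combinatorial side, but it amounts to re-deriving the content of Theorem \ref{theorem:interpretation-star} and Lemma \ref{lemma:star} in the deformed setting --- the paper's appeal to the already-established identity is shorter and avoids having to redo that induction when $R_{\phi+\widetilde\phi}$ is not reduction finite. One small imprecision to fix: under \ref{fc0} alone the individual products such as $\pi(u)\star\pi(\phi_s)$ appearing in Lemma \ref{lemma:higher-brackets} need not be finite sums; what \ref{fc0} guarantees is only that the \emph{telescoped} combination computing $\tfrac1{k!}\langle\widetilde\phi,\dotsc,\widetilde\phi\rangle_k(uvw)$ vanishes for $k\gg 0$, so the equivalence \ref{nf1} $\Leftrightarrow$ \ref{nf2} should be phrased in terms of that combination (equivalently, stabilization of the right-most reduction of $uvw$) rather than in terms of each star product separately.
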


\begin{proof}
Let $\widetilde \phi$ satisfy \textup{\ref{fc2}}. The equivalence of \ref{nf2} and \ref{nf4} follows from the Diamond Lemma \ref{lemma:basis}. The equivalence of \ref{nf1} and \ref{nf2} follows from Theorem \ref{theorem:higher-brackets}, which also holds in the non-formal case, with the same proof. That \ref{nf3} implies \ref{nf2} is clear. Finally, Theorem \ref{theorem:interpretation-star} holds in the non-formal case, again with the same proof, showing that \ref{nf1} implies \ref{nf3}. The equivalences of \ref{nfmc1}--\ref{nfmc3} for two Maurer--Cartan elements $\widetilde \phi, \widetilde \phi'$ are the content of Lemma \ref{lemma:equivalencereduction}.

The same argument proves the claimed statements in the cases when $\widetilde \phi$ satisfies only \ref{fc1} or \ref{fc0}.
\end{proof}

\begin{remark}
Note that the Diamond Lemma \ref{lemma:basis} is stated under the condition \ref{fc2} of reduction-finiteness, and Theorem \ref{theorem:nonformal} gives a generalization of the Diamond Lemma\index{Diamond Lemma!generalization} under the weaker finiteness conditions \ref{fc0} and \ref{fc1}.
\end{remark}

We now give an example to show that the conditions \ref{fc0}--\ref{fc2} are not equivalent in general, which also illustrates why the other equivalences in Theorem \ref{theorem:nonformal} may fail in the cases \ref{fc0} and \ref{fc1}.

\begin{figure}
\centering
\begin{tikzpicture}[x=1em,y=1em,line width=.4pt]
\node at (7,0) {$\supset$};
\node at (19,0) {$\supset$};
\begin{scope}[xshift=24em]
\draw[fill=black] (0,0) circle(.2ex);
\draw[line cap=round] (-2,-3) -- (-3,-3) -- (-3,3) -- (3,3) -- (3,-3) -- (0,-3) -- (0,3) (2,3) -- (2,4) -- (-2,2) -- (-2,-4) -- (0,-3);
\node at (0,-5) {(FC2)};
\end{scope}
\begin{scope}[xshift=13em]
\draw[fill=black] (0,0) circle(.2ex);
\draw[line cap=round] (-2,-3) -- (-3,-3) -- (-3,-1) (-2,0) -- (-3,0) -- (-3,3) -- (3,3) -- (3,-3) -- (0,-3) -- (0,-1) (0,-3) -- (-2,-4) -- (-2,2) -- (2,4) -- (2,3) (-3,0) -- (-5,-1) -- (1,-1) -- (5,1) -- (3,1) (-2,-1) -- (0,0) -- (3,0) (0,0) -- (0,3);
\node at (0,-5) {(FC1)};
\end{scope}
\begin{scope}[xshift=0em]
\node[font=\scriptsize] at (2.4,4.7) {$\Hom (\Bbbk S, \Bbbk \Irr_S)$};
\draw[fill=black] (0,0) circle(.2ex);
\node[font=\scriptsize] at (0,-.6) {$0$};
\path[dash pattern=on 0pt off 1.3pt, line width=.6pt, line cap=round] (1,-4) edge (5,-2) (5,-2) edge (5,4) (5,4) edge (-1,4) (-1,4) edge (-5,2) (-5,2) edge (-5,-4) (-5,-4) edge (1,-4) (1,-4) edge (1,2) (1,2) edge (-5,2) edge (5,4) (-1,4) edge (-1,2.2) (-1,-2) edge (-1,1.8) (-1,-2) edge (-5,-4) (-1,-2) edge (.8,-2) (5,-2) edge (1.2,-2);
\node at (0,-5) {(FC0)};
\end{scope}
\end{tikzpicture}
\caption{Finiteness conditions for elements of $\Hom (\Bbbk S, \Bbbk \Irr_S)$ for Example \ref{example:nf}}
\label{figure:fc}
\end{figure}

\begin{example}
\label{example:nf}
Let $Q$ be the following quiver
\[
\begin{tikzpicture}[baseline=-.25em,x=3em,y=1em]
\draw[line width=1pt, fill=black] (1,0) circle(0.2ex);
\draw[line width=1pt, fill=black] (2,0) circle(0.2ex);
\draw[line width=1pt, fill=black] (3,0) circle(0.2ex);
\draw[line width=1pt, fill=black] (4,0) circle(0.2ex);
\node[shape=circle, scale=0.7](N1) at (1,0) {};
\node[shape=circle, scale=0.7](N2) at (2,0) {};
\node[shape=circle, scale=0.7](N3) at (3,0) {};
\node[shape=circle, scale=0.7](N4) at (4,0) {};
\path[->, line width=.4pt]
(N1) edge node[above=-.3ex, font=\scriptsize] {$x$} (N2)
(N2) edge[transform canvas={yshift=.6ex}]  node[above=-.3ex, font=\scriptsize] {$y_1, y_2$}  (N3)
(N2) edge[transform canvas={yshift=-.2ex}] (N3)
(N3) edge node[above=-.3ex, font=\scriptsize] {$z$} (N4)
;
\path[->, line width=.4pt, out=-20, in=200]
(N2) edge[transform canvas={yshift=-.4ex}] node[below=-.3ex, font=\scriptsize] {$w$} (N4)
;
\end{tikzpicture}
\]
let $S = \{ x y_1, y_2 z \}$ and let $\phi$ be the zero element of $\Hom (\Bbbk S, \Bbbk \Irr_S)$ whose associated reduction system is $R = R_{\mathrm{mon}} = \{ (x y_1, 0), (y_2 z, 0) \}$. Note that $A_{\mathrm{mon}} = \Bbbk Q / \langle x y_1, y_2 z \rangle$ is $12$-dimensional.

Each $\widetilde \phi \in \Hom (\Bbbk S, \Bbbk \Irr_S)$ now gives a reduction system
\begin{flalign*}
&& R_{\phi + \widetilde \phi} &= \{ (x y_1, \lambda x y_2), (y_2 z, \mu y_1 z + \nu w) \} && \mathllap{\lambda, \mu, \nu \in \Bbbk}
\end{flalign*}
so that $\Hom (\Bbbk S, \Bbbk \Irr_S) \simeq \mathbb A^3$ and performing reductions with respect to $R_{\phi + \widetilde \phi}$ gives
\begin{multline}
\label{xyz}
x y_1 z \mapsto \lambda x y_2 z \mapsto \lambda \mu x y_1 z + \lambda \nu x w \\ \mapsto \lambda^2 \mu x y_2 z + \lambda \nu x w \mapsto \lambda^2 \mu^2 x y_1 z + \lambda \nu (1 + \lambda \mu) x w \mapsto \dotsb
\end{multline}
and we have
\begin{itemize}
\item $R_{\phi + \widetilde \phi}$ is reduction-finite if and only if $\lambda$ or $\mu$ vanish, so that the set of elements satisfying \ref{fc2} is cut out by $\lambda \mu = 0$
\item when $\nu = 0$, then $\star_{\phi + \widetilde \phi}^{\mathrm C}$ is always well-defined, even when $\lambda, \mu \neq 0$ and the set of elements satisfying \ref{fc1} is cut out by $\lambda \mu \nu = 0$
\item since $S_3 = \emptyset$ the Maurer--Cartan equation vacuously holds for any element in $\Hom (\Bbbk S, \Bbbk \Irr_S)$, so the set of elements satisfying \ref{fc0} is all of $\mathbb A^3$. 
\end{itemize}
See Fig.~\ref{figure:fc} for an illustration. In particular, this example shows that in general \ref{fc0} $\not\Rightarrow$ \ref{fc1} $\not\Rightarrow$ \ref{fc2}.

Moreover, when $\lambda \mu \neq 0$ and $\nu = 0$, then $(A, \star_{\phi + \widetilde \phi})$ is {\it not} isomorphic to the quotient $\Bbbk Q / I_{\phi + \widetilde \phi}$, where $I_{\phi + \widetilde \phi} = \langle x y_1 - \lambda x y_2, y_2 z - \mu y_1 z \rangle$. Indeed, $\Bbbk Q / I_{\phi + \widetilde \phi}$ is $13$-dimensional, showing that under condition \ref{fc1}, the condition \ref{nf4} in Theorem \ref{theorem:nonformal} is not equivalent to \ref{nf1}--\ref{nf3}.

Similarly, when $\lambda \mu \nu \neq 0$, then $\star$ is not defined on all of $A$, so it does not make sense to consider \ref{nf3} of Theorem \ref{theorem:nonformal} and, moreover, the image of irreducible paths does not form a basis of $\Bbbk Q / I_{\phi + \widetilde \phi}$, so that also \ref{nf4} fails. That is, under condition \ref{fc0}, \ref{nf3} and \ref{nf4} of Theorem \ref{theorem:nonformal} are not equivalent to \ref{nf1}--\ref{nf2}.
\end{example}

In Example \ref{example:nf} the sets of elements satisfying \ref{fc0}--\ref{fc2} were closed. This suggests the following question.

\begin{question}
\label{question}
For a finite quiver $Q$ and any set $S \subset Q_{\geq 2}$ such that $\dim \Hom (\Bbbk S, \Bbbk \Irr_S) = N < \infty$, are the subsets of elements in $\Hom (\Bbbk S, \Bbbk \Irr_S) \simeq \mathbb A^N$ satisfying \textup{\ref{fc0}}--\textup{\ref{fc2}} always closed? In particular, is reduction-finiteness \textup{\ref{fc2}} a closed condition?
\end{question}

\section{The Diamond Lemma from a deformation-theoretic viewpoint}
\label{subsection:proofdiamond}
\index{Diamond Lemma}

In the non-formal context, Theorem \ref{theorem:nonformal} showed that reduction-finiteness ensures that we have an equivalence between actual deformations of reduction systems, algebras and ideals. From this deformation-theoretic point of view, the Diamond Lemma\index{Diamond Lemma} \ref{lemma:basis} can be understood as follows.

Given an algebra $A = \Bbbk Q / I$, a reduction system satisfying the condition ($\diamond$) for $I$ is given by a set of pairs $R = \{ (s, \varphi_s) \}$. One may then consider the reduction system $R_{\mathrm{mon}} = \{ (s, 0) \mid (s, \varphi_s) \in R \}$ and the associated monomial algebra $A_{\mathrm{mon}} = \Bbbk Q / \langle s \rangle_{s \in S}$, where $S = \{ s \mid (s, \varphi_s) \in R \}$. As the irreducible paths only depend on the set $S$, the notion of irreducible paths is the same for $R$ and $R_{\mathrm{mon}}$. Indeed, $R$ is determined by $S$ and a function $\varphi \in \Hom (\Bbbk S, A_{\mathrm{mon}})$ (cf.\ Remark \ref{remark:reduction}). Clearly, the irreducible paths form a $\Bbbk$-basis of $A_{\mathrm{mon}}$ (cf.\ Example \ref{example:monomialreduction}) and $R_{\mathrm{mon}}$ clearly satisfies ($\diamond$) for the ideal $\langle s \rangle_{s \in S}$.

We now have the following deformation-theoretic interpretation:

\begin{none*}
\noindent\begin{minipage}[t]{.48\textwidth}
{\it Diamond Lemma}\index{Diamond Lemma}
\begin{enumerate}
\item all overlap ambiguities\index{overlap ambiguity}\index{ambiguity!overlap} of $R$ are resolvable
\item $R$ satisfies ($\diamond$) for $I$
\item the image of the set $\Irr_S$ of irreducible paths under the projection $\Bbbk Q \toarg{\pi} A$ is a $\Bbbk$-basis of $A$.
\end{enumerate}
\end{minipage}%
\hspace{.04\textwidth}%
\begin{minipage}[t]{.48\textwidth}
{\it Deformation-theoretic viewpoint}
\begin{enumerate}
\item \label{deformationtheoretic1} $\varphi$ satisfies the Maurer--Car\-tan equation of $\mathbf p (Q, R_{\mathrm{mon}})$
\item $R$ is a deformation of $R_{\mathrm{mon}}$
\item $A$ is a (flat) deformation of $A_{\mathrm{mon}}$.
\end{enumerate}
\end{minipage}
\end{none*}

\vspace{\parskip}
Here \ref{deformationtheoretic1} of the deformation-theoretic viewpoint says that if $R = \{ (s, \varphi_s) \}$ satisfies ($\diamond$), then $\varphi$ can be viewed as a Maurer--Cartan element of the L$_\infty[1]$ algebra $\mathbf p (Q, R_{\mathrm{mon}})$. 

\begin{remark}
After posting our preprint to the arXiv, the L$_\infty$ algebra\index{L$_\infty$!algebra}\index{algebra!L$_\infty$} structure for the monomial case also appeared in independent work by M.\hair J.~Redondo and F.~Rossi Bertone \cite{redondorossibertone} and a similar ``homotopical'' interpretation for the Diamond Lemma\index{Diamond Lemma} was given by V.~Dotsenko and P.~Tamaroff \cite{dotsenkotamaroff}. 
\end{remark}

\subsection{Maurer--Cartan twisting}
\index{Maurer--Cartan!twisting}

Since the map $\phi$ is a Maurer--Cartan element of $\mathbf p (Q, R_{\mathrm{mon}})$, one may also obtain the L$_\infty[1]$ algebra\index{L$_\infty$!algebra}\index{algebra!L$_\infty$} $\mathbf p (Q, R)$ from the L$_\infty[1]$ algebra $\mathbf p (Q, R_{\mathrm{mon}})$ by {\it Maurer--Cartan twisting}\index{Maurer--Cartan!twisting} (see for example \cite[Thm.~2.6]{chuanglazarev}). This process may be given by the following explicit formulae: denoting the $n$-ary bracket of $\mathbf p (Q, R_{\mathrm{mon}})$ by $\langle \blank , \dotsc {,} \blank \rangle_{\mathrm{mon}}$, the $n$-ary bracket $\langle \blank, \dotsc {,} \blank \rangle$ of $\mathbf p (Q, R)$ may be given by
\[
\langle \psi_1, \dotsc, \psi_n \rangle = \langle \psi_1, \dotsc, \psi_n \rangle_{\mathrm{mon}} + \sum_{m \geq 1} \frac{1}{m!} \langle \underbrace{\phi, \dotsc, \phi}_m, \psi_1, \dotsc, \psi_n \rangle_{\mathrm{mon}}.
\]

This is conceptually a neat way to understand the higher brackets of $\mathbf p (Q, R)$. However, the proofs of the main results in Chapter \ref{section:deformations-of-path-algebras} can not really be shortened by adopting this point of view. Moreover, in practice one usually deals with algebras which are not monomial, and the constructions in Chapters \ref{section:resolution}--\ref{section:deformations-of-path-algebras} are presented in the general, not necessarily monomial case.

\chapter[Varieties of reduction systems and PBW deformations]{Varieties of reduction systems and Poincaré--Birkhoff--Witt \\ deformations}
\label{section:pbw}

In this section we define certain subspaces $\Hom (\Bbbk S, A)_\prec$ of $\Hom (\Bbbk S, A)$ on which the Maurer--Cartan equation\index{Maurer--Cartan!equation} is well-defined and cuts out an algebraic variety of reduction systems, which can be viewed as an analogue of the varieties of finite-dimensional algebras (see \S\ref{subsubsection:variety}). Here $\Hom (\Bbbk S, A)_\prec$ is the subspace of elements in $\Hom (\Bbbk S, A)$ which satisfy a certain degree condition (\ref{degreecondition3}) to be defined in Definition \ref{definition:prec} below. The elements in $\Hom (\Bbbk S, A)_\prec$ satisfy the condition \ref{fc2} of Chapter \ref{section:nonformal}.

This geometric point of view can now be used to study actual deformations of infinite-dimensional algebras, which we also relate to PBW (Poincaré--Birkhoff--Witt) deformations of graded algebras (\S\ref{subsection:PBWdeformations}). In fact, the Maurer--Cartan equation\index{Maurer--Cartan!equation} of $\mathbf p (Q, R)$ can be viewed as a generalization of the classical criterion for PBW deformations of $N$-Koszul algebras (see Proposition \ref{proposition:pbw}).

The geometric formulation of deformations via varieties of reduction systems also provides a local geometric picture of the deformations of infinite-dimensional algebras, which can also be understood as a way to obtain algebraizations of certain formal deformations (see \S\ref{subsection:algebraizations}). 

\section{Algebraic varieties of reduction systems}
\label{subsection:varieties}
\index{reduction system!variety of}

\index{variety!of reduction systems}\index{admissible order}
Let $A = \Bbbk Q / I$ and let $R = \{ (s, \phi_s) \mid s \in S \}$ be a reduction system satisfying ($\diamond$) for $I$ obtained from a noncommutative Gröbner basis\index{Gröbner basis!noncommutative} with respect to some admissible order $\prec$ on $Q_\ldot$ so that $\langle S \rangle = \langle \tip_\prec (I) \rangle$ (see \S\ref{subsubsection:reduction}). Throughout this subsection, we fix this order $\prec$. 

\begin{definition}
\label{definition:prec}
For any $\widetilde \phi \in \Hom (\Bbbk S, A)$ we introduce the following degree condition:
\begin{flalign}
\label{degreecondition3}
\tag{$\prec$}
&& \widetilde \varphi_s = \sigma \widetilde \varphi (s) \in \Bbbk Q_{\prec s} && \mathllap{\text{for any $s \in S$}}
\end{flalign}
where $\Bbbk Q_{\prec s}$ is the $\Bbbk$-linear span of all paths which are ``smaller'' than $s$ with respect to the order $\prec$.  (Here $\sigma \colon A \to \Bbbk \Irr_S \subset \Bbbk Q$ is the $\Bbbk$-linear map defined in \eqref{sigma}.) Denote by $\Hom (\Bbbk S, A)_\prec \subset \Hom (\Bbbk S, A)$ the subspace of elements $\widetilde \varphi$ satisfying (\ref{degreecondition3}).
\end{definition}

Under the assumption that $\dim \Hom (\Bbbk S, A)_\prec < \infty$, Green--Hille--Schroll \cite{greenhilleschroll} introduced the following sets
\begin{align*}
\mathrm{Alg}_{\prec} &= \bigl\{ \Bbbk Q / J \bigm| \text{$J$ ideal of $\Bbbk Q$ and $\langle \tip_\prec (J) \rangle = \langle \tip_\prec (I) \rangle$} \bigr\} \\
V_{\prec} &= \bigl\{ \widetilde \varphi \in \Hom (\Bbbk S, A)_\prec \bigm| \text{$R_{\phi + \widetilde \phi}$ satisfies the condition ($\diamond$)} \bigr\}
\end{align*}
and proved the following result.

\begin{theorem}[{\cite[Thm.~4.3 \& Thm.~4.4]{greenhilleschroll}}]
\mbox{}
\begin{enumerate}
\item There is a one-to-one correspondence between the sets $V_{\prec}$ and $\mathrm{Alg}_\prec$. 
\item $V_{\prec}$ admits the structure of an affine algebraic variety.
\end{enumerate}
\end{theorem}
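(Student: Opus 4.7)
The plan is to identify $V_{\prec}$ with the set of Maurer--Cartan elements of $\mathbf{p}(Q, R)$ lying in the finite-dimensional subspace $\Hom(\Bbbk S, A)_{\prec}$, and then to cut this locus out by polynomial equations; the bijection with $\mathrm{Alg}_{\prec}$ will come from sending $\widetilde\phi$ to the algebra with presentation $\Bbbk Q/\langle s - \phi_s - \widetilde\phi_s\rangle_{s \in S}$. The key enabling observation, which I would verify first, is that every $\widetilde\phi \in \Hom(\Bbbk S, A)_{\prec}$ automatically satisfies \ref{fc2}. Indeed, since $R$ arises from a Gröbner basis with respect to $\prec$, the element $\phi_s$ lies in $\Bbbk Q_{\prec s}$ for each $s \in S$ (cf.\ \eqref{reductiongroebner}); together with the hypothesis $\widetilde\phi_s \in \Bbbk Q_{\prec s}$ from (\ref{degreecondition3}), every basic reduction with respect to $R_{\phi+\widetilde\phi}$ replaces a subpath $s \in S$ by a linear combination of strictly smaller paths. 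Since $\prec$ is well-founded, every sequence of reductions must terminate, so by Theorem \ref{theorem:nonformal} we have $\widetilde\phi \in V_{\prec}$ if and only if $\widetilde\phi$ is a Maurer--Cartan element of $\mathbf{p}(Q, R)$.

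For part (2), I would choose a basis of $\Hom(\Bbbk S, A)_{\prec}$ to obtain affine coordinates on $\mathbb A^N \simeq \Hom(\Bbbk S, A)_\prec$. By Theorem \ref{theorem:higher-brackets}, the Maurer--Cartan condition is equivalent to the associativity identity $(\pi(u) \star \pi(v)) \star \pi(w) = \pi(u) \star (\pi(v) \star \pi(w))$ for every overlap $uvw \in S_3$, where $\star = \star^{\mathrm{C}}_{\phi + \widetilde\phi}$. Each summand $\star^k$ is multilinear of degree $k$ in the coefficients of $\widetilde\phi$, and reduction-finiteness guarantees that for any fixed irreducible paths $a, b$ we have $a \star^k b = 0$ for $k$ larger than the length of the longest reduction chain starting from $ab$; hence each associativity identity expands into a polynomial equation in the coordinates of $\widetilde\phi$. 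Projecting onto a $\Bbbk$-basis of $A$ yields the defining equations of $V_{\prec}$ as an affine subvariety of $\mathbb A^N$.

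For part (1), I would define $\Psi \colon V_{\prec} \to \mathrm{Alg}_{\prec}$ by $\Psi(\widetilde\phi) = \Bbbk Q/J_{\widetilde\phi}$ with $J_{\widetilde\phi} = \langle s - \phi_s - \widetilde\phi_s \rangle_{s \in S}$. The degree condition forces $\tip_{\prec}(s - \phi_s - \widetilde\phi_s) = s$, so $\langle \tip_{\prec}(J_{\widetilde\phi}) \rangle \supseteq \langle S \rangle = \langle \tip_{\prec}(I) \rangle$; the reverse inclusion follows from reduction-uniqueness of $R_{\phi + \widetilde\phi}$ and the Diamond Lemma \ref{lemma:basis}, which forces $\Irr_S$ to be a basis of $\Bbbk Q/J_{\widetilde\phi}$, so every element of $J_{\widetilde\phi}$ has a tip containing some $s \in S$ as a subpath. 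Conversely, given $\Bbbk Q/J \in \mathrm{Alg}_{\prec}$, the reduced noncommutative Gröbner basis of $J$ with respect to $\prec$ has tips forming a minimal generating set of $\langle \tip_{\prec}(J) \rangle = \langle S \rangle$; since $S$ itself has this property and such minimal sets are uniquely determined, the Gröbner basis takes the form $\{s - \phi_s - \widetilde\phi'_s\}_{s \in S}$ with $\widetilde\phi'_s \in \Bbbk\Irr_S \cap \Bbbk Q_{\prec s}$, giving a well-defined element $\widetilde\phi' \in \Hom(\Bbbk S, A)_{\prec}$. The Gröbner basis property makes $R_{\phi + \widetilde\phi'}$ reduction-unique, so $\widetilde\phi' \in V_{\prec}$, and the two constructions are straightforwardly mutually inverse.

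The main obstacle I anticipate is the bookkeeping in the converse direction of (1): ensuring that each component $\widetilde\phi'_s$ extracted from the reduced Gröbner basis canonically lies in $\Bbbk\Irr_S$ rather than merely in $\Bbbk Q$. This hinges on the reducedness of the Gröbner basis, which guarantees that no monomial other than the tip contains an element of $S$ as a subpath. A secondary but conceptually straightforward issue is verifying the polynomiality in part (2), specifically that for each overlap $uvw \in S_3$ the truncation degree of $\star^k$ on the inputs appearing is uniformly bounded; this follows from well-foundedness of $\prec$ together with finiteness of the set of paths appearing in any one reduction of a fixed input.
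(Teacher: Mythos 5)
Your proposal is correct and follows essentially the same route as the paper: the paper only cites this statement from Green--Hille--Schroll, but its own proof of the refinement in Theorem \ref{theorem:variety} (together with Proposition \ref{proposition:Groebner}) establishes part (2) exactly as you do --- elements of $\Hom (\Bbbk S, A)_\prec$ satisfy \textup{(FC2)} because reductions strictly decrease the admissible order, so the Maurer--Cartan equation is well defined and, via Theorems \ref{theorem:nonformal} and \ref{theorem:higher-brackets}, becomes the finitely many polynomial associativity conditions indexed by $S_3$. Your part (1) supplies the standard reduced-Gröbner-basis dictionary (tips of elements of $J_{\widetilde\phi}$ lie in $\langle S\rangle$ by the Diamond Lemma, and uniqueness of the reduced Gröbner basis gives the inverse map) that the paper leaves entirely to the citation; it is sound.
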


\begin{remark}
As for the case of $\mathrm{Alg}_W$ of associative multiplications on a (finite-dimensional) vector space $W$ (see \S\ref{subsubsection:variety}, the equations defining $V_\prec$ naturally give rise to an affine {\it scheme} structure on $V_\prec$ which is necessary when relating its Zariski tangent space to Hochschild cohomology. However, we continue to refer to $V_\prec$ as a variety, even if at times we need to take the scheme structure (notably, nilpotent elements) into account.
\end{remark}

By definition $V_{\prec}$ contains the origin $0$ of $\Hom (\Bbbk S, A)$, which corresponds to the reduction system determined by $\phi$ and thus to the algebra $A$ itself. A general point $\widetilde \phi \in V_\prec$ corresponds to
\begin{align*}
R_{\phi + \widetilde \phi} &= \{ (s, \phi_s + \widetilde \phi_s) \}_{s \in S} \\
A_{\phi + \widetilde \phi} &= \Bbbk Q / \langle s - \phi_s - \widetilde \phi_s \rangle_{s \in S}.
\end{align*}

The following result gives a complete description of the variety $V_\prec$ from the point of view of deformations of reduction systems. Recall from Definition \ref{definition:equivalencereduction} the groupoid $G \rightrightarrows \mathrm{MC}$ over the set $\MC \subset \Hom (\Bbbk S, A)$ of all elements {\it whose associated reduction system is reduction-unique}. 

\begin{theorem}
\label{theorem:variety}
\index{reduction system!equivalence}\index{variety!of reduction systems}\index{reduction system!variety of}
Let $\dim \Hom (\Bbbk S, A)_\prec = N < \infty$.
\begin{enumerate}
\item The Maurer--Cartan equation\index{Maurer--Cartan!equation} of $\mathbf p (Q, R)$ gives the explicit algebraic equations for the scheme $V_{\prec} \subset \mathbb A^N$.
\item \label{variety2} The groupoid $G \rightrightarrows \MC$ restricts to a groupoid $G_\prec \rightrightarrows V_\prec$ and two reduction systems in $V_\prec$ are equivalent if and only if they lie in the same orbit of $G_\prec \rightrightarrows V_\prec$.
\item The Zariski tangent space of $V_\prec$ at the point $\widetilde \phi \in V_\prec$ is isomorphic to the space of $2$-cocycles in $\Hom (\Bbbk S, A_{\phi + \widetilde \phi})_\prec$. In particular, we have a \emph{Kodaira--Spencer map} $\mathrm{KS} \colon \mathrm T_{\widetilde \phi} V_\prec \to \HH^2 (A_{\phi + \widetilde \phi}, A_{\phi + \widetilde \phi})$.
\item \label{variety4} The Zariski tangent space to the orbit of the groupoid $G_\prec \rightrightarrows V_\prec$ at $\widetilde \phi$ is contained in the space of $2$-co\-bound\-aries in $\Hom (\Bbbk S, A_{\phi + \widetilde \phi})_\prec$.
\end{enumerate}
\end{theorem}

\begin{remark}
This description is analogous to the variety $\mathrm{Alg}_W$ of all associative algebra structures on a finite-dimensional vector space $W$ (see \S\ref{subsubsection:variety}). For finite-dimensional algebras, $V_\prec$ gives a low-dimensional blueprint of the variety $\mathrm{Alg}_W$ (see \S\ref{subsubsection:finitedimensional}). Moreover, the variety $V_\prec$ also makes sense for infinite-dimensional algebras, giving a geometric description of certain algebraizable formal deformations. As Green--Hille--Schroll \cite{greenhilleschroll} already note, there does not appear to be a natural action of a group on the variety $V_\prec$. However, Theorem \ref{theorem:variety} shows that there is a natural {\it groupoid} action by restricting the groupoid $G \rightrightarrows \MC$ given in Definition \ref{definition:equivalencereduction}. Recall that the algebras corresponding to equivalent reduction systems are isomorphic, so the orbits of the variety $V_\prec$ under this groupoid action give a low-dimensional picture of the orbits of $\GL (W)$ acting on $\mathrm{Alg}_W$ (see \S\ref{subsubsection:finitedimensional} and \S\ref{subsection:brauertree} for examples of $V_\prec$).
\end{remark}

\begin{proof}[Proof of Theorem \ref{theorem:variety}]
Let us prove the first assertion. It follows from Theorem \ref{theorem:nonformal} that the Maurer--Cartan equation of $\mathbf p (Q, R)$ is well-defined on $\Hom (\Bbbk S, A)_\prec \subset \Hom (\Bbbk S, A)$ since the elements in $\Hom (\Bbbk S, A)_\prec$ satisfy the condition \ref{fc2}. Then $\widetilde \phi \in V_\prec$ if and only if
\begin{equation}
\label{w123}
(u \star v) \star w = u \star (v \star w)
\end{equation}
for any $u v w \in S_3$ with $uv, vw \in S$, where $\star = \star^{\mathrm C}_{\phi + \widetilde \phi}$.

The space $\Hom (\Bbbk S, A)_\prec$ has a natural $\Bbbk$-basis given by pairs of parallel paths $(s, p)$ with $s \in S$, $p \in \Irr_S$ and $p \prec s$, so that any element $\widetilde \phi \in \Hom (\Bbbk S, A)_\prec$ is determined by its coefficients $\lambda_{s,p} \in \Bbbk$ in this basis. For each $uvw \in S_3$, \eqref{w123} gives explicit algebraic equations on the coefficients $\lambda_{s,p}$.

The second assertion is clear from the definition. Let us prove the third assertion. By \S\ref{subsection:firstorder} an element  $ \widetilde \varphi' \in P^2(Q, R_{\phi + \widetilde \varphi})_\prec$ is a $2$-cocycle if and only if for any $uvw \in S_3$, we have 
\[
(\pi (u) \star \pi(v)) \star \pi(w) = \pi (u) \star (\pi (v) \star \pi(w)) \; \mathrm{mod} \ t^2,
\]
where $\star = \star^{\mathrm C}_{\phi + \widetilde \varphi + \widetilde \phi' t}$. The latter is equivalent to $\widetilde \phi + \widetilde \phi' t$ lying in the fiber of the map $V_{\prec}(\Bbbk [t]/(t^2)) \to V_{\prec}(\Bbbk) = V_{\prec}$ at $ \widetilde \phi$, which is by definition the Zariski tangent space of $V_{\prec}$ at $\widetilde \phi$.    Here, $V_{\prec}(B)$ denotes the set of $B$-points of the variety $V_{\prec}$ for any commutative local ring $B$. Similarly, the first part of the fourth assertion may be deduced from \S\ref{subsection:firstorder} and Definition \ref{definition:equivalencereduction}.
\end{proof}

Note that if the Maurer--Cartan equation\index{Maurer--Cartan!equation} holds for all $2$-cochains, for example if $P^3 = 0$, then $V_\prec$ is simply the affine space $\mathbb A^N$.

The following example shows that the inclusion in Theorem \ref{theorem:variety} \ref{variety4} may be strict.

\begin{example}
\label{example:weyl}
Let $A = \Bbbk \langle x, y \rangle / (y x - x y - 1)$ be the first Weyl algebra and let $R = \{ (y x, x y + 1) \}$ be the reduction system obtained from the degree--lexicographic order extending $x \prec y$ and let $S = \{ y x \}$ as usual.\index{admissible order!degree--lexicographic}\index{degree--lexicographic order}

Since $R$ has no overlaps, any element in $\Hom (\Bbbk S, A) \simeq A$ is a $2$-cocycle. On the other hand, one can check directly that every element in $\Hom (\Bbbk S, A)$ is also a $2$-coboundary, so that $\HH^2 (A, A) = 0$. Concretely, $A$ has $\Bbbk$-basis $\Irr_S = \{ x^k y^l \}_{k, l \geq 0}$ and defining $\psi \in \Hom (\Bbbk Q_1, A)$ by $\psi (x) = x^k y^l$ and $\psi (y) = 0$, by \eqref{align:partial1} we have that for any $k, l \geq 0$
\begin{align*}
\partial^1 \psi (y x) &= y \psi (x) + \psi (y) x - \psi (x) y - x \psi (y) \\
&= y x^k y^l - x^k y^{l+1} \\
&= k x^{k-1} y^l.
\end{align*}

Now $V_\prec = \Hom (\Bbbk S, A)_\prec \simeq \mathbb A^5$ with coordinates $(\kappa, \lambda, \mu, \nu, \xi)$ corresponding to elements $\widetilde \phi \colon yx \mapsto \kappa + \lambda x + \mu y + \nu x^2 + \xi xy$. Note that the points in the closed subvariety $W = \{ (0, 0, 0, 0, \xi) \} \simeq \mathbb A^1$ correspond to the quantum Weyl algebras $A_q = \Bbbk \langle x, y \rangle / (y x - q x y - 1)$, where $q = 1 + \xi$, which are mutually non-isomorphic. Since the orbit of $0 \in V_\prec$ under the groupoid action by $G_\prec$ contains only isomorphic algebras, $W \cap (G_\prec \cdot 0) = \{ 0 \}$ so that $\mathrm T_0 W \cap \mathrm T_0 (G_\prec \cdot 0) = 0$.

Thus in this example the space of $2$-coboundaries contained in $\Hom (\Bbbk S, A)_\prec$ {\it strictly} contains the tangent space to the orbit under the groupoid action.
\end{example}

\begin{remark}
\label{remark:gerstenhaber}
Gerstenhaber and Giaquinto \cite{gerstenhabergiaquinto} observed that the first Weyl algebra $A$ from Example \ref{example:weyl} is infinitesimally ``rigid'' since $\HH^2 (A, A) = 0$, but it nevertheless admits a nontrivial family of deformations, namely the quantum Weyl algebras $A_q$ for $q \in \Bbbk$. (This type of phenomenon can only occur for infinite-dimensional algebras.) 
 
Gerstenhaber and Giaquinto showed that the deformation theory of {\it diagrams of algebras} could be used to obtain the family $A_q$ from deformations of the diagram $\Bbbk [x] \to A \leftarrow \Bbbk [y]$. It seems to be an interesting feature of the approach via deformations of reduction systems that the quantum Weyl algebras $A_q$ can also be obtained as actual deformations of the Weyl algebra $A$, as in Example \ref{example:weyl}.
\end{remark}

\section{Poincaré--Birkhoff--Witt deformations}\label{subsection:PBWdeformations}
\index{PBW deformation|textbf}

Recall that the classical context of PBW deformations is the Poincaré--Birkhoff--Witt isomorphism (of $\Bbbk$-vector spaces) between the universal enveloping algebra
\[
\U (\mathfrak g) = \Bbbk \langle x_1, \dotsc, x_d \rangle / (x_i \otimes x_j - x_j \otimes x_i - [x_i, x_j])_{1 \leq i < j \leq d}
\]
of a finite-dimensional Lie algebra (graded by tensor degree) and its associated graded algebra
\[
\gr \U (\mathfrak g) \simeq \Sym (\mathfrak g) = \Bbbk \langle x_1, \dotsc, x_d \rangle / (x_i \otimes x_j - x_j \otimes x_i)_{1 \leq i < j \leq d} \simeq \Bbbk [x_1, \dotsc, x_d].
\]
Here $\U (\mathfrak g)$ is viewed as a PBW deformation of $\gr \U (\mathfrak g) \simeq \Sym (\mathfrak g)$, where the (homogeneous quadratic) commutativity relations $x_i \otimes x_j - x_j \otimes x_i$ are deformed to the relations $x_i \otimes x_j - x_i \otimes x_j - [x_i, x_j]$ which are quadratic but nonhomogeneous in the tensor degree grading. This notion was generalized to Koszul algebras in \cite{bravermangaitsgory,polishchukpositselski}.

In this subsection, we consider (a slightly more general notion of) PBW deformations of graded algebras in the following sense. Viewing $\Bbbk Q$ as graded by path length (i.e.\ by tensor degree when viewing $\Bbbk Q$ as the tensor algebra of $\Bbbk Q_1$ over $\Bbbk Q_0$ as in Chapter \ref{section:quivers}), we can view any quotient algebra $A = \Bbbk Q / I$ as filtered by path length and consider the associated graded algebra $\gr A \simeq \Bbbk Q / I'$. Note that the relations $I'$ of $\gr A$ are homogeneous, but they may have varying degrees. A {\it PBW deformation} (or {\it filtered deformation}) of $\gr A$ is any quotient $\widetilde A = \Bbbk Q / \widetilde I$ such that $\gr \widetilde A \simeq \gr A$.

\subsection{Varieties of PBW deformations}

Similar to the variety of actual deformations $V_\prec$ associated to the degree condition \eqref{degreecondition3} in \S\ref{subsection:varieties}, the set of PBW deformations of a graded algebra $A$ can be viewed as a variety $V_<$ for the following notion of degree condition.

\index{variety!of PBW deformations}
\begin{definition}
\label{definition:degree}
Let $R = \{ (s, \varphi_s) \mid s \in S \}$ be a finite reduction system satisfying the condition ($\diamond$) for an ideal $I$ of $\Bbbk Q$ and let $A = \Bbbk Q / I$ as usual.

For any $\widetilde \phi \in \Hom (\Bbbk S, A)$ we introduce the following two degree conditions:
\begin{flalign}
\label{degreecondition1}
\tag{$<$}
&& \widetilde \phi_s = \sigma \widetilde \phi (s) \in \Bbbk Q_{< \lvert s \rvert} && \mathllap{\text{for any $s \in S$}} \\
\label{degreecondition2}
\tag{$\leq$}
&& \widetilde \phi_s = \sigma \widetilde \phi (s) \in \Bbbk Q_{\leq \lvert s \rvert} && \mathllap{\text{for any $s \in S$}}
\end{flalign}
which are to be understood as conditions on $\widetilde \phi$. 

Let us also denote $\Hom (\Bbbk S, A)_< \subset \Hom (\Bbbk S, A)_\leq \subset \Hom (\Bbbk S, A)$ the subspaces of elements satisfying (\ref{degreecondition1}) and (\ref{degreecondition2}), respectively.
\end{definition}

Assume that $\varphi \in \Hom (\Bbbk S, A)_\leq$. Then $A$ is filtered by path length and denote by $\gr A$ the associated graded algebra.

\begin{proposition}
\label{proposition:pbwvariety}
The subspace $\Hom (\Bbbk S, A)_< \subset \Hom (\Bbbk S, A)$ is a finite-dimensional affine space and the Maurer--Cartan equation\index{Maurer--Cartan!equation} for $\mathbf p (Q, R)$ cuts out an affine variety $V_<$ whose points correspond to PBW deformations of $\gr A$.

Moreover, there is a natural groupoid action on $V_<$ such that two PBW deformations\index{PBW deformation}\index{variety!of PBW deformations} are isomorphic if and only if they lie in the same orbit.
\end{proposition}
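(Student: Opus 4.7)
The plan is to reduce the statement to Theorem \ref{theorem:variety} by choosing an admissible order that refines path length.

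First, I would verify that $\Hom (\Bbbk S, A)_<$ is a finite-dimensional affine space. Since the reduction system $R$ is assumed finite, the set $S$ is finite, and for each $s \in S$ the space $\Bbbk Q_{<|s|} \cap \Bbbk \Irr_S$ is finite-dimensional, because a finite quiver has only finitely many paths below any fixed length. Hence $\Hom (\Bbbk S, A)_< = \bigoplus_{s\in S} \Hom (\Bbbk s, \Bbbk Q_{<|s|} \cap \Bbbk \Irr_S)$ is finite-dimensional. Next I would choose an admissible order $\prec$ on $Q_\ldot$ which refines path length, e.g.\ the degree-lexicographic order of Example \ref{example:reductiongroebnersymmetric}, and (if necessary) replace $R$ by the reduction system arising from the Gröbner basis of $I$ with respect to $\prec$. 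Then condition $(<)$ strictly implies condition $(\prec)$, so $\Hom (\Bbbk S, A)_< \subseteq \Hom (\Bbbk S, A)_\prec$, and Theorem \ref{theorem:variety} provides an affine variety $V_\prec$ cut out by the Maurer--Cartan equation of $\mathbf p (Q, R)$. Setting $V_< := V_\prec \cap \Hom (\Bbbk S, A)_<$ then gives an affine subvariety of the finite-dimensional space $\Hom (\Bbbk S, A)_<$, defined by those same equations.

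For the correspondence with PBW deformations, the key observation is that for any $\widetilde \phi \in V_<$ the ideal $I_{\phi + \widetilde \phi} = \langle s - \phi_s - \widetilde \phi_s\rangle_{s \in S}$ is generated by elements whose top-degree (length-$|s|$) component agrees with that of $s - \phi_s$, since $\widetilde \phi_s \in \Bbbk Q_{<|s|}$. Hence the path length filtration on $\Bbbk Q$ descends to $A_{\phi + \widetilde \phi}$ with $\gr A_{\phi + \widetilde \phi} \simeq \gr A$, so $A_{\phi + \widetilde \phi}$ is a PBW deformation of $\gr A$. Conversely, any PBW deformation $B$ of $\gr A$ admits defining relations of the form $s - \phi_s - \eta_s$ with $\eta_s$ of path length strictly smaller than $|s|$, giving an element $\widetilde \phi \in \Hom (\Bbbk S, A)_<$ which by the non-formal analogue of Theorem \ref{theorem:equivalenceformal} must satisfy the Maurer--Cartan equation and hence lie in $V_<$.

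Finally, for the groupoid statement, I would restrict the groupoid $G_\prec \rightrightarrows V_\prec$ of Theorem \ref{theorem:variety}\ref{variety2} to the sub-groupoid $G_< \rightrightarrows V_<$ consisting of those $T \in \GL (\Bbbk \Irr_S)$ that preserve the path length filtration, i.e.\ satisfy $T(x) - x \in \Bbbk Q_{<|x|}$ for every irreducible path $x$. Using the formula of Remark \ref{remark:equivalenceformalreductionsystem} (in its non-formal version), such a $T$ transports an element of $V_<$ to another element of $V_<$, and by Lemma \ref{lemma:equivalencereduction} two reduction systems in $V_<$ lie in the same $G_<$-orbit if and only if the corresponding PBW deformations of $\gr A$ are isomorphic as filtered algebras. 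The main obstacle will be verifying that the appropriate notion of isomorphism of PBW deformations corresponds precisely to equivalence under the restricted groupoid $G_<$: concretely, that any filtered isomorphism between $A_{\phi + \widetilde \phi}$ and $A_{\phi + \widetilde \phi'}$ (inducing the identity on $\gr A$) lifts to a filtration-preserving linear automorphism of $\Bbbk \Irr_S$. This I would prove by decomposing the isomorphism layer by layer along the filtration and inductively solving for the components of $T$ on paths of increasing length.
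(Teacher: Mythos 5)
Your overall strategy is close in spirit to the paper's, but there are two concrete problems. First, the detour through Theorem \ref{theorem:variety} is not available for the reduction system the proposition is actually about: that theorem is stated under the standing hypothesis of \S\ref{subsection:varieties} that $R$ is obtained from a noncommutative Gr\"obner basis with respect to the admissible order $\prec$, whereas Proposition \ref{proposition:pbwvariety} only assumes that $R$ is a finite reduction system satisfying \textup{($\diamond$)} with $\phi \in \Hom (\Bbbk S, A)_\leq$ --- and the paper notes explicitly (\S\ref{subsubsection:reduction}) that not every reduction system arises from a Gr\"obner basis. Your hedge of ``replacing $R$ if necessary'' does not repair this: replacing $R$ changes $S$, $\Irr_S$, $\mathbf p (Q, R)$ and hence the very space $\Hom (\Bbbk S, A)_<$ in which the variety is supposed to live, and the replacement need not be finite nor satisfy \textup{(\ref{degreecondition2})}. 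The paper instead argues directly: condition \textup{(\ref{degreecondition1})} implies reduction-finiteness \textup{\ref{fc2}} (each use of $\widetilde \phi$ strictly decreases path length, and in between one performs $R_\phi$-reductions, which terminate since $R$ satisfies \textup{($\diamond$)}), so Theorem \ref{theorem:nonformal} applies verbatim and identifies the Maurer--Cartan elements in $\Hom (\Bbbk S, A)_<$ with the reduction systems satisfying \textup{($\diamond$)}, i.e.\ with the PBW deformations; finiteness of $S$ and $S_3$ then makes \eqref{conditionmc} a finite system of polynomial equations. You should make this direct argument rather than citing Theorem \ref{theorem:variety}. (Your identification of points of $V_<$ with PBW deformations also leans only on ``the top-degree components agree''; the actual PBW property $\gr A_{\phi + \widetilde \phi} \simeq \gr A$ is the flatness statement, which comes from reduction-uniqueness via the Diamond Lemma, i.e.\ from \ref{nf4} of Theorem \ref{theorem:nonformal}.)

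Second, for the groupoid statement you have made the problem harder than it is and left the hard part unproved. The proposition asserts isomorphism of algebras, and the intended groupoid is simply the full restriction of $G \rightrightarrows \mathrm{MC}$ from Definition \ref{definition:equivalencereduction} to the subset $V_< \subset \mathrm{MC}$; with that choice the ``if and only if'' is exactly the equivalence of \ref{equivred1} and \ref{equivred3} in Lemma \ref{lemma:equivalencereduction}. By cutting down instead to filtration-preserving automorphisms $T$ you are attempting a strictly finer statement (isomorphism as filtered algebras inducing the identity on $\gr A$), and the key step --- that every algebra isomorphism between two PBW deformations can be replaced by a filtration-preserving equivalence of reduction systems --- is precisely what you defer as ``the main obstacle''. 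That claim is neither needed for the proposition nor supplied by Lemma \ref{lemma:equivalencereduction}, so as written the biconditional you assert for your sub-groupoid $G_<$ does not follow.
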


\begin{proof}
The elements in $\Hom (\Bbbk S, A)_<$ satisfy \ref{fc2} in Chapter \ref{section:nonformal}. 
Then it follows from Theorem \ref{theorem:nonformal} that the Maurer--Cartan elements of $\mathbf p (Q, R)$ satisfying the degree condition \textup{(\ref{degreecondition1})} correspond to the PBW deformations. Since $S$ and $S_3$ are finite sets, the space $\Hom (\Bbbk S, A)_<$ is finite-dimensional and the Maurer--Cartan equation gives finitely many algebraic equations in $\Hom (\Bbbk S, A)_<$ via the identity (\ref{conditionmc}).

The second statement follows from Theorem \ref{theorem:variety} \ref{variety2} by restricting the groupoid $G \rightrightarrows \mathrm{MC}$ to $V_< \subset \mathrm{MC}$.
\end{proof}

The following theorem due to Berger--Ginzburg \cite{bergerginzburg} and Fløystad--Vatne \cite{floystadvatne} gives a necessary and sufficient criterion for PBW deformations\index{PBW deformation} of $N$-Koszul algebras, generalizing the corresponding result of Braverman--Gaitsgory \cite{bravermangaitsgory} and Polishchuk--Positselski \cite{polishchukpositselski} for Koszul algebras\index{Koszul!algebra} (i.e.\ the case $N = 2$).

\begin{theorem}[\cite{bergerginzburg,bravermangaitsgory,floystadvatne,polishchukpositselski}]
\label{theorem:pbwcriteria}
Let $A = \T (V) / I$ be an $N$-Koszul algebra\index{Koszul!algebra} where $I = \langle X \rangle$ is a homogeneous ideal generated by a set of relations $X \subset V^{\otimes_\Bbbk N}$. Let $\psi \colon \Bbbk X \to \T^{< N} (V)$ be a linear map with $\psi (x) = \psi_1 (x) + \dotsb + \psi_{N} (x)$ for each $x \in X$, where $\psi_i (x) \in V^{\otimes_\Bbbk N- i}$ and let $I' = \langle x - \psi (x) \rangle_{x \in X}$. Then $A' = \T (V) / I'$ is a PBW deformation of $A$ if and only if
\[
I' \cap \T^{\leq N} (V) = \Bbbk \{ x - \psi (x) \}_{x \in X}.
\]
Moreover, this condition is equivalent to the following two conditions:
\begin{enumerate}
\item \label{pbwcond1} $\im \big( (\Bbbk X \otimes_{\Bbbk} V) \cap (V \otimes_{\Bbbk} \Bbbk X) \mathrel{\tikz[baseline]\path[->,line width=.4pt] (0ex,0.65ex) edge node[above=-.4ex, overlay, font=\scriptsize] {$[\id_V, \psi_1]$} (8ex,.65ex);} V^{\otimes_{\Bbbk} N} \big) \subset \Bbbk X$
\item \label{pbwcond2} $\psi_i \circ [\id_V, \psi_1] = - [\id_V, \psi_{i+1}]$ for $1 \leq i \leq N$ (setting $\psi_{N+1} = 0$)
\end{enumerate}
where $[\id_V, \psi_i] = \id_V \otimes \psi_i - \psi_i \otimes \id_V$ as maps $(\Bbbk X \otimes_\Bbbk V) \cap (V \otimes_\Bbbk \Bbbk X) \to V^{\otimes_\Bbbk N - i + 1}$.
\end{theorem}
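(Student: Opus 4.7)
The plan is to derive Theorem \ref{theorem:pbwcriteria} from the combinatorial Maurer--Cartan criterion of \S\ref{subsection:computinghigherbrackets}. After fixing a noncommutative Gröbner basis for $I$, let $R = \{(s, \phi_s) \mid s \in S\}$ with $S \subset Q_N$ be the associated reduction system satisfying $(\diamond)$ for $I$. Choosing a basis of $X$ compatible with $S$, the deformation datum $\psi = \psi_1 + \dotsb + \psi_N$ with $\psi_i(x) \in V^{\otimes_\Bbbk N-i}$ translates to an element $\widetilde \phi \in \Hom(\Bbbk S, A)_<$ whose decomposition by path length $\widetilde \phi_s = \widetilde \phi_s^{(N-1)} + \dotsb + \widetilde \phi_s^{(0)}$ recovers $\psi_1, \dotsc, \psi_N$. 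The PBW condition $I' \cap \T^{\leq N}(V) = \Bbbk\{x - \psi(x)\}_{x \in X}$ is equivalent to saying the image of $\Irr_S$ under $\Bbbk Q \to \Bbbk Q / I'$ forms a $\Bbbk$-basis, so by Theorem \ref{theorem:nonformal} (or Proposition \ref{proposition:pbwvariety}) this is in turn equivalent to $\widetilde \phi$ being a Maurer--Cartan element of $\mathbf p(Q, R)$.

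By Theorem \ref{theorem:higher-brackets}, the Maurer--Cartan equation reduces to associativity
\[
(\pi(u) \star \pi(v)) \star \pi(w) = \pi(u) \star (\pi(v) \star \pi(w))
\]
of $\star = \star^{\mathrm C}_{\phi + \widetilde \phi}$ on each overlap $uvw \in S_3$. The $N$-Koszul property ensures that the linear span of $S_3$ realizes the intersection $(\Bbbk X \otimes_\Bbbk V) \cap (V \otimes_\Bbbk \Bbbk X)$ after passing to the associated graded, so the associativity condition is equivalent to the corresponding condition on this intersection. I would then expand the associator using the grading $a \star b = \sum_{k \geq 0} a \star^k b$ by the number of times $\widetilde \phi$ is used (Lemma \ref{lemma:star}) and compare coefficients degree by degree in path length.

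The term contributing in path length $N$ uses $\widetilde \phi$ exactly once and produces precisely $[\id_V, \psi_1]$ applied to the overlap, so its vanishing modulo $\Bbbk X$ is exactly condition \ref{pbwcond1}. The contributions to lower path length involve $\widetilde \phi$ applied multiple times and, after an induction on $i$ which uses \ref{pbwcond1} to push top-degree corrections back into $\Bbbk X$, collapse to the recursive identity $\psi_i \circ [\id_V, \psi_1] = -[\id_V, \psi_{i+1}]$ of condition \ref{pbwcond2}. The converse direction is immediate from the same expansion.

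The main obstacle will be the explicit bookkeeping in matching the associator expansion term-by-term with the commutator expressions $[\id_V, \psi_i]$ of the Braverman--Gaitsgory formulation, in particular accounting for the intermediate $\phi$-reductions (which act nontrivially in $A$ since $R$ is nonzero) that appear between successive $\widetilde \phi$-reductions in a right-most reduction sequence. The identification of the span of $S_3$ with $(\Bbbk X \otimes_\Bbbk V) \cap (V \otimes_\Bbbk \Bbbk X)$ for $N > 2$ also requires the $N$-Koszul property in an essential way, to rule out spurious ``shorter'' overlaps of lower-order relations that would otherwise enter the resolution.
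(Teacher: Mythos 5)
First, a structural point: the paper does not prove Theorem \ref{theorem:pbwcriteria} at all --- it is quoted from the literature \cite{bergerginzburg,bravermangaitsgory,floystadvatne,polishchukpositselski}, and the paper's own contribution in this direction is Proposition \ref{proposition:pbw}, which runs in the \emph{opposite} logical direction: it shows that for algebras admitting a suitable reduction system the Maurer--Cartan equation of $\mathbf p(Q,R)$ can be identified with conditions \ref{pbwcond1} and \ref{pbwcond2}. Your plan is essentially to invert this and derive the theorem from Theorems \ref{theorem:nonformal} and \ref{theorem:higher-brackets}. That is a reasonable idea, but it has a genuine gap in scope.

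The gap is in your very first step, where you assume that the reduction system obtained from a noncommutative Gröbner basis of $I = \langle X \rangle$ has $S \subset Q_N$ (and, implicitly, $\phi_s \in V^{\otimes_\Bbbk N}$ and $S_3 \subset V^{\otimes_\Bbbk N+1}$). None of this follows from $N$-Koszulity. A Gröbner basis of an ideal generated in degree $N$ generally acquires elements of higher degree from unresolvable overlaps, and already for $N=2$ there exist Koszul algebras admitting no quadratic Gröbner basis in any coordinates (Koszulity is strictly weaker than G-quadraticity). For $N>2$ the requirement $S_3 \subset V^{\otimes_\Bbbk N+1}$ is an additional hypothesis --- cf.\ Remark \ref{remark:strong}, where it is guaranteed only for \emph{strong} $N$-Koszul algebras --- and in the paper the logical flow is that this hypothesis \emph{implies} $N$-Koszulity (via \cite{greenmarcosmartinezvillazhang} and \cite{chouhysolotar}), not conversely. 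So your argument would at best prove the theorem for the restricted class of algebras appearing in Proposition \ref{proposition:pbw}, whereas the statement concerns arbitrary $N$-Koszul algebras; the general case genuinely requires the homological input of the cited proofs (vanishing of obstructions controlled by the Koszul bimodule resolution). A secondary issue: even under those extra hypotheses, $\Bbbk S_3$ coincides with $(\Bbbk X \otimes_\Bbbk V) \cap (V \otimes_\Bbbk \Bbbk X)$ only in the monomial case; in general one must use the nontrivial isomorphism \eqref{wwbar} built from alternating left-most and right-most reductions, which your sketch does not supply and which is precisely where the ``bookkeeping'' you flag as an obstacle lives.
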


The criterion of Theorem \ref{theorem:pbwcriteria} is sometimes called the {\it Braverman--Gaitsgory criterion}\index{PBW deformation}. (See also Cassidy--Shelton \cite{cassidyshelton} for a criterion for PBW deformations of graded algebras using central extensions.)

The following result relates the above criterion to the Maurer--Cartan equation\index{Maurer--Cartan!equation} of $\mathbf p(Q, R)$.

\begin{proposition}
\label{proposition:pbw}
Let $A =  \Bbbk Q / I = \T (V) / I$ be a finitely generated algebra graded by path length, where $Q$ has a single vertex and $V = \Bbbk Q_1$. Let $R = \{ (s, \varphi_s) \mid s \in S \}$ be a reduction system satisfying \textup{($\diamond$)} for $I$ such that $S \subset V^{\otimes_\Bbbk N}$ and $\varphi_s \in V^{\otimes_\Bbbk N}$ for all $s \in S$ and assume that $S_3 \subset V^{\otimes_\Bbbk N+1}$. 
 
Then $A$ is an $N$-Koszul algebra\index{Koszul!algebra} and the degree components of the Maurer--Cartan equation\index{Maurer--Cartan!equation} of $\mathbf p(Q, R)$ can naturally be identified with the conditions \textup{\ref{pbwcond1}} and \textup{\ref{pbwcond2}} in Theorem \ref{theorem:pbwcriteria}.
\end{proposition}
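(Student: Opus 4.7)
Under the hypotheses $A = \T(V)/\langle X \rangle$ with $X = \Bbbk\{s - \phi_s : s \in S\} \subset V^{\otimes N}$, so $A$ is $N$-homogeneous. The condition $S_3 \subset V^{\otimes N+1}$ forces every $1$-ambiguity to have length exactly $N+1$, so each overlap decomposes as $aqb$ with $a, b \in V$, $q \in V^{\otimes N-1}$ and $aq, qb \in S$. A straightforward induction, using that $n$-ambiguities are built from chained overlaps, shows $S_k \subset V^{\otimes \delta(k)}$ with $\delta(k) = (k/2) N$ for $k$ even and $\delta(k) = ((k-1)/2) N + 1$ for $k$ odd. The Chouhy--Solotar resolution $P_\ldot$ of Theorem~\ref{theorem:projective} thus becomes a pure (in Berger's sense) graded $A$-bimodule resolution of $A$, which is Berger's characterization of $N$-Koszulity.

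For the identification of the MC equation with conditions \itemi\ and \itemii, any $\widetilde{\phi} \in \Hom(\Bbbk S, A)_<$ decomposes uniquely as $\widetilde{\phi}_s = \psi_1(s) + \cdots + \psi_N(s)$ with $\psi_i(s) \in V^{\otimes N-i}$; via the identification $s \leftrightarrow s - \phi_s$ this gives a map $\psi = \psi_1 + \cdots + \psi_N \colon \Bbbk X \to \T^{<N}(V)$ exactly as in Theorem~\ref{theorem:pbwcriteria}. By Theorem~\ref{theorem:higher-brackets}, $\widetilde{\phi}$ is a Maurer--Cartan element iff for each overlap $aqb \in S_3$ the associativity
\[
(\pi(a) \star \pi(q)) \star \pi(b) = \pi(a) \star (\pi(q) \star \pi(b))
\]
holds for $\star = \star^{\mathrm{C}}_{\phi + \widetilde{\phi}}$. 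I would then expand both sides: since $aq, qb \in S$ the inner reductions collapse to $\pi(a) \star \pi(q) = \pi(\phi_{s'} + \widetilde{\phi}_{s'})$ and $\pi(q) \star \pi(b) = \pi(\phi_s + \widetilde{\phi}_s)$, where $s' = aq$ and $s = qb$. The outer star products then demand further right-most reductions of $\phi_{s'} \cdot b$, $a \cdot \phi_s$, $\widetilde{\phi}_{s'} \cdot b$ and $a \cdot \widetilde{\phi}_s$, each producing a graded cascade of irreducible terms governed by $\widehat R_{\phi + \widetilde{\phi}}$.

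The next step is to rearrange the resulting identity in $A$ and organize it by the path length of the irreducible representatives in $\Bbbk Q$, matching components degree-by-degree. The syzygy $w = (s' - \phi_{s'}) \otimes b - a \otimes (s - \phi_s)$ naturally represents a generator of $(\Bbbk X \otimes V) \cap (V \otimes \Bbbk X)$ associated to the overlap $aqb$. The degree-$N$ component of the MC equation will combine the ``Koszul piece'' $\phi_{s'} b - a \phi_s \in \Bbbk X$ with $\psi_1(s') \cdot b - a \cdot \psi_1(s) = [\id_V, \psi_1](w)$, forcing $[\id_V, \psi_1](w) \in \Bbbk X$ and thus yielding condition~\itemi. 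For each $1 \leq i \leq N$ the degree-$(N-i)$ component will combine $\psi_i$ applied to $[\id_V, \psi_1](w)$ (which by \itemi\ lies in $\Bbbk X$, so $\psi_i$ can be applied) with $[\id_V, \psi_{i+1}](w)$ and force their sum to be zero, giving condition~\itemii\ (with the convention $\psi_{N+1} = 0$). The main obstacle will be the careful bookkeeping of the cascade of right-most reductions and their grouping by total degree: one must verify that the reduction of $\phi_{s'} \cdot b$ and $a \cdot \phi_s$ through $\widehat R_{\phi + \widetilde{\phi}}$ produces precisely the operators $[\id_V, \psi_i]$ acting on $w$, and nothing more. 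The $N$-Koszul structure established in the first paragraph is exactly what controls this cascade, since $S_3 \subset V^{\otimes N+1}$ ensures all second syzygies in degree $N+1$ are exhausted by the $S_3$-overlaps considered in Theorem~\ref{theorem:higher-brackets}.
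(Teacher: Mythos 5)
Your first paragraph takes a mildly different route from the paper's: you establish $S_k \subset V^{\otimes_\Bbbk \delta(k)}$ directly and invoke purity of the resolution $P_\ldot$, whereas the paper deduces $N$-Koszulity of the associated monomial algebra from \cite[Thm.~10.2]{greenmarcosmartinezvillazhang} and transfers it to $A$ via \cite[Prop.~8.2]{chouhysolotar}. Both arguments rest on the same degree concentration of the ambiguities, so this variant is acceptable (though the "straightforward induction" does need the observation that each consecutive triple $u_{i-1}u_iu_{i+1}$ inside a higher ambiguity contains an element of $S_3$, which is what forces the lengths).

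The second half has a genuine gap at the one point where the proof requires real work. The element you call the syzygy,
\[
w = (s' - \phi_{s'}) \otimes b - a \otimes (s - \phi_s),
\]
is \emph{not} an element of $(\Bbbk X \otimes_\Bbbk V) \cap (V \otimes_\Bbbk \Bbbk X)$: it is the difference of an element of $\Bbbk X \otimes_\Bbbk V$ and an element of $V \otimes_\Bbbk \Bbbk X$, and since $s'b = aqb = as$ it equals $a\phi_s - \phi_{s'}b$ in $V^{\otimes_\Bbbk N+1}$, which in general lies only in the sum $\Bbbk X \otimes_\Bbbk V + V \otimes_\Bbbk \Bbbk X$, not in the intersection. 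Conditions \ref{pbwcond1} and \ref{pbwcond2} of Theorem \ref{theorem:pbwcriteria} are conditions on maps defined on that intersection, so until you say which element of the intersection the overlap $aqb$ corresponds to, the assertion that the degree-$N$ component forces $[\id_V,\psi_1](w) \in \Bbbk X$ does not parse. In the monomial case one has $(\Bbbk X \otimes_\Bbbk V)\cap(V\otimes_\Bbbk \Bbbk X) = \Bbbk S_3$ on the nose and your degree-by-degree matching goes through; in the general case the paper must first construct an explicit isomorphism $\Bbbk S_3 \tosim (\Bbbk X \otimes_\Bbbk V)\cap(V\otimes_\Bbbk \Bbbk X)$, $uvw \mapsto \overbar{uvw}$, where $\overbar{uvw}$ is an alternating sum of $uvw$ with the entire cascade of its iterated left-most and right-most reductions down to an irreducible element (this terminates and the two cascades agree precisely because $R$ satisfies ($\diamond$)). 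That correction term is exactly the ``careful bookkeeping of the cascade of reductions'' your plan defers, and it is the substantive content of the proof in the non-monomial case; once it is in place, the remaining degree-matching is the same computation you sketch and the paper carries out.
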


\begin{proof}
Let us first prove that $A$ is $N$-Koszul. Let $A_{\mathrm{mon}} = \T (V) / \langle S \rangle$ denote the associated monomial algebra. Since $ S_3 \subset V^{\otimes N+1}$, it follows from \cite[Thm.~10.2]{greenmarcosmartinezvillazhang} that $A_{\mathrm{mon}}$ is $N$-Koszul. By \cite[Prop.~8.2]{chouhysolotar} we obtain that $A$ is also $N$-Koszul. 

Throughout we use the natural isomorphism $\Bbbk Q \simeq \T (V)$, where a path $x_1 \dotsb x_n \in Q_\ldot$ corresponds to $x_1 \otimes \dotsb \otimes x_n \in \T (V)$ for $x_i \in Q_1$.

We first prove the second statement in case $A = A_{\mathrm{mon}}$.  Let $\widetilde \varphi \in \Hom (\Bbbk S, A_{\mathrm{mon}})_{<}$ be a Maurer--Cartan element of $\mathbf p (Q, R_{\mathrm{mon}})$. For each $s \in S$ we may write $\widetilde \phi (s) = \widetilde \phi_1 (s) + \dotsb + \widetilde \phi_N (s)$, where $\psi_i(s): = \sigma \widetilde \phi_i (s) \in \Bbbk Q_{N-i}$.  We need to show that $\psi_i$ satisfies the two conditions \ref{pbwcond1} and \ref{pbwcond2} in Theorem \ref{theorem:pbwcriteria}, where we take $X = S$. Note that  
\begin{equation}
\label{xs3}
(\Bbbk X \otimes_{\Bbbk} V) \cap (V \otimes_{\Bbbk} \Bbbk X) = \Bbbk S_3 \subset V^{\otimes N + 1}.
\end{equation}
Let $u v w \in S_3$ with $u v, v w \in S$. Since $S_3 \subset V^{\otimes N + 1}$ we have $u, w \in V = \Bbbk Q_1$.  Writing $\star = \star^{\mathrm C}_{\phi + \widetilde \phi}$, Theorem \ref{theorem:higher-brackets} shows that
\[
(\pi(u) \star \pi(v)) \star \pi(w) - \pi(u) \star (\pi(v) \star \pi(w)) = 0,
\] 
Restricting to the component $\pi(\Bbbk Q_N) \subset A$, we have the following identity
\begin{align*}
0={} & \bigl( (\pi(u) \star \pi(v)) \star \pi(w) - \pi(u) \star (\pi(v) \star \pi(w) \bigr)\bigr\rvert_{\pi(\Bbbk Q_N)}\\
 ={} & \widetilde \phi_1(u v) \pi(w) - \pi(u) \widetilde \phi_1 (vw).
\end{align*}
It follows that $[\id_V, \psi_1] (uvw) = - \psi_1(uv) w + u \psi_1 (vw)$ must be in $I_{\mathrm{mon}} \cap \Bbbk Q_N = \Bbbk S = \Bbbk X$. 
Similarly, for $0 < i < N$ we may obtain 
\begin{align*}
0 & =  \bigl( (\pi(u) \star \pi(v)) \star \pi(w) - \pi(u) \star (\pi(v) \star \pi(w) \bigr) \bigr\rvert_{\pi(\Bbbk Q_{N-i})} \\
& =\widetilde \phi_i (  \psi_1(uv) w - u \psi_1 (vw)) + \widetilde \phi_{i+1} (uv) \pi(w) - \pi(u) \widetilde \phi_{i+1}(vw) \in \Bbbk \Irr_S
\end{align*}
which yields $ \psi_i [\id_V, \psi_1] = -[\id_V, \psi_{i+1}]$.
Therefore, the Maurer--Cartan equation coincides with the two conditions \ref{pbwcond1} and \ref{pbwcond2} in Theorem \ref{theorem:pbwcriteria}.

Let us consider the general case $A = \Bbbk Q /I$, where $I = \langle X \rangle$ with $X = \{ s - \varphi_s \mid s \in S \} \subset V^{\otimes N}$. The proof is very similar to the monomial case, except that the equality in \eqref{xs3} need not hold. Instead, one should consider the natural $\Bbbk$-linear isomorphism
\begin{equation}
\label{wwbar}
\begin{aligned}
\Bbbk S_3 &\tosim (\Bbbk X \otimes_{\Bbbk} V) \cap (V \otimes_{\Bbbk} \Bbbk X) \\
uvw &\mapsto \overbar{uvw}
\end{aligned}
\end{equation}
with $uv, vw \in S$ and the image $\overbar{uvw}$ of $uvw$ is given as follows. Since $S_3 \subset V^{\otimes N + 1}$ we have $u,w \in V = \Bbbk Q_1$. For each $i \geq 1$ we introduce the maps $\Phi_i, \Phi_i' \colon \Bbbk S_3 \to V^{\otimes N + 1}$ with
\begin{align*}
\Phi_i (uvw) &=\underbrace{\dotsb (\phi \otimes \id_V) (\id_V \otimes \phi) (\phi \otimes \id_V)}_i (uvw) \\
\Phi_i'(uvw) &=\underbrace{\dotsb (\id_V \otimes \phi) (\phi \otimes \id_V) (\id_V \otimes \phi)}_i (uvw)
\end{align*}
which perform exactly $i$ reductions on $uvw \in S_3$, starting with the leftmost and rightmost reduction, respectively. Since $R$ satisfies \textup{($\diamond$)} it follows that there exists a minimal number $i_0 > 0$ (resp.\ $i'_0 > 0$) such that $\Phi_{i_0} (uvw)$ (resp.\ $\Phi_{i'_0}' (uvw)$) is irreducible and $\Phi_{i_0} (uvw) = \Phi_{i'_0}' (uvw)$ in $V^{\otimes N+1}$.
The image $\overbar{uvw}$ of $uvw$ is given by 
\begin{equation*}
\overbar{uvw} := 
\begin{cases}
\displaystyle uvw + \sum_{i = 1}^{i_0-1} (-1)^i \Phi_i (uvw) + \sum_{j = 1}^{i'_0-1} (-1)^j \Phi_j' (uvw) & \text{if $i_0 + i'_0$ is odd}\\
\displaystyle uvw + \sum_{i = 1}^{i_0} (-1)^i \Phi_i (uvw) + \sum_{j = 1}^{i'_0-1} (-1)^j \Phi_j' (uvw) & \text{if $i_0 + i'_0$ is even.}
\end{cases}
\end{equation*}
Then similar to the monomial case, one checks that under the isomorphism \eqref{wwbar} the Maurer--Cartan equation gives precisely the two conditions \ref{pbwcond1} and \ref{pbwcond2} in Theorem \ref{theorem:pbwcriteria}.
\end{proof}

Thus the Maurer--Cartan equation\index{Maurer--Cartan!equation} of $\mathbf p (Q, R)$ can be viewed as a generalization of the Braverman--Gaitsgory criterion for PBW deformations\index{PBW deformation} to the setting of non-Koszul algebras and non-PBW deformations.

\begin{remark}
\label{remark:strong}
When an $N$-Koszul algebra\index{Koszul!algebra!strong} has a reduction system\index{reduction system!for Koszul algebras} satisfying the assumption in Proposition \ref{proposition:pbw}, the criterion for PBW deformations\index{PBW deformation} in Theorem \ref{theorem:pbwcriteria} becomes a combinatorial criterion. Such a reduction system exists for {\it strong} $N$-Koszul algebras (cf.\ \cite[Def.~2.2]{green2}). However, even if an $N$-Koszul algebra is not strong, it may still admit a reduction system satisfying the hypotheses of Proposition \ref{proposition:pbw}. (See for example \cite[Ex.~2.10.1]{chouhysolotar} which is $3$-Koszul, but not strong $3$-Koszul, admitting a reduction system satisfying the hypotheses of the proposition. This reduction system may be obtained by applying Heuristic \ref{heuristic}.) Note that for $N = 2$, the assumption $S_3 \subset V^{\otimes_\Bbbk N+1}$ is automatic.  
\end{remark}

We now relate the varieties $V_\prec$ defined in \S\ref{subsection:varieties} to the variety $V_<$ of PBW deformations of Proposition \ref{proposition:pbwvariety}.

As is apparent from the notation, the affine variety $V_{\prec}$ depends on the chosen admissible order. However, if we restrict our attention to a certain class of admissible orders, we may consider a variety of algebras that does not depend on the particular admissible order in this class.

We say that an admissible order $\prec$ is {\it compatible with the path length} \index{admissible order!compatible with path length} if for any two paths $p, q \in \Bbbk Q$ such that $\lvert p \rvert < \lvert q \rvert$, we have $p \prec q$.

\begin{theorem}
\label{theorem:unionvariety}
\index{admissible order!compatible with path length}
Let $A = \Bbbk Q / \langle S \rangle$. Let $T$ be the set of admissible orders compatible with the path length. 
\begin{enumerate}
\item \label{compatibleorder1} $\bigcup_{{\prec} \in T} V_{\prec}$ is an algebraic variety. 
\item \label{compatibleorder2} $\bigcap_{{\prec} \in T} V_{\prec} = V_<$ is the variety of PBW deformations\index{variety!of PBW deformations}\index{PBW deformation}.
\end{enumerate}
\end{theorem}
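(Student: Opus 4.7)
The plan is to fit both assertions inside the finite-dimensional ambient space $\Hom (\Bbbk S, A)_{\leq}$ of Definition \ref{definition:degree}, which contains $V_<$ as well as every $V_\prec$ for $\prec \in T$, since compatibility of $\prec$ with the path length forces $\Bbbk Q_{\prec s} \subseteq \Bbbk Q_{\leq \lvert s \rvert}$ for each $s \in S$.

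For \itemi, I would first observe that the coordinate subspace $\Hom (\Bbbk S, A)_\prec$ of $\Hom (\Bbbk S, A)_{\leq}$ depends only on the restriction of $\prec$ to the (finite) collection of pairs $(s, u)$ with $s \in S$, $u \in \Irr_S$ parallel to $s$, and $\lvert u \rvert = \lvert s \rvert$. Consequently, as $\prec$ varies over $T$, only finitely many distinct subspaces $\Hom (\Bbbk S, A)_\prec$ arise, and correspondingly only finitely many distinct varieties $V_\prec$. By Theorem \ref{theorem:variety} each $V_\prec$ is a closed algebraic subvariety of $\Hom (\Bbbk S, A)_\prec$, hence also of $\Hom (\Bbbk S, A)_{\leq}$. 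Part \itemi then reduces to the observation that a finite union of closed subvarieties in an affine space is again a closed subvariety.

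For \itemii, the crucial step is the set-theoretic identity $\bigcap_{\prec \in T} \Hom (\Bbbk S, A)_\prec = \Hom (\Bbbk S, A)_<$. The inclusion $\supseteq$ is immediate from compatibility with the path length. For $\subseteq$, it suffices to show that for every $s \in S$ and every irreducible path $u \neq s$ parallel to $s$ with $\lvert u \rvert = \lvert s \rvert$, there exists $\prec \in T$ with $s \prec u$, which forces the $u$-coefficient of $\widetilde \phi_s$ to vanish on any element of the intersection. Writing $s = s_1 \cdots s_n$ and $u = u_1 \cdots u_n$ with $s_i, u_i \in Q_1$, I would pick the first index $i$ at which $s_i \neq u_i$ and choose any total order on $Q_1$ with $s_i < u_i$; the induced degree-lexicographic order on $Q_\ldot$ then lies in $T$ and places $s$ strictly below $u$. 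Granted this lemma, \itemii follows directly: any $\widetilde \phi \in \bigcap_\prec V_\prec$ lies in $\Hom (\Bbbk S, A)_<$ and satisfies the Maurer--Cartan equation (since it does so in any single $V_\prec$), whence $\widetilde \phi \in V_<$ by Theorem \ref{theorem:variety}; conversely every $\widetilde \phi \in V_<$ lies in every $\Hom (\Bbbk S, A)_\prec$ and satisfies the same defining Maurer--Cartan equations, hence lies in every $V_\prec$.

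The main obstacle is verifying that the degree-lexicographic order constructed above is an admissible order in the sense of \S\ref{subsubsection:reduction}, i.e.\ well-founded and stable under left- and right-concatenation. This is a classical fact from noncommutative Gröbner basis theory, so no serious technical difficulty arises; the content of the argument is simply that every comparison between two distinct parallel paths of the same length can be realized by a suitable order in $T$.
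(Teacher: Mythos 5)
Your proposal is correct and follows essentially the same route as the paper: both parts are carried out inside the ambient space $\Hom (\Bbbk S, A)_{\leq}$, part \itemi{} reduces to the union of the subspaces $\Hom (\Bbbk S, A)_\prec$ being a variety on which the Maurer--Cartan equation is defined, and part \itemii{} rests on the identity $\bigcap_{\prec \in T} \Hom (\Bbbk S, A)_\prec = \Hom (\Bbbk S, A)_<$ proved by exhibiting, for each pair of distinct parallel paths $s, u$ of equal length, a degree--lexicographic order with $s \prec u$. Your explicit observation that only finitely many distinct subspaces $\Hom (\Bbbk S, A)_\prec$ occur is a welcome refinement that the paper leaves implicit when calling the union an ``arrangement of subspaces.''
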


\begin{proof}
Let $N = \dim \Hom (\Bbbk S, A)_{\leq}$. To prove \ref{compatibleorder1} note that for each ${\prec} \in T$ we have $\Hom (\Bbbk S, A)_\prec \subset \Hom (\Bbbk S, A)_\leq$, so that
\[
W := \bigcup_{{\prec} \in T} \Hom (\Bbbk S, A)_\prec \subset \Hom (\Bbbk S, A)_\leq \simeq \mathbb A^N
\]
is an arrangement of subspaces which may be viewed as an affine variety in $\mathbb A^N$. By Theorem \ref{theorem:variety} the Maurer--Cartan equation is well-defined on each $\Hom (\Bbbk S, A)_\prec$ and thus also on $W$, cutting out an affine variety $\bigcup_{{\prec} \in T} V_{\prec} \subset \mathbb A^N$.

To prove \ref{compatibleorder2}, note that since ${\prec} \in T$ is compatible with the path length, we have $\Hom (\Bbbk S, A)_< \subset \Hom (\Bbbk S, A)_\prec$ for all ${\prec} \in T$. Thus the variety $V_<$ of PBW deformations given in Proposition \ref{proposition:pbwvariety} is contained in the intersection. Conversely, given any order on arrows $x_i \in Q_1$, the corresponding degree--lexicographic order \index{admissible order!degree--lexicographic}\index{degree--lexicographic order} is compatible with the path length, so for each $s \in S$ and each irreducible path $u \in Q_{\lvert s \rvert}$ parallel to $s$, there is always some (degree--lexicographic) admissible order such that $s \prec u$. Thus $\bigcap_{{\prec} \in T} \Hom (\Bbbk S, A)_\prec = \Hom (\Bbbk S, A)_<$ and consequently the Maurer--Cartan equation cuts out precisely the variety $V_<$ of PBW deformations.
\end{proof}

The degree conditions (\ref{degreecondition1}) and (\ref{degreecondition3}) are very useful in practice to obtain actual deformations.  In general,  $\Hom (\Bbbk S, A)$ may be an infinite-dimensional vector space and it is not immediately clear what geometric structure (if any) $V \subset \Hom (\Bbbk S, A)$ has. The degree conditions (\ref{degreecondition1}) and (\ref{degreecondition3}) ensure that the Maurer--Cartan equation\index{Maurer--Cartan!equation}  does make sense and then Proposition \ref{proposition:pbwvariety}, Theorems \ref{theorem:variety} and \ref{theorem:unionvariety} identify concrete affine varieties of actual deformations
\begin{equation}
\label{inclvar}
V_< \subset \bigcup_{{\prec} \in T} V_\prec \subset V.
\end{equation}
(See Figs.~\ref{figure:orbits} and \ref{figure:brauervariety} for concrete examples of $V_\prec$ and $V_<$.)

We now illustrate the theory developed up to this point in a simple example.

\begin{example}[(A very simple example)]
\label{example:simple}
The algebra
\[
A = \Bbbk \bigl(
\begin{tikzpicture}[baseline=-2.75pt,x=2.5em,y=1em]
\draw[line width=1pt, fill=black] (1,0) circle(0.2ex);
\draw[line width=1pt, fill=black] (0,0) circle(0.2ex);
\node[shape=circle, scale=0.7](L) at (0,0) {};
\node[shape=circle, scale=0.7](R) at (1,0) {};
\path[->, line width=.4pt, transform canvas={yshift=.4ex}, font=\scriptsize] (L) edge node[above=-.4ex] {$x$} (R);
\path[->, line width=.4pt, transform canvas={yshift=-.4ex}, font=\scriptsize] (R) edge node[below=-.4ex] {$y$} (L);
\end{tikzpicture}
\bigr) \big/ \langle xy, yx \rangle
\]
is a self-injective algebra of dimension $4$ with $\gldim A = \infty$. We first consider formal deformations of $A$ over $\Bbbk \llrr{t}$, given as Maurer--Cartan elements of the L$_\infty[1]$ algebra $\mathbf p (Q, R) \hatotimes \mathfrak m$, where $\mathfrak m = (t)$.

\paragraph*{The reduction system} The ideal $\langle xy, yx \rangle$ admits a reduction system
\begin{align*}
R = \{ (xy, 0), (yx, 0) \}
\end{align*}
so that
\begin{flalign*}
&& S_0 &= \eqmakebox[QS]{$Q_0$} = \{ e_1, e_2 \} & S_3 &= \{ xyx, yxy \} && \\
&& S_1 &= \eqmakebox[QS]{$Q_1$} = \{ x, y \}     & S_4 &= \{ xyxy, yxyx \} && \\
&& S_2 &= \eqmakebox[QS]{$S$}   = \{ xy, yx \}   &     & \hspace{.52em} \smash{\vdots}
\end{flalign*}
Note that the only irreducible paths parallel to $xy$ and $yx$ are the idempotents $e_1$ and $e_2$, so a general element $\widetilde \varphi$ of $\Hom (\Bbbk S, A) \hatotimes \mathfrak m$ is of the form
\begin{flalign*}
&& \widetilde \varphi (xy) &= \lambda e_1 \qquad \text{and} \qquad \widetilde \varphi(yx) = \mu e_2 && \llap{for $\lambda, \mu \in \mathfrak m$.}
\end{flalign*}
Note that any $\widetilde \phi$ satisfies \eqref{degreecondition1}, so that $\Hom (\Bbbk S, A)_< = \Hom (\Bbbk S, A)$.

\paragraph*{The Maurer--Cartan equation}\index{Maurer--Cartan!equation}
Writing $\star = \star^{\mathrm C}_{\phi + \widetilde \phi}$, note that $\widetilde \varphi$ is a Maurer--Cartan element of the L$_\infty[1]$ algebra $\mathbf p (Q, R) \hatotimes \mathfrak m$ if and only if $\lambda = \mu$, since for $xyx \in S_3$ we have
\begin{align*}
x \star (y \star x) = \mu x \star e_2 = \mu x \qquad \text{and} \qquad (x \star y) \star x = \lambda e_1 \star x = \lambda x
\end{align*}
and similarly for $yxy \in S_3$. By Theorem \ref{theorem:higher-brackets} we thus must have $\lambda = \mu$.

\paragraph*{Algebraization}\index{algebraization}
If $\mu = \lambda = \lambda_1 t + \dotsb + \lambda_n t^n \in \Bbbk [t]$, the corresponding formal deformation $(A \llrr{t}, \star)$ admits an algebraization $(A [t], \star) = \Bbbk Q [t] / \langle xy - \lambda e_1, yx - \lambda e_2 \rangle$. For instance, we may take $\lambda = t$ and evaluate $t = 1$ giving $A' := \Bbbk Q / \langle xy - e_1, yx - e_2 \rangle \simeq \mathrm M_2 (\Bbbk)$, i.e.\ the $4$-dimensional algebra of $2 \times 2$ matrices.

On the other hand, the element $\widetilde \varphi' \in \Hom (\Bbbk S, A) \hatotimes \mathfrak m$ defined by
\begin{align*}
\widetilde \varphi' (xy) = 0 \quad \text{and} \quad \widetilde \varphi' (yx) = e_2 t
\end{align*}
is {\it not} a Maurer--Cartan element and the algebra $(A \llrr{t}, \star') = \Bbbk Q \llrr{t} / \langle xy, yx - e_2 t \rangle$ where $\star' = \star^{\mathrm C}_{\phi + \widetilde \phi'}$ is {\it not} a flat formal deformation of $A$. Indeed, $(A \llrr{t}, \star')$ has $t$-torsion, as using the deformed relations we have for example $x t = xyx = 0$. Here $\widetilde \varphi'$ satisfies the degree condition (\ref{degreecondition1}), but $\Bbbk Q / \langle xy, yx - e_2 \rangle \simeq \Bbbk \{ e_1 \}$ is not a PBW deformation of $A$.

\paragraph*{Algebraic variety} Since $\Hom (\Bbbk S, A) = \Hom (\Bbbk S, A)_< \simeq \Bbbk^2$, the Maurer--Cartan equation\index{Maurer--Cartan!equation} of $\mathbf p (Q, R)$, given by $\lambda = \mu$ cuts out the variety $V_< \subset \mathbb A^2 = \{ (\lambda, \mu) \}$ of PBW deformations.
\end{example}

\begin{remark}
\index{admissible ideal}\index{algebra!finite-dimensional}\index{finite-dimensional algebra}
Any finite-dimensional algebra over an algebraically closed field $\Bbbk$ is Morita equivalent to the path algebra of a quiver modulo an admissible ideal $I$ (i.e.\ $\Bbbk Q_{\geq N} \subset I \subset \Bbbk Q_{\geq 2}$ for some $N \geq 2$). In Example \ref{example:simple} the deformation $A'$ is a PBW deformation\index{PBW deformation} of $A$ and has the same dimension, but the ideal $\langle xy - e_1, yx - e_2 \rangle$ is no longer admissible and $A' \simeq \mathrm M_2 (\Bbbk)$ is Morita equivalent to $\Bbbk$, so $\gldim A' = \gldim \Bbbk = 0$.

In other words, deformations of path algebras of quivers need not preserve the admissibility of the ideal and may be Morita equivalent to the path algebra of a quiver with less (or more) vertices and less arrows.
\end{remark}

\section{Algebraizations}
\label{subsection:algebraizations}
\index{algebraization}

In the context of algebraizations one should look at Maurer--Cartan elements in
\[
\mathbf p (Q, R) \otimes \mathfrak m
\]
where now $\mathfrak m$ is a maximal ideal in a commutative Noetherian $\Bbbk$-algebra $B$ which is not necessarily local or and not necessarily $\mathfrak m$-adically complete. For example $\mathfrak m = (t) \subset \Bbbk [t]$. Even though $\mathbf p (Q, R) \otimes \mathfrak m$ is still an L$_\infty[1]$ algebra, the Maurer--Cartan equation\index{Maurer--Cartan!equation} (being an infinite sum) does not necessarily make sense for {\it all} elements in $\Hom (\Bbbk S, A) \otimes \mathfrak m$ (cf.\ Remark \ref{remark:adic}) and the degree conditions \eqref{degreecondition1} and \eqref{degreecondition3} ensure that for the elements in $\Hom (\Bbbk S, A)_< \otimes \mathfrak m$ and $\Hom (\Bbbk S, A)_\prec \otimes \mathfrak m$ it does.

\begin{proposition}
\label{proposition:Groebner}
Let $\widetilde \varphi \in \Hom (\Bbbk S, A)_\prec \otimes \mathfrak m$ or $\Hom (\Bbbk S, A)_{<} \otimes \mathfrak m$.
\begin{enumerate}
\item \label{lemgr1} Then for any $a, b \in A$ we have that $a \star^{\mathrm C}_{\phi + \widetilde \varphi} b$ is a finite sum.
\item \label{lemgr2} If $\widetilde \phi$ satisfies the Maurer--Cartan equation\index{Maurer--Cartan!equation}  of $\mathbf p (Q, R) \hatotimes \mathfrak m$, then the formal deformation $(A \llrr{t}, \star^{\mathrm C}_{\phi + \widetilde \varphi})$ admits $(A [t], \star^{\mathrm C}_{\phi + \widetilde \phi})$ as an algebraization.\index{algebraization}
\end{enumerate}
\end{proposition}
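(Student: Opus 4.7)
The plan for \itemi{} is to show that the degree condition on $\widetilde \varphi$ forces each application of $\widetilde \varphi$ in right-most reduction to strictly decrease a well-founded measure on paths, and then to conclude finiteness of the full expansion via König's lemma. For $\widetilde \varphi \in \Hom(\Bbbk S, A)_\prec$, since $R$ arises from a Gröbner basis with respect to $\prec$, one has $\varphi_s \in \Bbbk Q_{\prec s}$ automatically (from $\varphi_s = s - F_j$ with $\tip(F_j) = s$), and by hypothesis $\widetilde \varphi_s \in \Bbbk Q_{\prec s}$ as well; the admissible-order axioms then give that both $q \varphi_s r$ and $q \widetilde \varphi_s r$ are $\Bbbk$-linear combinations of paths strictly $\prec q s r$, so every right-most reduction step is a strict $\prec$-descent. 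For $\widetilde \varphi \in \Hom(\Bbbk S, A)_<$, with $\varphi \in \Hom(\Bbbk S, A)_\leq$ as in the PBW setup of \S\ref{subsection:PBWdeformations}, path length serves as the analogous measure: each $\widetilde \varphi$-reduction strictly decreases the length of the reduced path while each $\varphi$-reduction is length-non-increasing, and any pure $\varphi$-sequence terminates by reduction-finiteness of $R$, so each branch uses $\widetilde \varphi$ at most $|\sigma(a)\sigma(b)|$ times.

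Since $\widetilde \varphi$ lies in the uncompleted tensor product $\Hom(\Bbbk S, A) \otimes \mathfrak m$, each $\widetilde \varphi_s$ is a finite $\Bbbk$-linear combination of irreducible paths (with coefficients in $\mathfrak m$), so the reduction tree rooted at $\sigma(a) \sigma(b)$ is finitely branching at every node. By the previous paragraph every root-to-leaf branch is finite, so König's lemma gives finiteness of the entire tree; consequently $\mathrm{red}^{(\infty)}_{\varphi + z \widetilde \varphi}(\sigma(a) \sigma(b))$ is polynomial (not merely a power series) in $z$, and evaluating $z = 1$ yields the asserted finite sum.

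For \itemii{}, part \itemi{} implies that $\star^{\mathrm C}_{\varphi + \widetilde \varphi}$ restricts to a $\Bbbk[t]$-bilinear operation on $A[t]$, making $(A[t], \star^{\mathrm C}_{\varphi + \widetilde \varphi})$ a well-defined $\Bbbk[t]$-algebra. Associativity on $A[t]$ follows by restriction from associativity on $A \llrr{t}$, which holds under the Maurer--Cartan hypothesis by Theorem \ref{theorem:equivalenceformal}; the $(t)$-adic completion of $(A[t], \star^{\mathrm C}_{\varphi + \widetilde \varphi})$ is naturally identified with $(A \llrr{t}, \star^{\mathrm C}_{\varphi + \widetilde \varphi})$ via the identity map, which is the identity modulo $(t)$, exhibiting the former as an algebraization of the latter. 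The principal technical obstacle will be the finiteness of the reduction tree in \itemi{}: well-foundedness of the order alone gives only that each individual branch is finite, and König's lemma, applicable because the tree is finitely branching, is the key step upgrading branch-finiteness to tree-finiteness.
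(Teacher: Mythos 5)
Your proof is correct and follows the same route as the paper's, whose entire argument for \itemi{} is the one-line observation that right-most reductions with respect to $R_{\phi+\widetilde\phi}$ strictly decrease the order of paths with respect to $\prec$, with \itemii{} then following because $\star^{\mathrm C}_{\phi+\widetilde\phi}$ is well defined on $A[t]\subset A\llrr{t}$. Your write-up is in fact more explicit on two points the paper leaves implicit: the separate path-length measure needed for the $(<)$ case (where no admissible order is fixed and one must combine strict length-descent for $\widetilde\varphi$-steps with reduction-finiteness of $R$ for the intervening $\varphi$-steps), and the König's-lemma step upgrading finiteness of each branch to finiteness of the whole reduction tree.
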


\begin{proof}
Assertion \ref{lemgr1} follows from the observation that $\star^{\mathrm C}_{\phi + \widetilde \phi}$ is defined by performing (rightmost) reductions with respect to $R_{\phi + \widetilde \phi} = \{ (s, \phi_s + \widetilde \phi_s) \}$ (see Definition \ref{definition:star_2}) and such reductions strictly decrease the order of paths with respect to $\prec$, and \ref{lemgr2} directly follows from \ref{lemgr1} since $\star^{\mathrm C}_{\phi + \widetilde \phi}$ is well-defined on $A [t] \subset A \llrr{t}$.
\end{proof}

Proposition \ref{proposition:Groebner} is very useful in practice, since it allows one to pass from formal to actual deformations in contexts where one deals with deformations which do not satisfy the PBW condition (e.g.\ certain homogeneous deformations), but still satisfy the degree condition (\ref{degreecondition3}) for some suitable choice of \index{admissible order} admissible order. In geometric contexts this can for example be used to obtain algebraizations\index{algebraization} of quantizations of polynomial Poisson structures, deformed quantum preprojective algebras and noncommutative projective planes \cite{barmeierwang}.

\subsection{Geometric interpretation}
\label{subsubsection:geometricinterpretation}

The variety $V_\prec$ of \S\ref{subsection:varieties} can be viewed as providing algebraizations\index{algebraization} of formal deformations in the following sense. Consider the point $\phi$ viewed as the origin of the variety $V_\prec$, corresponding to the reduction system $R = R_\phi$, or equivalently to the algebra $A$. If $C \subset V_\prec$ is a curve passing through $\phi$ such that $C$ is smooth at $\phi$, then the completion $\widehat{\mathcal O}_{C,\phi}$ is a formal deformation of $R$ over $\Bbbk \llrr{t}$.

Let $(A \llrr{t}, \star)$ be the corresponding formal deformation of $A$. Then $C$ can be viewed as an algebraization\index{algebraization} of $(A \llrr{t}, \star)$.

Let $\alpha \in \HH^2 (A, A)$ be a cohomology class. If there is a curve $C$ in $V_\prec$ passing through the origin which is smooth at the origin, and if $\widetilde \phi_1 \in \mathrm T_\phi C$ is such that the image under the Kodaira--Spencer map $\mathrm{KS} (\widetilde \phi_1) = \alpha$ (cf.\ Theorem \ref{theorem:variety}), then there is a formal deformation of $R$ over $\Bbbk \llrr{t}$ whose first-order term is $\widetilde \phi_1$ which admits $C$ as an algebraization, providing an algebraization\index{algebraization} of the formal deformation of $A$ associated to $\alpha$.

\section{Finite-dimensional algebras}
\label{subsubsection:finitedimensional}
\index{finite-dimensional algebra}

Deformations of finite-dimensional algebras can in principle be determined already from the classical theory, i.e.\ as Maurer--Cartan elements in the Hochschild complex with its DG Lie algebra structure given by the Gerstenhaber bracket. The deformations can be understood by studying the variety $\mathrm{Alg}_W$ of all associative algebra structures on a finite-dimensional vector space $W$, and its stratification into $\GL (W)$-orbits (see \S\ref{subsubsection:variety}).

When $\dim W = d < \infty$, one has that $\mathrm{Alg}_W$ is an affine variety in $\mathbb A^{d^3}$ cut out by $d^4$ quadratic equations, and $\GL (W)$ is an algebraic group of dimension $d^2$. The dimensions quickly grow too large to allow a detailed analysis of the orbits.

From a computational perspective, the approach via reduction systems is often much more accessible --- even for higher-dimensional algebras (see \S\ref{subsection:brauertree} for deformations of Brauer tree algebras). Using the degree conditions (\ref{degreecondition1}) and (\ref{degreecondition3}), the varieties $V_<$ or $V_\prec$ can be used to give a low-dimensional blueprint of the variety $\mathrm{Alg}_W$. Let us illustrate this in the case of $4$-dimensional algebras.

Gabriel \cite{gabriel} showed that for $\dim W = 4$, the variety $\mathrm{Alg}_W \subset \mathbb A^{64}$ consists of five irreducible components with $18$ $\GL (W)$-orbits of dimensions between $6$ and $15$ and one $1$-dimensional family of $11$-dimensional orbits.

The $2 \times 2$ matrix algebra $\mathrm M_2 (\Bbbk)$ corresponds to an open $12$-dimensional orbit inside one of the five irreducible components of $\mathrm{Alg}_W$ and the $4$-dimensional self-injective algebra
\[
A = \Bbbk \bigl(
\begin{tikzpicture}[baseline=-2.75pt,x=2.5em,y=1em]
\draw[line width=1pt, fill=black] (1,0) circle(0.2ex);
\draw[line width=1pt, fill=black] (0,0) circle(0.2ex);
\node[shape=circle, scale=0.7](L) at (0,0) {};
\node[shape=circle, scale=0.7](R) at (1,0) {};
\path[->, line width=.4pt, transform canvas={yshift=.4ex}, font=\scriptsize] (L) edge node[above=-.4ex] {$x$} (R);
\path[->, line width=.4pt, transform canvas={yshift=-.4ex}, font=\scriptsize] (R) edge node[below=-.4ex] {$y$} (L);
\end{tikzpicture}
\bigr) \big/ \langle xy, yx \rangle
\]
of Example \ref{example:simple} is an $11$-dimensional orbit lying in its closure. Describing the deformations of $A$ using the approach via reduction systems, the space of $2$-cochains $\Hom (\Bbbk S, A)$ is only $2$-dimensional and the Maurer--Cartan equation \index{Maurer--Cartan!equation} cuts out the variety $V_< \subset \mathbb A^2$ of PBW deformations\index{PBW deformation}, with $V_< \simeq \mathbb A^1$ with $0 \in \mathbb A^1$ corresponding to $A$ and $\lambda \in \mathbb A^1 \setminus \{ 0 \}$ corresponding to $\mathrm M_2 (\Bbbk)$.

Now let $A$ be the $4$-dimensional algebra
\[
A = \Bbbk \bigl(
\begin{tikzpicture}[baseline=-2.75pt,x=2.5em,y=1em]
\draw[line width=1pt, fill=black] (0,0) circle(0.2ex);
\node[shape=circle, scale=0.7](L) at (0,0) {};
\path[->,line width=.4pt,font=\scriptsize, looseness=16, in=215, out=145]
(L.160) edge node[pos=.8,below=-.2ex] {$x$} (L.200)
;
\path[->,line width=.4pt,font=\scriptsize, looseness=16, in=35, out=325]
(L.340) edge node[pos=.15,below=-.2ex] {$y$} (L.20)
;
\end{tikzpicture}
\bigr) \big/ \langle yx, x^2, y^2 \rangle.
\]
Then $A$ lies in the intersection of another, $13$-dimensional, irreducible component and a $12$-dimensional component and the $\GL (W)$-orbit through $A$ has dimension $11$. We consider the obvious monomial reduction system $R = \{(yx, 0),  (x^2, 0), (y^2, 0) \}$ so that $S = \{ yx,  x^2, y^2 \}$. 

\begin{lemma}\label{Lemma:hh24dimensional}
$\dim_\Bbbk \HH^2(A, A) = 3$.
\end{lemma}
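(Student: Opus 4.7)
The plan is to apply the combinatorial recipe from \S\ref{subsection:firstorder} (Corollary \ref{corollary:firstorder}) to the monomial reduction system $R = \{(yx,0),(x^2,0),(y^2,0)\}$ and compute $Z^2$ and $B^2$ as subspaces of $P^2 = \Hom(\Bbbk S, A)$ by hand. First I would list the irreducible paths: since $\Irr_S$ consists of paths containing none of $yx$, $x^2$, $y^2$, one gets $\Irr_S = \{e, x, y, xy\}$, recovering $\dim A = 4$. The $2$-ambiguities are the length-$3$ paths both of whose length-$2$ subpaths lie in $S$, which gives $S_3 = \{x^3, y^3, y x^2, y^2 x\}$. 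Hence $\dim P^2 = 3 \cdot 4 = 12$ and $\dim P^3 = 4 \cdot 4 = 16$ while $\dim P^1 = 2 \cdot 4 = 8$.

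Next I would determine $Z^2$ by writing a general $\widetilde\phi\in P^2$ in the basis $\{e,x,y,xy\}$ with twelve unknown coefficients and imposing the associativity of $\star = \star^{\mathrm C}_{\phi+\widetilde\phi t}$ modulo $t^2$ on each of the four elements of $S_3$ (Theorem \ref{theorem:higher-brackets} combined with \S\ref{subsection:firstorder}). Each overlap yields a linear constraint: the overlap $yx^2 = y\cdot x^2 = yx\cdot x$ forces the $e$-coefficients of $\widetilde\phi(yx)$ and $\widetilde\phi(x^2)$ to vanish (since the two bracketings land in the $\Bbbk$-span of $x$ and $y$ respectively), and symmetrically $y^2x$ forces the $e$-coefficient of $\widetilde\phi(y^2)$ to vanish; the overlap $x^3$ forces the $y$-coefficient of $\widetilde\phi(x^2)$ to vanish, and $y^3$ forces the $x$-coefficient of $\widetilde\phi(y^2)$ to vanish. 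These five independent linear conditions cut $Z^2$ out as a $7$-dimensional subspace of $P^2$.

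To compute $B^2$ I would use the explicit formula for the differential $\langle\blank\rangle\colon P^1 \to P^2$, namely $\langle\psi\rangle(s) = -\check\psi(\mathrm{split}_1(s))$ for $s\in S$ (since $\phi_s = 0$, Lemma \ref{lemma:rhosplit}). Writing $\psi(x) = \alpha e+\beta x+\gamma y+\delta xy$ and $\psi(y) = \alpha'e+\beta'x+\gamma'y+\delta'xy$, a short computation in $A$ gives
\begin{align*}
\langle\psi\rangle(yx) &= -\alpha' x - \alpha y, &
\langle\psi\rangle(x^2) &= -2\alpha x - \gamma xy, &
\langle\psi\rangle(y^2) &= -2\alpha' y - \beta' xy.
\end{align*}
Reading off the map $P^1\to Z^2$ in the coordinates of the previous paragraph, the image depends effectively only on the four parameters $\alpha,\alpha',\beta',\gamma$, and the corresponding four image vectors are clearly linearly independent in $Z^2$; hence $\dim B^2 = 4$. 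Therefore $\dim \HH^2(A, A) = 7 - 4 = 3$.

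The only step that requires any care is the bookkeeping for the cocycle condition: one must be sure that the two bracketings $(\pi(u)\star\pi(v))\star\pi(w)$ and $\pi(u)\star(\pi(v)\star\pi(w))$ for each $uvw\in S_3$ have been reduced to elements of the basis $\{e,x,y,xy\}$ before comparing coefficients. Once the table of products in $A$ is written down, all the computations above are mechanical, and no deeper homological input beyond Corollary \ref{corollary:firstorder} is needed.
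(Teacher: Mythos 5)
Your proposal is correct and follows essentially the same route as the paper: both apply the combinatorial recipe of \S\ref{subsection:firstorder} to the monomial reduction system, extract the five cocycle conditions ($\lambda_e=\mu_e=\nu_e=0$, $\mu_y=\nu_x=0$ in the paper's notation) from associativity on $S_3=\{x^3,y^3,yx^2,y^2x\}$ mod $t^2$, and find a $4$-dimensional space of coboundaries, giving $7-4=3$. The only cosmetic difference is that you compute $B^2$ directly from $\langle\psi\rangle(s)=-\check\psi(\mathrm{split}_1(s))$ whereas the paper packages the same computation through the gauge automorphism $T=\id+\psi t$; the resulting image subspace is identical.
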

\begin{proof}
Any element $\widetilde \phi \in \Hom(\Bbbk S, A)$ is of the following general form 
\begin{alignat*}{7}
&& \widetilde \phi (yx)  &{}={}& \lambda_e &{}+{}& \lambda_x x &{}+{}& \lambda_y y &{}+{}& \lambda_{xy} xy& \\
&& \widetilde \phi (x^2) &{}={}&     \mu_e &{}+{}&     \mu_x x &{}+{}&     \mu_y y &{}+{}&     \mu_{xy} xy& \\
&& \widetilde \phi (y^2) &{}={}&     \nu_e &{}+{}&     \nu_x x &{}+{}&     \nu_y y &{}+{}&     \nu_{xy} xy&
\end{alignat*}
with $\lambda_e, \dotsc, \nu_{xy} \in \Bbbk$. By Corollary \ref{corollary:firstorder} $\widetilde \phi$ is a $2$-cocycle if and only if for each $uvw \in S_3 = \{ yx^2, y^2x, x^3, y^3\}$ with $uv, vw \in S$ we have 
\[
(\pi (u) \star \pi(v)) \star \pi(w) = \pi (u) \star (\pi (v) \star \pi(w)) \; \mathrm{mod} \ t^2
\]
where $\star = \star^{\mathrm C}_{\phi + \widetilde \phi t}$ and $\phi = 0$. This is equivalent to  
\[
\lambda_e = 0,\quad \mu_e = 0 = \mu_y, \quad \nu_e = 0 = \nu_x.
\]
By Corollary \ref{corollary:firstorder} again two $2$-cocycles $\widetilde \phi, \widetilde \phi'$ are cohomologous, i.e.\ $\widetilde \phi' - \widetilde \phi = \langle \psi \rangle$ for some $\psi \in \Hom (\Bbbk Q_1, A) \simeq \Hom (\Bbbk Q_1, \Bbbk \Irr_S)$, if and only if the isomorphism $T \colon \Bbbk \Irr_S[t]/(t^2) \to \Bbbk \Irr_S[t]/(t^2)$ determined by $T (x) = x + \psi (x) t$ for $x \in Q_1$ via \eqref{align:tudetermined},  satisfies (note that $\phi = 0$)
\begin{align}\label{align:examplecoboundary}
\widetilde \phi' (s) t = T (s_1) \star \dotsb \star T (s_m) \; \mathrm{mod} \ t^2
\end{align}
for any $s \in S$ with $s = s_1 \dotsb s_m$ for $s_i \in Q_1$. Write $\psi$ as the following general form 
\begin{alignat*}{7}
&& \psi (x) &{}={}& \alpha_e &{}+{}& \alpha_x x &{}+{}& \alpha_y y &{}+{}& \alpha_{xy} xy& \\
&& \psi (y) &{}={}&  \beta_e &{}+{}&  \beta_x x &{}+{}&  \beta_y y &{}+{}&  \beta_{xy} xy&
\end{alignat*}
with $\alpha_e, \dotsc, \beta_{xy} \in \Bbbk$. Then \eqref{align:examplecoboundary} is equivalent to    
\begin{flalign*}
&&
\begin{aligned}
\lambda_x' - \lambda_x &= \beta_e  \\
\lambda_y' - \lambda_y &= \alpha_e \\
\lambda_{xy}' - \lambda_{xy} &= 0
\end{aligned}
&&
\begin{aligned}
\mu_x' - \mu_x &= 2 \alpha_e \\
\mu_{xy}' - \mu_{xy} &= \alpha_{xy} \\
\end{aligned}
&&
\begin{aligned}
\nu_y' - \nu_y &= 2 \beta_e \\
\nu_{xy}' - \nu_{xy} &= \beta_y.
\end{aligned}
&&
\end{flalign*}
This implies that $\HH^2 (A, A)$ is $3$-dimensional and any cohomology class can be represented by a $2$-cocycle of the form
\begin{equation}
\label{spanning}
\begin{aligned}
y x &\mapsto \lambda \hair x y \\
x^2 &\mapsto \mu \hair x \\
y^2 &\mapsto \nu \hair y
\end{aligned}
\end{equation}
for some $\lambda, \mu, \nu \in \Bbbk$. 
\end{proof}

Letting $\prec$ be the degree--lexicographic order \index{admissible order!degree--lexicographic}\index{degree--lexicographic order} obtained from $x \prec y$, we have that $\dim \Hom (\Bbbk S, A)_\prec = 11$. Let us look at the subspace $Y \subset \Hom (\Bbbk S, A)_\prec$ spanned by the elements of the form \eqref{spanning}, which will give the full local picture of the deformations of $A$. Evaluating the Maurer--Cartan equation \index{Maurer--Cartan!equation} of $\mathbf p (Q, R)$ on the overlaps $S_3 = \{ y^3, x^3, y^2 x, y x^2 \}$ (see Theorem \ref{theorem:higher-brackets}), the variety $X = V_\prec \cap Y \subset Y \simeq \mathbb A^3$ is cut out by two equations
\begin{align*}
\lambda (\lambda - 1) \mu &= 0 \\
\lambda (\lambda - 1) \nu &= 0
\end{align*}
so that $X = \{ (\lambda, 0, 0) \} \cup \{ (0, \mu, \nu) \} \cup \{ (1, \mu, \nu) \}$, where $0 \in X$ corresponds to the algebra $A$.

The orbits of the groupoid action $G \rightrightarrows X$ capture precisely the isomorphism classes of the algebras contained in $X$. One can show by Definition \ref{definition:equivalencereduction} that  each point in $\{ (\lambda, 0, 0) \}$ is its own orbit; the planes $\{ (0, \mu, \nu) \}$ and $\{ (1, \mu, \nu) \}$ decompose into four and three orbits, respectively, as illustrated in Fig.~\ref{figure:orbits} (cf.\ the diagram \cite[p.~153]{gabriel}). Note that the Zariski tangent space $\mathrm T_0 X \simeq \Bbbk^3$ and the natural Kodaira--Spencer map $\mathrm T_0 X \to \HH^2 (A, A)$ (see Theorem \ref{theorem:variety}) is an isomorphism, so that $X$ captures all deformations of $A$ parametrized by $\HH^2 (A, A)$.

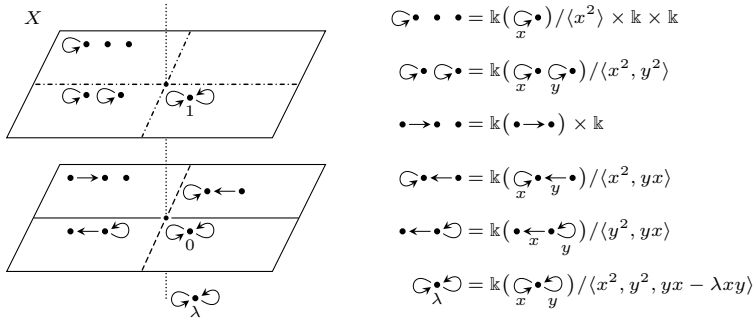
\begin{figure}
\centering
\begin{tikzpicture}[x=1em,y=1em,line width=.4pt]
\draw[line cap=round] (4,-2) -- (6,2) -- (-4,2) -- (-6,-2) -- cycle;
\draw[line cap=round] (4,3) -- (6,7) -- (-4,7) -- (-6,3) -- cycle;
\draw[fill=black] (0,0) circle(.17ex);
\draw[fill=black] (0,5) circle(.17ex);
\path[dash pattern=on 0pt off 1.3pt, line width=.6pt, line cap=round] (0,-2.08) edge (0,-3);
\path[dash pattern=on 0pt off 1.3pt, line width=.6pt, line cap=round] (0,2.92) edge (0,.1);
\path[dash pattern=on 0pt off 1.3pt, line width=.6pt, line cap=round] (0,5.1) edge (0,8);
\path[dash pattern=on 2pt off 1.5pt, line width=.5pt, line cap=round] (-.1,-.2) edge (-.9,-1.95);
\path[dash pattern=on 2pt off 1.5pt, line width=.5pt, line cap=round] (.1,.2) edge (.9,1.95);
\draw (-.2,0) -- (-4.99,0);
\draw (.2,0) -- (4.99,0);
\path[dash pattern=on 1.5pt off 1.5pt on .05pt off 1.5pt, line width=.55pt, line cap=round] (.1,5.2) edge (.9,6.95);
\path[dash pattern=on 1.5pt off 1.5pt on .05pt off 1.5pt, line width=.55pt, line cap=round] (-.1,4.8) edge (-.9,3.05);
\path[dash pattern=on 1.5pt off 1.5pt on .05pt off 1.5pt, line width=.55pt, line cap=round] (-.2,5) edge (-4.99,5);
\path[dash pattern=on 1.5pt off 1.5pt on .05pt off 1.5pt, line width=.55pt, line cap=round] (.2,5) edge (4.99,5);
\node[font=\scriptsize] at (-5,7.5) {$X$};
\node[font=\scriptsize,right] at (11,7.5) {$= \Bbbk \bigl( \hspace{1.6em} \bigr) / \langle x^2 \rangle \times \Bbbk \times \Bbbk$};
\node[font=\scriptsize,right] at (11,5.5) {$= \Bbbk \bigl( \hspace{3.2em} \bigr) / \langle x^2, y^2 \rangle$};
\node[font=\scriptsize,right] at (11,3.5) {$= \Bbbk \bigl( \hspace{2.2em} \bigr) \times \Bbbk$};
\node[font=\scriptsize,right] at (11,1.5) {$= \Bbbk \bigl( \hspace{3.2em} \bigr) / \langle x^2, yx \rangle$};
\node[font=\scriptsize,right] at (11,-.5) {$= \Bbbk \bigl( \hspace{3.2em} \bigr) / \langle y^2, yx \rangle$};
\node[font=\scriptsize,right] at (11,-2.5){$= \Bbbk \bigl( \hspace{2.65em} \bigr) / \langle x^2, y^2, y x - \lambda x y \rangle$};
\begin{scope}[shift={(10.2em,-2.5em)}]
\draw[line width=1pt, fill=black] (0,0) circle(0.15ex);
\node[shape=circle, scale=0.5] (L1) at (0,0) {};
\node[font=\tiny] at (0,-.5) {$\lambda$};
\path[->,line width=.35pt,font=\tiny, looseness=16, in=215, out=145]
(L1.160) edge (L1.200);
\path[->,line width=.35pt,font=\tiny, looseness=16, in=35, out=325]
(L1.340) edge (L1.20);
\end{scope}
\begin{scope}[shift={(14em,-2.5em)}]
\draw[line width=1pt, fill=black] (0,0) circle(0.15ex);
\node[shape=circle, scale=0.5] (L1) at (0,0) {};
\path[->,line width=.35pt,font=\tiny, looseness=16, in=215, out=145]
(L1.160) edge node[pos=.8,below=-.2ex] {$x$} (L1.200);
\path[->,line width=.35pt,font=\tiny, looseness=16, in=35, out=325]
(L1.340) edge node[pos=.15,below=-.2ex] {$y$} (L1.20);
\end{scope}
\begin{scope}[shift={(8.9em,-.5em)}]
\draw[line width=1pt, fill=black] (0,0) circle(0.15ex);
\draw[line width=1pt, fill=black] (1.3,0) circle(0.15ex);
\node[shape=circle, scale=0.5] (L3) at (0,0) {};
\node[shape=circle, scale=0.5] (R3) at (1.3,0) {};
\path[->,line width=.35pt,font=\tiny, looseness=16, in=35, out=325]
(R3.340) edge (R3.20);
\path[<-,line width=.35pt,font=\tiny]
(L3) edge (R3);
\end{scope}
\begin{scope}[shift={(13.2em,-.5em)}]
\draw[line width=1pt, fill=black] (0,0) circle(0.15ex);
\draw[line width=1pt, fill=black] (1.3,0) circle(0.15ex);
\node[shape=circle, scale=0.5] (L3) at (0,0) {};
\node[shape=circle, scale=0.5] (R3) at (1.3,0) {};
\path[->,line width=.35pt,font=\tiny, looseness=16, in=35, out=325]
(R3.340) edge node[pos=.15,below=-.2ex] {$y$} (R3.20);
\path[<-,line width=.35pt,font=\tiny]
(L3) edge node[below=-.2ex] {$x$} (R3);
\end{scope}
\begin{scope}[shift={(9.7em,1.5em)}]
\draw[line width=1pt, fill=black] (0,0) circle(0.15ex);
\draw[line width=1pt, fill=black] (1.3,0) circle(0.15ex);
\node[shape=circle, scale=0.5] (L2) at (0,0) {};
\node[shape=circle, scale=0.5] (R2) at (1.3,0) {};
\path[->,line width=.35pt,font=\tiny, looseness=16, in=215, out=145]
(L2.160) edge (L2.200);
\path[<-,line width=.35pt,font=\tiny]
(L2) edge (R2);
\end{scope}
\begin{scope}[shift={(14em,1.5em)}]
\draw[line width=1pt, fill=black] (0,0) circle(0.15ex);
\draw[line width=1pt, fill=black] (1.3,0) circle(0.15ex);
\node[shape=circle, scale=0.5] (L2) at (0,0) {};
\node[shape=circle, scale=0.5] (R2) at (1.3,0) {};
\path[->,line width=.35pt,font=\tiny, looseness=16, in=215, out=145]
(L2.160) edge node[pos=.8,below=-.2ex] {$x$} (L2.200);
\path[<-,line width=.35pt,font=\tiny]
(L2) edge node[below=-.2ex] {$y$} (R2);
\end{scope}
\begin{scope}[shift={(8.9em,3.5em)}]
\draw[line width=1pt, fill=black] (0,0) circle(0.15ex);
\draw[line width=1pt, fill=black] (1.3,0) circle(0.15ex);
\draw[line width=1pt, fill=black] (2.1,0) circle(0.15ex);
\node[shape=circle, scale=0.5] (L4) at (0,0) {};
\node[shape=circle, scale=0.5] (R4) at (1.3,0) {};
\path[->,line width=.35pt,font=\tiny]
(L4) edge (R4);
\end{scope}
\begin{scope}[shift={(13.2em,3.5em)}]
\draw[line width=1pt, fill=black] (0,0) circle(0.15ex);
\draw[line width=1pt, fill=black] (1.3,0) circle(0.15ex);
\node[shape=circle, scale=0.5] (L4) at (0,0) {};
\node[shape=circle, scale=0.5] (R4) at (1.3,0) {};
\path[->,line width=.35pt,font=\tiny]
(L4) edge (R4);
\end{scope}
\begin{scope}[shift={(9.7em,5.5em)}]
\draw[line width=1pt, fill=black] (0,0) circle(0.15ex);
\draw[line width=1pt, fill=black] (1.3,0) circle(0.15ex);
\node[shape=circle, scale=0.5] (L5) at (0,0) {};
\node[shape=circle, scale=0.5] (R5) at (1.3,0) {};
\path[->,line width=.35pt,font=\tiny, looseness=16, in=215, out=145]
(L5.160) edge (L5.200);
\path[->,line width=.35pt,font=\tiny, looseness=16, in=215, out=145]
(R5.160) edge (R5.200);
\end{scope}
\begin{scope}[shift={(14em,5.5em)}]
\draw[line width=1pt, fill=black] (0,0) circle(0.15ex);
\draw[line width=1pt, fill=black] (1.3,0) circle(0.15ex);
\node[shape=circle, scale=0.5] (L5) at (0,0) {};
\node[shape=circle, scale=0.5] (R5) at (1.3,0) {};
\path[->,line width=.35pt,font=\tiny, looseness=16, in=215, out=145]
(L5.160) edge node[pos=.8,below=-.2ex] {$x$} (L5.200);
\path[->,line width=.35pt,font=\tiny, looseness=16, in=215, out=145]
(R5.160) edge node[pos=.8,below=-.2ex] {$y$} (R5.200);
\end{scope}
\begin{scope}[shift={(9.4em,7.5em)}]
\draw[line width=1pt, fill=black] (0,0) circle(0.15ex);
\draw[line width=1pt, fill=black] (.8,0) circle(0.15ex);
\draw[line width=1pt, fill=black] (1.6,0) circle(0.15ex);
\node[shape=circle, scale=0.5] (L6) at (0,0) {};
\path[->,line width=.35pt,font=\tiny, looseness=16, in=215, out=145]
(L6.160) edge (L6.200);
\end{scope}
\begin{scope}[shift={(14em,7.5em)}]
\draw[line width=1pt, fill=black] (0,0) circle(0.15ex);
\node[shape=circle, scale=0.5] (L6) at (0,0) {};
\path[->,line width=.35pt,font=\tiny, looseness=16, in=215, out=145]
(L6.160) edge node[pos=.8,below=-.2ex] {$x$} (L6.200);
\end{scope}
%
\begin{scope}[shift={(1.1em,-3em)}]
\draw[line width=1pt, fill=black] (0,0) circle(0.15ex);
\node[shape=circle, scale=0.5] (L1) at (0,0) {};
\node[font=\tiny] at (0,-.5) {$\lambda$};
\path[->,line width=.35pt,font=\tiny, looseness=16, in=215, out=145]
(L1.160) edge (L1.200);
\path[->,line width=.35pt,font=\tiny, looseness=16, in=35, out=325]
(L1.340) edge (L1.20);
\end{scope}
\begin{scope}[shift={(.9em,-.5em)}]
\draw[line width=1pt, fill=black] (0,0) circle(0.15ex);
\node[shape=circle, scale=0.5] (L1) at (0,0) {};
\node[font=\tiny] at (0,-.5) {$0$};
\path[->,line width=.35pt,font=\tiny, looseness=16, in=215, out=145]
(L1.160) edge (L1.200);
\path[->,line width=.35pt,font=\tiny, looseness=16, in=35, out=325]
(L1.340) edge (L1.20);
\end{scope}
\begin{scope}[shift={(.9em,4.5em)}]
\draw[line width=1pt, fill=black] (0,0) circle(0.15ex);
\node[shape=circle, scale=0.5] (L1) at (0,0) {};
\node[font=\tiny] at (0,-.5) {$1$};
\path[->,line width=.35pt,font=\tiny, looseness=16, in=215, out=145]
(L1.160) edge (L1.200);
\path[->,line width=.35pt,font=\tiny, looseness=16, in=35, out=325]
(L1.340) edge (L1.20);
\end{scope}
\begin{scope}[shift={(-3.6em,-.5em)}]
\draw[line width=1pt, fill=black] (0,0) circle(0.15ex);
\draw[line width=1pt, fill=black] (1.3,0) circle(0.15ex);
\node[shape=circle, scale=0.5] (L2) at (0,0) {};
\node[shape=circle, scale=0.5] (R2) at (1.3,0) {};
\path[->,line width=.35pt,font=\tiny, looseness=16, in=35, out=325]
(R2.340) edge (R2.20);
\path[<-,line width=.35pt,font=\tiny]
(L2) edge (R2);
\end{scope}
\begin{scope}[shift={(1.55em,1em)}]
\draw[line width=1pt, fill=black] (0,0) circle(0.15ex);
\draw[line width=1pt, fill=black] (1.3,0) circle(0.15ex);
\node[shape=circle, scale=0.5] (L2) at (0,0) {};
\node[shape=circle, scale=0.5] (R2) at (1.3,0) {};
\path[->,line width=.35pt,font=\tiny, looseness=16, in=215, out=145]
(L2.160) edge (L2.200);
\path[<-,line width=.35pt,font=\tiny]
(L2) edge (R2);
\end{scope}
\begin{scope}[shift={(-3.6em,1.5em)}]
\draw[line width=1pt, fill=black] (0,0) circle(0.15ex);
\draw[line width=1pt, fill=black] (1.3,0) circle(0.15ex);
\draw[line width=1pt, fill=black] (2.1,0) circle(0.15ex);
\node[shape=circle, scale=0.5] (L4) at (0,0) {};
\node[shape=circle, scale=0.5] (R4) at (1.3,0) {};
\path[->,line width=.35pt,font=\tiny]
(L4) edge (R4);
\end{scope}
\begin{scope}[shift={(-3em,4.6em)}]
\draw[line width=1pt, fill=black] (0,0) circle(0.15ex);
\draw[line width=1pt, fill=black] (1.3,0) circle(0.15ex);
\node[shape=circle, scale=0.5] (L5) at (0,0) {};
\node[shape=circle, scale=0.5] (R5) at (1.3,0) {};
\path[->,line width=.35pt,font=\tiny, looseness=16, in=215, out=145]
(L5.160) edge (L5.200);
\path[->,line width=.35pt,font=\tiny, looseness=16, in=215, out=145]
(R5.160) edge (R5.200);
\end{scope}
\begin{scope}[shift={(-3em,6.5em)}]
\draw[line width=1pt, fill=black] (0,0) circle(0.15ex);
\draw[line width=1pt, fill=black] (.8,0) circle(0.15ex);
\draw[line width=1pt, fill=black] (1.6,0) circle(0.15ex);
\node[shape=circle, scale=0.5] (L6) at (0,0) {};
\path[->,line width=.35pt,font=\tiny, looseness=16, in=215, out=145]
(L6.160) edge (L6.200);
\end{scope}
\end{tikzpicture}
\caption{A variety of actual deformations of $\Bbbk \langle x, y \rangle / \langle x^2, y^2, yx \rangle$}
\label{figure:orbits}
\end{figure}

This sort of low-dimensional local picture can be obtained for any finite-dimen\-sional algebra. Note that if $A = \Bbbk Q / I$ is finite-dimensional, then $I$ admits a {\it finite} Gröbner basis\index{Gröbner basis!noncommutative!finite} \cite[Prop.~2.11]{green1}, which can be used to obtain a finite reduction system satisfying ($\diamond$) for $I$ (cf.\ \S\ref{subsubsection:reduction}).

\begin{subappendices}

\section{Deformations of Brauer tree algebras}
\label{subsection:brauertree}

In this section we describe deformations of certain Brauer tree algebras which are special biserial algebras and are examples of (generalized) Khovanov arc algebras which appear in various different contexts from low dimensional topology and topological quantum field theory to representation theory and symplectic geometry (see e.g.\ \cite{khovanov,stroppel,brundanstroppel,abouzaidsmith,boenakanowiesner}). Deformations of these Brauer tree algebras were studied in Mazorchuk--Stroppel \cite{mazorchukstroppel} in the context of the representation theory of $\mathfrak{sl}_n$. Here we illustrate how to recover the deformations of these Brauer tree algebras using the approach via reduction systems and the combinatorial criterion for the Maurer--Cartan equation \index{Maurer--Cartan!equation} given in Theorem \ref{theorem:higher-brackets}. We compute both the variety of {\it actual} deformations as well as the equivalence classes of formal deformations.

Let $n \geq 4$ be fixed. Let $Q$ be the quiver
\[
\begin{tikzpicture}[baseline=-2.75pt,x=3.75em,y=1em]
\draw[line width=1pt, fill=black] (0,0) circle(0.2ex);
\draw[line width=1pt, fill=black] (1,0) circle(0.2ex);
\draw[line width=1pt, fill=black] (2,0) circle(0.2ex);
\draw[line width=1pt, fill=black] (4.3,0) circle(0.2ex);
\node[shape=circle, scale=0.7](0) at (0,0) {};
\node[shape=circle, scale=0.7](1) at (1,0) {};
\node[shape=circle, scale=0.7](2) at (2,0) {};
\node[shape=circle, scale=0.7](31) at (3,0) {};
\node[shape=circle, scale=0.7](32) at (3.3,0) {};
\node[shape=circle, scale=0.7](dots) at (3.15,0) {$\cdots$};
\node[shape=circle, scale=0.7](4) at (4.3,0) {};
\path[->,line width=.4pt,font=\scriptsize, looseness=1, in=155, out=25]
(0) edge node[above=-.2ex] {$x_1$} (1)
(1) edge node[above=-.2ex] {$x_2$} (2)
(2) edge node[above=-.2ex] {$x_3$} (31)
(32) edge node[above=-.2ex] {$x_{n-2}$} (4)
;
\path[<-,line width=.4pt,font=\scriptsize, looseness=1, in=-155, out=-25]
(0) edge node[below=-.2ex] {$y_1$} (1)
(1) edge node[below=-.2ex] {$y_2$} (2)
(2) edge node[below=-.2ex] {$y_3$} (31)
(32) edge node[below=-.2ex] {$y_{n-2}$} (4)
;
\end{tikzpicture}
\]
The {\it Brauer tree algebra} is the quotient algebra $A = \Bbbk Q / I$ where 
\[
I = \langle x_i x_{i+1}, y_{i+1}y_i, x_{i+1}y_{i+1}- y_{i}x_{i}\rangle_{1\leq i \leq n-3}
\]
so that $A$ is a finite-dimensional algebra of dimension $4n-6$.

We have a reduction system 
$$
R = \{ (y_1 x_1 y_1, 0), (x_1 y_1 x_1, 0)\} \cup \{ (x_i x_{i+1}, 0), (y_{i+1} y_i, 0), (x_{i+1} y_{i+1}, y_i x_i) \}_{1 \leq i < n-2}
$$
satisfying ($\diamond$) for $I$. The extra two terms $(y_1 x_1 y_1, 0)$ and $(x_1 y_1 x_1, 0)$ are added to the reduction system to resolve the overlap ambiguities\index{overlap ambiguity}\index{ambiguity!overlap} $x_1 x_2 y_2$ and $x_2 y_2 y_1$, respectively (cf.\ Heuristic \ref{heuristic}). We have
\begin{align*}
S   &= \{ x_1 y_1 x_1, y_1 x_1 y_1\} \cup \{ x_i x_{i+1}, y_{i+1} y_i, x_{i+1} y_{i+1} \}_{1 \leq i \leq n-3} \\
S_3 &= \{ x_1 y_1 x_1 y_1, y_1 x_1 y_1 x_1, x_1 y_1 x_1 x_2, y_2 y_1 x_1 y_1 \} \\
&\qquad{} \cup \{ x_{i+1} y_{i+1} y_i, x_i x_{i+1} y_{i+1} \}_{1 \leq i \leq n-3} \\
&\qquad{} \cup \{x_ix_{i+1}x_{i+2}, y_{i+2}y_{i+1}y_i\}_{1 \leq i \leq n-4}.
\end{align*}

Observe that the reduction system $R$ can be obtained from the noncommutative Gröbner basis\index{Gröbner basis!noncommutative} induced by the degree--lexicographic order $\prec$ \index{admissible order!degree--lexicographic}\index{degree--lexicographic order} which extends $y_1 \prec \dotsb \prec y_{n-2} \prec x_{n-2} \prec \dotsb \prec x_1$ and for this order we have $\Hom (\Bbbk S, A)_\prec = \Hom(\Bbbk S, A) \simeq \Bbbk^{2n - 4}$.

Note that the irreducible path parallel to $x_1 y_1 x_1$ is $x_1$, the irreducible path parallel to $y_1 x_1 y_1$ is $y_1$, the irreducible paths parallel to $x_{i+1} y_{i+1}$ are $e_{i+1}$ and $y_i x_i$, and there are no irreducible paths parallel to $x_i x_{i+1}$ or $y_{i+1} y_{i}$. This immediately gives the following lemma.

\begin{lemma}
\label{lemma:brauerg}
We have that $\Hom (\Bbbk S, A) \simeq \Bbbk^{2n-4}$ with an element $\widetilde \varphi \in \Hom(\Bbbk S, A)$ of the following general form
\begin{flalign}
&& \widetilde \varphi_{x_1y_1x_1} & = \lambda_1 x_1 && \notag \\
&& \widetilde \varphi_{y_1x_1y_1} & = \mu_1 y_1 && \notag \\
&& \widetilde \varphi_{x_{i+1}y_{i+1}} & = \lambda_{i+1} e_{i+1} + \mu_{i+1} y_i x_i && \mathllap{\text{for $1 \leq i \leq n-3$}} \label{brauerg} \\
&& \widetilde \varphi_{x_ix_{i+1}} & = 0 && \notag \\
&& \widetilde \varphi_{y_{i+1}y_i} & = 0 && \notag
\end{flalign}
for $\lambda_j, \mu_j \in \Bbbk$ for $1 \leq j \leq n-2$.

Moreover, the subspace $\Hom (\Bbbk S, A)_< \subset \Hom (\Bbbk S, A)$ is given by the elements $\widetilde \varphi$ of the form \textup{(\ref{brauerg})} with $\mu_2 = \dotsb = \mu_{n-2} = 0$.
\end{lemma}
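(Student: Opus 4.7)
The plan is to apply Remark \ref{remark:reduction}, which identifies $\Hom(\Bbbk S, A)$ with $\Hom(\Bbbk S, \Bbbk \Irr_S)$. A $\Bbbk Q_0$-bimodule map of this form is determined by a choice, for each $s \in S$, of a linear combination of irreducible paths parallel to $s$. The main task is therefore a case analysis enumerating, for each generator of $S$, the irreducible paths having the same source and target as $s$.

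First I would record that an irreducible path cannot contain any of $x_j x_{j+1}$, $y_{j+1} y_j$, $x_{j+1} y_{j+1}$ (for $1 \leq j \leq n-3$), $x_1 y_1 x_1$, or $y_1 x_1 y_1$ as a subpath; equivalently, it is an alternating word in the $x$'s and $y$'s with no $x_j y_j$ for $j \geq 2$, no $x_1 y_1 x_1$, and no $y_1 x_1 y_1$ as a subword. A direct case-check then yields: for $s = x_1 y_1 x_1$ the only irreducible parallel path is $x_1$; for $s = y_1 x_1 y_1$ it is $y_1$; for $s = x_i x_{i+1}$ or $s = y_{i+1} y_i$ there is none; and for $s = x_{i+1} y_{i+1}$ the only two are $e_{i+1}$ and $y_i x_i$. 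Assembling these yields the claimed general form with $2(n-2) = 2n-4$ free parameters and hence the isomorphism $\Hom(\Bbbk S, A) \simeq \Bbbk^{2n-4}$.

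The only case requiring real work is showing there is no irreducible path from $i$ to $i+2$ (and dually from $i+2$ to $i$). Such a path must end in $x_{i+1}$, so the penultimate arrow ends at $i+1$ and is either $x_i$ (immediately forbidden since $x_i x_{i+1} \in S$) or $y_{i+1}$; in the latter case the arrow preceding $y_{i+1}$ ends at $i+2$ and is either $x_{i+1}$ (giving the forbidden $x_{i+1} y_{i+1}$) or $y_{i+2}$ (giving the forbidden $y_{i+2} y_{i+1}$), so no such path can exist. All other cases in the enumeration are immediate from the forbidden-subword list.

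For the second statement, one simply imposes $\widetilde\varphi_s \in \Bbbk Q_{<|s|}$ on each generator. The generators $x_1 y_1 x_1$ and $y_1 x_1 y_1$ have length $3$ and admit only parallel irreducible paths of length $1$, so $\lambda_1$ and $\mu_1$ remain unconstrained; each $x_{i+1} y_{i+1}$ has length $2$, and while $e_{i+1}$ has length $0$, the other parallel irreducible path $y_i x_i$ has length $2$, forcing $\mu_{i+1} = 0$ for $1 \leq i \leq n-3$, i.e.\ $\mu_2 = \dotsb = \mu_{n-2} = 0$. The remaining generators $x_i x_{i+1}$ and $y_{i+1} y_i$ contribute no parameters at all, so there are no further constraints.
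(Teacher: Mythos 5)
Your proof is correct and follows essentially the same route as the paper, which simply records the enumeration of irreducible paths parallel to each element of $S$ (namely $x_1$ for $x_1y_1x_1$, $y_1$ for $y_1x_1y_1$, $e_{i+1}$ and $y_ix_i$ for $x_{i+1}y_{i+1}$, and none for $x_ix_{i+1}$, $y_{i+1}y_i$) and reads off the lemma; your proposal just supplies the case-check the paper leaves implicit. One small omission: in ruling out irreducible paths from $i$ to $i+2$ you assert that such a path must end in $x_{i+1}$, but it could instead end in $y_{i+2}$ (when that arrow exists, i.e.\ $i \leq n-4$); that branch dies by the identical argument, since the single arrow $y_{i+2}$ starts at $i+3 \neq i$ and any preceding arrow would create the forbidden subword $x_{i+2}y_{i+2}$ or $y_{i+3}y_{i+2}$ --- or, more directly, your own characterization shows every irreducible path has length at most $2$ with the length-$2$ ones being the loops $x_1y_1$ and $y_jx_j$, so none can join vertices two apart.
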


\subsection{The Maurer--Cartan equation}
\index{Maurer--Cartan!equation}

The Maurer--Cartan equation of $\mathbf p (Q, R)$ allows us to compute all formal deformations up to equivalence (Theorem \ref{theorem:equivalenceformal}) and it gives the equations for the varieties of actual deformations (Proposition \ref{proposition:pbwvariety} and Theorem \ref{theorem:variety}).

Thus the whole deformation theory of $A$ is essentially contained in a simple computation which we record in the following lemma.

\begin{lemma}
\label{lemma:brauermc}
Let $\widetilde \varphi \in \Hom (\Bbbk S, A)$ be given by $\lambda_i, \mu_j \in \Bbbk$ for $1 \leq i, j \leq n - 2$ as in Lemma \ref{lemma:brauerg}. Then the Maurer--Cartan equation \index{Maurer--Cartan!equation} for $\widetilde \varphi$ is well-defined for all $\lambda_i, \mu_j$ in $\Bbbk$ and is equivalent to
\[
\lambda_1 = \mu_1 \qquad\text{and}\qquad \lambda_i = (-1)^{i+1} \mu_1 (1 + \mu_2) \dotsb (1 + \mu_i) \quad \text{for $1 < i \leq n-2$.}
\]

The same equation is valid for formal deformations over any complete local Noetherian $\Bbbk$-algebra $(B, \mathfrak m)$ by viewing $\lambda_i, \mu_j \in \mathfrak m$ so that $\widetilde \varphi \in \Hom (\Bbbk S, A) \hatotimes \mathfrak m$.
\end{lemma}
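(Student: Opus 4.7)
The plan is to apply Theorem \ref{theorem:higher-brackets}: writing $\star = \star^{\mathrm C}_{\phi + \widetilde\phi}$, the element $\widetilde\varphi$ is a Maurer--Cartan element of $\mathbf p(Q, R)$ if and only if $(u \star v) \star w = u \star (v \star w)$ for every overlap $uvw \in S_3$ with $uv, vw \in S$. By Lemma \ref{lemma:brauerg}, every element of $\Hom(\Bbbk S, A)$ already satisfies the degree condition $(\prec)$ for the chosen degree--lexicographic order, so the reduction system $R_{\phi + \widetilde\phi}$ is reduction-finite; hence the combinatorial star product, and in turn the Maurer--Cartan equation, is well defined on all of $\Bbbk^{2n-4}$. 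In the formal case over any $(B, \mathfrak m)$, well-definedness is automatic by the $\mathfrak m$-adic topology.

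I would then split the overlaps in $S_3$ into three families. The purely monomial overlaps $x_i x_{i+1} x_{i+2}$, $y_{i+2} y_{i+1} y_i$, $x_1 y_1 x_1 x_2$, and $y_2 y_1 x_1 y_1$ produce no constraints: because $\widetilde\varphi$ vanishes on the $x_i x_{i+1}$ and $y_{i+1} y_i$, at least one of $u \star v$ and $v \star w$ already reduces to zero, and the other side is easily checked to do the same. The two ``loop'' overlaps $x_1 y_1 x_1 y_1$ and $y_1 x_1 y_1 x_1$ each give a direct two-step calculation: with $u = x_1$, $v = y_1 x_1$, $w = y_1$ one obtains $(x_1 \star y_1 x_1) \star y_1 = \lambda_1 x_1 y_1$ (using $\widetilde\varphi(x_1 y_1 x_1) = \lambda_1 x_1$) and $x_1 \star (y_1 x_1 \star y_1) = \mu_1 x_1 y_1$ (using $\widetilde\varphi(y_1 x_1 y_1) = \mu_1 y_1$), yielding $\lambda_1 = \mu_1$; the second loop overlap produces the same identity.

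The core of the argument is the remaining family of ``mixed'' overlaps $x_{i+1} y_{i+1} y_i$ and $x_i x_{i+1} y_{i+1}$ for $1 \leq i \leq n-3$. For these, one side of associativity vanishes immediately because $\widetilde\varphi$ is zero on the monomial relations $y_{i+1} y_i$ and $x_i x_{i+1}$; the other side expands via $x_{i+1} \star y_{i+1} = (1+\mu_{i+1}) y_i x_i + \lambda_{i+1} e_{i+1}$, so the whole computation reduces to evaluating $(y_i x_i) \star y_i$ (or its left-hand analogue $x_i \star (y_i x_i)$). For $i \geq 2$ the right-most redex is $x_i y_i$, after which a short cascade uses $y_i y_{i-1} \in S$ (respectively $x_{i-1} x_i \in S$) to kill the ``long'' term, leaving only $\lambda_i y_i$ (respectively $\lambda_i x_i$); for $i = 1$ the redex is $y_1 x_1 y_1 \in S$ (respectively $x_1 y_1 x_1 \in S$), giving $\mu_1 y_1 = \lambda_1 y_1$ directly. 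Setting the two sides equal then produces exactly the recursion $\lambda_{i+1} = -(1+\mu_{i+1})\lambda_i$, from both overlap types.

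Combined with $\lambda_1 = \mu_1$, iterating this recursion gives $\lambda_i = (-1)^{i+1} \mu_1 (1+\mu_2) \cdots (1+\mu_i)$, as claimed. The main obstacle is the combinatorial bookkeeping of the nested right-most reductions in the mixed overlaps: intermediate words like $y_i x_i y_i$, $y_{i-1} x_{i-1} x_i$ and $y_i y_{i-1} x_{i-1}$ are not yet irreducible, and one must carefully apply right-most reductions while tracking cancellations arising from $\widetilde\varphi_{x_i x_{i+1}} = \widetilde\varphi_{y_{i+1} y_i} = 0$, which is precisely what causes the cascade to collapse. Once this is done, the formal case over $(B, \mathfrak m)$ is purely mechanical: the same identities hold with $\lambda_i, \mu_j \in \mathfrak m$, and convergence of the sums appearing in $\star$ is guaranteed by the $\mathfrak m$-adic topology.
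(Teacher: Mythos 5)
Your proposal is correct and follows exactly the paper's route: reduce the Maurer--Cartan equation to associativity of $\star^{\mathrm C}_{\phi+\widetilde\phi}$ on the overlaps in $S_3$ via Theorem \ref{theorem:higher-brackets}, and deduce well-definedness from $\Hom(\Bbbk S, A)_\prec = \Hom(\Bbbk S, A)$ as in Proposition \ref{proposition:Groebner}. The only difference is that you carry out the overlap computations (the vanishing families, the loop overlaps giving $\lambda_1=\mu_1$, and the mixed overlaps giving the recursion $\lambda_{i+1}=-(1+\mu_{i+1})\lambda_i$) in detail, whereas the paper records only the $\lambda_1=\mu_1$ case and leaves the rest to the reader; your computations agree with what that omitted verification yields.
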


\begin{proof}
By Theorem \ref{theorem:higher-brackets} the Maurer--Cartan equation is equivalent to the condition $\pi(u) \star (\pi(v) \star \pi(w)) = (\pi(u) \star \pi(v)) \star \pi(w)$ for every $uvw \in S_3$ with $uv, vw \in S$, where $\star = \star^{\mathrm C}_{\phi + \widetilde \phi}$. For example the condition $\lambda_1 = \mu_1$ is equivalent to $(x_1 \star y_1 x_1) \star y_1 = x_1 \star (y_1 x_1 \star y_1)$. The other equations follow from computing associativity of $\star$ for other elements in $S_3$. (For all $n \geq 4$ it suffices to check essentially six equations.)

That the Maurer--Cartan equation is well-defined for all $\widetilde \varphi \in \Hom (\Bbbk S, A)$ follows from Proposition \ref{proposition:Groebner} since $\Hom (\Bbbk S, A)_\prec = \Hom(\Bbbk S, A).$
\end{proof}

From Lemma \ref{lemma:brauermc} and Corollary \ref{corollary:firstorder} it is now easy to see that $\HH^2 (A, A) \simeq \Bbbk$ (cf. \cite[Prop.~31]{mazorchukstroppel}).

\subsection{The varieties of actual deformations}

The equations in Lemma \ref{lemma:brauermc} cut out the variety $V_\prec \subset \mathbb A^{2n-4}$ which contains the variety $V_< \simeq \mathbb A^1$ of PBW deformations (cf.\ (\ref{inclvar})) parametrized by $\lambda \in \Bbbk$, where
\[
\lambda = \mu_1 = \lambda_1 = - \lambda_2 = \dotsb = (-1)^{n-1} \lambda_{n-2} \qquad \text{and} \qquad \mu_2 = \dotsb = \mu_{n-2} = 0.
\]
The algebras $A_\lambda$ in this family are given by
$$ 
A_{\lambda} = \Bbbk Q / \langle x_{i+1}y_{i+1} - y_i x_i - (-1)^{i} \lambda e_{i+1}, x_{i}x_{i+1}, y_{i+1}y_i\rangle_{1 \leq i < n-2}
$$
where $A_0 = A$ is the Brauer tree algebra. (Note that the two relations $x_1 y_1 x_1 - \lambda x_1$ and $y_1 x_1 y_1 - \lambda y_1$ are implied by the other relations.) Using the basis of irreducible paths for $A$, it would be straightforward to write down a full multiplication table of $A_\lambda$ using  $\star$.

Since $\HH^2 (A, A) \simeq \Bbbk$ we could already stop here as we have found a $1$-dimensional family of nontrivial deformations, i.e.\ for any cohomology class in $\HH^2 (A, A)$ we have found an algebraizable deformation whose first order term is a cocycle representing this cohomology class. The other deformations come from coboundaries and it can be shown directly that the algebra corresponding to the point $(\lambda_1, \dotsc, \lambda_{n-2}, \mu_1, \dotsc, \mu_{n-2}) \in V_\prec$ is isomorphic to the algebra determined by $\mu_1$ with $\mu_2 = \dotsb = \mu_{n-2} = 0$. In fact, it is straightforward to construct an explicit equivalence between the corresponding reduction systems giving the following result.

\begin{proposition}
\label{proposition:orbits}
The groupoid $G \rightrightarrows \MC = V_\prec$ has two orbits, the closed point corresponding to the origin of $V_\prec \subset \Hom (\Bbbk S, A)_\prec$ and its complement in $V_\prec$.
\end{proposition}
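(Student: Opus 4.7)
The plan is to use Lemma \ref{lemma:equivalencereduction} together with Theorem \ref{theorem:variety}\,\ref{variety2} to translate the groupoid orbits into isomorphism classes of the deformed algebras $A_{\phi+\widetilde\varphi}$, and then to analyze these by an explicit parametrization of equivalences. By Lemma \ref{lemma:brauermc} the variety $V_\prec$ is identified with $\mathbb{A}^{n-2}$ with coordinates $(\mu_1,\dotsc,\mu_{n-2})$, and the $\lambda_i$ are determined. So I need to show (i) the origin forms its own orbit, and (ii) any non-origin point is equivalent to a fixed representative, e.g.\ the PBW deformation with $\mu_1=1$, $\mu_2=\dotsb=\mu_{n-2}=0$.

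First I would parametrize the equivalences. By Remark \ref{remark:equivalenceformalreductionsystem}, an equivalence $T \in G(\widetilde\varphi, \widetilde\varphi')$ is uniquely determined by its values on $Q_1$. A key observation in the Brauer tree setting is that for each arrow $x_i$ (resp.\ $y_i$), the only irreducible path parallel to it is $x_i$ itself (resp.\ $y_i$), as the other candidates are forbidden by the elements of $S = \{x_1y_1x_1,\,y_1x_1y_1,\,x_ix_{i+1},\,y_{i+1}y_i,\,x_{i+1}y_{i+1}\}$. Consequently $T(x_i) = \alpha_i x_i$ and $T(y_i) = \beta_i y_i$ for scalars $\alpha_i,\beta_i \in \Bbbk^\times$. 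Writing out the equivalence condition
\[
T\bigl(\varphi(s) + \widetilde\varphi'(s)\bigr) = \mathrm{red}_{\varphi+\widetilde\varphi}^{(\infty)}\bigl(T(s_1)\cdots T(s_m)\bigr)
\]
for each $s \in S$, using the explicit form \eqref{brauerg} of Lemma \ref{lemma:brauerg} and the constraints of Lemma \ref{lemma:brauermc}, reduces the equivalence to a system of scaling relations in $c_i := \alpha_i\beta_i$:
\[
\mu_1' = c_1\mu_1, \qquad c_{i-1}(1+\mu_i') = c_i(1+\mu_i) \quad (2 \leq i \leq n-2),
\]
together with compatibility in the $\lambda_i$, which is automatic once the $\mu_i$ are matched.

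Second I would handle the origin. If $\widetilde\varphi'$ is equivalent to the origin $0$, the scaling relations read $\mu_1' = 0$ and $c_{i-1}(1+\mu_i') = c_i$, which admit a solution but yield an algebra that must be isomorphic to the undeformed Brauer tree algebra $A$. The main point is to show that $A$ is distinguished from any $A_{\phi+\widetilde\varphi}$ with $\widetilde\varphi\neq 0$ by an intrinsic algebraic invariant. The cleanest invariant is that in $A$ every basis element $y_ix_i$ is nilpotent and each $e_j$ is a primitive idempotent, whereas as soon as one $\mu_j$ or $\lambda_j$ is nonzero, the corresponding deformed relation exhibits either a quadratic equation $(y_jx_j)^2 = \mu_1(y_jx_j)$ with a nonzero eigenvalue, or an idempotent correction term $\lambda_j e_j$, both of which produce a non-trivial idempotent decomposition of $e_j$ in $A_{\phi+\widetilde\varphi}$ and therefore more primitive idempotents than $A$ has. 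Hence the orbit of the origin collapses to the single point $\{0\}$.

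Third, for any non-origin $\widetilde\varphi$ I would choose the $c_i$ recursively to reach the canonical representative $(1,0,\dotsc,0)$: solve $c_1 = \mu_1^{-1}$ first (when $\mu_1\neq 0$), then solve $c_i = c_{i-1}(1+\mu_i)$ for each $i\geq 2$, giving $\mu_i' = 0$. Cases where $\mu_1=0$ but some $\mu_i\neq 0$ are absorbed into this same orbit by first using the $c_i$ freedom to reduce to $\mu_1=0$, $\mu_i'=0$ form at the origin and then noting that the algebra nonetheless carries a nontrivial idempotent, which by the invariant above cannot be the origin's algebra, so it must sit with the generic orbit. The hard part will be this last reconciliation: showing rigorously that the boundary strata (where some $1+\mu_i$ vanishes) do not break the single-orbit structure, which I would handle by constructing, for each such stratum, a direct equivalence to a nearby generic point through a carefully chosen sequence of rescalings, using the freedom in the $c_i$ at indices where $\mu_i \neq -1$ to propagate the scaling across the chain.
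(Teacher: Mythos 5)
Your first step is right, and it is exactly what the paper's one\hyphen{}line proof alludes to: since the only irreducible path parallel to an arrow is that arrow, any equivalence $T$ is a rescaling $T(x_i)=\alpha_i x_i$, $T(y_i)=\beta_i y_i$, and writing out $T(\phi(s)+\widetilde\phi'(s))=\mathrm{red}^{(\infty)}_{\phi+\widetilde\phi}(T(s_1)\cdots T(s_m))$ for $s\in S$ gives precisely $\mu_1'=c_1\mu_1$ and $c_{i-1}(1+\mu_i')=c_i(1+\mu_i)$ with $c_i=\alpha_i\beta_i\in\Bbbk^\times$, the $\lambda$-conditions being automatic. Everything after that, however, fails — and in a way your own relations already expose. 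First, the relations with $\mu_1'=0$ \emph{do} carry the origin to every point with $\mu_1=0$ and all $1+\mu_i\neq 0$: concretely, for $n=4$ the assignment $y_1\mapsto(1+\mu_2)y_1$ is an isomorphism $A\to A_{\phi+\widetilde\phi}$ when $\mu_1=0$, $\mu_2\neq-1$. Your proposed invariant (``any nonzero $\mu_j$ produces a nontrivial idempotent'') is false there: when $\mu_1=0$ all $\lambda_j$ vanish and $(y_jx_j)^2=0$ still, so no idempotent splits off and the algebra is isomorphic to $A$. Hence the origin is \emph{not} a singleton orbit. Second, the boundary strata where some $1+\mu_i=0$ cannot be ``absorbed'' by any sequence of rescalings: your relation $(1+\mu_i')=\tfrac{c_i}{c_{i-1}}(1+\mu_i)$ with $c_i$ invertible preserves the vanishing of each $1+\mu_i$, and the algebras there are genuinely different — for $n=4$ the point $(\mu_1,\mu_2)=(0,-1)$ gives the monomial algebra, in which $x_2y_2=0$ while $y_1x_1\neq0$, and this is not isomorphic to $A$; likewise $(1,-1)$ keeps $e_3$ primitive whereas at $(1,0)$ it splits. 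So the ``hard part'' you defer is not merely hard, it is impossible.

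The honest conclusion from your (correct) scaling relations is that the groupoid has $2^{n-2}$ orbits, indexed by the vanishing or not of $\mu_1$ and of each $1+\mu_i$ for $2\le i\le n-2$; the two-orbit picture is valid only on the line $V_<$ of PBW deformations (where $\mu_2=\dots=\mu_{n-2}=0$) or on a formal neighbourhood of the origin, where each $1+\mu_i$ is a unit — which is all that Proposition \ref{proposition:deformationfunctorbrauer} actually uses. In other words, the statement does not follow from Lemmas \ref{lemma:brauerg} and \ref{lemma:brauermc} as printed, and the two places where your argument breaks (the idempotent invariant, and the reconciliation of the strata $1+\mu_i=0$) are precisely the places where you tried to force the claimed answer against what your computation shows.
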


\noindent (Note that here $\Hom (\Bbbk S, A)_\prec = \Hom (\Bbbk S, A)$, so all corresponding reduction systems are reduction-finite and we have $V_\prec = \MC$.)

\begin{remark}
\label{remark:brauerdim}
Note that in the classical picture, we have $\dim_{\Bbbk} \Hom_{\Bbbk} (A \otimes_{\Bbbk} A, A) = 8 (2n-3)^3$ so that
\begin{none*}
\begin{center}
\renewcommand{\arraystretch}{1.25}
\begin{tabular}{c||c|c|c|c|c}
$n$ & $4$ & $5$ & $6$ & $7$ & $\cdots$ \\
\hline
\rlap{$\dim A$}\phantom{$\dim \Hom (A^{\otimes 2}, A)$} & $10$ & $14$ & $18$ & $22$ & $\cdots$ \\[.5em]
$\dim \Hom (A^{\otimes 2}, A)$ & \eqmakebox[ctr]{$1000$} & \eqmakebox[ctr]{$2744$} & \eqmakebox[ctr]{$5832$} & \eqmakebox[ctr]{$10648$} & $\cdots$ \\
$\dim \Hom (A^{\otimes 3}, A)$ & \eqmakebox[ctr]{$10000$} & \eqmakebox[ctr]{$38416$} & \eqmakebox[ctr]{$104976$} & \eqmakebox[ctr]{$234256$} & $\cdots$ \\[.5em]
\rlap{$\dim \Hom (\Bbbk S, A)$}\phantom{$\dim \Hom (A^{\otimes 2}, A)$} & \eqmakebox[ctr]{$4$} & \eqmakebox[ctr]{$6$} & \eqmakebox[ctr]{$8$} & \eqmakebox[ctr]{$10$} & $\cdots$ \\
\rlap{$\dim \Hom (\Bbbk S_3, A)$}\phantom{$\dim \Hom (A^{\otimes 2}, A)$} & \eqmakebox[ctr]{$6$} & \eqmakebox[ctr]{$10$} & \eqmakebox[ctr]{$14$} & \eqmakebox[ctr]{$18$} & $\cdots$ \\
\end{tabular}
\end{center}
\end{none*}
\noindent In principle, the deformations of the $10$-dimensional Brauer tree algebra can be obtained by studying a certain irreducible component of the variety of all associative algebra structures on a $10$-dimensional vector space --- which is cut out by $10000$ equations in $\mathbb A^{1000}$ (cf.\ \S\ref{subsubsection:variety}). An explicit description of this variety seems rather inaccessible in such high dimensions. However, the variety $V_{\prec}$ can be viewed as a suitable subvariety, which may already capture all nontrivial deformations. Indeed for the $10$-dimensional Brauer tree algebra, we find a {\it surface} $V_\prec$ whose coordinate ring is $\Bbbk [x, y, z] / (x - yz)$ which contains an affine {\it line} $V_<$ (see Fig.~\ref{figure:brauervariety}) already capturing all nontrivial deformations by Proposition \ref{proposition:orbits}. 
\end{remark}

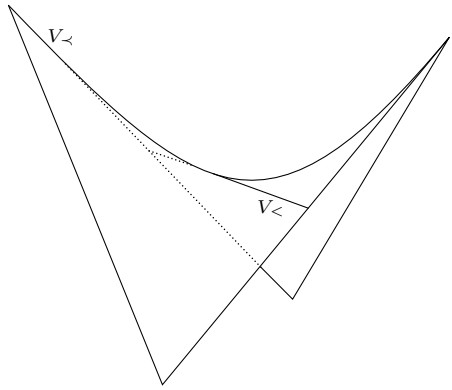
\begin{figure}
\centering
\begin{tikzpicture}[x=1pt,y=1pt,line width=.4pt]
\draw (-58,142) -- (0,0) -- (108,130) -- (49,32) -- (37,44);
\draw (55,66) -- (19,79);
\path[dash pattern=on 0pt off 1.5pt, line width=.6pt, line cap=round] (37,44) edge (-36.5,120);
\path[dash pattern=on 0pt off 1.5pt, line width=.6pt, line cap=round] (-5,87.5) edge (12,81.75);
\node[font=\scriptsize] at (-38,130) {$V_\prec$};
\node[font=\scriptsize] at (41,66) {$V_<$};
\path (-58,142) edge[out=-46,in=230,looseness=1.63] (108,130);
\end{tikzpicture}
\caption{The variety of actual deformations of the reduction system for the $10$-dimensional Brauer tree algebra and its subvariety of PBW deformations}
\label{figure:brauervariety}
\end{figure}

\subsection{Formal deformations up to equivalence}
\label{subsubsection:formalbrauertree}

Intuitively, the tangent space of the ``deformation space'' should be given by the second Hochschild cohomology, which for the Brauer tree algebras is $\HH^2 (A, A) \simeq \Bbbk$. We found above that the variety $V_< \simeq \mathbb A^1$ is a geometric model of the orbit space of the groupoid of actual deformations and it gives an explicit family of nontrivial deformations whose tangent space at $0 \in \mathbb A^1$ (corresponding to $A$) is indeed $1$-dimensional.

From the above point of view we expect that the deformation functor $\Def_A$ encoding the formal deformation theory of $A$ is prorepresented by the formal neighbourhood of $0 \in \mathbb A^1$ whose ring of functions is just the formal power series ring $\Bbbk \llrr{t}$ (cf.\ \S\ref{subsubsection:geometricinterpretation}). Indeed, this can be shown directly by determining the Maurer--Cartan elements up to equivalence in the formal setting (see Lemma \ref{lemma:brauermc} and Definition \ref{definition:equivalenceformalreductionsystem}) giving the following result, which is the formal analogue of Proposition \ref{proposition:orbits}.

\begin{proposition} \label{proposition:deformationfunctorbrauer}
The deformation functor $\Def_A$  of the Brauer tree algebra $A$ is prorepresented by $\widehat{\mathcal O}_{\mathbb A^1, 0} \simeq (\Bbbk \llrr{t}, (t)) \in \widehat{\mathfrak{Art}}_{\Bbbk}$.
\end{proposition}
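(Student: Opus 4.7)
The plan is to invoke Theorem \ref{theorem:equivalenceformal} together with Corollary \ref{theorem-summary} to identify $\Def_A(B, \mathfrak m)$ with the set of equivalence classes of Maurer--Cartan elements of $\mathbf p(Q, R) \hatotimes \mathfrak m$, and then exhibit a natural bijection of the latter with $\mathfrak m = \Hom_{\widehat{\mathfrak{Art}}_{\Bbbk}}(\Bbbk \llrr{t}, B)$. This functorial bijection is precisely what it means for $\Def_A$ to be prorepresented by $(\Bbbk \llrr{t}, (t))$.

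First, I would observe that the proof of Lemma \ref{lemma:brauermc} extends verbatim to the formal setting once one reads $\lambda_i, \mu_j \in \mathfrak m$ (the sum defining $\langle \widetilde\varphi, \ldots, \widetilde \varphi \rangle$ is automatically convergent in the $\mathfrak m$-adic topology). Thus the formal Maurer--Cartan set is parametrized by free tuples $(\mu_1, \ldots, \mu_{n-2}) \in \mathfrak m^{n-2}$, with the $\lambda_i$ determined by the relations of that lemma.

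Next, mimicking the groupoid orbit analysis in the proof of Proposition \ref{proposition:orbits}, I would construct an explicit equivalence of formal reduction systems in the sense of Definition \ref{definition:equivalenceformalreductionsystem} that sends an arbitrary tuple $(\mu_1, \mu_2, \ldots, \mu_{n-2})$ to its normal form $(\mu_1, 0, \ldots, 0)$, corresponding to the PBW family $\{A_\lambda\}$ of \S\ref{subsection:brauertree}. Using Remark \ref{remark:equivalenceformalreductionsystem}, this amounts to choosing $\psi \in \Hom(\Bbbk Q_1, \Bbbk \Irr_S) \hatotimes \mathfrak m$ --- concretely a rescaling $\psi(y_i) = \nu_i\, y_i$ with $\nu_i \in \mathfrak m$ defined inductively in $i$ --- and verifying the identity
\[
T(\phi(s) + \widetilde \phi'(s)) = \mathrm{red}^{(\infty)}_{\phi + \widetilde \phi}(T(s_1) \cdots T(s_m))
\]
for each $s \in S$. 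Here the precise relation $\lambda_i = (-1)^{i+1} \mu_1(1+\mu_2)\cdots(1+\mu_i)$ from Lemma \ref{lemma:brauermc} is exactly what makes the inductive choice of $\nu_i$ consistent. This step, which is essentially the formal upgrade of Proposition \ref{proposition:orbits}, is the main obstacle, but it is a direct (if somewhat bookkeeping-heavy) verification patterned on the non-formal case.

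Finally, I would show that the remaining parameter $\mu_1 \in \mathfrak m$ is a genuine invariant of the equivalence class. Working modulo $\mathfrak m^2$, the first-order term of $\widetilde \phi$ is a Hochschild $2$-cocycle whose class in $\HH^2(A, A) \simeq \Bbbk$ (cf.\ Corollary \ref{corollary:firstorder} and the computation implicit in Lemma \ref{lemma:brauermc}) is represented by $\mu_1 \bmod \mathfrak m^2$; hence $\mu_1$ is preserved modulo $\mathfrak m^2$. An inductive argument on the $\mathfrak m$-adic filtration, extending this first-order rigidity order by order within the normalized family, then shows that two normalized Maurer--Cartan elements with parameters $(\mu_1, 0, \ldots, 0)$ and $(\mu_1', 0, \ldots, 0)$ are equivalent if and only if $\mu_1 = \mu_1'$. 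Putting the three steps together gives a natural bijection $\Def_A(B, \mathfrak m) \simeq \mathfrak m$, concluding the proof.
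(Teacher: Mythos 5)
Your steps 1--3 follow exactly the route the paper indicates (the paper itself offers only a one-line pointer to Lemma \ref{lemma:brauermc} and Definition \ref{definition:equivalenceformalreductionsystem}, calling the result the formal analogue of Proposition \ref{proposition:orbits}), and the normalization of a formal Maurer--Cartan element to $(\mu_1,0,\dots,0)$ by an inductive rescaling of arrows is the correct formal upgrade of the orbit computation. The problem is step 4. The invariance claim --- that $(\mu_1,0,\dots,0)$ and $(\mu_1',0,\dots,0)$ are equivalent only if $\mu_1=\mu_1'$ --- is not what the equivalence relation gives, and the order-by-order induction does not close. By Remark \ref{remark:equivalenceformalreductionsystem} an equivalence is determined by $\psi\in\Hom(\Bbbk Q_1,\Bbbk\Irr_S)\hatotimes\mathfrak m$, and since the only irreducible path parallel to an arrow is that arrow itself, $T(x_i)=(1+a_i)x_i$ and $T(y_i)=(1+b_i)y_i$. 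Writing $c_i=(1+a_i)(1+b_i)$, the compatibility conditions of Remark \ref{remark:equivalenceformalreductionsystem} for two normalized elements reduce to $c_1=\dots=c_{n-2}=:c$ together with $\mu_1'=c\,\mu_1$. Hence $(\mu_1,0,\dots,0)\sim(c\mu_1,0,\dots,0)$ for \emph{every} $c\in 1+\mathfrak m$; already over $\Bbbk[t]/(t^3)$ the classes of $t$ and $t+t^2$ coincide. Your induction misses this because it tacitly assumes that an equivalence between elements agreeing modulo $\mathfrak m^k$ perturbs them only by coboundaries with coefficients in $\mathfrak m^k$; but $A$ has outer derivations ($x_i\mapsto x_i$, $y_i\mapsto 0$ is a non-inner derivation, so $\HH^1(A,A)\neq 0$), and a gauge transformation that is nontrivial at order one moves a normalized element at order two by $a\mu_1$ with $a\in\mathfrak m$, which is exactly the multiplicative shift above and is invisible to the $\HH^2$-argument.

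Consequently your construction yields a natural \emph{surjection} $\mathfrak m_B\to\Def_A(B)$ whose fibers are the orbits of the multiplicative action of $1+\mathfrak m_B$, i.e.\ $\Def_A(B)\simeq\mathfrak m_B/(1+\mathfrak m_B)$ rather than $\mathfrak m_B$. This map is bijective on tangent spaces and smooth, so what you actually prove is that $\Bbbk\llrr{t}$ is a hull (miniversal deformation) for $\Def_A$; injectivity, and hence strict prorepresentability in the sense of a natural isomorphism $\Def_A\simeq\Hom_{\widehat{\mathfrak{Art}}_\Bbbk}(\Bbbk\llrr{t},-)$, does not follow from your argument. To complete a proof of the statement as literally written you would need to rule out the rescaling equivalences, which the definitions in \S\ref{subsubsection:notation} do not do; otherwise the conclusion should be weakened to the hull statement.
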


\end{subappendices}

\chapter[A graphical calculus]{A graphical calculus for the combinatorial star product}
\label{section:relationtoquantization}
\index{deformation quantization}
\index{star product!combinatorial!graphical calculus}\index{combinatorial star product!graphical calculus}

In this section we give a graphical description of the combinatorial star product for deformations of the polynomial algebra
\[
A = \Bbbk [x_1, \dotsc, x_d] = \Bbbk \langle x_1, \dotsc, x_d \rangle / \langle x_j x_i - x_i x_j\rangle_{1 \leq i < j \leq d}.
\]
This graphical description turns out to be very similar to Kontsevich's graphical calculus used in his universal formula for the deformation quantization of Poisson structures on $\mathbb R^d$ \cite{kontsevich1} --- with a number of key differences.\index{deformation quantization}

Let $Q$ be the quiver with one vertex and $d$ loops $x_1, \dotsc, x_d$. Imposing the commutativity relations, one obtains the polynomial algebra
\begin{equation}
\label{polynomialring}
A = \Bbbk [x_1, \dotsc, x_d] = \Bbbk \biggl( 
\!
\begin{tikzpicture}[baseline=-2.75pt,x=2.2em,y=1em]
\draw[line width=1pt, fill=black] (0,0) circle(0.2ex);
\node[shape=circle, scale=0.7](L) at (0,0) {};
\node[shape=circle, scale=0.9](LL) at (0,0) {};
\path[->,line width=.4pt,font=\scriptsize, looseness=16, in=35, out=325,transform canvas={xshift=-.5pt,yshift=-.3pt}]
(L.340) edge (L.20)
;
\path[->,line width=.4pt,font=\scriptsize, looseness=18, in=40, out=320,transform canvas={xshift=-3pt},overlay]
(LL.320) edge (LL.40)
;
\node[font=\scriptsize] at (0.56,-1.3) {$x_1, \dotsc, x_d$}
;
\node[font=\scriptsize] at (.8,0) {$...$}
;
\end{tikzpicture}
\!\!
\biggr) \Big/ \bigl\langle x_j x_i - x_i x_j \bigr\rangle_{1 \leq i < j \leq d}.
\end{equation}
We have a reduction system
\begin{equation}
\label{reductionsymmetric}
\begin{aligned}
R &= \bigl\{ (x_j x_i, x_i x_j) \bigr\}_{1 \leq i < j \leq d} \\
\llap{with} \quad S &= \{ x_j x_i \}_{1 \leq i < j \leq d}
\end{aligned}
\end{equation}
(cf.\ Example \ref{example:reductiongroebnersymmetric}).

Working with deformations over $\Bbbk \llrr{\hbar}$ with $\mathfrak m = (\hbar)$, let $\widetilde \phi \in \Hom (\Bbbk S, A) \hatotimes \mathfrak m$. Then we have the following result.

\begin{theorem}
\label{theorem:loopless}
The combinatorial star product\index{star product!combinatorial!graphical calculus}\index{combinatorial star product!graphical calculus} $\star = \star^{\rm C}_{\phi + \widetilde \phi}$ can be given as
\begin{flalign}
\label{combinatorialstar}
&& f \star g &= \sum_{k \geq 0} \sum_{\Pi \in \mathfrak G_{k,2}^{\mathrm C}} C_\Pi (f, g) && \mathllap{f, g \in A}
\end{flalign}
where $\mathfrak G_{k,2}^{\mathrm C}$ is a set of graphs and each $C_\Pi$ is a formal power series of bidifferential operators starting in order $\hbar^k$ associated to a graph $\Pi$, corresponding to performing reductions.
\end{theorem}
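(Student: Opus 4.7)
The plan is to compute $f \star g$ directly from Definition \ref{definition:star_2}, using $f \star g = \sum_{k \geq 0} f \star^{\mathrm C, k}_{\phi + \widetilde \phi} g$, and to organise the right-most reductions by a directed graph recording the \emph{provenance} of each application of $\widetilde \phi$. Each term $f \star^{\mathrm C, k}_{\phi + \widetilde \phi} g$ collects the reduction sequences which use $\widetilde \phi$ exactly $k$ times and $\phi$ (i.e.\ the commutativity swaps $x_j x_i \mapsto x_i x_j$ with $j > i$) an arbitrary number of times, always at the right-most reducible position.

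The first step is the construction of the graph. Whenever a reduction step uses $\widetilde \phi$ on a subword $x_b x_a$ with $b > a$, I introduce an aerial vertex $v$ with two ordered outgoing half-edges labelled by $x_b$ and $x_a$. Each half-edge is then completed to an edge pointing to the \emph{source} of its letter: either a specific factor in $f$, a specific factor in $g$, or a specific factor in the polynomial $\widetilde \phi_{x_{b'} x_{a'}}$ produced by an earlier vertex $v'$. This yields a directed graph $\Pi$ with two ground vertices (for $f$ and $g$), $k$ aerial vertices each with two ordered outgoing edges, and no oriented cycles, because provenance only points backwards in time. I define $\mathfrak G_{k,2}^{\mathrm C}$ to be the set of such abstract graphs.

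Second, I would evaluate the total contribution to $f \star^{\mathrm C, k}_{\phi + \widetilde \phi} g$ from all reduction sequences realising a fixed $\Pi$. Since the polynomial algebra is commutative, the outcome of a reduction sequence depends only on $\Pi$, so this amounts to counting the multiplicities with which each selection can be made. The crucial identity is that, for a polynomial $h$ and a variable $x_j$, summing over all ways of marking one factor of $x_j$ in $h$ produces $x_j \, \partial_{x_j}(h)$. Applied inductively along a topological order on the acyclic graph $\Pi$, this converts the sum of reduction contributions into a bidifferential operator $C_\Pi(f, g)$ in which each aerial vertex contributes its attached polynomial $\widetilde \phi_{x_b x_a}$, each outgoing edge contributes a partial derivative acting on the polynomial at the target of the edge, and the global expression is the product of all these pieces evaluated with $f$ and $g$ placed at the ground vertices. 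This yields \eqref{combinatorialstar}, and since only derivatives of $f$, $g$ and of the $\widetilde \phi_s$'s appear, each $C_\Pi$ is a bidifferential operator.

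For Proposition \ref{proposition:graphs}, I would check that $\mathfrak G_{k,2}^{\mathrm C}$ is in bijection with Kontsevich graphs of type $(k, 2)$ without oriented cycles, equipped with an ordering of the two outgoing edges at each aerial vertex. One direction is given by the construction; for the converse, any such abstract graph can be realised by some right-most reduction sequence by fixing a topological order on its aerial vertices and interleaving pure $\phi$-swaps so that the prescribed $\widetilde \phi$ reductions fire in this order at right-most positions. The main obstacle will lie in step two: right-most reduction is not freely commutative on reduction steps, so one must carefully identify the equivalence classes of reduction sequences that share a common provenance graph $\Pi$, and show that the total contribution from each such class matches precisely the partial-derivative contractions described above, uniformly in $f$ and $g$. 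This is where the bidifferential nature of $C_\Pi$ truly emerges and where the acyclicity of $\Pi$ is essential, since it allows the evaluation to proceed inductively in topological order.
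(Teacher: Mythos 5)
Your proposal follows essentially the same route as the paper's proof: organize the right-most reductions by a directed graph recording, for each application of $\widetilde\phi$, where its two letters came from; observe that provenance forces the graph to be acyclic; convert the instance-counting into derivatives acting on $f$, $g$ and the $\widetilde\phi_s$ placed at the vertices; and match the resulting graphs with cycle-free Kontsevich graphs equipped with an ordering of the incoming edges at each vertex.

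The one place where your bookkeeping would go wrong is the passage to derivatives. First, a marked letter is \emph{consumed} by the reduction $x_b x_a \mapsto \widetilde\phi_{x_b x_a}$, so the operator attached to an edge targeting $h$ is $\partial_{x_j}$, not $x_j\,\partial_{x_j}(h)$ as you wrote. More seriously, when $l$ edges point at the same variable $x_i$ inside a monomial containing $x_i^k$, the right-most convention completely determines the order in which the $l$ chosen instances are hit, so there are only $\binom{k}{l}$ contributing terms; your inductive application of single derivatives along a topological order would instead produce $\partial_{x_i}^l\, x_i^k = l!\binom{k}{l}x_i^{k-l}$, overcounting by $l!$. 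The paper resolves exactly this by declaring the product of $l$ copies of $\partial_i$ to be the \emph{divided} power $\frac{1}{l!}\frac{\partial^l}{\partial x_i^l}$; without that normalization your $C_\Pi$ carries spurious factorials. Finally, your closing worry about identifying equivalence classes of reduction sequences sharing a provenance graph is moot: right-most reduction fixes the order of every step, so each fully labelled graph corresponds to exactly one term of the reduction, and $C_\Pi$ is simply the sum over all labellings of the fixed shape $\Pi$.
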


We will compare the formula \eqref{combinatorialstar} to Kontsevich's universal formula in \S\ref{subsection:relationdeformationquantization} and explain how the combinatorial star product\index{star product!combinatorial}\index{combinatorial star product} and the L$_\infty[1]$ algebra\index{L$_\infty$!algebra}\index{algebra!L$_\infty$} $\mathbf p (Q, R)$ of Chapter \ref{section:deformations-of-path-algebras} can be used as a combinatorial approach to the problem of deformation quantization of Poisson structures in \S\ref{subsection:combinatorialquantization}.\index{deformation quantization}

The set of graphs $\mathfrak G_{k,2}^{\mathrm C}$ and the formal power series of bidifferential operators $C_\Pi$ will be given in Definition \ref{definition:admissible}.

We now turn to the proof of Theorem \ref{theorem:loopless}. For simplicity, let us assume that $\widetilde \phi_{x_j x_i}$ is a monomial times $\hbar^m$ for some $m$ --- the general case follows by extending everything linearly. We shall now describe $f \star g$, where $f = x_{i_1} \cdots x_{i_p}$ and $g = x_{j_1} \cdots x_{j_q}$ with $1 \leq i_1 \leq \dotsb \leq i_p \leq d$ and $1 \leq j_1 \leq \dotsb \leq j_q \leq d$ are two arbitrary monomials of degree $p$ and $q$, respectively.

Recall that we have
\[
f \star g = fg + \sum_{k \geq 1} f \star^k g
\]
where $f \star^k g$ is given by performing rightmost reductions on the concatenation $x_{i_1} \cdots x_{i_p} x_{j_1} \cdots x_{j_q}$, using $\widetilde \phi$ exactly $k$ times and then projecting to $A\llrr{\hbar}$ via $\pi \colon \Bbbk Q\llrr{\hbar} \to (\Bbbk Q / I) \llrr{\hbar} = A \llrr{\hbar}$ (see \S\ref{subsection:combinatorialstarproduct}).

If the indices appearing in the concatenation $x_{i_1} \cdots x_{i_p} x_{j_1} \cdots x_{j_q}$ are increasing (i.e.\ if $i_p \leq j_1$), then there are no reductions to be performed, and we have $f \star^k g = 0$ for all $k > 0$, so that $f \star g = fg$ (cf.\ Lemma \ref{lemma:star} \ref{guttproperty2}). In general, we have to perform reductions on $x_1 \cdots x_{i_p} x_{j_1} \cdots x_{j_q}$, which we shall now describe in detail.

If $i_p > j_1$, then there may be several terms $i_l$ for $1 \leq l \leq p$ and $j_m$ for $1 \leq m \leq q$ such that $i_l > j_m$ and then
\begin{equation}
\label{starg1}
f \star^1 g = \sum_{i_l > j_m} \pi (x_{i_1} \cdots x_{i_{l-1}} x_{j_1} \cdots x_{j_{m-1}} \widetilde \phi_{x_{i_l} x_{j_m}} \overbar{x_{i_{l+1}} \cdots x_{i_p} x_{j_{m+1}} \cdots x_{j_q}})
\end{equation}
where each summand is obtained by using the usual commutativity relations from the right until $x_{i_l}$ and $x_{j_m}$ are next to each other and using $\widetilde \phi$ (once) when commuting $x_{i_l}$ past $x_{j_m}$, i.e.\ replacing $x_{i_l} x_{j_m}$ not by $x_{j_m} x_{i_l}$ but by $\widetilde \phi_{x_{i_l} x_{j_m}}$. Since we always perform the rightmost reduction first, the terms to the right of $\widetilde \phi_{x_{i_l} x_{j_m}}$ must have been put into the correct order already, which we indicate by the bar. At this stage there are further reductions to be performed, but in $\star^1$ these have to use the usual commutativity relations since we have used $\widetilde \phi$ once already.

Representing linear monomials by \begin{tikzpicture}[baseline=.1em,x=.8em,y=.8em]\draw[line width=.4pt] (0,0) -- (1,0) -- (1,1) -- (0,1) -- (0,0); \end{tikzpicture}\hair, each term appearing as an argument of $\pi$ in (\ref{starg1}) can be represented graphically as follows
\begin{equation}
\label{graph1}
\begin{tikzpicture}[baseline=1.75em,x=.8em,y=.8em]
\path[line width=.4pt]
(1,0) edge (7,0)
(7,0) edge (7,1) 
(7,1) edge (1,1)
(1,1) edge (1,0)
(2,0) edge (2,1)
(3.5,0) edge (3.5,1)
(4.5,0) edge (4.5,1)
(6,0) edge (6,1)
;
\node[font=\scriptsize] at (1.65,-.7) {$x_{i_1}$};
\node[font=\scriptsize] at (2.75,-.5) {$...$};
\node[font=\scriptsize] at (4.15,-.7) {$x_{i_l}$};
\node[font=\scriptsize] at (5.25,-.5) {$...$};
\node[font=\scriptsize] at (6.65,-.7) {$x_{i_p}$};
\node[font=\scriptsize] at (2.75,.5) {$...$};
\node[font=\scriptsize] at (5.25,.5) {$...$};
\begin{scope}[shift={(10,0)}]
\path[line width=.4pt]
(1,0) edge (7,0)
(7,0) edge (7,1) 
(7,1) edge (1,1)
(1,1) edge (1,0)
(2,0) edge (2,1)
(3.5,0) edge (3.5,1)
(4.5,0) edge (4.5,1)
(6,0) edge (6,1)
;
\node[font=\scriptsize] at (1.65,-.7) {$x_{j_1}$};
\node[font=\scriptsize] at (4.2,-.7) {$x_{j_m}$};
\node[font=\scriptsize] at (6.65,-.7) {$x_{j_q}$};
\node[font=\scriptsize] at (2.75,.5) {$...$};
\node[font=\scriptsize] at (5.25,.5) {$...$};
\end{scope}
\begin{scope}[shift={(6.75,4)}]
\path[line width=.4pt]
(0,0) edge (4.5,0)
(4.5,0) edge (4.5,1) 
(4.5,1) edge (0,1)
(0,1) edge (0,0)
(1,0) edge (1,1)
(2,0) edge (2,1)
(3.5,0) edge (3.5,1)
;
\node at (2.25,-.9) {$\underbrace{\hspace{3.5em}}_{}$};
\node[font=\scriptsize] at (2.25,-1.4) {$\widetilde \phi_{x_{i_l} x_{j_m}}$};
\node[font=\scriptsize] at (2.75,.5) {$...$};
\end{scope}
\draw[->,line width=.4pt]
(6.75,4.5) -- (4,4.5) -- (4,1.1);
\draw[->,line width=.4pt]
(11.25,4.5) -- (14,4.5) -- (14,1.1);
\node[anchor=base] at (21,1.75em) {or simply};
\begin{scope}[shift={(24,0)}]
\path[line width=.4pt]
(1,0) edge (7,0)
(4,4.5) edge (14,4.5)
(11,0) edge (17,0)
;
\path[->,line width=.4pt]
(14,4.5) edge (14,.1)
(4,4.5) edge (4,.1)
;
\end{scope}
\end{tikzpicture}
\end{equation}
Here the ordering of the monomials and the ingoing arrows is important, so it is convenient to represent the monomials by line segments, rather than points.

The higher terms $f \star^k g$ for $k \geq 1$ can be obtained recursively, by performing reductions on all terms (before using the projection $\pi$) appearing in the expression of $\star^{k-1}$. In particular, to describe the terms in $f \star^2 g$ let us choose a monomial appearing as an argument of $\pi$ in (\ref{starg1}) and relabel the linear monomials as
\[
x_{h_1} \cdots x_{h_r} := x_{i_1} \cdots x_{i_{l-1}} x_{j_1} \cdots x_{j_{m-1}} \widetilde \phi_{x_{i_l} x_{j_m}} \overbar{x_{i_{l+1}} \cdots x_{i_p} x_{j_{m+1}} \cdots x_{j_q}}.
\]
Now choose linear monomials $x_{h_u}$ and $x_{h_v}$ for $1 \leq u < v \leq r$ with $h_u > h_v$ and use $\widetilde \phi$ (once) when commuting $x_{h_u}$ past $x_{h_v}$, giving
\[
x_{h_1} \cdots x_{h_{u-1}} \widetilde \phi_{x_{h_u} x_{h_v}} \overbar{x_{h_{u+1}} \cdots x_{h_{v-1}} x_{h_{v+1}} \cdots x_{h_r}}.
\]

Graphically this can be represented as in (\ref{graph1}) by
\begin{equation}
\label{graph}
\begin{tikzpicture}[baseline=2em,x=.8em,y=.8em]
\path[line width=.4pt]
(0,0) edge (10.5,0)
(10.5,0) edge (10.5,1) 
(10.5,1) edge (0,1)
(0,1) edge (0,0)
(1,0) edge (1,1)
(2.5,0) edge (2.5,1)
(3.5,0) edge (3.5,1)
(7,0) edge (7,1)
(8,0) edge (8,1)
(9.5,0) edge (9.5,1)
;
\node[font=\scriptsize] at (.65,-.7) {$x_{h_1}$};
\node[font=\scriptsize] at (1.75,.5) {$...$};
\node[font=\scriptsize] at (3.15,-.7) {$x_{h_u}$};
\node[font=\scriptsize] at (5.25,.5) {$.........$};
\node[font=\scriptsize] at (7.75,-.7) {$x_{h_v}$};
\node[font=\scriptsize] at (8.75,.5) {$...$};
\node[font=\scriptsize] at (10.15,-.7) {$x_{h_r}$};
\begin{scope}[shift={(3.5,4)}]
\path[line width=.4pt]
(0,0) edge (3.5,0)
(3.5,0) edge (3.5,1) 
(3.5,1) edge (0,1)
(0,1) edge (0,0)
(1,0) edge (1,1)
(2.5,0) edge (2.5,1)
;
\node at (1.75,-.9) {$\underbrace{\hspace{2.7em}}_{}$};
\node[font=\scriptsize] at (1.85,-1.4) {$\widetilde \phi_{x_{h_u} x_{h_v}}$};
\node[font=\scriptsize] at (1.75,.5) {$...$};
\end{scope}
\draw[->,line width=.4pt]
(3.5,4.5) -- (3,4.5) -- (3,1.1);
\draw[->,line width=.4pt]
(7,4.5) -- (7.5,4.5) -- (7.5,1.1);
\end{tikzpicture}
\end{equation}
but (\ref{graph}) can also be represented in terms of the original graph (\ref{graph1}). For example, if $x_{h_u}$ is a linear monomial appearing in $\widetilde \phi_{x_{i_l} x_{j_m}}$ and $x_{h_v}$ a linear monomial appearing in $x_{j_1}, \dotsc, x_{j_{m-1}}$ such that $h_u > h_v$, we can represent (\ref{graph}) as
\begin{equation*}
\begin{tikzpicture}[baseline=3.2em,x=.8em,y=.8em]
\path[line width=.4pt]
(0,0) edge (6,0)
(6,0) edge (6,1) 
(6,1) edge (0,1)
(0,1) edge (0,0)
(1,0) edge (1,1)
(2.5,0) edge (2.5,1)
(3.5,0) edge (3.5,1)
(5,0) edge (5,1)
(10,0) edge (18.5,0)
(18.5,0) edge (18.5,1)
(18.5,1) edge (10,1)
(10,1) edge (10,0)
(11,0) edge (11,1)
(12.5,0) edge (12.5,1)
(13.5,0) edge (13.5,1)
(15,0) edge (15,1)
(16,0) edge (16,1)
(17.5,0) edge (17.5,1)
(5.5,4) edge (11.5,4)
(11.5,4) edge (11.5,5)
(11.5,5) edge (5.5,5)
(5.5,5) edge (5.5,4)
(6.5,4) edge (6.5,5)
(8,4) edge (8,5)
(9,4) edge (9,5)
(10.5,4) edge (10.5,5)
(9,8) edge (12.5,8)
(12.5,8) edge (12.5,9)
(12.5,9) edge (9,9)
(9,9) edge (9,8)
(10,8) edge (10,9)
(11.5,8) edge (11.5,9)
;
\node[font=\scriptsize] at (1.75,.5) {$...$};
\node[font=\scriptsize] at (4.25,.5) {$...$};
\node[font=\scriptsize] at (11.75,.5) {$...$};
\node[font=\scriptsize] at (14.25,.5) {$...$};
\node[font=\scriptsize] at (16.75,.5) {$...$};
\node[font=\scriptsize] at (7.25,4.5) {$...$};
\node[font=\scriptsize] at (9.75,4.5) {$...$};
\node[font=\scriptsize] at (10.75,8.5) {$...$};
\node[font=\scriptsize] at (.65,-.7) {$x_{i_1}$};
\node[font=\scriptsize] at (3.15,-.7) {$x_{i_l}$};
\node[font=\scriptsize] at (5.65,-.7) {$x_{i_p}$};
\node[font=\scriptsize] at (8.65,3.45) {$x_{h_v}$};
\node[font=\scriptsize] at (10.65,-.7) {$x_{j_1}$};
\node[font=\scriptsize] at (13.15,-.7) {$x_{h_u}$};
\node[font=\scriptsize] at (15.65,-.7) {$x_{j_m}$};
\node[font=\scriptsize] at (18.15,-.7) {$x_{j_q}$};
\node at (10.75,7.1) {$\underbrace{\hspace{2.7em}}_{}$};
\node[font=\scriptsize] at (10.85,6.6) {$\widetilde \phi_{x_{h_u} x_{h_v}}$};
\node at (8.5,2.5) {$\underbrace{\hspace{4.7em}}_{}$};
\node[font=\scriptsize] at (8.65,2) {$\widetilde \phi_{x_{i_l} x_{j_m}}$};
\draw[->,line width=.4pt]
(5.5,4.5) -- (3,4.5) -- (3,1.1);
\draw[->,line width=.4pt]
(9,8.5) -- (8.5,8.5) -- (8.5,5.1);
\draw[->,line width=.4pt]
(11.5,4.5) -- (15.5,4.5) -- (15.5,1.1);
\draw[->,line width=.4pt]
(12.5,8.5) -- (13,8.5) -- (13,1.1);
\node[anchor=base] at (21,3.3em) {or simply};
\begin{scope}[shift={(23.5,0)}]
\path[line width=.4pt]
(0,0) edge (6,0)
(10,0) edge (18.5,0)
(3,4.5) edge (15.5,4.5)
(8.5,8.5) edge (13,8.5)
;
\path[->,line width=.4pt]
(15.5,4.5) edge (15.5,.1)
(13,8.5) edge (13,.1)
;
\path[<-,line width=.4pt]
(8.5,4.6) edge (8.5,8.5)
(3,.1) edge (3,4.5)
;
\end{scope}
\end{tikzpicture}
\end{equation*}

The following six graphs are the graphs which can appear in $\star^2$
\[
\begin{tikzpicture}[x=.5em,y=.5em]
\draw[line width=.4pt]
(0,0) -- (4,0);
\draw[line width=.4pt]
(6,0) -- (10,0);
\draw[<->,line width=.4pt]
(2,.1) -- (2,3) -- (8,3) -- (8,.1);
\draw[<->,line width=.4pt]
(1,.1) -- (1,4) -- (7,4) -- (7,.1);
\begin{scope}[shift={(12,0)}]
\draw[line width=.4pt]
(0,0) -- (4,0);
\draw[line width=.4pt]
(6,0) -- (10,0);
\draw[<->,line width=.4pt]
(2,.1) -- (2,3) -- (8,3) -- (8,.1);
\draw[<->,line width=.4pt]
(1,.1) -- (1,4) -- (9,4) -- (9,.1);
\end{scope}
\begin{scope}[shift={(24,0)}]
\draw[line width=.4pt]
(0,0) -- (4,0);
\draw[line width=.4pt]
(6,0) -- (10,0);
\draw[<->,line width=.4pt]
(2,.1) -- (2,3) -- (8,3) -- (8,.1);
\draw[<->,line width=.4pt]
(1,.1) -- (1,4) -- (5,4) -- (5,3.1);
\end{scope}
\begin{scope}[shift={(36,0)}]
\draw[line width=.4pt]
(0,0) -- (4,0);
\draw[line width=.4pt]
(6,0) -- (10,0);
\draw[<->,line width=.4pt]
(2,.1) -- (2,3) -- (8,3) -- (8,.1);
\draw[<->,line width=.4pt]
(3,.1) -- (3,4) -- (5,4) -- (5,3.1);
\end{scope}
\begin{scope}[shift={(48,0)}]
\draw[line width=.4pt]
(0,0) -- (4,0);
\draw[line width=.4pt]
(6,0) -- (10,0);
\draw[<->,line width=.4pt]
(2,.1) -- (2,3) -- (8,3) -- (8,.1);
\draw[<->,line width=.4pt]
(5,3.1) -- (5,4) -- (7,4) -- (7,.1);
\end{scope}
\begin{scope}[shift={(60,0)}]
\draw[line width=.4pt]
(0,0) -- (4,0);
\draw[line width=.4pt]
(6,0) -- (10,0);
\draw[<->,line width=.4pt]
(2,.1) -- (2,3) -- (8,3) -- (8,.1);
\draw[<->,line width=.4pt]
(5,3.1) -- (5,4) -- (9,4) -- (9,.1);
\end{scope}
\end{tikzpicture}
\]

Using (\ref{graph1}) and (\ref{graph}) recursively, each term in $\star^k$ can be represented by a graph. 

\begin{definition}
\label{definition:admissible}
\index{admissible graph!for the combinatorial star product}
\index{star product!combinatorial!admissible graph}\index{combinatorial star product!admissible graph}
Denote by $\mathfrak G_{k,2}^{\mathrm C}$ the set of graphs appearing for $\star^k$ and for each $\Pi \in \mathfrak G_{k,2}^{\mathrm C}$, let $C_\Pi (f, g)$ be the sum of terms in $f \star^k g$ corresponding to reductions whose corresponding graph is of the form $\Pi$.
\end{definition}

(See Proposition \ref{proposition:graphs} for the relation to Kontsevich's admissible graphs.)

\begin{lemma}
\label{lemma:operator}
For each $\Pi \in \mathfrak G^{\mathrm C}_{k,2}$ the term $C_\Pi (f, g)$, obtained by performing reductions, can be expressed explicitly as a formal power series $C_\Pi$ of bidifferential operators acting on $f \otimes g$.
\end{lemma}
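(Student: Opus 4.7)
The plan is to first reduce by bilinearity to the case where $f$ and $g$ are monomials, say $f = x_{i_1} \cdots x_{i_p}$ and $g = x_{j_1} \cdots x_{j_q}$. For such monomials, each summand of $f \star^k g$ is produced by a sequence of $k$ right-most reductions using $\widetilde\phi$, and such a sequence is recorded by a graph $\Pi \in \mathfrak G^{\mathrm C}_{k,2}$ together with the following data: at each upper vertex $v_s$ a pair of indices $(a_s, b_s)$ with $a_s > b_s$ (so that $\widetilde\phi_{x_{a_s} x_{b_s}}$ is the polynomial inserted at $v_s$), and for each of the $2k$ legs of $\Pi$ a choice of position in $f$, in $g$, or in some $\widetilde\phi_{x_{a_t} x_{b_t}}$ produced by an earlier reduction, depending on where the leg terminates.

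The key observation is the following elementary identity, valid for any polynomial $M \in A$ and any variable $x_a$:
\[
\sum_{\text{positions of } x_a \text{ in monomials of } M} (M \text{ with one occurrence of } x_a \text{ removed}) \;=\; \frac{\partial M}{\partial x_a}.
\]
Applying this identity simultaneously on each of the $2k$ legs of $\Pi$ — while keeping the labelling of the upper vertices fixed and summing only over positions — converts the combinatorial sum into a product of partial derivatives acting on $f$, $g$, and on the various polynomials $\widetilde\phi_{x_{a_t} x_{b_t}}$. Collecting the resulting contributions yields an explicit expression of the shape
\[
C_\Pi(f, g) \;=\; \sum_{(a_1, b_1), \ldots, (a_k, b_k)} \Bigl( \prod_{s=1}^{k} \partial^{\gamma_s(\Pi)} \widetilde\phi_{x_{a_s} x_{b_s}} \Bigr) \cdot \partial^{\alpha(\Pi)} f \cdot \partial^{\beta(\Pi)} g,
\]
where $\alpha(\Pi), \beta(\Pi), \gamma_s(\Pi) \in \mathbb N^d$ are multi-indices determined by counting which legs of $\Pi$ land on the $f$-, $g$-, and $v_s$-vertices and weighted by the labels $a_s, b_s$ attached to the source of each leg. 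This expression is manifestly a finite-order bidifferential operator applied to $f \otimes g$, and it extends by bilinearity to all of $A \llrr{\hbar}$.

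I would organise the argument as an induction on $k$. The base case $k=1$ follows directly from the formula \eqref{starg1}, which after the position-to-derivative translation becomes
\[
f \star^1 g \;=\; \sum_{i > j} \widetilde\phi_{x_i x_j} \cdot \frac{\partial f}{\partial x_i} \cdot \frac{\partial g}{\partial x_j}.
\]
The inductive step records the effect of performing one additional right-most reduction on an intermediate expression: geometrically this amounts to adjoining one new upper vertex (together with its two ordered outgoing legs) to an existing graph. The main obstacle is to verify that the right-most reduction convention establishes a bijection between reduction sequences of length $k$ on the one hand, and pairs consisting of a graph in $\mathfrak G^{\mathrm C}_{k,2}$ together with a labelling on the other, so that no double counting occurs when the sum is reorganised. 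As indicated by the placement of the arrowheads in the diagrams preceding the lemma, the right-most rule determines uniquely which leg of each upper vertex points ``inward'' (toward the site of the insertion), and this combinatorial rigidity is precisely what ensures the required bijection, and hence the well-definedness of the bidifferential operator $C_\Pi$.
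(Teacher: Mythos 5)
Your overall strategy---group the summands of $f \star^k g$ by their underlying graph and convert the sum over positions of the consumed variables into derivatives---is the same as the paper's. However, there is a genuine gap at the central step, namely the claim that applying your one-occurrence identity ``simultaneously on each of the $2k$ legs'' converts the combinatorial sum into a product of \emph{ordinary} partial derivatives. The identity is correct for a single leg, but iterating it fails as soon as $l \geq 2$ legs carry the same label $i$ and land on the same factor. In that case the reduction process contributes only $\binom{k}{l}$ terms when the target contains $x_i^k$: the legs arriving at a given line segment are totally ordered (this is precisely the extra datum $<_v$ that distinguishes $\mathfrak G^{\mathrm C}_{k,2}$ from Kontsevich graphs, and it is forced by the right-most reduction convention), so the assignment of legs to occurrences of $x_i$ must be order-preserving and one only chooses an $l$-element subset of occurrences. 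The iterated ordinary derivative $\partial_i^l$ instead produces $l!\binom{k}{l}$ terms. Concretely: for $d=2$, $\widetilde\phi_{x_2x_1} = c\hbar$ constant, $f = x_2^2$, $g = x_1^2$, performing right-most reductions gives $f \star^2 g = 2c^2\hbar^2$, split as $c^2\hbar^2$ for each of the two graphs in $\mathfrak G^{\mathrm C}_{2,2}$ (crossing and non-crossing matchings), whereas your formula would assign each graph $(\partial_2^2 f)(\partial_1^2 g)\,c^2\hbar^2 = 4c^2\hbar^2$.

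The repair is exactly the one point the paper's proof makes: the operator attached to a labelled graph must be built from \emph{divided} partial derivatives, i.e.\ $l$ legs with the same label $i$ on the same factor contribute $\frac{1}{l!}\frac{\partial^l}{\partial x_i^l}$, since $\frac{1}{l!}\frac{\partial^l x_i^k}{\partial x_i^l} = \binom{k}{l}x_i^{k-l}$ reproduces the count of order-preserving assignments. With that normalization your multi-index formula, your induction on $k$, and the bijection between reduction sequences and labelled graphs (which you correctly single out as the remaining combinatorial point, and which is indeed guaranteed by the arrowhead/right-most convention) all go through and agree with the paper. Note that the bare statement of the lemma survives either way---a divided derivative is still a bidifferential operator---but the explicit operator you write down is off by the factorials coming from repeated labels, so as a proof of the formula \eqref{combinatorialstar} used later it does not stand as written.
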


\begin{proof}
For each shape
\hair\begin{tikzpicture}[baseline=.1em,x=.8em,y=.8em]
\draw[<->,line width=.4pt]
(0,.35em) -- (0,2ex) -- (1.5em,2ex) -- (1.5em,-.1ex);
\draw[-,line width=.4pt]
(-.3em,.3em) -- (.3em,.3em);
\draw[-,line width=.4pt]
(1.2em,-.2ex) -- (1.8em,-.2ex);
\end{tikzpicture}\hair{}
appearing in $\Pi$ label the horizontal line segment by $\widetilde \phi_{x_j x_i}$ for some $x_i$ and $x_j$ with $j > i$ and label the legs by $i$ and $j$, such that the labelling of the legs agrees with the order of the reductions and the labelling along the horizontal line segment respects the ordering, so that the parts of a labelled graph will look as follows
\begin{flalign*}
&&
\begin{tikzpicture}[baseline=.8em,x=.8em,y=.8em]
\draw[<->,line width=.4pt]
(0,.65em) -- (0,4ex) -- (3em,4ex) -- (3em,-.4ex);
\draw[-,line width=.4pt]
(-.6em,.6em) -- (.6em,.6em);
\draw[-,line width=.4pt]
(2.4em,-.55ex) -- (3.6em,-.55ex);
\draw[densely dotted,line width=.4pt]
(-.9em,.6em) -- (-.6em,.6em);
\draw[densely dotted,line width=.4pt]
(.9em,.6em) -- (.6em,.6em);
\draw[densely dotted,line width=.4pt]
(2.1em,-.55ex) -- (2.4em,-.55ex);
\draw[densely dotted,line width=.4pt]
(3.9em,-.55ex) -- (3.6em,-.55ex);
\node[font=\scriptsize] at (-.4em,2.8ex) {$j$};
\node[font=\scriptsize] at (3.4em,2ex) {$i$};
\node[font=\scriptsize] at (1.5em,5.2ex) {$\widetilde \phi_{x_j x_i}$};
\end{tikzpicture}
\quad\text{for $i < j$}
&&
\begin{tikzpicture}[baseline=3ex,x=.8em,y=.8em]
\draw[-,line width=.4pt]
(.3em,0) -- (4.7em,0);
\draw[->,line width=.4pt]
(.5em,4ex) -- (1em,4ex) -- (1em,.1em);
\draw[->,line width=.4pt]
(1.2em,6ex) -- (2em,6ex) -- (2em,.1em);
\draw[->,line width=.4pt]
(4.5em,2.5ex) -- (3em,2.5ex) -- (3em,.1em);
\draw[->,line width=.4pt]
(3em,8ex) -- (4em,8ex) -- (4em,.1em);
\foreach \x/\y in {{(0,0)}/{(.3em,0)}, {(5em,0)}/{(4.7em,0)}, {(.2em,4ex)}/{(.5em,4ex)}, {(.9em,6ex)}/{(1.2em,6ex)}, {(4.8em,2.5ex)}/{(4.5em,2.5ex)}, {(2.7em,8ex)}/{(3em,8ex)}}
{
\draw[densely dotted,line width=.4pt]
\x -- \y;
}
\node[font=\scriptsize] at (.7em,2.2ex) {$i$};
\node[font=\scriptsize] at (1.7em,3.1ex) {$j$};
\node[font=\scriptsize] at (2.7em,1.8ex) {$k$};
\node[font=\scriptsize] at (3.7em,5.4ex) {$l$};
\end{tikzpicture}
\quad\text{for $i \leq j \leq k \leq l$.}
&&
\end{flalign*}

A leg labelled $i$ shall correspond to the operator $\partial_i = \frac{\partial}{\partial x_i}$ acting on the monomial represented by the line segment. Once the leg is labelled by $i$, say, the corresponding operator $\partial_i$ can only act on $x_i$, but $x_i$ may appear multiple times. For example, if $x_i$ appears $k$ times and $l$ of the incoming legs are labelled by $i$, then there are $k \choose l$ ways of choosing which instances of $x_i$ should be replaced in the reductions. But
\[
\frac{\partial^l x_i^k}{\partial x_i^l} = k (k-1) \cdots (k-l+1) x_i^{k-l} = \frac{k!}{(k-l)!} x^{k-l} = l! {k \choose l} x_i^{k-l}
\]
so performing reductions can be viewed as taking {\it divided partial derivatives}, i.e.\ we may define the product of $\partial_i$ and $\partial_j$ by
\[
\partial_i \partial_j = \partial_j \partial_i \quad\text{ if } i \neq j \qquad\text{and}\qquad \underbrace{\partial_i \cdots \partial_i}_k = \frac{1}{k!} \frac{\partial^k}{\partial x_i^k}
\]
and in general we write
\begin{flalign*}
&& \frac{\partial^{[k]} f} {\partial x_1^{k_1} \dotsb \partial x_d^{k_d}} &= \frac{1}{k_1!\dotsb k_d!} \frac{\partial^k f}{\partial x_1^{k_1} \dotsb \partial x_d^{k_d}}
&& \mathllap{k = k_1 + \dotsb + k_d.}
\end{flalign*}

For each labelling we obtain an explicit formula for the bidifferential operator corresponding to the labelling (see Example \ref{example:lambdapi}) and $C_\Pi$ is defined as the sum of these bidifferential operators for all such labellings.
\end{proof}

This completes the proof of Theorem \ref{theorem:loopless}.

\begin{remark}
Theorem \ref{theorem:loopless} gives a graphical formula for the combinatorial star product\index{star product!combinatorial}\index{combinatorial star product} in terms of bidifferential operators. When working over $\Bbbk = \mathbb R$, bidifferential operators are continuous with respect to the locally convex topology of $C^\infty (\mathbb R^d)$ defined by the seminorms $\lVert f \rVert_{K, n} = \sum_{\lvert k \rvert \leq n} \sup_{x \in K} \lvert \partial_k f (x) \rvert$, where $K \subset \mathbb R^d$ is a compact set, $n \in \mathbb N$ and $\partial_k = \frac{\partial^{k_1}}{\partial x_1^{k_1}} \dotsb \frac{\partial^{k_d}}{\partial x_d^{k_d}}$ for a multi-index $k = (k_1, \dotsc, k_d) \in \mathbb N^d$ with $\lvert k \rvert = k_1 + \dotsb + k_d$ its sum of components. Thus $\star = \star^{\mathrm C}_{\phi + \widetilde \phi}$ extends uniquely to the algebra $C^\infty (\mathbb R^d) \llrr{t}$ of all smooth functions. (See \cite{barmeierschmitt} for further applications in strict deformation quantization\index{deformation quantization!strict}.)
\end{remark}

\begin{example}
\label{example:constantpoisson}
\index{Poisson structure!constant}\index{Poisson structure}
Let $\eta_{ji} \in \Bbbk$ for $1 \leq i < j \leq d$ be any constants and define $\widetilde \phi \in \Hom (\Bbbk S, A) \otimes \mathfrak m$ by $\widetilde \varphi (x_j x_i) = \eta_{ji} \hbar$. Then the explicit formula for $\star = \star^{\rm C}_{\phi + \widetilde \phi}$ is given by
\begin{align*}
f \star g &= fg +  \sum_{k \geq 1}  \sum_{\substack{i_1 \leq \dotsb \leq i_k \\ j_1 \leq \dotsb \leq j_k}} \sum_{s \in \mathfrak S_k} \eta_{j_{s(1)}i_1} \dotsb \eta_{j_{s(k)}i_k} \frac{\partial^{[k]} f}{\partial x_{j_1}\dotsb\partial x_{j_k}} \frac{\partial^{[k]} g} {\partial x_{i_1} \dotsb \partial x_{i_k}} \hbar^k \\
&= \exp \Bigl(\hbar \sum_{i < j} \eta_{ji} \, \partial_j \otimes \partial_i \Bigr) (f \otimes g) \big\rvert_{\mathrm{Diag}}
\end{align*}
where we set $\eta_{ji} = 0$ if $i \geq j$ and the second identity may be verified by comparing the coefficients of $\hbar^k$ for each $k \geq 0$. 

In fact, the constants $\eta_{ji}$ define a constant Poisson structure
\[
\eta = \sum_{1\leq i < j \leq d} \eta_{ji} \frac{\partial}{\partial x_j} \wedge \frac{\partial}{\partial x_i}
\]
on $\mathbb A^d$. Moreover, using Theorem \ref{theorem:higher-brackets} one easily checks that $\widetilde \varphi$ is a Maurer--Cartan element of $\mathbf p (Q, R) \hatotimes \mathfrak m$, and $\star$ gives a deformation quantization\index{deformation quantization} of $\eta$; see Proposition \ref{proposition:quantizations} below.

Note that the standard quantization\index{deformation quantization} of $\eta$ is given by the Moyal product $\ast$
\[
f \ast g = \exp \Bigl( \frac{\hbar}2 \sum\limits_{i < j} \eta_{ji} (\partial_j \otimes \partial_i - \partial_i \otimes \partial_j) \Bigr) (f \otimes g) \big\rvert_{\mathrm{Diag}}
\]
and an explicit gauge transformation between $\star$ and $\ast$ can be given by the formula $\Phi (f) \ast \Phi (g) = \Phi (f \star g)$ where $\Phi (f) = \exp \bigl( \frac{\hbar}2 \sum_{i<j} \eta_{ji} \frac{\partial^2}{\partial x_i \partial x_j} \bigr) (f)$.
\end{example}

\section{Deformation quantization and Kontsevich's Formality Theorem}
\label{subsection:relationdeformationquantization}
\index{Kontsevich!formality}

We give a brief overview of deformation quantization\index{deformation quantization|textbf} and the results of Kontsevich's seminal paper \cite{kontsevich1} (also see \cite{kontsevich1,keller2,cattaneo} for details). We then directly relate the graphical description of the combinatorial star product given in Theorem \ref{theorem:loopless} to Kontsevich's construction.

\begin{definition}
Let $(X, \{ \blank {,} \blank \})$ be a Poisson manifold and let $A = C^\infty (X)$ be the algebra of smooth functions. A {\it star product} $\star$ is an $\mathbb R \llrr{\hbar}$-bilinear associative product on $A \llrr{\hbar}$ of the form
\[
f \star g = fg + \sum_{n \geq 1} B_n (f, g) \hbar^n
\]
such that
\begin{enumerate}
\item each $B_n$ is a bidifferential operator
\item $1 \star f = f = f \star 1$ for each $f \in C^\infty (X)$.
\end{enumerate}

We say that $\star$ is a {\it quantization} of $\{ \blank {,} \blank \}$ (i.e.\ a deformation quantization of $(X, \{ \blank {,} \blank \})$)\index{deformation quantization} if
\[
\frac{f \star g - g \star f}\hbar \Big\rvert_{\hbar = 0} = B_1 (f, g) - B_1 (g, f) = \{ f, g \}
\]
i.e.\ if the skew-symmetrization of the first-order term $B_1$ of $\star$ coincides with the Poisson bracket.
\end{definition}

Deformation quantization\index{deformation quantization} was proposed in Bayen--Flato--Frønsdal--Lichnerowicz--Sternheimer \cite{bayen} as a way to quantize a classical mechanical system, given by a Poisson manifold, by viewing quantum mechanics as a kind of deformation of classical mechanics. The existence of a deformation quantization\index{deformation quantization} was first shown for symplectic manifolds (i.e.\ the non-degenerate case) by De~Wilde--Lecomte \cite{dewildelecomte} with a different explicit construction given by Fedosov \cite{fedosov}. The deformation quantization\index{deformation quantization} of arbitrary Poisson structures was solved by Kontsevich \cite{kontsevich1}, giving the following general result.

\begin{theorem}[(Kontsevich \cite{kontsevich1})]
Any Poisson manifold $(X, \{ \blank {,} \blank \})$ admits a formal deformation quantization.\index{deformation quantization}
\end{theorem}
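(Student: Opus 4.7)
The plan is to reduce to the case $X = \mathbb A^d$, where the machinery built in Sections \ref{section:deformations-of-path-algebras}--\ref{section:relationtoquantization} applies directly, and then to globalize. For the affine case, given a Poisson bivector $\pi = \sum_{i < j} \pi_{ji}\,\partial_j \wedge \partial_i$ on $\mathbb A^d$, I would take as a candidate first-order term the element $\widetilde \phi_1 \in \Hom(\Bbbk S, A)$ defined by $\widetilde \phi_1(x_j x_i) = \pi_{ji}$ for $1 \leq i < j \leq d$, with $S$ and $R$ as in Example \ref{example:polynomialreduction}. Using Theorem \ref{theorem:higher-brackets}, verifying that $\widetilde \phi_1 \hbar$ is a first-order Maurer--Cartan element of $\mathbf p(Q, R) \hatotimes (\hbar)$ reduces to checking associativity of the combinatorial star product modulo $\hbar^2$ on the finite set $S_3 = \{ x_k x_j x_i \}_{i < j < k}$, and this is easily seen to be equivalent to the Jacobi identity $\{\pi, \pi\}_{\mathrm S} = 0$ for the Schouten bracket. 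The resulting first-order term $\widetilde \phi_1$ recovers $\pi$ after skew-symmetrization, as required for a quantization.

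The hard part is extending $\widetilde \phi_1$ to a full Maurer--Cartan element $\widetilde \phi = \widetilde \phi_1 \hbar + \widetilde \phi_2 \hbar^2 + \dotsb$ of $\mathbf p(Q, R) \hatotimes (\hbar)$. Naive obstruction theory is insufficient because $\HH^3(A, A)$ is nonzero for $A = \Bbbk[x_1, \dotsc, x_d]$ with $d \geq 3$, so nontrivial cohomological obstructions can, a priori, appear at each order. The natural route is Kontsevich's formality theorem, which provides an L$_\infty$ quasi-isomorphism between the DG Lie algebra $T_{\mathrm{poly}}(\mathbb A^d)$ of polyvector fields (with Schouten bracket) and the Hochschild cochain complex $\mathbf h(A)$. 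Composing with a quasi-inverse (Lemma \ref{lemma:quasi-inverse}) of the L$_\infty[1]$ quasi-isomorphism $\mathbf p(Q, R) \tosim \mathbf h(A)$ of Theorem \ref{theorem:linfinitytransfer}, one transfers the Maurer--Cartan element $\pi\hbar$ of $T_{\mathrm{poly}}(\mathbb A^d) \hatotimes (\hbar)$ to a Maurer--Cartan element $\widetilde \phi$ of $\mathbf p(Q, R) \hatotimes (\hbar)$ whose first-order term is $\widetilde \phi_1$.

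Once $\widetilde \phi$ is produced, Corollary \ref{corollary:controls} (or equivalently Theorem \ref{theorem:equivalenceformal}) guarantees that $(A \llrr{\hbar}, \star^{\mathrm C}_{\phi + \widetilde \phi})$ is an associative formal deformation of $A$, and the graphical formula \eqref{combinatorialstar} of Theorem \ref{theorem:loopless} provides an explicit closed expression for it in terms of bidifferential operators indexed by the graphs in $\mathfrak G^{\mathrm C}_{k,2}$. By construction its first-order antisymmetrization recovers $\pi$, so $\star^{\mathrm C}_{\phi + \widetilde \phi}$ quantizes $\pi$. Finally, to pass from $\mathbb A^d$ to an arbitrary Poisson manifold $X$, I would invoke the standard globalization argument: cover $X$ by coordinate charts, apply the affine construction fiberwise to the jet bundle of $\mathcal O_X$, and patch the local quantizations together using a Fedosov-type flat connection on the associated bundle of formal Weyl algebras. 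The key geometric input for gluing---that the combinatorial star product transforms covariantly under affine changes of coordinates---can be read off from the naturality of the L$_\infty[1]$ morphisms $G^\hdot_k$ of Theorem \ref{theorem:linfinitytransfer}.
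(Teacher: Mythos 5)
This statement is a recalled result of Kontsevich: the paper gives no proof of it, citing \cite{kontsevich1} and merely describing, in the surrounding text, exactly the architecture you outline (the formality L$_\infty$ quasi-isomorphism $\mathfrak v (\mathbb R^d) \tosim \mathfrak d (\mathbb R^d)$ for the affine case, followed by the globalization of \cite{cattaneofeldertomassini}). Your sketch is therefore consistent with the paper's treatment, and your first paragraph correctly matches the paper's own observations (Lemma \ref{lemma:koszulquantization} and Proposition \ref{proposition:quantizations}) that the first-order Maurer--Cartan condition on $S_3$ is the Jacobi identity. But be aware that what you have written is an outline of the standard argument, not a proof: the entire difficulty of the theorem is concentrated in the existence of the formality quasi-isomorphism, which you invoke as a black box, and likewise the Fedosov-type gluing requires more than covariance under affine coordinate changes (it uses the $\GL_d$-equivariance and vanishing properties of Kontsevich's explicit morphism, which do not follow from the abstract naturality of the transferred maps $G^\hdot_k$ of Theorem \ref{theorem:linfinitytransfer}). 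The detour through $\mathbf p (Q, R)$ via Lemma \ref{lemma:quasi-inverse} reproduces the paper's own Theorem \ref{theorem:theoretical}; it yields an explicit weight-free formula for the resulting star product but contributes nothing to the existence statement, which still rests entirely on Kontsevich's theorem itself.
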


Kontsevich's proof relies on the construction of an L$_\infty$ quasi-iso\-mor\-phism\index{L$_\infty$!quasi-isomorphism}, the {\it Kontsevich formality quasi-iso\-mor\-phism}\index{Kontsevich!formality|textbf}, between the DG Lie algebra\index{DG Lie algebra}\index{algebra!DG Lie} $\mathfrak v (X)$ of polyvector fields on $X$ (with the Schouten--Nijenhuis bracket\index{Schouten--Nijenhuis bracket} and trivial differential) and the DG Lie algebra $\mathfrak d (X)$ of multi-differential operators (viewed as a DG Lie subalgebra of the Hochschild DG Lie algebra of $A = C^\infty (X)$). This quasi-isomorphism gives an equivalence of Maurer--Cartan elements up to gauge equivalence in these two DG Lie algebras (cf.~Definition \ref{dgla} and Theorem \ref{theorem:quasiisomorphism}), which for $\mathfrak v (X)$ are precisely (formal) Poisson structures and for $\mathfrak d (X)$ star products.

This formality quasi-isomorphism was constructed using an explicit universal formula for any Poisson structure on $\mathbb R^d$ and the statement for arbitrary Poisson manifolds follows from a globalization argument (see \cite[\S7]{kontsevich1} and \cite{cattaneofeldertomassini}). In particular, to each Poisson structure on $\mathbb R^d$, viewed as a Maurer--Cartan element of $\mathfrak v (\mathbb R^d)$, one can thus associate an explicit star product.

Kontsevich's explicit formula for Poisson structures on $\mathbb R^d$ takes the following form\index{Kontsevich!star product|textbf}\index{star product!Kontsevich|textbf}
\begin{equation}
\label{kontsevichstar}
f \ast^{\mathrm K} g = f g + \sum_{k \geq 1} \frac{\hbar^k}{k!} \sum_{\Gamma \in \mathfrak G_{k,2}} w_\Gamma B_\Gamma (f, g)
\end{equation}
and for constant Poisson structures $\ast^{\mathrm K}$ coincides with the Moyal product $\ast$ (see Example \ref{example:constantpoisson}).
The graphs $\Gamma$ and their associated bidifferential operators $B_\Gamma$ are defined as follows.

\begin{definition}[{\cite[\S 6.1]{kontsevich1}}]
\label{definition:admissiblek}
\index{admissible graph!for the Kontsevich star product}\index{Kontsevich!graphs|textbf}
Let $\Gamma$ be a directed graph with $k + 2$ vertices, $k$ of which are said to be of ``first type'' and $2$ of ``second type'', and $2k$ arrows such that each vertex of first type has exactly two outgoing arrows. Then $\Gamma$ is said to be {\it admissible} if $\Gamma$ has no parallel arrows and no loops and the set of all admissible graphs with $k$ vertices of first type and $2$ vertices of second type is denoted by $\mathfrak G_{k,2}$.\footnote{One may also allow a larger class of admissible graphs in the definition, but they would not enter the formula of the Kontsevich star product, as their associated weights would be $0$.}

To each graph $\Gamma \in \mathfrak G_{k,2}$ one can associate a certain bidifferential operator $B_\Gamma$, which is built from $k$ copies of the Poisson bivector field $\eta = \sum_{1 \leq i < j \leq d} \eta_{ji} \frac{\partial}{\partial x_j} {\hair\wedge\hair} \frac{\partial}{\partial x_i}$, by decorating each vertex of first type by $\eta_{ji}$ for some $i, j$ and the two outgoing arrows by $\frac{\partial}{\partial x_i}$ and $\frac{\partial}{\partial x_j}$. Then $B_\Gamma (f, g)$ is the sum over all such labellings acting on $f, g$ which are placed at the two vertices of second type (see e.g.\ \cite[\S 2.2]{keller2}).
\end{definition}

The subtlety in the formula (\ref{kontsevichstar}) arguably lies in the choice of weights $w_\Gamma$ which make $\ast^{\mathrm K}$ an {\it associative} product. These {\it Kontsevich weights} are obtained by looking at geodesic embeddings of $\Gamma$ into the upper half plane (with the hyperbolic metric) and integrating a certain differential form (associated to the angles $\theta_e$ of the individual edges) over the compactification of a configuration space of points:
\[
w_\Gamma = \frac{1}{(2 \pi)^{2k}} \int_{\overline{\mathbb H}_k} \bigwedge_{e \in \Gamma} \mathrm d \theta_e.
\]
These weights are rather mysterious and some of them are conjecturally irrational (cf.\ Felder--Willwacher \cite{felderwillwacher}), but the integrals have recently been computed as integer-linear combinations of multiple zeta values in Banks--Panzer--Pym \cite{bankspanzerpym}.

\begin{remark}
Locally, the same formula can also be used to quantize holomorphic Poisson structures on $\mathbb C^d$. However, the globalization proceduce works rather differently as one should replace the algebra of smooth functions $C^\infty (X)$ by the structure sheaf $\mathcal O_X$. In particular, one should consider deformations of $\mathcal O_X$ as (twisted) presheaf, which naturally include ``commutative'' deformations of $\mathcal O_X$ corresponding to deformations of the complex structure of $X$. For more details see for example \cite{barmeierwang,kontsevich2,nesttsygan,palamodov1,palamodov2,vandenbergh,yekutieli2}.
\end{remark}

We now relate the set of graphs $\mathfrak G^{\mathrm C}_{k,2}$ for the combinatorial star product\index{star product!combinatorial}\index{combinatorial star product} appearing in Theorem \ref{theorem:loopless} to the graphs of Kontsevich's universal formula \eqref{kontsevichstar} (see Definition \ref{definition:admissiblek}).

\begin{proposition}
\label{proposition:graphs}
\index{Kontsevich!graphs}
There is a bijection between the set $\mathfrak G_{k,2}^{\mathrm C}$ and the set of pairs
\[
\bigl( \Lambda, \{ <_v \}_{v \in V (\Lambda)} \bigr)
\]
where $\Lambda \in \mathfrak G_{k,2}$ is a Kontsevich graph without oriented cycles (``wheels''), $V (\Lambda)$ denotes the set of vertices of the graph $\Lambda$, and $<_v$ is a total order on the set of incoming edges at the vertex $v \in V (\Lambda)$.
\end{proposition}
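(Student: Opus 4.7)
My strategy is to construct an explicit bijection by forgetting the arrowheads of a combinatorial graph, which—as noted in the discussion just before Definition~\ref{definition:admissible}—are canonically determined by the right-most reduction convention, and then to repackage the remaining data as an acyclic Kontsevich graph equipped with a total order on the incoming edges at each vertex.

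First, I will define the forward map $\Phi \colon \mathfrak{G}_{k,2}^{\mathrm C} \to \{(\Lambda, \{<_v\}_v)\}$ by sending each $\Pi \in \mathfrak{G}_{k,2}^{\mathrm C}$ to the directed graph $\Lambda$ obtained by erasing the arrowheads of $\Pi$: each $\widetilde{\varphi}$-bar becomes a first-type vertex with its two legs as the outgoing edges, and each of the two bottom bars becomes a vertex of second type. At each vertex $v$, the incoming legs meet the corresponding bar at distinct positions, and reading these positions left-to-right defines the total order $<_v$. I will then verify that $\Lambda$ has no loops (since the two legs of each $\widetilde\varphi$-bar always descend to strictly earlier bars) and no parallel edges (since distinct legs of a single bar land at distinct positions on any target bar). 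Acyclicity will follow from the observation that the temporal order in which the bars are introduced—starting from the $f$- and $g$-bars and stacking the $\widetilde\varphi$-bars upward, one per reduction step—provides a linear extension of $\Lambda$.

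Next, I will construct the inverse map $\Psi$ as follows: given $(\Lambda, \{<_v\}_v)$ with $\Lambda$ acyclic, pick any topological sort $v_1, \dotsc, v_k$ of the first-type vertices, lay down the two bottom bars for $f$ and $g$, then successively stack the bars for $v_1, \dotsc, v_k$ above, each with its two legs routed to the two targets prescribed by the outgoing edges of $v_j$ in $\Lambda$, inserted at the positions prescribed by the orders $<_v$. Finally, place the arrowhead on each $\widetilde\varphi$-bar on the unique leg compatible with the right-most reduction rule. Once this is in place, $\Phi \circ \Psi = \mathrm{id}$ and $\Psi \circ \Phi = \mathrm{id}$ follow directly: by construction $\Phi \circ \Psi$ recovers the data $(\Lambda, \{<_v\}_v)$, while $\Psi \circ \Phi$ rebuilds $\Pi$ because the temporal order of the bars in $\Pi$ is itself a valid topological sort of $\Lambda$ and every insertion occurs at the position recorded by $<_v$.

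The hard part will be the commutation argument underpinning the well-definedness of $\Psi$: I must show that if $v_j$ and $v_{j+1}$ are incomparable in $\Lambda$, exchanging their positions in the topological sort yields the same combinatorial graph. This amounts to checking that two successive insertions whose bars are not linked by an edge act on disjoint positions of the layout and hence commute, both pictorially and as operations on monomials. A secondary point to verify is that the right-most reduction rule truly pins down the arrowhead uniquely on each $\widetilde\varphi$-bar; this can be extracted from the recursive description of $\star^k$ in terms of $\star^{k-1}$ given between equations~\eqref{starg1} and~\eqref{graph}, by a short case analysis on the two possible arrowhead placements.
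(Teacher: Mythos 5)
Your construction --- contract each horizontal bar of $\Pi$ to a vertex to obtain $\Lambda$ and read $<_v$ off the left-to-right order of the incoming legs, then invert by re-expanding vertices into stacked bars --- is exactly the paper's. The gap is in how you establish well-definedness of the inverse. You propose to pick an \emph{arbitrary} topological sort of the first-type vertices and then argue that swapping two incomparable vertices ``yields the same combinatorial graph''. This is not the right mechanism, and as stated it is false: the elements of $\mathfrak G_{k,2}^{\mathrm C}$ are by definition the layered diagrams that actually arise from performing right-most reductions, and the relative heights of the bars are part of the data (the six graphs listed for $\star^2$ are distinguished in part by which bar sits above which). Two bars not joined by an edge in $\Lambda$ still have a \emph{forced} relative height: even when two reductions are completely independent, the right-most one is performed first, so only one of the two stackings ever occurs. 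Swapping them does not produce ``the same graph''; it produces a different layered diagram which is not an element of $\mathfrak G_{k,2}^{\mathrm C}$ at all. What you actually need --- and what the paper's proof records --- is that among all linear extensions of the partial order on the bars determined by $\Lambda$, exactly one is compatible with the right-most reduction rule, so the layering carries no information beyond $(\Lambda, \{<_v\}_v)$ and the inverse is defined without any choice. Your commutation lemma should be replaced by this uniqueness statement; note it is also what makes the forward map injective, since if two admissible layerings of the same pair $(\Lambda, \{<_v\}_v)$ occurred, the contraction map would identify distinct elements of $\mathfrak G_{k,2}^{\mathrm C}$.

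Two smaller points. Your justification that $\Lambda$ has no parallel arrows (``distinct legs of a single bar land at distinct positions on any target bar'') does not exclude both legs landing on the \emph{same} target bar, which after contraction would give parallel arrows in the Kontsevich sense. The correct reason is that the two letters commuted in any reduction necessarily lie in different segments: within a single inserted segment (an irreducible monomial, hence with increasing indices) no reduction applies between two of its letters. Finally, your observation that the arrowhead placement on each bar is pinned down by the right-most rule is correct and matches the remark preceding Definition \ref{definition:admissible}.
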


\begin{proof}
Given a graph $\Pi \in \mathfrak G_{k,2}^{\mathrm C}$ one may contract each horizontal line segment to a vertex $v$ and set $<_v$ as the ordering (from left to right) of the incoming edges along this line segment. Conversely, any pair $(\Lambda, \{ <_v \}_v)$ may appear in $\mathfrak G_{k,2}^{\mathrm C}$. This can be seen by drawing a Kontsevich graph $\Lambda \in \mathfrak G_{k,2}$ without cycles in a way that reflects the ordering $<_v$ at each vertex and then replacing each vertex by a horizontal line segment. One now observes that there is a unique way to order the line segments horizontally such that the lower line segments correspond to the reductions performed earlier.
\end{proof}

\begin{example}
\label{example:lambdapi}
Consider the following example of a graph $\Lambda \in \mathfrak G_{3,2}$ and consider two different orderings at the left vertex of second type. The corresponding graphs $\Pi = (\Lambda, \{ <_v \}) \in \mathfrak G_{3,2}^{\mathrm C}$ are then given as follows
\begin{equation*}
\begin{tikzpicture}[baseline=0,x=4em,y=4em]
\node at (.5,1.5) {$(\Lambda, \{ <_v \})$};
\node at (11.5em,1.5) {$\Pi$};
\draw[line width=.4pt, fill=black]         (27:1) circle(0.25ex);
\node[shape=circle, scale=0.8](right)  at  (27:1) {};
\draw[line width=.4pt, fill=black]         (54:1) circle(0.25ex);
\node[shape=circle, scale=0.8](middle) at  (54:1) {};
\draw[line width=.4pt, fill=black]        (81:1) circle(0.25ex);
\node[shape=circle, scale=0.8](left)   at (81:1) {};
\draw[line width=.4pt, fill=white]        (0,0) circle(0.3ex);
\node[shape=circle, scale=0.9](f)      at (0,0) {};
\draw[line width=.4pt, fill=white]        (1,0) circle(0.3ex);
\node[shape=circle, scale=0.8](g)      at (1,0) {};
\path[-stealth, line width=.4pt, line cap=round] (left) edge (f.90);
\path[-stealth, line width=.4pt, line cap=round] (left) edge (middle);
\path[-stealth, line width=.4pt, line cap=round] (middle) edge (f);
\path[-stealth, line width=.4pt, line cap=round] (middle) edge (right);
\path[-stealth, line width=.4pt, line cap=round] (right) edge (f.18);
\path[-stealth, line width=.4pt, line cap=round] (right) edge (g);
\begin{scope}[xshift=9em,x=1em,y=1.3em]
\draw[line width=.4pt]
(-.5,0) -- (2.5,0);
\draw[line width=.4pt]
(3.5,0) -- (5.5,0);
\draw[<->,line width=.4pt]
(0,.07) -- (0,3) -- (2.13,3) -- (2.13,2.07);
\draw[<->,line width=.4pt]
(1,.07) -- (1,2) -- (3.25,2) -- (3.25,1.07);
\draw[<->,line width=.4pt]
(2,.07) -- (2,1) -- (4.5,1) -- (4.5,.07);
\end{scope}
\begin{scope}[yshift=-6em]
\draw[line width=.4pt, fill=black]         (27:1) circle(0.25ex);
\node[shape=circle, scale=0.8](right)  at  (27:1) {};
\draw[line width=.4pt, fill=black]         (54:1) circle(0.25ex);
\node[shape=circle, scale=0.8](middle) at  (54:1) {};
\draw[line width=.4pt, fill=black]        (81:1) circle(0.25ex);
\node[shape=circle, scale=0.8](left)   at (81:1) {};
\draw[line width=.4pt, fill=white]        (0,0) circle(0.3ex);
\node[shape=circle, scale=0.9](f)      at (0,0) {};
\draw[line width=.4pt, fill=white]        (1,0) circle(0.3ex);
\node[shape=circle, scale=0.8](g)      at (1,0) {};
\path[-stealth, line width=.4pt, line cap=round, in=54, out=261] (left) edge (f.54);
\path[-stealth, line width=.4pt, line cap=round] (left) edge (middle);
\path[-stealth, line width=.4pt, line cap=round, in=81, out=234] (middle) edge (f.90);
\path[-stealth, line width=.4pt, line cap=round] (middle) edge (right);
\path[-stealth, line width=.4pt, line cap=round] (right) edge (f.18);
\path[-stealth, line width=.4pt, line cap=round] (right) edge (g);
\begin{scope}[xshift=9em,x=1em,y=1.3em]
\draw[line width=.4pt]
(-.5,0) -- (2.5,0);
\draw[line width=.4pt]
(3.5,0) -- (5.5,0);
\draw[<->,line width=.4pt]
(0,.07) -- (0,2) -- (3.25,2) -- (3.25,1.07);
\draw[<->,line width=.4pt]
(1,.07) -- (1,3) -- (2.25,3) -- (2.25,2.07);
\draw[<->,line width=.4pt]
(2,.07) -- (2,1) -- (4.5,1) -- (4.5,.07);
\end{scope}
\end{scope}
\end{tikzpicture}
\end{equation*}
where the ordering $<_v$ is as illustrated.

For $1 \leq i, j, k, l, m, n \leq d$ with $i < j$, $k < l$, $m < n$ and $j \leq l \leq n$ each labelling of the first graph corresponds to a bidifferential operators as follows:
\begin{equation*}
\begin{tikzpicture}[baseline=0,x=1em,y=1.3em]
\draw[line width=.4pt]
(-.5,0) -- (2.5,0);
\draw[line width=.4pt]
(3.5,0) -- (5.5,0);
\draw[<->,line width=.4pt]
(0,.07) -- (0,3) -- (2.13,3) -- (2.13,2.07);
\draw[<->,line width=.4pt]
(1,.07) -- (1,2) -- (3.25,2) -- (3.25,1.07);
\draw[<->,line width=.4pt]
(2,.07) -- (2,1) -- (4.5,1) -- (4.5,.07);
\draw[<->,line width=.4pt] (6,1.5) -- (8,1.5);
\node[font=\scriptsize] at (-.3,1.05) {$j$};
\node[font=\scriptsize] at (3.65,1.55) {$k$};
\node[font=\scriptsize] at (.7,1.05) {\strut$l$};
\node[font=\scriptsize] at (2.5,2.6) {$i$};
\node[font=\scriptsize] at (2.4,.55) {$n$};
\node[font=\scriptsize] at (5,.55) {$m$};
\node at (18,1.5) {$\displaystyle\widetilde \phi_{x_j x_i} \frac{\partial \widetilde \phi_{x_l x_k}}{\partial x_i} \frac{\partial \widetilde \phi_{x_n x_m}}{\partial x_k} \frac{\partial^{[3]}}{\partial x_j \partial x_l \partial x_n} \otimes \frac{\partial}{\partial x_m}$.};
\end{tikzpicture}
\end{equation*}
Similarly, for the second graph, where now $j \leq k \leq n$
\begin{equation*}
\begin{tikzpicture}[baseline=0,x=1em,y=1.3em]
\draw[line width=.4pt]
(-.5,0) -- (2.5,0);
\draw[line width=.4pt]
(3.5,0) -- (5.5,0);
\draw[<->,line width=.4pt]
(0,.07) -- (0,2) -- (3.25,2) -- (3.25,1.07);
\draw[<->,line width=.4pt]
(1,.07) -- (1,3) -- (2.25,3) -- (2.25,2.07);
\draw[<->,line width=.4pt]
(2,.07) -- (2,1) -- (4.5,1) -- (4.5,.07);
\draw[<->,line width=.4pt] (6,1.5) -- (8,1.5);
\node[font=\scriptsize] at (-.3,1.05) {$j$};
\node[font=\scriptsize] at (3.5,1.65) {$i$};
\node[font=\scriptsize] at (.7,1.05) {\strut$k$};
\node[font=\scriptsize] at (2.55,2.55) {$l$};
\node[font=\scriptsize] at (2.4,.55) {$n$};
\node[font=\scriptsize] at (5,.55) {$m$};
\node at (18,1.5) {$\displaystyle\widetilde \phi_{x_l x_k} \frac{\partial \widetilde \phi_{x_n x_m}}{\partial x_i} \frac{\partial \widetilde \phi_{x_j x_i}}{\partial x_l} \frac{\partial^{[3]}}{\partial x_j \partial x_k \partial x_n} \otimes \frac{\partial}{\partial x_m}$.};
\end{tikzpicture}
\end{equation*}
For both these examples, the bidifferential operator $C_\Pi$ would be the sum over all such labellings arising from reductions.
\end{example}

\section{Combinatorial deformation quantization}
\label{subsection:combinatorialquantization}

In this section we will show when the combinatorial star product\index{star product!combinatorial}\index{combinatorial star product} constructed in \S\ref{subsection:combinatorialstarproduct} can be used to produce explicit formulae for deformation quantizations\index{deformation quantization} of algebraic Poisson structures on $\mathbb A^d$. The idea of using the Diamond Lemma\index{Diamond Lemma} to produce explicit formal deformation quantizations\index{deformation quantization} was used for algebraic Poisson structures on $\mathbb R^3$ in \cite{arnlindbordemannhoferhoppeshimada,doninmakarlimanov,nowak} and the results in this section may be viewed as a general account of this approach.

For this subsection we revert to the classical language of (unshifted) DG Lie algebras and L$_\infty$ algebras\index{L$_\infty$!algebra}\index{algebra!L$_\infty$}, denoting by $\mathbf p (Q, R) [-1]$ the L$_\infty$ algebra obtained from the L$_\infty [1]$ algebra $\mathbf p (Q, R)$ by a negative shift (cf.\ Remark \ref{ordinarydgalgebra}).

Recall from \S\ref{subsection:relationdeformationquantization} that $\mathfrak v (\mathbb A^d)$ denotes the polyvector fields with its DG Lie algebra\index{DG Lie algebra}\index{algebra!DG Lie} structure given by the Schouten--Nijenhuis bracket. Relying on Kontsevich's formality\index{Kontsevich!formality} quasi-isomorphism we have the following theoretical result.

\begin{theorem}
\label{theorem:theoretical}
There is an L$_\infty$ quasi-isomorphism\index{L$_\infty$!quasi-isomorphism}\index{algebra!L$_\infty$} $\mathfrak v (\mathbb A^d) \simeq \mathbf p (Q, R) [-1]$. In particular, any Poisson structure\index{Poisson structure} can be quantized using the combinatorial star product.\index{star product!combinatorial}\index{combinatorial star product}
\end{theorem}

\begin{proof}
Combining Kontsevich's formality quasi-isomorphism $\mathfrak v (\mathbb A^d) \tosim \mathfrak d (\mathbb A^d)$, the quasi-isomorphism given by the inclusion $\mathfrak d (\mathbb A^d) \subset \mathbf h (A) [-1]$, the quasi-isomorphism $\mathbf p (Q, R) \tosim \mathbf h (A)$ (Theorem \ref{theorem:linfinitytransfer}) and the invertibility of L$_\infty$ quasi-iso\-mor\-phisms (cf.\ Lemma \ref{lemma:quasi-inverse}) we obtain the following diagram of DG Lie and L$_\infty$ algebras with L$_\infty$ quasi-iso\-mor\-phisms
\begin{equation}
\label{quasi}
\begin{tikzpicture}[baseline=0,description/.style={fill=white,inner sep=1.75pt}]
\matrix (m) [matrix of math nodes, row sep=2em, text height=1.5ex, column sep=3em, text depth=0.25ex, ampersand replacement=\&]
{
\mathfrak v (\mathbb A^d)   \& \mathfrak d (\mathbb A^d) \\
\mathbf p (Q, R) [-1]       \& \mathbf h (A) [-1] \\
};
\path[->,line width=.4pt]
(m-1-1) edge (m-1-2)
(m-1-2) edge (m-2-2)
(m-2-2) edge (m-2-1)
;
\end{tikzpicture}
\end{equation}
By Corollary \ref{theorem-summary} any star product on $\mathbb A^d$ (quantizing some Poisson structure $\eta$) is gauge equivalent to the combinatorial star product\index{star product!combinatorial}\index{combinatorial star product} for some Maurer--Cartan element $\widetilde \varphi \in \Hom (\Bbbk S, A) \hatotimes \mathfrak m$.
\end{proof}

Although this theoretical result depends on Kontsevich's formality\index{Kontsevich!formality} quasi-iso\-mor\-phism, it recasts the problem of quantizing Poisson structures\index{Poisson structure} in a combinatorial language, as quantizations can always be obtained as Maurer--Cartan elements of $\mathbf p (Q, R) \hatotimes \mathfrak m$. Indeed, Proposition \ref{proposition:quantizations} below shows that the first-order term of a Maurer--Cartan element is a Poisson structure\index{Poisson structure} and Theorem \ref{theorem:loopless} shows that the associated combinatorial star product\index{star product!combinatorial}\index{combinatorial star product} gives an explicit formula for the quantization\index{deformation quantization} which does not use any conjecturally transcendental coefficients (cf.\ Remarks \ref{remarks:quantization}).

Let us write $\widetilde \phi = \widetilde \varphi_1 \hbar + \widetilde \varphi_2 \hbar^2 + \dotsb \in \Hom (\Bbbk S, A) \hatotimes \mathfrak m$ as usual. To $\widetilde \phi$ we can associate a bivector field
\begin{equation}
\label{eta}
\eta = \sum_{1 \leq i < j \leq d} \eta_{ji} \frac{\partial}{\partial x_j} {\hair\wedge\hair} \frac{\partial}{\partial x_i}
\end{equation}
by setting $\eta_{ji} = \widetilde \phi_1 (x_j x_i) \in A$. 

\begin{lemma}
\label{lemma:koszulquantization}
\begin{enumerate}
\item \label{kq1} The L$_{\infty}$ algebra $\mathbf p (Q, R) [-1]$ has trivial differential and the underlying graded vector space is isomorphic to the polyvector fields on $\mathbb A^d$.
\item \label{kq2} The binary bracket $[\blank {,} \blank]$ of $\mathbf p (Q, R) [-1]$ coincides with the Schouten--Nijenhuis\index{Schouten--Nijenhuis bracket} graded Lie bracket on polyvector fields.
\end{enumerate}
\end{lemma}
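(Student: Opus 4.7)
The plan is to exploit the explicit Koszul form of the resolution $P_\ldot$ for the polynomial algebra recorded in Example~\ref{example:polynomialkoszul}. First I would identify the underlying graded vector space: since $S_k = \{x_{i_k} \dotsb x_{i_1} \mid 1 \leq i_1 < \dotsb < i_k \leq d\}$ for $1 \leq k \leq d$ (and $S_k = \emptyset$ otherwise) and $\Bbbk Q_0 = \Bbbk$, the map $x_{i_k} \dotsb x_{i_1} \mapsto \partial_{i_k} \wedge \dotsb \wedge \partial_{i_1}$ (with $\partial_i = \tfrac{\partial}{\partial x_i}$) yields a $\Bbbk$-linear isomorphism $P^{k} \simeq A \otimes \bigwedge^{k} T$, which after the shift identifies $\mathbf p(Q, R)[-1]^n = P^{n+1}$ with the $(n+1)$-polyvector fields $\mathfrak v(\mathbb A^d)^n$.

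Next I would verify that the differential vanishes. For $f \in P^k$ and $w = x_{i_{k+1}} \dotsb x_{i_1} \in S_{k+1}$, the explicit Koszul differential of Example~\ref{example:polynomialkoszul} reads
\[
\partial_{k+1}(1 \otimes w \otimes 1) = \sum_{j=1}^{k+1} (-1)^{k+1-j}\bigl(x_{i_j} \otimes \widehat w_j \otimes 1 - 1 \otimes \widehat w_j \otimes x_{i_j}\bigr),
\]
where $\widehat w_j$ omits $x_{i_j}$. Extending $f$ to $\check f(a \otimes w' \otimes b) = a f(w') b$, each summand becomes a commutator $[x_{i_j}, f(\widehat w_j)]$ in $A$, and these vanish by commutativity; hence $\langle f \rangle = 0$.

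For part~\ref{kq2}, the fact that $\langle - \rangle = 0$ collapses the arity-three L$_\infty[1]$-relation to the graded Jacobi identity for the binary bracket, so $\mathbf p(Q, R)[-1]$ is already a bona fide graded Lie algebra and coincides with its own cohomology. The L$_\infty[1]$-quasi-iso\-morphism $G_\ldot^\hdot \colon \mathbf p(Q, R) \tosim \mathbf h(A)$ of Theorem~\ref{theorem:linfinitytransfer} therefore descends to a graded Lie algebra iso\-morphism onto $\HH^\hdot(A, A)$ equipped with the Gerstenhaber bracket. The classical Hochschild--Kostant--Rosenberg theorem identifies the latter with $\mathfrak v(\mathbb A^d)$ endowed with the Schouten--Nijenhuis bracket, so the proof reduces to checking that the identification of graded vector spaces from part~\ref{kq1} is compatible with the HKR map. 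For this I would compute $G^\hdot = G^\hdot_1$ on a generator $f \in P^k$ using the recursive formula \eqref{Gn}, and match the resulting Hochschild cochain with the antisymmetrized multiderivation $(a_1, \dotsc, a_k) \mapsto \sum_{\sigma} \mathrm{sgn}(\sigma)\, \partial_{i_{\sigma(1)}}(a_1) \dotsb \partial_{i_{\sigma(k)}}(a_k)$ representing the HKR image of $\partial_{i_k} \wedge \dotsb \wedge \partial_{i_1}$.

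The main obstacle will be this last identification: tracking the signs and normalizations through the inductive definitions of the maps $G_\ldot$ built from the homotopy $\rho_\ldot$ and the Koszul $\partial_\ldot$. This is nevertheless a finite combinatorial verification, since both sides are multilinear cochains on $\bar A^{\otimes k}$ determined by their values on monomials; reducing to the canonical test cases $a_l = x_{j_l}$ (with $(j_1, \dotsc, j_k)$ a permutation of $(i_1, \dotsc, i_k)$) then matches the wedge signs appearing in the antisymmetrization.
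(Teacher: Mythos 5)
Your argument is correct in outline but takes a genuinely different route from the paper for part \ref{kq2}. The paper works entirely at the cochain level: it writes down explicit formulae for $F^k$ and $G^k$ on the Koszul complex (with $G^k(a\, x_{i_k}\cdots x_{i_1})(a_k\otimes\dotsb\otimes a_1) = a\,\partial_{i_k}(a_k)\cdots\partial_{i_1}(a_1)$) and then computes the transferred binary bracket $F^{k+l-1}([G^k(\alpha),G^l(\beta)]_{\mathrm G})$ directly, observing that the resulting expression is literally the Schouten--Nijenhuis formula. You instead pass to cohomology, using that the differential vanishes, that an L$_\infty$ morphism induces a graded Lie algebra map on cohomology, and the classical HKR identification of $(\HH^\hdot(A,A),[\blank{,}\blank]_{\mathrm G})$ with polyvector fields and the Schouten--Nijenhuis bracket. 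This trades the paper's finite but sign-heavy computation for reliance on two standard external theorems; both are legitimate, though the paper's route has the side benefit of producing the explicit $F^k$, $G^k$ formulae. One point in your plan needs repair: $G^k$ applied to a generator is \emph{not} the antisymmetrized multiderivation --- it is the ordered product of partial derivatives $\partial_{i_k}\otimes\dotsb\otimes\partial_{i_1}$, which is visibly not antisymmetric in its arguments. So the "matching" with the HKR representative cannot be an equality of cochains; it must be carried out at the level of cohomology classes, invoking the standard fact that a multiderivation cocycle is cohomologous to its antisymmetrization (and keeping track of the $1/k!$ normalization convention in the HKR map). With that adjustment the argument goes through.
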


\begin{proof} 
\ref{kq1} was shown in Example \ref{example:polynomialkoszul}. To prove \ref{kq2}, note that
\[
P^k \simeq \bigoplus_{1 \leq i_1 < \dotsb < i_k \leq d} A x_{i_k} x_{i_{k-1}} \cdots x_{i_1}.
\]
The maps $F^\hdot \colon \Hom_{\Bbbk} (A^{\otimes_{\Bbbk} \hdot}, A) \to P^\hdot$ and $G^\hdot \colon P^\hdot \to \Hom_{\Bbbk} (A^{\otimes_{\Bbbk} \hdot}, A)$ are given as follows. For any $\psi \in \Hom_{\Bbbk} (A^{\otimes_{\Bbbk} k}, A)$, we have 
\begin{align*}
F^k (\psi) =\sum_{i_1 < \dotsb < i_k} \sum_{s \in \mathfrak S_k} \mathrm{sgn}(s) \psi(x_{i_{s(k)}} \otimes x_{i_{s(k-1)}} \otimes \dotsb \otimes x_{i_{s(1)}}) x_{i_k} x_{i_{k-1}} \cdots x_{i_1}.
\end{align*}

To give a formula for $G^k$ note that any element in $P^k$ is a linear combination of elements of the form $\alpha = a x_{i_k} x_{i_{k-1}} \cdots x_{i_1}$ for some $1 \leq i_1 < \dotsb < i_k \leq d$ and some $a \in A$ and thus $G^k$ is determined by its value on such elements for which we have
\begin{flalign*}
&& G^k (\alpha) (a_k \otimes a_{k-1} \otimes \dotsb \otimes a_1) &= a \frac{\partial a_k}{\partial x_{i_k}} \frac{\partial a_{k-1}}{\partial x_{i_{k-1}}} \dotsb \frac{\partial a_1}{\partial x_{i_1}} && \mathllap{a_1, \dotsc, a_k \in A.}
\end{flalign*}

Now let $\beta = b x_{j_l} x_{j_{l-1}} \cdots x_{j_1} \in P^l$. Then the Lie bracket $[\blank {,} \blank]$ on $P^{\hdot + 1}$ is determined by the following formula:
\begin{align*}
[\alpha, \beta] &= F^{k+l-1} ([G^k (\alpha), G^l (\beta)]_{\mathrm G}) \\
& = \sum_{r=1}^k (-1)^{r-1} a \frac{\partial b}{\partial x_{i_{k-r+1}}} x_{j_l} \cdots x_{j_1} x_{i_k} \cdots \widehat x_{i_{k-r+1}} \cdots x_{i_1} \\
&\quad - \sum_{r=1}^l (-1)^{l-r} b \frac{\partial a}{\partial x_{j_{l-r+1}}} x_{j_l} \cdots \widehat x_{j_{l-r+1}} \cdots x_{j_1} x_{i_k} \cdots x_{i_1}
\end{align*} 
where $[\blank {,} \blank]_{\mathrm G}$ is the Gerstenhaber bracket given in Definition \ref{definitiongerstenhaber}. Note that the right-hand side of the second identity is exactly the Schouten--Nijenhuis bracket $[\alpha, \beta]_{\mathrm{SN}}$ of the multivector fields corresponding to $\alpha$ and $\beta$. 
\end{proof}

By Theorem \ref{theorem:equivalenceformal}, $\star^{\rm C}_{\phi+\widetilde \varphi}$ is associative whenever $\widetilde \varphi$ is a Maurer--Cartan element of $\mathbf p (Q, R) \hatotimes \mathfrak m$. The following proposition shows that the first-order term of a Maurer--Cartan element is always a Poisson structure\index{Poisson structure}. As the Maurer--Cartan equation \index{Maurer--Cartan!equation} can be checked combinatorially, this gives a criterion for which Poisson structures\index{Poisson structure} may be quantized via the combinatorial star product\index{star product!combinatorial}\index{combinatorial star product} {\it without} any higher correction terms $\widetilde \varphi_2 \hbar^2 + \widetilde \varphi_3 \hbar^3 + \dotsb$.

\begin{proposition}
\label{proposition:quantizations}
\begin{enumerate}
\item If $\widetilde \varphi = \sum_{i \geq 1} \widetilde \varphi_i \hbar^i \in \mathbf p (Q, R) \hatotimes \mathfrak m$ is a Maurer--Cartan element, then the associated bivector field $\eta$ \eqref{eta} is a Poisson structure on $\mathbb A^d$ and the combinatorial star product\index{star product!combinatorial}\index{combinatorial star product} $\star = \star^{\rm C}_{\phi + \widetilde \phi}$ gives an explicit deformation quantization of $\eta$.\index{deformation quantization}

\item \label{quant2} Let $\eta$ be any algebraic Poisson structure\index{Poisson structure} on $\mathbb A^d$ for $d \geq 3$ and let $\widetilde \varphi_1 \in \Hom (\Bbbk S, A)$ be the corresponding element. Let $\widetilde \varphi = \widetilde \varphi_1 \hbar$ and denote by $\star = \star^{\rm C}_{\phi + \widetilde \phi}$.

If $x_k \star (x_j \star x_i) = (x_k \star x_j) \star x_i$ for all $1 \leq i < j < k \leq d$, then $\star$ gives an explicit deformation quantization of $\eta$.\index{deformation quantization}
\end{enumerate}
\end{proposition}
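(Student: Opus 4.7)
The plan is to reduce Part \ref{quant2} to the first assertion via the combinatorial criterion for the Maurer--Cartan equation (Theorem \ref{theorem:higher-brackets}), and to prove the first assertion by computing the first-order term of $\star$ directly from its definition and identifying the order $\hbar^2$ part of the Maurer--Cartan equation with the Schouten--Nijenhuis condition $[\eta,\eta]_{\mathrm{SN}} = 0$ via Lemma \ref{lemma:koszulquantization}.

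For Part \ref{quant2}, since $S = \{x_j x_i\}_{1 \leq i < j \leq d} \subset Q_2$, the description of $S_n$ given in \S\ref{subsection:chains} specializes to $S_3 = \{ x_k x_j x_i \mid 1 \leq i < j < k \leq d \}$. The hypothesis is therefore precisely the statement that $\star$ is associative on every overlap $uvw \in S_3$ with $uv, vw \in S$, which by Theorem \ref{theorem:higher-brackets} is equivalent to $\widetilde \varphi = \widetilde \varphi_1 \hbar$ being a Maurer--Cartan element of $\mathbf p (Q, R) \hatotimes \mathfrak m$. Hence Part \ref{quant2} reduces to the first assertion applied to this $\widetilde \varphi$.

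To prove the first assertion, Theorem \ref{theorem:equivalenceformal} shows that $(A \llrr{\hbar}, \star)$ is a formal deformation of the polynomial algebra $A$, so $\star$ is an associative deformation of its commutative product. It then remains to verify (a) that the skew-symmetrization of the first-order coefficient $B_1$ of $\star$ equals the Poisson bracket associated to $\eta$, and (b) that $[\eta, \eta]_{\mathrm{SN}} = 0$. For (a), reading off from \eqref{starg1} the coefficient of $\hbar$ in $f \star g$ for monomials $f = x_{i_1} \cdots x_{i_p}$ and $g = x_{j_1} \cdots x_{j_q}$ with weakly increasing indices, and projecting to the commutative algebra $A$, yields
\[
B_1 (f, g) = \sum_{1 \leq i < j \leq d} \eta_{ji} \, \partial_j (f) \, \partial_i (g),
\]
whose skew-symmetrization is visibly $\{f, g\}_\eta$; equivalently, this is the unique contribution from the single family of graphs in $\mathfrak G_{1,2}^{\mathrm C}$ via Lemma \ref{lemma:operator}. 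For (b), Lemma \ref{lemma:koszulquantization} identifies $\mathbf p (Q, R)[-1]$ as a graded vector space with the polyvector fields on $\mathbb A^d$ and its binary bracket with the Schouten--Nijenhuis bracket, while the differential vanishes. Extracting the coefficient of $\hbar^2$ from the Maurer--Cartan equation $\sum_{n \geq 1} \tfrac{1}{n!} \langle \widetilde \varphi, \dotsc, \widetilde \varphi \rangle_n = 0$ yields $\langle \widetilde \varphi_1, \widetilde \varphi_1 \rangle_2 = 0$, which under this identification translates to $[\eta, \eta]_{\mathrm{SN}} = 0$.

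The main obstacle is the explicit identification of $B_1$ with the first-order biderivation coming from $\eta$; this is a bookkeeping exercise using the combinatorial description of $\star^1$ and the commutativity of $A$, and can be verified by noting that summing $x_{i_1} \cdots \widehat{x_{i_l}} \cdots x_{i_p}$ over indices $l$ with $i_l = a$ produces $\partial_a f$ in $A$. Once this identification is in hand, part (b) follows immediately from the Maurer--Cartan equation and the identifications in Lemma \ref{lemma:koszulquantization}.
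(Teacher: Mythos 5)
Your proposal is correct and follows essentially the same route as the paper, whose proof is a one-line appeal to Theorem \ref{theorem:higher-brackets} (associativity on $S_3$ $\Leftrightarrow$ Maurer--Cartan) combined with Lemma \ref{lemma:koszulquantization} (trivial differential, binary bracket $=$ Schouten--Nijenhuis, so the $\hbar^2$-coefficient of the Maurer--Cartan equation is $[\eta,\eta]_{\mathrm{SN}}=0$). The extra bookkeeping you supply for identifying the skew-symmetrization of $B_1$ with $\{\blank{,}\blank\}_\eta$ is a correct elaboration of a step the paper leaves implicit.
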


\begin{proof}
This follows directly from Theorem \ref{theorem:higher-brackets}, since by Lemma \ref{lemma:koszulquantization} the binary bracket coincides with the Schouten--Nijenhuis bracket.
\end{proof}

From a combinatorial point of view, the problem of quantizing Poisson structures\index{Poisson structure} can thus be solved by finding Maurer--Cartan elements of $\mathbf p (Q, R) \hatotimes \mathfrak m$.

\begin{remark}
The following classes of Poisson structures satisfy the condition of Proposition \ref{proposition:quantizations} \ref{quant2}: constant\index{Poisson structure!constant} and linear\index{Poisson structure!linear} Poisson structures on $\mathbb A^d$ for any $d$, algebraic Poisson structures of arbitrary degree on $\mathbb A^2$, almost all quadratic Poisson structures on $\mathbb A^3$ (up to Poisson automorphism), which was already observed by \cite{doninmakarlimanov}.

Moreover, it is not difficult to come up with particular examples of arbitrarily high degree on $\mathbb A^d$ for any $d$. Also, the condition of Proposition \ref{proposition:quantizations} \ref{quant2} is easy to check for any {\it given} Poisson structure\index{Poisson structure} on $\mathbb A^d$.
\end{remark}

We now give two concrete examples for quantizing nonlinear Poisson structures in dimension $3$. (In \cite{barmeierschmitt} these combinatorial star products\index{star product!combinatorial}\index{combinatorial star product} are used to obtain {\it strict} deformation quantizations\index{deformation quantization!strict}, i.e.\ formal quantizations which converge for constant values of $\hbar$ and can be extended from polynomial functions to larger function spaces, such as the space of analytic complex-valued functions on $\mathbb R^d$ with infinite radius of convergence.)

The following first example is a quadratic Poisson structure\index{Poisson structure} on $\mathbb A^3$ which is a Maurer--Cartan element of $\mathbf p(Q, R)$ and thus can be quantized using the combinatorial star product.\index{star product!combinatorial}\index{combinatorial star product}

\begin{example}
Consider the Poisson structure $\eta = -(x^2 + \lambda y z) \frac{\partial}{\partial z} {\hair\wedge\hair} \frac{\partial}{\partial y}$ on $\mathbb A^3$ with $\lambda \in \Bbbk$. Note that the associated element $\widetilde \varphi \in \mathbf p (Q, R) \hatotimes \mathfrak m$ where
\[
\widetilde \varphi(y x) = 0 = \widetilde \varphi(z x) \qquad \text{and}\qquad \widetilde \varphi(zy) = -(x^2+ \lambda yz) \hbar
\]
is a Maurer--Cartan element since $(z \star y) \star x = xyz - (x^3 + \lambda xyz) \hbar = z \star (y \star x)$, where $\star = \star^{\rm C}_{\phi + \widetilde \phi}$. It follows that $(A \llrr{\hbar}, \star)$ is a deformation quantization\index{deformation quantization} of $\eta$. Note that by Manchon--Masmoudi--Roux \cite[Prop.~III.2]{manchonmasmoudiroux} the Poisson structure\index{Poisson structure} $\eta$ is not the image of a classical $r$-matrix when $\lambda \neq 0$ and thus cannot be quantized using Drinfel'd twists.
\end{example}

The following example is an exact Poisson structure of degree $k \geq 2$, which is {\it not} a Maurer--Cartan element of $\mathbf p (Q, R) \hatotimes \mathfrak m$, but which may be corrected by adding higher-order terms. (We obtained the higher-order terms by considering graphs with cycles and leave a precise description of this process for future work.) After adding these higher-order terms, the combinatorial star product\index{star product!combinatorial}\index{combinatorial star product} gives again an explicit formula for the quantization\index{deformation quantization} of this Poisson structure.

\begin{example}
\label{example:higherterms}
Consider the exact Poisson structure\index{Poisson structure}\index{Poisson structure!exact} $\eta$ associated to the polynomial $-xyz - \frac{1}{k+1} x^{k+1}$ for $k \geq 2$, i.e.\ 
$\eta = (yz + x^k) \frac{\partial}{\partial z} {\hair\wedge\hair} \frac{\partial}{\partial y} - xz \frac{\partial}{\partial z} {\hair\wedge\hair} \frac{\partial}{\partial x} + xy \frac{\partial}{\partial y} {\hair\wedge\hair} \frac{\partial}{\partial x}$ on $\mathbb A^3$. Let $\widetilde \varphi_1 \in \Hom(\Bbbk S, A)$ be the associated element to $\eta$: 
\[ 
\widetilde \varphi_1 (yx) = xy, \quad \widetilde \varphi_1 (zx) = -xz, \quad \text{and} \quad \widetilde \varphi_1 (zy) = yz + x^k.
\]
The element $\widetilde \varphi_1 \hbar$ is {\it not} a Maurer--Cartan element of $\mathbf p (Q, R) \hatotimes \mathfrak m$. However, by adding higher-order correction terms to we obtain a Maurer--Cartan element $\widetilde \varphi = \widetilde \varphi_1 \hbar + \widetilde \varphi_2 \hbar^2 + \dotsb$ of $\mathbf p (Q, R) \hatotimes \mathfrak m$ given as
\begin{align*}
\widetilde \varphi (yx) & = xy \hbar \\  
\widetilde \varphi (zx) & = xz (-\hbar + \hbar^2 - \hbar^3 + \dotsb) = -xz \frac{\hbar}{1 + \hbar}  \\
\widetilde \varphi (zy) & = (yz + x^k) \hbar.
\end{align*}
Indeed one easily verifies that for $\star = \star^{\rm C}_{\phi + \widetilde \phi}$ we have $(z \star y) \star x = z \star (y \star x)$ and thus $(A \llrr{\hbar}, \star)$ is a deformation quantization\index{deformation quantization} of the Poisson structure $\eta$.
\end{example}

\begin{remarks}
\label{remarks:quantization}
\begin{enumerate}
\item {\it Rational deformation quantization\index{deformation quantization}.} Given a Maurer--Cartan element of $\mathbf p (Q, R) \hatotimes \mathfrak m$ the formula (\ref{combinatorialstar}) appears to be a rather natural way to define a star product since the expression does not need any weights and thus works over $\mathbb Z$.
\item {\it ``Loopless'' deformation quantization.} It is not yet clear how to define the higher correction terms $\widetilde \varphi_2 \hbar^2 + \dotsb$ in general, without using the Kontsevich formality\index{Kontsevich!formality} map (see Theorem \ref{theorem:theoretical}), when $\widetilde \varphi_1 \hbar$ does not satisfy the Maurer--Cartan equation \index{Maurer--Cartan!equation} of $\mathbf p (Q, R)$. However, in examples this can be done by considering graphs with cycles. Indeed, Dito \cite{dito2} and Willwacher \cite{willwacher} have shown that a universal formula without cycles (``wheels'') cannot exist. The ``smallest'' example of a Poisson structure\index{Poisson structure} which by itself does not satisfy the Maurer--Cartan equation \index{Maurer--Cartan!equation} is a quadratic Poisson structure on $\mathbb A^3$ (see Example \ref{example:higherterms}). However, one can find higher order terms to ``correct'' a Poisson structure to a Maurer--Cartan element by considering graphs with cycles. We hope that a general combinatorial formula can be found, which would give a rational universal quantization formula\index{deformation quantization}. 
\end{enumerate}
\end{remarks}

\subsection*{Acknowledgements}

We would like to thank Pieter Belmans, Martin Bordemann, William Crawley-Boevey, Lutz Hille, Martin Kalck, Bernhard Keller, Travis Schedler, Catharina Stroppel, Michel Van den Bergh, Matt Young and Guodong Zhou for many helpful comments and discussions during the writing of this book. We also would like to thank Murray Gerstenhaber for valuable comments and bringing the reference \cite{gerstenhabergiaquinto} to our attention, and we thank the algebra group at Beijing Normal University, in particular Wei Hu, Nengqun Li, Yuming Liu and Ziheng Liu, for carefully reading this book and making many valuable suggestions for improvement. We are also grateful to Philipp Schmitt for pointing out an error in the description of the graphical calculus in a previous version.

A large part of this work was carried out while both authors were supported by the Max Planck Institute for Mathematics in Bonn and later by the Hausdorff Research Institute for Mathematics in Bonn as part of the Junior Trimester Program ``New Trends in Representation Theory'', funded by the Deutsche Forschungsgemeinschaft (DFG, German Research Foundation) under Germany's Excellence Strategy -- EXC-2047/1 -- 390685813, and we would like to thank both institutes for the welcoming atmosphere and the excellent working environment.

\backmatter

\printindex

\end{document}